\numberwithin{equation}{section}
\numberwithin{figure}{section}
\theoremstyle{plain}
\theoremstyle{remark}
\theoremstyle{plain}
\theoremstyle{plain}
\newtheorem{theorem}{Theorem}[section]
\newtheorem{proposition}[theorem]{Proposition}
\newtheorem{lemma}[theorem]{Lemma}
\theoremstyle{remark}
\newtheorem{remark}[theorem]{\protect\remarkname}
\providecommand{\propositionname}{Proposition}
\providecommand{\remarkname}{Remark}
\providecommand{\theoremname}{Theorem}
\providecommand{\lemmaname}{Lemma}
\newcommand{\nut}[1]{\overset{\circ}{#1}}
\global\long\def\R{\mathbf{\mathbb{R}}}%
\global\long\def\C{\mathbf{\mathbb{C}}}%
\global\long\def\Im{\mathrm{Im}}%
\global\long\def\Re{\mathrm{Re}}%
\global\long\def\norm#1{\|#1\|}%
\global\long\def\ol#1{\overline{#1}}%
\global\long\def\tilde#1{\widetilde{#1}}%
\global\long\def\calN{\mathcal{N}}%
\global\long\def\bbR{\mathbf{\mathbb{R}}}%
\global\long\def\bbC{\mathbf{\mathbb{C}}}%
\global\long\def\bbN{\mathbf{\mathbb{N}}}%
\global\long\def\calR{\mathcal{R}}%
	\global\long\def\calP{\mathcal{P}}%
\global\long\def\gmm{\gamma}%
\global\long\def\td#1{\widetilde{#1}}%
\providecommand{\lemmaname}{Lemma}
\providecommand{\propositionname}{Proposition}
\providecommand{\remarkname}{Remark}
\providecommand{\theoremname}{Theorem}
\title[Blow-up for half-wave equation]{Blow-up construction and instability for mass-critical half-wave equation with slightly superthreshold mass}
\author{Taegyu Kim}
\email{k1216300@kaist.ac.kr}
\address{Department of Mathematical Sciences, Korea Advanced Institute of Science
and Technology, 291 Daehak-ro, Yuseong-gu, Daejeon 34141, Korea}
\author{Soonsik Kwon}
\email{soonsikk@kaist.edu}
\address{Department of Mathematical Sciences, Korea Advanced Institute of Science
and Technology, 291 Daehak-ro, Yuseong-gu, Daejeon 34141, Korea}
\author{Jeongheon Park}
\email{jse05002@kaist.ac.kr}
\address{Department of Mathematical Sciences, Korea Advanced Institute of Science
and Technology, 291 Daehak-ro, Yuseong-gu, Daejeon 34141, Korea}
\begin{document}

\begin{abstract}
We study the blow-up dynamics for the $L^2$-critical focusing half-wave equation on the real line, a nonlocal dispersive PDE arising in various physical models. As in other mass-critical models, the ground state solution becomes a threshold between the global well-posedness and the existence of a blow-up. The first blow-up construction is due to Krieger, Lenzmann and Rapha\"el, in which they constructed the minimal mass blow-up solution at the threshold mass. In this paper, we construct finite-time blow-up solutions with mass slightly exceeding the threshold. This is inspired by similar results in the mass-critical NLS by Bourgain and Wang, and their instability by Merle, Rapha\"el and Szeftel. We exhibit a blow-up profile driven by the rescaled ground state, with a decoupled dispersive radiation component. We rigorously describe the asymptotic behavior of such solutions near the blow-up time, including sharp modulation dynamics. Furthermore, we demonstrate the instability of these solutions by constructing non-blow-up solutions that are arbitrarily close to the blow-up solutions. The main contribution of this work is to overcome the nonlocal setting of half-wave and to extend insights from the mass-critical NLS to a setting lacking pseudo-conformal symmetry.

\end{abstract}

\maketitle

\tableofcontents{}

\section{Introduction}

\subsection{Setting of the problem} We consider the $L^2$-critical focusing half-wave equation on $\bbR$:
\begin{equation}\label{half-wave}\tag{HW}
    \begin{cases}
        &i\partial_t u = Du - |u|^2u, \quad u:I\times \mathbb{R} \rightarrow \mathbb{C},
        \\
        &u(t_0)=u_0.
    \end{cases}
\end{equation}
Here, $I \subset \mathbb{R}$ is an interval that contains $t_0 \in I$, and $D$ denotes the pseudodifferential operator defined by the Fourier multiplier with symbol $|\xi|$. Nonlocal evolution equations such as \eqref{half-wave} arise in a range of physical contexts, including the continuum limits of lattice models \cite{KLS2013CMP}, models for wave turbulence \cite{Physics1997JNS,Physics2001PhysD}, and gravitational collapse \cite{Physics2007CPAM,FrohlichLenzmann2007CPAM}.

Apart from its physical origins, the half-wave equation \eqref{half-wave} has attracted mathematical interest due to its nonlocal structure and critical scaling. At the same time, the absence of classical symmetries such as Lorentz, Galilean, or pseudo-conformal invariance presents substantial analytical challenges. A central problem in its analysis is the formation of singularities in finite time. In this direction, Krieger, Lenzmann, and Rapha\"el \cite{KLR2013ARMAhalfwave} constructed minimal-mass blow-up solutions and identified that the ground state is a sharp threshold dividing line between global existence and finite-time blow-up. Besides the minimal mass-blow-up solution, the dynamics above this threshold remains poorly understood. In this work, we construct other finite-time blow-up solutions having slightly supercritical mass. These are referred to as \textit{Bourgain--Wang type solutions}. Our study is inspired by similar works in the mass-critical NLS. In there, Bourgain and Wang \cite{BourgainWang1997} originally constructed solutions that blow up at the pseudo-conformal rate. Their instability was shown in \cite{MRS2013AJM}. Inspired by the results in NLS, our goal is to construct blow-up solutions with the same blow-up scenario as the minimal blow-up solutions in \cite{KLR2013ARMAhalfwave} and to show their instability.

We begin with elementary facts, symmetries, and conservation laws. Equation \eqref{half-wave} is invariant under the $L^2$-critical scaling
\begin{align*}
    u(t,x)\mapsto\lambda^{-\frac{1}{2}}u(\lambda^{-2}t,\lambda^{-1}x),\quad(\lambda>0),
\end{align*}
which preserves the $L^2$-norm. It also enjoys space and time translation symmetry
\begin{align*}
    u(t,x)\mapsto u(t+t_{0},x+x_{0}),\quad((t_{0},x_{0})\in\mathbb{R}\times \bbR),
\end{align*}
and the phase rotation symmetry;
\begin{align*}
    u(t,x)\mapsto e^{i\gamma}u(t,x),\quad(\gamma\in\mathbb{R}).
\end{align*}
Moreover, associated conservation laws include the conservation of mass, momentum, and energy: 
\begin{equation}
    \begin{aligned}
        M(u) &= \int_{\mathbb{R}} |u(t,x)|^2 dx,
    \\
    P(u) &= -\int_{\mathbb{R}} i\nabla u(t,x)\overline{u}(t,x)dx,  
    \\
    E(u) &= \frac{1}{2}\int_{\mathbb{R}} |D^{1/2}u(t,x)|^2 dx - \frac{1}{4}\int_{\mathbb{R}}|u(t,x)|^4 dx. 
    \end{aligned} \label{eq:conservation laws}
\end{equation}
Since the scaling symmetry of \eqref{half-wave} preserves the $L^2$-norm, the equation is referred to as $L^2$-critical, or mass-critical. This aligns it with other well-studied mass-critical models, such as the nonlinear Schrödinger equation \eqref{NLS}, with which it shares several structural features, including critical scaling and the conservation of mass, energy and momentum. However, the half-wave equation \eqref{half-wave} differs in important ways: It lacks Lorentz, Galilean, or pseudo-conformal invariance.

The \emph{ground state} solution plays a pivotal role in the dynamics of \eqref{half-wave} and serves as a threshold for global behavior. It is characterized as the unique positive solution $Q\in H^{1/2}(\bbR)$ to the nonlinear  elliptic equation
\begin{equation*}
    DQ+Q-Q^3 =0.
\end{equation*}
Although the existence of such a solution is obtained by standard variational arguments, the uniqueness and qualitative properties of $Q$ are nontrivial due to the nonlocal nature of the equation. The uniqueness was established by Frank and Lenzmann \cite{FrankLenzmann2013Acta}. The ground state $Q$ also provides the best constant in the associated Gagliardo--Nirenberg inequality
\begin{align*}
    \|u\|_{L^4}^4\leq C \||D|^{1/2}u\|_{L^2}^2\|u\|_{L^2}^2.
\end{align*}
As a consequence, the energy functional admits the coercive lower bound for a subthreshold mass
\begin{align*}
    E(u) \geq \frac{1}{2}\norm{u}_{\dot{H}^{1/2}}^2\left[1-\frac{\norm{u}_{L^2}^2}{\norm{Q}_{L^2}^2}\right].
\end{align*}
More precisely, by combining this with the conservation of mass and energy, one deduces that for any initial data $u_0\in H^{1/2}(\bbR)$ with $\|u_0\|_{L^2} < \|Q\|_{L^2}$, the solution $u(t)$ exists globally in time and satisfies the uniform bound
\begin{align*}
    \|u(t)\|_{H^{1/2}(\bbR)}\lesssim C(M(u_0),E(u_0)).
\end{align*}
On the threshold $M(u)=M(Q)$, the minimal blow-up solution is constructed in \cite{FrankLenzmann2013Acta}. However, the uniqueness of the minimal blow-up solution is not known, and so the threshold dynamics is barely understood.   

\subsection{Main result}
In this paper, we look at slightly superthreshold mass solutions. More precisely, we construct Bourgain--Wang type finite-time blow-up solutions to \eqref{half-wave} and prove their instability.
Our main result continues the line of investigation initiated in the mass-critical NLS in \cite{BourgainWang1997, MRS2013AJM}. We are ready to state our main theorem.
\begin{theorem}[Bourgain--Wang type solutions]\label{main result 1}
There exists a small universal constant $\alpha^*>0$ such that the following holds. Fix $(E_0,P_0)\in\mathbb{R}_+\times \mathbb{R}$. Let $z^*\in H^{1/2+}$ be an asymptotic profile constructed in Proposition ~\ref{construction of asymptotic profile} that satisfies
$\|z^*\|_{H^{1/2}}<\min\{\alpha^*,E_0\}$. 
Then, there exists corresponding blow-up solution
\begin{equation*}
    u \in \mathcal{C}\big([t_1,0);H^{1/2+}(\mathbb{R})\big),\qquad t_1<0,
\end{equation*}
and parameters      $(\lambda^*,x^*,\gamma^*)\in\mathbb{R}_+\times\mathbb{R}\times\mathbb{R}$ satisfying 
\begin{equation}
    u(t,x)-\frac{1}{\sqrt{\lambda^*}\,|t|}\,
    Q\!\left(\frac{x-x^* t^3}{\lambda^* t^2}\right)
    e^{i\gamma^*/t} \rightarrow z^*,\quad \text{in } L^2,
\end{equation}
as $t \to 0^-$, and
\[
    \|u\|_{L^2}^2 = \|Q\|_{L^2}^2 + \|z^*\|_{L^2}^2,
    \quad 
    E(u(t))=E_0,\quad P(u(t))=P_0.
\]
\end{theorem}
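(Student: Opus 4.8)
The plan is to construct the blow-up solution $u$ by backward integration from a sequence of approximate solutions, following the now-classical scheme of Bourgain--Wang \cite{BourgainWang1997} and its mass-critical NLS refinements \cite{MRS2013AJM}, but adapted to the nonlocal half-wave flow. First I would take the asymptotic profile $z^*$ from Proposition~\ref{construction of asymptotic profile} and, for a sequence $t_n \uparrow 0^-$, solve \eqref{half-wave} backward in time with terminal data
\begin{equation*}
    u_n(t_n,x) = \frac{1}{\sqrt{\lambda^*}\,|t_n|}\,Q\!\left(\frac{x-x^*t_n^3}{\lambda^*t_n^2}\right)e^{i\gamma^*/t_n} + z^*(t_n),
\end{equation*}
where $z^*(t)$ is the (asymptotically free) radiation evolution whose endpoint is $z^*$. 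The point is that this terminal data is, for $n$ large, an extremely good approximate solution: the rescaled ground state solves the profile equation, the radiation part solves (to high order) the linear half-wave equation, and the crucial input is that the profile and the radiation are \emph{decoupled} — the profile concentrates at the self-similar scale $\lambda^* t^2$ around the point $x^* t^3$, while $z^*$ stays at unit scale, so nonlinear interaction terms are controlled by powers of $|t_n|$.

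The core of the argument is a uniform backward bootstrap / a priori estimate on $[t_1, t_n]$, independent of $n$. Here I would decompose $u_n = (\text{modulated profile}) + (\text{radiation}) + \eps_n$ with modulation parameters $(\lambda_n, x_n, \gamma_n)(t)$ fixed by orthogonality conditions adapted to the kernel of the linearized operator $L$ around $Q$ (the half-wave analogue of $L_+, L_-$; here one must use the spectral information from Frank--Lenzmann \cite{FrankLenzmann2013Acta}). The modulation equations should reproduce the law $\lambda \sim \lambda^* t^2$, $x \sim x^* t^3$, $\gamma \sim \gamma^*/t$ — note the $t^2$ scaling and $1/t$ phase, which reflects that the half-wave blow-up rate is \emph{not} pseudo-conformal but is the rate identified in \cite{KLR2013ARMAhalfwave}. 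The remainder $\eps_n$ is controlled in $H^{1/2+}$ by an energy/virial-type functional; one proves $\|\eps_n(t)\|_{H^{1/2+}} \lesssim |t|^{K}$ for some large $K$, uniformly in $n$, by a Lyapunov argument combined with the coercivity of the linearized energy on the orthogonal complement. A compactness argument (local well-posedness in $H^{1/2+}$, plus the uniform bounds giving a limit point as $n \to \infty$) then produces the desired solution $u$ on $[t_1, 0)$ with the stated convergence, and the conservation laws \eqref{eq:conservation laws} pin down $M(u)$, $E(u)$, $P(u)$; the energy and momentum values $E_0, P_0$ are achieved by using the two remaining free phase/translation/Galilean-type adjustments on the radiation profile, which is why $z^*$ is built with parameters in Proposition~\ref{construction of asymptotic profile} and why the constraint $\|z^*\|_{H^{1/2}} < \min\{\alpha^*, E_0\}$ appears.

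The main obstacle I expect is twofold and both difficulties stem from nonlocality. First, the operator $D = |\nabla|$ does not obey the exact scaling commutation and Leibniz rules that make the NLS computations clean: controlling the interaction between the rescaled profile $Q((x - x^*t^3)/(\lambda^* t^2))$ and the unit-scale radiation under $D$ requires careful nonlocal commutator estimates (of the type developed in \cite{KLR2013ARMAhalfwave}), since $D$ applied to a sharply-scaled bump is not simply a rescaling of $DQ$. Second, the absence of pseudo-conformal symmetry means there is no exact conformal transformation turning the blow-up problem into a global-in-time problem; the self-similar profile is only an \emph{approximate} solution of the renormalized flow, with an error that must be shown to be negligible and that forces the $H^{1/2+}$ (rather than $H^{1/2}$) regularity, since extra derivatives are needed to absorb the slowly-decaying tail of $Q$ (which decays only like $|x|^{-2}$) against the radiation. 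Managing this error term through the full time interval — in particular checking that the accumulated error in the modulation ODEs is consistent with the claimed asymptotics as $t \to 0^-$ — is the technical heart of the proof.
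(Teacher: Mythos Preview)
Your overall scheme --- backward construction with terminal data equal to the modulated profile plus radiation, uniform bootstrap bounds, and compactness --- is the right skeleton and matches the paper's architecture. But there are two concrete gaps in your outline, and one structural difference worth noting.

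\textbf{The profile is too crude.} A three-parameter modulation $(\lambda,x,\gamma)$ with the bare ground state $Q$ is not enough: the resulting profile error is of size $b\sim |t|$ (from the missing $\lambda_s/\lambda$ term in the renormalized equation), which swamps the error budget. Already in \cite{KLR2013ARMAhalfwave} one needs two extra internal parameters $(b,\nu)$ and a tail-computed corrector $Q_{(b,\nu)}=Q+ibR_{1,0,0}+i\nu R_{0,1,0}+\cdots$ to push the profile error down. The paper goes further (Proposition~\ref{Singular profile}): it expands to order $k=5$ in $(b,\nu,\eta)$ and tunes four constants $c_1,\dots,c_4$ in the formal modulation law so that $\|\Psi_{\mathcal P}\|_{L^2}\lesssim\lambda^3$. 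Without this, the modulation estimate for $\tilde\gamma_s$ is too weak to recover the sharp phase asymptotic $\gamma\sim\gamma^*/t$, and the $H^{1/2}$ convergence in the remark following the theorem fails. Relatedly, the control of the unstable direction $(\epsilon,Q)_r$ --- which you do not address --- does not come from mass conservation here (we are above threshold) but from differentiating $(\epsilon,Q_{\mathcal P})_r$ in $s$ and using the algebraic cancellation $(\partial_b Q_{\mathcal P},Q_{\mathcal P})_r=\mathcal O(\lambda)$, itself a consequence of the identity $\|R_{1,0,0}\|_{L^2}^2=2(Q,R_{2,0,0})_r$.

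\textbf{There is no Galilean symmetry.} Your proposed mechanism for hitting $(E_0,P_0)$ --- ``Galilean-type adjustments on the radiation'' --- does not exist for the half-wave equation. The paper instead encodes $(E_0,P_0)$ in the initial values of the internal parameters: the expansions $E(Q_{\mathcal P})\sim \tfrac12(L_Q R_{1,0,0},R_{1,0,0})_r(b^2+2\eta)$ and $P(Q_{\mathcal P})\sim 2(L_Q R_{0,1,0},R_{0,1,0})_r\,\nu$ (Proposition~\ref{energy and momentum expansion for singular profile}) let one choose constants $C_0,D_0$ in the formal law so that $b^2/\lambda$ and $\nu/\lambda$ converge to the right values as $t\to 0^-$.

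\textbf{A structural difference.} You take the compactness limit along $t_n\uparrow 0^-$; the paper instead introduces a mass-deficiency parameter $\eta>0$ along the generalized-kernel direction $\rho_1$ (so $\|Q_{\mathcal P(b,\nu,\eta)}\|_{L^2}<\|Q\|_{L^2}$ for $\eta>0$), solves from well-prepared data at $t=0$ with $\lambda(0)\sim\eta$, and sends $\eta_n\to 0^+$. This is not merely cosmetic: each $u_{\eta_n}$ is a genuine global (non-blow-up) solution, so the same family proves Theorem~\ref{main result 2} for free. Your $t_n$-approach would yield only the existence statement and would require a separate argument for instability.
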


Although the statement above only describes the leading order asymptotics, the blow-up solution actually admits a refined $H^{1/2}$–decomposition with explicit modulation parameters.

\begin{remark}[Sharp description for blow-up solutions]
    Given $(E_0,P_0) \in \mathbb{R}_+ \times \mathbb{R}$ and the corresponding asymptotic profile $z^*$, there exists a universal constant $C>0$ such that $u$ satisfies, for $t \in [t_1,0)$,
    \begin{equation}\label{H1/2 decomposition}
        \begin{aligned}
            &\norm{u(t) - S_{\mathrm{HW}}(t) - z(t)}_{L^2} \le C\,\alpha^* |t|^{5/4},\\
            &\norm{u(t) - S_{\mathrm{HW}}(t) - z(t)}_{H^{1/2}} \le C\,\alpha^* |t|^{1/4}.
        \end{aligned}
    \end{equation}
    Here $z(t)$ denotes the flow of \eqref{half-wave} with initial data $z^*$, and
    \begin{equation}\label{S HW def}
        S_{\mathrm{HW}}(t,x)
        \coloneqq \frac{1}{\lambda_0^{1/2}(t)}
        Q_{\mathcal{P}(b_0(t),\nu_0(t),0)}
        \left(\frac{x-\nut{x}_0(t)}{\lambda_0(t)}\right)
        e^{i\gamma_0(t)},
    \end{equation}
    where $Q_{\mathcal{P}(b,\nu,0)}$ is the modified profile constructed in Proposition~\ref{Singular profile}.
    The modulation parameters $(\lambda_0,\gamma_0,\nut{x}_0,b_0,\nu_0)(t)$ satisfy, for $t \in [t_1,0)$,
    \begin{equation}\label{modulation asymptotics}
        \begin{aligned}
            \lambda_0(t) &= \lambda^* t^2 + \mathcal{O}(t^4), \qquad
            b_0(t) = -2\lambda^* t + \mathcal{O}(t^3),\\
            \nut{x}_0(t) &= x^* t^3 + \mathcal{O}(t^5), \qquad
            \nu_0(t) = 3x^* t^2 + \mathcal{O}(t^4),\\
            \gamma_0(t) &= -\frac{\lambda^*}{t} + \mathcal{O}(t),
        \end{aligned}
    \end{equation}
    where the constants $\lambda^*$ and $x^*$ depend on $(E_0,P_0)$ and are given by
    \[
        \lambda^* = e_1 \bigl(E_0 - E(z^*)\bigr), \qquad
        x^* = p_1 \lambda^* \bigl(P_0 - P(z^*)\bigr),
    \]
    with $e_1$ and $p_1$ being universal constants.

    We also emphasize that the constructed asymptotic profile $z^*$ is highly degenerate at $x=0$: all its low-order derivatives vanish up to a sufficiently high order. This spatial flatness at the origin is the hallmark of Bourgain--Wang type solutions, reflecting that the blow-up and radiation parts are essentially decoupled as $t\to0$ in $H^{1/2}$.
\end{remark}

Together with its construction, we obtain the following instability property.

\begin{theorem}[Instability of Bourgain--Wang type solutions]\label{main result 2}
    Given $(E_0,P_0)\in\mathbb{R}_+\times\mathbb{R}$, 
    let $u \in \mathcal{C}([t_1,0);H^{1/2+})$ 
    be the blow-up solution from Theorem~\ref{main result 1}
    corresponding to the asymptotic profile $z^*$. Then there exists a sequence of solutions $\{u_n\}_{n\in\mathbb{N}}$ to \eqref{half-wave} with the following properties:
    \[
        u_n \in \mathcal{C}([t_1,-t_1];H^{1/2+}(\mathbb{R})) 
    \quad\text{and does not blow up at } t=0,\quad \text{for all }n\in\mathbb{N},
    \]
    and for each $t\in[t_1,0)$,
    \[
        u_n(t)\rightarrow u(t)\quad\text{in }H^{1/2}\ \text{as }n\rightarrow\infty.
    \]
\end{theorem}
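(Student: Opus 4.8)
The plan is to prove instability by exploiting the fact that the blow-up solution $u$ from Theorem~\ref{main result 1} is built via a backward construction from data prescribed near $t=0$, and that this construction depends continuously on a finite-dimensional truncation of the radiation part. Concretely, I would approximate the asymptotic profile $z^*$ by a sequence $z_n^*$ obtained by modifying $z^*$ only in a small ball around the origin $x=0$, breaking the high-order flatness condition that is the hallmark of Bourgain--Wang solutions; for instance one may take $z_n^* = z^* + \eta_n$ where $\eta_n$ is supported in $\{|x|\le 1/n\}$, has $\|\eta_n\|_{H^{1/2}}\to 0$, but $\eta_n(0)\ne 0$ so that the flatness at the origin fails. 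One then adjusts $z_n^*$ (using the modulation freedom / a finite-dimensional parameter) so that the energy and momentum still match $E_0$ and $P_0$ after the compensating change in $\lambda^*,x^*$. Running the backward construction of Theorem~\ref{main result 1} with $z_n^*$ in place of $z^*$ produces, for each $n$, a solution $u_n$ on $[t_1,0)$ which converges to $u$ on $[t_1,0)$ in $H^{1/2}$ as $n\to\infty$, by the continuous dependence inherent in the fixed-point/bootstrap scheme used to construct these solutions.

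The second and essential step is to show that each $u_n$ does \emph{not} blow up at $t=0$, so that it extends past $t=0$ to a solution on $[t_1,-t_1]$. The mechanism is the standard dichotomy near the ground-state threshold: the solution $u_n(t)$ as $t\to 0^-$ admits a decomposition $u_n(t) = S_n(t) + z_n(t) + \text{error}$ where $S_n$ is a modulated rescaled ground state and $z_n(t)$ is the regular $H^{1/2}$ flow with data $z_n^*$. The key is a virial-type / modulation argument showing that when the radiation $z_n(t)$ fails to be flat at the concentration point, it injects a nontrivial contribution into the modulation equation for the scaling parameter $b_{0,n}$ (the analogue of $b_0$ in \eqref{modulation asymptotics}), which prevents $\lambda_{0,n}(t)\to 0$: the sign of this contribution forces $\lambda_{0,n}$ to be bounded below, i.e.\ the solution ejects from the blow-up regime and converges (in the rescaled frame) to the ground state minus a small perturbation, hence stays in $H^{1/2}$ up to and beyond $t=0$. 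This is exactly the Merle--Rapha\"el--Szeftel instability mechanism for NLS \cite{MRS2013AJM} adapted to the half-wave setting; the relevant monotonicity comes from the refined energy/virial functional that underlies the construction in Theorem~\ref{main result 1}.

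I would organize the argument as follows. First, define the perturbed profiles $z_n^*$ precisely, verify $\|z_n^*\|_{H^{1/2}}<\min\{\alpha^*,E_0\}$ for large $n$, and record the corresponding perturbed parameters $\lambda_n^*,x_n^*$ via the formulas $\lambda^*=e_1(E_0-E(z^*))$, etc., noting they converge to $\lambda^*,x^*$. Second, re-run the construction of Theorem~\ref{main result 1} verbatim with $z_n^*$, obtaining $u_n\in\mathcal{C}([t_1,0);H^{1/2+})$ satisfying \eqref{H1/2 decomposition}--\eqref{modulation asymptotics} with $z_n^*$-dependent constants, and extract the convergence $u_n(t)\to u(t)$ in $H^{1/2}$ on $[t_1,0)$ from the uniform-in-$n$ bounds and continuity of the scheme. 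Third — the crux — establish the no-blow-up assertion: using the bootstrap bounds, derive the modified modulation law for $b_{0,n}$ with the extra source term proportional to $\langle z_n(t), \text{(ground-state weighted)}\rangle$ evaluated near $x=0$, show this term is nonvanishing (thanks to $\eta_n(0)\ne 0$) and has the sign that drives $\lambda_{0,n}$ away from $0$, conclude $\liminf_{t\to 0^-}\lambda_{0,n}(t)>0$, and then invoke local well-posedness in $H^{1/2+}$ to continue $u_n$ past $t=0$ onto $[t_1,-t_1]$.

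The main obstacle I anticipate is the third step: quantifying precisely how the loss of flatness of $z_n^*$ at the origin feeds into the modulation equations and produces a definite-sign obstruction to $\lambda_{0,n}\to 0$. In the pseudo-conformal NLS case of \cite{MRS2013AJM} this is delicate but aided by the pseudo-conformal symmetry and the explicit conformal change of variables; in the half-wave setting that symmetry is absent, so one must work directly with the refined energy functional and virial identities developed for Theorem~\ref{main result 1}, carefully tracking the interaction term between $S_n$ and $z_n$ in the nonlocal operator $D$. Controlling this nonlocal interaction — in particular showing the radiation's contribution localizes enough near the concentration point to change the effective ODE for $b_{0,n}$ — is where the bulk of the technical work will lie, and it will reuse the commutator and localization estimates for $D$ established earlier in the paper.
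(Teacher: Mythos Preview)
Your proposal identifies the wrong instability mechanism. You want to perturb the \emph{radiation} $z^*$ by breaking its flatness at the origin, and then argue that the resulting interaction term in the modulation equations forces $\lambda$ away from $0$. This is not how the paper (nor, in fact, how Merle--Rapha\"el--Szeftel for NLS) proceeds, and there are concrete reasons why your route would fail here. First, the entire backward construction of Theorem~\ref{main result 1} relies on the high-order degeneracy \eqref{degeneracy for z} of $z^*$ at the origin: this is precisely what makes the interaction error $\mathrm{IN}$ in Lemma~\ref{interaction error order} small enough to close the bootstrap in Lemma~\ref{Main bootstrap}. If you break the flatness, you cannot ``re-run the construction of Theorem~\ref{main result 1} verbatim'' --- the modulation estimates (Proposition~\ref{Modulation estimates}) and the energy bounds of Section~\ref{closing the bootstrap arguments} would not close. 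Second, even setting that aside, there is no established sign mechanism showing that a generic non-flat contribution of $z_n$ at the concentration point drives $\lambda$ away from zero; your step~3 is speculative.

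The paper's mechanism is entirely different and already built into the proof of Theorem~\ref{main result 1}. The blow-up solution $u$ is obtained as the $H^{1/2}$-limit along a subsequence $\eta_n\to 0$ of the one-parameter family $\{u_\eta\}_{0<\eta<\eta^*}$ from Proposition~\ref{construction of one parameter family of solutions}, all sharing the \emph{same} radiation $z^*$ but with a modified singular core $Q_{\mathcal P(b,\nu,\eta)}$. The parameter $\eta$ perturbs the core along the generalized-kernel direction $\rho_1$, which is mass-decreasing ($\|Q_{\mathcal P(b,\nu,\eta)}\|_{L^2}<\|Q\|_{L^2}$ for $\eta>0$), and the formal modulation law \eqref{formal modulation law} then yields $\lambda_\eta(0)\sim\eta>0$, so each $u_{\eta_n}$ does not blow up at $t=0$ and extends to $[t_1,-t_1]$ by Proposition~\ref{construction of one parameter family of solutions}. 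Setting $u_n\coloneqq u_{\eta_n}$ gives the instability sequence immediately; the convergence $u_n(t)\to u(t)$ in $H^{1/2}$ for $t\in[t_1,0)$ is exactly \eqref{strong convergence of H^{1/2}} from the proof of Theorem~\ref{main result 1}. No new analysis is needed --- the theorem is essentially a corollary of the construction.
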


We provide a more explicit description of the family of non-blow-up solutions that generate the sequence $\{u_n\}$. They can be regarded as perturbations of the Bourgain--Wang solution along a direction associated with the phase rotation of $e^{it} Q$.

\begin{remark}[On the construction of the approximating sequence in Theorem~\ref{main result 2}]
    The sequence of solutions $\{u_n\}$ in Theorem~\ref{main result 2} is extracted from a one-parameter family 
    of non-blow-up solutions $\{u_{\eta}\}_{\eta \in (0,\eta^*)}$ ($0<\eta^*\ll1$) to \eqref{half-wave}, 
    by choosing a sequence $\eta_n \to 0$ as $n \to \infty$.

    More precisely, $u_\eta$ admits the approximate description
    \[
        u_{\eta}(t,x) \approx \frac{1}{\lambda_{\eta}(t)^{1/2}}\,
        Q_{\mathcal{P}(b_\eta(t),\nu_{\eta}(t),\eta)}
        \left(\frac{x-\nut{x}_{\eta}(t)}{\lambda_{\eta}(t)}\right)
        e^{i\gmm_{\eta}(t)} \;+\; z(t,x),
    \]
    where the modulation parameters 
    $(\lambda_\eta,\nut{x}_{\eta},\gmm_\eta,b_\eta,\nu_\eta)(t)$ 
    satisfy, for all $\eta \in (0,\eta^*)$ and $t \in [t_1,-t_1]$,
    \begin{align*}
        \lambda_\eta(t) \sim t^2 + \eta, 
        \quad \nut{x}_{\eta}(t) &\sim t^3+\eta t, \quad
        \gmm_\eta(t) \sim \frac{1}{\sqrt{\eta}}
        \left(\arctan\!\left(\frac{t}{\sqrt{\eta}}\right)+\frac{\pi}{2}\right), \\
         b_{\eta}(t) &\sim -t,\quad 
        \nu_{\eta}(t) \sim t^2+\eta. 
    \end{align*}
    In particular, at $t=0$ we have
    \[
        \lambda_\eta(0) \sim \eta, \qquad 
        \gmm_\eta(0) \sim \frac{1}{\sqrt{\eta}},
    \]
    so that $u_\eta$ does not blow up at $t=0$.

    The introduction of the parameter $\eta$ prevents blow-up by slightly decreasing 
    the mass of the core profile: indeed, 
    \[
        \norm{Q_{\mathcal{P}(b_\eta,\nu_\eta,\eta)}}_{L^2} 
        < \norm{Q}_{L^2} \qquad (\eta > 0),
    \]
    while the core part and the radiation part $z^*$ remain almost decoupled.
    This family $\{u_\eta\}$ thus provides the natural approximation of the 
    Bourgain--Wang type blow-up solution $u$ obtained in Theorem~\ref{main result 1}. 
\end{remark}

\subsection{Previous results}\label{previous results}
The half-wave equation \eqref{half-wave} is a nonlocal, $L^2$-critical non-dispersive model lacking pseudo-conformal invariance. A key structural foundation is the ground state \(Q\), whose existence and uniqueness (up to symmetries) for the elliptic profile \(D Q + Q = Q^3\) in one dimension were rigorously established by Frank and Lenzmann~\cite{FrankLenzmann2013Acta} via variational and spectral methods.

Krieger, Lenzmann, and Rapha\"{e}l~\cite{KLR2013ARMAhalfwave} constructed finite-time blow-up solutions at the threshold $\norm{u_0}_{L^2} = \norm{Q}_{L^2}$ in one dimension, showing that the blow-up rate satisfies $\norm{D^{1/2}u(t)}_{L^2} \sim 1 / |t|$ as $t \to 0$. For higher dimensions, see \cite{Georgiev2Dhalfwaveblowup, Georgiev3Dhalfwaveblowup}.

We also mention the work of G\'{e}rard, Lenzmann, Pocovnicu, and Rapha\"{e}l~\cite{GLPR2018annPDE}, who constructed a two-soliton solution exhibiting a transient turbulent regime with dramatic $H^1$-growth from arbitrarily small $L^2$-mass, followed by a long-time saturation phase. While their setting differs from the minimal blow-up regime, their analysis reveals rich dynamical behavior for solutions below the threshold.

For other results in this model, see: weak stability of multi‑soliton sums~\cite{Li2024WeakStability}, multi‑bubble blow‑up~\cite{CaoSuZhang2022}, inhomogeneous mass‑critical blow‑up~\cite{Li2022Inhomogeneous}, nondispersive/traveling waves and failure of small‑data scattering~\cite{BellazziniGeorgievVisciglia2018}, and ill‑posedness of the cubic half-wave or related fractional NLS~\cite{ChoffrutPocovnicu2018}.

In contrast, the present work focuses on the super-threshold regime: we construct a family of finite-time blow-up solutions with mass strictly above $\norm{Q}_{L^2}$, exhibiting a non-trivial asymptotic part decoupled from the blow-up core. This constitutes the first Bourgain--Wang type construction for \eqref{half-wave}, and we further prove the instability of these solutions, as a continuation of the framework developed by Krieger–Lenzmann–Rapha\"{e}l. This constitutes the first Bourgain--Wang type construction for \eqref{half-wave}, and we further prove the instability of these solutions, as a continuation of the framework developed by Krieger--Lenzmann--Rapha\"{e}l.

For a comparison, we recall results for the $L^2$-critical nonlinear Schr\"{o}dinger equation:
\begin{equation}\label{NLS}\tag{NLS}
i \partial_t u + \Delta u + |u|^\frac{4}{d} u = 0,\quad  (t,x) \in I \times \bbR^d.
\end{equation}
In two dimensions, Bourgain and Wang~\cite{BourgainWang1997} originally constructed blow-up solutions at the superthreshold with prescribed asymptotic profile $z^*$, using the pseudo-conformal symmetry. The instability of these Bourgain--Wang solutions was later rigorously established by Merle, Rapha\"{e}l, and Szeftel~\cite{MRS2013AJM}, where it was shown that the instability occurs transversally to the pseudo-conformal manifold. In particular, the set of initial data splits into two open sets: one leading to global scattering and the other to log-log type stable blow-up. As a similar result, for the self-dual Chern-Simons-Schr\"{o}dinger equation, pseudo-conformal blow-up solutions construction and their instability was established in \cite{KimKwon2019}. 

The pseudo-conformal symmetry plays an essential role in the analysis of~\cite{MRS2013AJM, KimKwon2019}, allowing a precise control of the pseudo-conformal phase and enabling a refined modulation approach near the blow-up regime. Such a structure is absent in the half-wave setting, presenting analytical challenges for constructing and analyzing analogous solutions.

Our main contribution is to demonstrate that analogous Bourgain–Wang type dynamics—and their instability—can be realized in a nonlocal half-wave, without pseudo-conformal symmetry setting of the half-wave equation.

\subsection{Discussions on main results}
\ \vspace{5bp}

\emph{1. Method and novelties.}
We use the modulation analysis for the backward construction. The overall strategy of our finite-time blow-up construction is based on \cite{MRS2013AJM}. We construct a one-parameter family of near-blow-up solutions that exhibit the instability mechanism of the blow-up solutions. More precisely, we approximate the singular part of the solution and decouple it from the radiation part by imposing a degeneracy condition on the asymptotic profile. The singular part follows a soliton profile, which is modified by corrector terms resulting from tail computations \cite{RaphaelRodnianski2012}. However, there are major difficulties in our setting: \textit{the nonlocal nature of \eqref{half-wave}} and \textit{the lack of the pseudo-conformal symmetry}.

One of our novelties lies in identifying the correct instability mechanism and constructing a modified singular profile $Q_{\mathcal{P}}$ in the absence of pseudo-conformal symmetry. Unlike the \eqref{NLS} case, where the pseudo-conformal phase explicitly provides the unstable direction in terms of ODE-level, we rely on the spectral structure of the linearized operator $L_Q$. In particular, we detect a mass-deficient direction in its generalized kernel, along which the $L^2$ mass of the singular part becomes slightly smaller than that of $Q$. This recovers, at a spectral level, the same type of ejection mechanism as in the \eqref{NLS} case. Based on this observation, we identify the direction in which the blow-up profile constructed in \cite{KLR2013ARMAhalfwave} becomes unstable and construct a corrected profile $Q_{\mathcal{P}(b,\nu,\eta)}$. Compared to the minimal blow-up solution \cite{KLR2013ARMAhalfwave}, in our setting, there is a lack of smallness coming from the threshold condition. Because of this, we have to find a refined ODE dynamics of the modulation parameters and a modified profile.  

Another novelty lies in the construction of the asymptotic profile $z^*$. The nonlocal nature of the differential operator $D$ creates a major difficulty: spatial degeneracy near the origin of the initial data
does not directly propagate into temporal degeneracy due to interaction with far from the origin. Hence, only such an assumption on $z^*$ gives a strongly harmful interaction with the main singular part. To enforce a sufficiently high space-time degeneracy at $(0,0)$, and decouple from the singular part, we impose additional codimension conditions to $z^*$ on the Fourier side and use a Borsuk--Ulam type topological argument to guarantee the existence of suitable initial data.

Finally, we emphasize that our construction is carried out in the
non-radial setting with a prescribed momentum, which is technically
delicate due to the absence of Galilean symmetry. 

\emph{2. Comparison with \eqref{NLS}.}
As mentioned in Section~\ref{previous results}, our work is motivated from similar results in mass-critical NLS \cite{BourgainWang1997,MRS2013AJM}. Here, we briefly comment on some difference to the \eqref{half-wave}. 
In \eqref{NLS}, subthreshold and threshold dynamics are well-understood. If $M(u)<M(Q_{\text{NLS}})$, then the solution is global and scatters \cite{KenigMerle2006Invent,Dodson2012JAMS}. On the threshold $M(u)=M(Q_{\text{NLS}})$, the dynamics is completely classified, i.e., soliton, pseudo-conformal blow-up solution, or scattering \cite{Merle1993Duke,Dodson2024AnalPDEd1,Dodson2024AnnPDEd2}. In the half-wave \eqref{half-wave}, there is no dispersion even for the small initial data. In fact, there are traveling wave solutions whose mass is strictly smaller than $Q$. (See \cite[Theorem 1.1]{KLR2013ARMAhalfwave}.) See also \cite{GLPR2018annPDE,BGLV2019CMP}. The threshold dynamics for \eqref{half-wave} is much less known. Even the uniqueness of the minimal blow-up solution is still open \cite{KLR2013ARMAhalfwave}. Above the threshold, there are crucial progress in \eqref{NLS}. Merle and Rapha\"{e}l have shown that solutions with negative energy and slightly superthreshold mass should blow up with \emph{log-log corrected} self-similar rate \cite{MerleRaphael2005AnnMath,MerleRaphael2003GAFA,Raphael2005MathAnnalen,MerleRaphael2004Invent,MerleRaphael2006JAMS,MerleRaphael2005CMP}. There are blow-up solutions with pseudo-conformal rate \cite{BourgainWang1997,MRS2013AJM}. However, in \eqref{half-wave} there are no blow-up results prior to the current result. In this case, because the virial identity is absent, Glassey's convexity argument fails to demonstrate a blow-up.
On the other hand, in \eqref{NLS} non-blow-up solutions in the constructed family of \cite{MRS2013AJM} scatter in both time directions using the stability of scattering and the pseudo-conformal symmetry. However, in \eqref{half-wave} we have no further information outside a time interval $[t_1,-t_1]$.
Finally, we note that Theorem~\ref{main result 2} is slightly weaker than that for \eqref{NLS} in \cite{MRS2013AJM}, though our result suffices to claim the instability. This is connected to a uniqueness issue. See more detail in Remark~\ref{remark:instability}.

\subsection{Strategy of the proof}

Our proof is inspired by the approach in \cite{BourgainWang1997, MRS2013AJM}, where Bourgain–Wang type blow-up solutions are constructed and their instability is established via modulation analysis. We adapt this strategy to the half-wave equation, building on the blow-up profile construction in \cite{KLR2013ARMAhalfwave}. The adaptation to the half-wave equation presents several key challenges, including the absence of pseudo-conformal and Galilean symmetries as well as the nonlocal nature of the equation.

\textbf{1. Approximate solution.}
We aim to construct a blow-up solution of the form
\[
    u(t,x) \approx \frac{1}{\lambda^{1/2}(t)}Q\left(\frac{x-\nut{x}(t)}{\lambda(t)}\right)e^{i\gmm(t)} + z(t,x),
\]
where
\[
    \lambda(t) \sim t^2,\quad \nut{x}(t) \sim t^3,\quad \gamma(t)\sim -\frac{1}{t}.
\]
This corresponds to a concentration of mass towards the origin with the asymptotic behavior $u(t) \rightharpoonup \|Q\|_{L^2}\delta_{x=0}$ as $t \to 0^-$.

To achieve decoupling of the nonlinear interaction between the blow-up profile and the radiation part $z$ in the regime $|t|\ll 1$, we construct an asymptotic profile $z^*$ so that $z(t,x)$ exhibits sufficient degeneracy at $(0,0)$ in space-time. However, due to the nonlocal nature of the operator $D = |\nabla|$, a simple pointwise degeneracy of $z^*$ at $x=0$ is insufficient to ensure decoupling, due to long-range interactions in the convolution kernel.

To overcome this difficulty, we construct $z^*$ on the Fourier side. For a fixed Schwartz function $f \in \mathcal{S}(\mathbb{R})\setminus\{0\}$, we define
\begin{equation}\label{proof strategy : asymptotic profile ansatz}
    \widehat{z_f^*}(\xi) \coloneqq \widehat{f}(\xi)(d_1\xi+\cdots + d_N\xi^N),
\end{equation}
where $\|f\|_{H^{A_0}} < \max\{\alpha^*,E_0\}$ for large universal constant $A_0 \gg 1$.

Our goal is to choose the coefficients $\{d_j\}$ so that for sufficiently large $m \in \mathbb{N}$, the solution $z(t,x)$ to the half-wave equation with initial data $z(0) = z_f^*$ satisfies the high-order degeneracy condition
\begin{equation}\label{proof strategy : degeneracy}
    \left((\partial_t^{k_1}\nabla^{k_2}z)(0,0),\partial_t^{k_3}\nabla^{k_4}(Dz)(0,0)\right) = (0,0)
\end{equation}
for all $k_1+k_2 \leq m$ and $k_3+k_4 \leq m-1$.

By the local well-posedness theory in $H^{\frac{1}{2}+}$ (Lemma~\ref{Cauchy theory}) and the explicit Fourier-side ansatz \eqref{proof strategy : asymptotic profile ansatz}, the degeneracy condition \eqref{proof strategy : degeneracy} translates into a system of algebraic equations for the coefficients $\{d_j\}_{j=1}^N$. These equations are odd polynomials in $\{d_j\}$, arising from the cubic nonlinearity in the half-wave equation.

Taking $N = N(m) > (m+1)^4$, the system has a nontrivial solution on the unit sphere $\mathbb{S}^{2N-1} \subset \mathbb{C}^N$, which is guaranteed by the Borsuk–Ulam theorem. Hence, we obtain coefficients $\{d_j\}$ such that \eqref{proof strategy : degeneracy} holds. Furthermore, the $H^{\frac{1}{2}+}$ norm of the radiation remains uniformly controlled:
\[
    \|z(t)\|_{H^{\frac{1}{2}+}} < \alpha^*, \quad \text{for all } |t| \ll 1.
\]
The construction of the asymptotic profile $z_f^*$ via a \emph{Fourier-side degeneracy mechanism}, together with the use of topological arguments (Borsuk–Ulam) to guarantee the existence of coefficients satisfying high-order cancellation, is a key novelty of our approach. In particular, this method circumvents the lack of pseudo-conformal symmetry and provides a robust framework for decoupling radiation in the nonlocal setting of the half-wave equation.

\textbf{2. Blow-up profile with unstable direction}
With the prescribed asymptotic profile $z_f^*$, we simultaneously prove Theorem~\ref{main result 2} and Theorem~\ref{main result 1} by constructing a family of solutions $\{u_\eta(t)\}_{\eta \in (0,\eta^*)}$ to \eqref{half-wave} on $[t_1, -t_1]$, uniformly in $\eta$. This approach is referred to as \emph{backward construction}, and the desired blow-up solution $u(t)$ is then obtained by taking the limit $\eta \to 0^+$.

We consider initial data of the form
\begin{equation}\label{proof strategy : initial data of u_eta}
    u_{\eta}(0,x) = \frac{1}{\lambda_{\eta}^{1/2}(0)}Q_{\mathcal{P}(b_\eta(0),\nu_{\eta}(0),\eta)}\left(\frac{x - \nut{x}_\eta(0)}{\lambda_\eta(0)}\right)e^{i\gamma_\eta(0)} + z_f^*(x),
\end{equation}
where $(\lambda_\eta, b_\eta, \nu_\eta, \nut{x}_\eta, \gamma_\eta)(0) \sim (\eta, 0, \eta, 0, \eta^{-1/2})$ are chosen in Lemma~\ref{approximated dynamics}, and $Q_{\mathcal{P}(b,\nu,\eta)}$ is a modified blow-up profile close to $Q$, with slowly modulated parameters $(b,\nu)$ and a small perturbation $\eta$.

Unlike the minimal blow-up solution in \cite{KLR2013ARMAhalfwave}, our setting corresponds to an above-threshold regime:
\[
    \|u_\eta(t)\|_{L^2} > \|Q\|_{L^2}, \quad \text{for small } \eta > 0.
\]
However, due to the degeneracy of $z_f^*$ at the origin and the resulting weak interaction between the singular core and radiation, the leading-order behavior is still governed by the blow-up profile $Q_{(b,\nu)}$ constructed in \cite[Proposition 4.1]{KLR2013ARMAhalfwave}. Furthermore, from the structure of the generalized kernel, we find that the generalized kernel corresponding to the phase rotation $\rho_1$ yields a mass decreasing direction:
\begin{equation}\label{proof strategy : instability direction}
    \left.\frac{d}{d\eta}\right|_{\eta=0} \|Q_{(b,\nu)} + \eta \rho_1\|_{L^2}^2 = 2(Q_{(b,\nu)}, \rho_1)_r \approx 2(Q, \rho_1)_r.
\end{equation}
From the kernel relations \eqref{generalized kernel relations}, we have $(Q, \rho_1)_r < 0$, hence the direction of $\rho_1$ corresponds to a \emph{mass-deficient} direction for $\eta > 0$.

Combining with the sharp lower bound on the energy functional \eqref{sharp energy lower bound inequality}, we anticipate that the $\eta \rho_1$ corrector prevents blow-up at $t=0$, consistent with
\[
    \|Q_{(b,\nu)} + \eta \rho_1\|_{L^2} < \|Q\|_{L^2},
\]
which further confirms that $\rho_1$ is an \emph{unstable direction}. For $\eta=0$, the profile coincides with the minimal mass blow-up solution of \cite{KLR2013ARMAhalfwave}, whereas for $\eta > 0$, the solution is in the subcritical regime and expected to global in time regime.

Guided by this insight, we construct the singular core profile as a high-order expansion.
\[
    Q_{\mathcal{P}} \coloneqq Q + P = Q + \sum_{(p,q,r)} (ib)^p(i\nu)^q \eta^r R_{p,q,r},
\]
with the first few coefficients given by
\[
    R_{1,0,0} \coloneqq S_1, \quad R_{0,1,0} \coloneqq G_1, \quad R_{0,0,1} \coloneqq \rho_1.
\]
After renormalization into the $(s,y)$-variables, the profile $Q_{\mathcal{P}}$ satisfies
\begin{equation}\label{proof strategy : singular part ansatz}
\begin{aligned}
   (\partial_s & - \frac{\lambda_s}{\lambda}\Lambda - \frac{\nut{x}_s}{\lambda}\nabla + \tilde{\gamma}_s i) Q_{\mathcal{P}} + i\left(D Q_{\mathcal{P}} + Q_{\mathcal{P}} - |Q_{\mathcal{P}}|^2 Q_{\mathcal{P}}\right) \\
   &= \overrightarrow{\text{Mod}}(t) \cdot (-i\Lambda Q_{\mathcal{P}}, -i\nabla Q_{\mathcal{P}}, -Q_{\mathcal{P}}, i\partial_b Q_{\mathcal{P}}, i\partial_\nu Q_{\mathcal{P}}) + \Psi_{\mathcal{P}} \approx 0,
\end{aligned}
\end{equation}
where $\Psi_{\mathcal{P}}$ is the profile error and $\overrightarrow{\text{Mod}}(t)$ is the modulation vector defined in \eqref{proof strategy : Mod(t) = 0 choice}.

To make $\Psi_{\mathcal{P}}$ small, we construct higher-order terms $R_{p,q,r}$ by inverting the linearized operator $L_Q$. This step is technically delicate: choosing appropriate constants $c_1$, $c_2$, $c_3 \in \mathbb{R}$, and $c_4 < 0$ in the modulation equations to minimize $\Psi_{\mathcal{P}}$ to a desired order involves careful analysis of cancellation. Although this is not a new phenomenon, it is a technically demanding part of our construction, particularly due to the presence of the additional $\eta$-direction. See Proposition~\ref{Singular profile} with its proof and remark~\ref{where and why we choose c_j} for more details.

\textbf{3. Formal law of modulation parameters and modulation analysis.}
We decompose the solution $u_\eta(t)$ with the initial data \eqref{proof strategy : initial data of u_eta} into the form
\begin{equation}\label{proof strategy : decompose of the solution u_eta}
    u_\eta(t,x) = \frac{1}{\lambda^{1/2}(t)}\left[Q_{\mathcal{P}(b,\nu,\eta)} + \epsilon\right]\left(t, \frac{x - \nut{x}(t)}{\lambda(t)}\right)e^{i\gamma(t)} + z(t,x),
\end{equation}
 investigate the dynamics of the perturbation $\epsilon$. After renormalization into the $(s,y)$-variables, the equation for $\epsilon$ becomes
\begin{equation}\label{proof strategy : epsilon flow}
    \partial_s \epsilon + iL_Q[\epsilon] \approx \overrightarrow{\text{Mod}}(t) \cdot \overrightarrow{V} + i\mathcal{E},
\end{equation}
where $i\mathcal{E}$ is a small error term that contains interactions between the singular core and the radiation part, as well as higher order terms that arise from cubic non-linearity $|u|^2 u$ in \eqref{half-wave}.

Our goal is to control $\epsilon$ in $H^{1/2+}$, which requires careful selection of modulation parameters $(\lambda, \nut{x}, \gamma, b, \nu)(t)$. We define
\begin{equation}\label{proof strategy : Mod(t) = 0 choice}
    \overrightarrow{\text{Mod}}(t) \coloneqq 
        \begin{pmatrix}
            \frac{\lambda_s}{\lambda} + b \\
            \frac{\nut{x}_s}{\lambda} - \nu - c_2 b^2 \nu \\
            \gamma_s - 1 \\
            b_s + \left(\frac{1}{2} + c_3 \eta \right)b^2 + c_1 b^4 + c_4 \nu^2 + \eta \\
            \nu_s + b\nu
        \end{pmatrix}
    \approx 0.
\end{equation}

From Lemma~\ref{approximated dynamics}, the exact solution $(\lambda_\eta, \nut{x}_\eta, \gamma_\eta, b_\eta, \nu_\eta)(t)$ to solve $\overrightarrow{\text{Mod}}(t) = 0$ satisfies the approximate dynamics
\[
    b_\eta^2 + \eta \lesssim \lambda_\eta, \quad b_\eta \sim -t, \quad \nut{x}_\eta \sim t^3 + \eta t, \quad \lambda_\eta \sim \nu_\eta.
\]

To enforce the modulation law \eqref{proof strategy : Mod(t) = 0 choice}, we impose five orthogonality conditions on $\epsilon$:
\[
    (\epsilon, i\Lambda Q_{\mathcal{P}})_r = (\epsilon, i\partial_b Q_{\mathcal{P}})_r = (\epsilon, i\partial_\eta Q_{\mathcal{P}})_r = (\epsilon, i\nabla Q_{\mathcal{P}})_r = (\epsilon, i\partial_\nu Q_{\mathcal{P}})_r = 0.
\]
Differentiating these orthogonality conditions in time and applying \eqref{proof strategy : epsilon flow}, we obtain
\[
    |\overrightarrow{\text{Mod}}(t)| \lesssim |(\epsilon, Q_{\mathcal{P}})_r| + \alpha^* \lambda^{2+},
\]
where the smallness of the error term $\mathcal{E}$ plays a key role. More precisely, $\mathcal{E}$ is bounded by $\alpha^* \lambda^{2+}$, as shown in Lemma~\ref{interaction error order}.

To control the key quantity $(\epsilon, Q_{\mathcal{P}})_r$, we differentiate in $s$:
\begin{equation}
    \partial_s (\epsilon, Q_{\mathcal{P}})_r \approx (\epsilon, L_Q[iQ])_r + |(\partial_b Q_{\mathcal{P}}, Q_{\mathcal{P}})_r| \cdot |\overrightarrow{\text{Mod}}(t)| + (i\mathcal{E}, Q)_r.
\end{equation}
From the identity $\|R_{1,0,0}\|_{L^2}^2 = 2(Q, R_{2,0,0})_r$ and the generalized kernel relations \eqref{generalized kernel relations}, we obtain the bound
\begin{equation*}
    (\partial_b Q_{\mathcal{P}}, Q_{\mathcal{P}})_r = \mathcal{O}(b^2 + \nu + \eta),
\end{equation*}
which yields the estimate
\[
    |(\epsilon, Q)_r| \lesssim \lambda^{1/2-} \|\epsilon\|_{L^2}.
\]
This additional smallness of $(\epsilon, Q)_r$ is one of the main motivations to impose the above orthogonality conditions. It allows us to close the bootstrap argument and control $\epsilon$ uniformly in $H^{1/2+}$ norm over the backward time interval $[t_1, 0]$.

\textbf{4. Bootstrap argument and energy bounds.}
To close the bootstrap argument, we seek to control the $H^{1/2+}$-norm of $\epsilon$ uniformly in $\eta$, which is essential for applying a soft compactness argument at the end. 

For the $H^{1/2}$-control, we employ a linearized energy functional augmented by a virial-type correction. For sufficiently large $A \gg 1$, we define
\begin{equation}\label{proof strategy : energy functional J_A}
    \begin{aligned}
        \mathcal{J}_A(u) \coloneqq& \frac{1}{2} \int |D^{1/2}\epsilon^{\sharp}|^2 + \frac{1}{2\lambda} \int |\epsilon^{\sharp}|^2 
        - \int \left[F(W+\epsilon^{\sharp}) - F(W) - F'(W)\cdot \epsilon^{\sharp}\right] \\
        &\quad + \frac{b}{2} \Im\left( \int A\phi'\left(\frac{x - \nut{x}}{A\lambda}\right) \nabla \epsilon^{\sharp} \cdot \overline{\epsilon^{\sharp}} \right),
    \end{aligned}
\end{equation}
where $\phi:\mathbb{R} \rightarrow \mathbb{R}$ is a smooth weight function satisfying $\phi''(y) \geq 0$ and
\[
    \phi'(y) \coloneqq 
    \begin{cases}
        y, & \text{for } 0 \leq y \leq 1, \\
        3 - e^{-y}, & \text{for } y \geq 2.
    \end{cases}
\]
Here, we define the nonlinear energy components as
\[
    F(u) = \frac{1}{4}|u|^4, \quad f(u) = u|u|^2, \quad \text{and} \quad F'(u) \cdot h = \Re\left(f(u)\cdot \overline{h}\right),
\]
so that $F'(u) \cdot h$ captures the linear term in the Taylor expansion of $F(u + h) - F(u)$.

We then show that there exists $t_1 = t_1(\alpha^*) < 0$ such that for all $t \in [t_1, 0]$, uniformly in $\eta$, we have
\[
    \mathcal{J}_A(u) \gtrsim \frac{1}{\lambda} \|\epsilon\|_{H^{1/2}}^2, \quad \frac{d}{dt} \mathcal{J}_A(u) \geq 0 + o\left( \frac{1}{\lambda} \|\epsilon\|_{H^{1/2}}^2 \right).
\]
This energy functional originates from the conservation law and virial identity applied to the leading-order dynamics of $\epsilon$, particularly the scaling contribution $ib \Lambda \epsilon$ arising in the renormalized equation (see \eqref{energy bound: flow of epsilon}). The idea of this correction term was first introduced by Rapha\"{e}l–Szeftel in \cite{RS2011JAMSinhomo} to compensate for the noncoercivity of the linearized energy near the singular profile.

By applying the fundamental theorem of calculus in time, and using the fact that $\epsilon(0) = 0$, we obtain an upper bound for $\|\epsilon(t)\|_{H^{1/2}}$ over $[t_1, 0]$.

However, the half-wave equation lacks a local smoothing effect, and small data do not scatter. Therefore, traditional Strichartz estimates are not available to control the $H^{1/2+}$-norm. Instead, we work on the Fourier side using fractional calculus techniques as in \cite[Appendix E]{KLR2013ARMAhalfwave}.

Specifically, in the Fourier domain, we consider the time derivative of the high-regularity norm:
\[
    \frac{1}{2} \frac{d}{dt} \|\epsilon^{\sharp}\|_{\dot{H}^{1/2+\delta}}^2 
    = -\Im\left( D^{1/2+\delta}\left(-D\epsilon^{\sharp} + |\epsilon^{\sharp}|^2 \epsilon^{\sharp} + \text{(higher-order terms)}\right),\, D^{1/2+\delta}\epsilon^{\sharp} \right).
\]
The self-adjointness of the fractional Laplacian $D = |\nabla|$ implies that
\[
    \Im\left(D^{1/2+\delta}(D\epsilon^{\sharp}), D^{1/2+\delta} \epsilon^{\sharp} \right) = 0,
\]
and the remaining terms can be controlled using fractional Leibniz rules and Lemma~\ref{log loss of the L-infty estiamte}, provided that we have the bootstrap assumption $\|\epsilon^{\sharp}\|_{H^{1/2+\delta}} \lesssim 1$.

Combining the energy-virial method for $H^{1/2}$ and the Fourier-based estimate for $H^{1/2+}$, we close the bootstrap argument and control $\epsilon$ uniformly in both norms in $\eta$. Finally, using a soft compactness argument, we pass to the limit $\eta \to 0^+$ and obtain the existence of a blow-up solution with a mass slightly above the threshold, together with its instability with respect to the perturbation direction $\rho_1$.

\vspace{5bp}
\noindent\textbf{Acknowledgements.} We appreciate Junseok Kim for notifying a topological lemma, Borsuk-Ulam Theorem. The authors are partially supported by the National Research Foundation of Korea, RS-2019-NR040050 and NRF-2022R1A2C1091499.

\section{Preliminaries}
\noindent\textbf{Notations.}
Throughout the paper, we use the standard notation
$\mathbb{R}$, $\mathbb{C}$, and $\mathbb{N}$ for the sets of real numbers, 
complex numbers, and natural numbers, respectively. 
For a complex number $A = A_1 + iA_2 \in \mathbb{C}$ with $A_1, A_2 \in \mathbb{R}$, we denote
\[
    \Re A \coloneqq A_1, \qquad \Im A \coloneqq A_2.
\]
For quantities $A\in\bbC$ and $B\geq0$, we denote $A \lesssim B$ or $A=\mathcal{O}(B)$ if $|A|\leq CB$ holds for some implicit constant $C$. For $A,B\geq0$, we say $A \sim B$ when $A \lesssim B$ and $B \lesssim A$. Similarly, for $A\geq 0$ and $B\in \bbC$, we write $A\gtrsim B$ if $B\lesssim A$. If $C$ depends on some parameters $m$, then we write $\lesssim_m,\sim_m$, and $\gtrsim_m$ to indicate this dependence. We also use the symbol $\sim$ schematically to denote algebraic expansions up to harmless constants; for example,
\[
    |A+B|^2(A+B)\sim A^3+A^2B+AB^2+B^3.
\]
We also write $\langle \xi \rangle = (1+|\xi|^2)^{\frac{1}{2}}$.

We denote $\nabla$ by an ordinary spatial derivative. If $f : \mathbb{R} \rightarrow \mathbb{C}$, we define
\begin{equation*}
    \nabla^m f \coloneqq \frac{d^mf}{dx^m},
\end{equation*}
where $m \in \mathbb{N}$. We also denote $\widehat{f}$ and $\check{f}$ by the Fourier transform and the inverse Fourier transform of $f$, respectively, and define the nonlocal spatial derivative by
\begin{equation*}
    \widehat{D^sf}(\xi) \coloneqq |\xi|^s\widehat{f}(\xi) \quad \text{for} \quad s \geq 0,
\end{equation*}
We use the following notations for inner products:
\[
    (f,g)_r := \Re\!\int_{\R} f\,\overline{g}, \qquad
    (f,g) := \int_{\R} f\,\overline{g}, \qquad
    A\cdot B := \sum_{j=1}^n a_j b_j,
\]
for $f,g\in L^2(\R)$ and $A=(a_1,\dots,a_n),\,B=(b_1,\dots,b_n)\in\C^n$.
The scaling operator is defined by
\[
    \Lambda f := \tfrac12 f + x\cdot\nabla f.
\]
Unless otherwise stated, we abbreviate
\[
    \int f := \int_{\R} f, \qquad
    \int u(t) := \int_{\R} u(t,x)\,dx \quad (u:I\times\R\to\C).
\]
Since in most places, the singular part corresponding to the blow-up component and the asymptotic part live on different scales, we introduce the following notation to focus on the singular part. Let $s$ and $y$ be variables defined by
\begin{equation}
    \frac{ds}{dt} \coloneqq \frac{1}{\lambda}, 
    \qquad 
    y \coloneqq \frac{x - \nut{x}}{\lambda}.
\end{equation}
For functions $f$ and $g$, we define $f^{\sharp}$ in the $(t,x)$-scale and $g^{\flat}$ in the $(s,y)$-scale by
\begin{align}
    f^{\sharp}(t,x) &\coloneqq \frac{1}{\lambda^{1/2}} 
    f\!\left(s, \frac{x - \nut{x}}{\lambda}\right) 
    e^{i\gamma}\bigg|_{s = s(t)}, \\
    g^{\flat}(s,y) &\coloneqq \lambda^{1/2} 
    g\!\left(t, \lambda y + \nut{x}\right) 
    e^{-i\gamma}\bigg|_{t = t(s)}.
\end{align}
Here, $(\lambda, \nut{x}, \gamma)(t)$ are the dynamical parameters. 
Note that
\begin{equation}
    f = [f^{\sharp}]^\flat,
    \qquad
    g = [g^\flat]^\sharp.
\end{equation}

\vspace{5bp}
\noindent\textbf{Function spaces.}
For $1\le p\le \infty$, we define the usual $L^p$-norms by
\[
    \norm{f}_{L^p} \coloneqq \left(\int_{\mathbb{R}} |f(x)|^p\,dx\right)^{1/p}.
\]
For $s\ge0$, the homogeneous and inhomogeneous fractional Sobolev norms are defined by
\begin{align*}
    \norm{f}_{\dot{H}^s} &\coloneqq \norm{D^s f}_{L^2},\\
    \norm{f}_{H^s} &\coloneqq \norm{\langle\nabla\rangle^s f}_{L^2},
\end{align*}
where $D^s$ and $\langle\nabla\rangle^s$ are the Fourier multiplier operators 
with symbols $|\xi|^s$ and $\langle\xi\rangle^s$, respectively.  
We write $H^s \equiv H^s(\mathbb{R})$ and $\dot{H}^s \equiv \dot{H}^s(\mathbb{R})$ for simplicity.
For an interval $I\subset\mathbb{R}$ and $f\in H^s(I)$, we set
\[
    \norm{f}_{H^s(I)} \coloneqq \norm{f\,1_I}_{H^s(\mathbb{R})},
\]
where $1_I$ denotes the characteristic function of $I$.  
In particular,
\[
    \norm{f}_{H^s} \sim_s \norm{f}_{L^2} + \norm{f}_{\dot{H}^s}.
\]
We will use a refined Sobolev embedding estimate in Section~\ref{closing the bootstrap arguments}.
\begin{lemma}[refined Sobolev embedding]\label{log loss of the L-infty estiamte}
Let $s>1/2$ and $u\in H^s(\mathbb{R})$. Then
\[
    \norm{u}_{L^{\infty}} \le C_s\norm{u}_{H^{1/2}}
    \Bigl[\log\!\left(2+\frac{\norm{u}_{H^s}}{\norm{u}_{H^{1/2}}}\right)\Bigr]^{1/2},
\]
where $C_s>0$ depends only on $s$.
\end{lemma}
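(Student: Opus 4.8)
The statement to prove is the refined (logarithmic) Sobolev embedding $\norm{u}_{L^{\infty}} \le C_s\norm{u}_{H^{1/2}}[\log(2+\norm{u}_{H^s}/\norm{u}_{H^{1/2}})]^{1/2}$ for $s>1/2$. The plan is the classical Fourier-side frequency-splitting argument (a Brezis--Gallouet type inequality), with the cutoff radius optimized so the logarithm comes out with the right argument.

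First I would dispose of the trivial case $u\equiv 0$, so that $\norm{u}_{H^{1/2}}>0$, and set $X\coloneqq \norm{u}_{H^s}/\norm{u}_{H^{1/2}}$, noting $X\ge 1$ since $s>1/2$ forces $\norm{u}_{H^{1/2}}\le\norm{u}_{H^s}$. By Fourier inversion $\norm{u}_{L^{\infty}}\lesssim \norm{\widehat u}_{L^1}$, so it suffices to bound $\int_{\mathbb{R}}|\widehat u(\xi)|\,d\xi$. I split this integral at a radius $R>1$ to be chosen. On $\{|\xi|\le R\}$, Cauchy--Schwarz against the weight $\langle\xi\rangle$ gives
\[
\int_{|\xi|\le R}|\widehat u(\xi)|\,d\xi\le\Bigl(\int_{|\xi|\le R}\frac{d\xi}{\langle\xi\rangle}\Bigr)^{1/2}\Bigl(\int_{\mathbb{R}}\langle\xi\rangle|\widehat u(\xi)|^2\,d\xi\Bigr)^{1/2}\lesssim [\log(2+R)]^{1/2}\,\norm{u}_{H^{1/2}},
\]
using the elementary computation $\int_{|\xi|\le R}\langle\xi\rangle^{-1}\,d\xi\sim\log(2+R)$. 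On $\{|\xi|>R\}$, Cauchy--Schwarz against $\langle\xi\rangle^{s}$, together with $2s>1$ so that the tail integral converges, gives
\[
\int_{|\xi|>R}|\widehat u(\xi)|\,d\xi\le\Bigl(\int_{|\xi|>R}\frac{d\xi}{\langle\xi\rangle^{2s}}\Bigr)^{1/2}\Bigl(\int_{\mathbb{R}}\langle\xi\rangle^{2s}|\widehat u(\xi)|^2\,d\xi\Bigr)^{1/2}\lesssim_s R^{\frac12-s}\,\norm{u}_{H^s},
\]
so that $\norm{u}_{L^{\infty}}\lesssim_s [\log(2+R)]^{1/2}\norm{u}_{H^{1/2}}+R^{\frac12-s}\norm{u}_{H^s}$.

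The one step that requires care is the choice of $R$: I would take $R\coloneqq 2+X^{1/(s-1/2)}$. Then $R^{s-1/2}\ge X$ makes the second term $\le X^{-1}\norm{u}_{H^s}=\norm{u}_{H^{1/2}}$, while $X\ge 1$ yields $2+R\le 8\,X^{1/(s-1/2)}$ and hence $\log(2+R)\le C_s\log(2+X)$. Substituting back, $\norm{u}_{L^{\infty}}\le C_s[\log(2+X)]^{1/2}\norm{u}_{H^{1/2}}+C_s\norm{u}_{H^{1/2}}\le C_s[\log(2+X)]^{1/2}\norm{u}_{H^{1/2}}$, where the last inequality absorbs the lower-order term using $[\log(2+X)]^{1/2}\ge(\log 3)^{1/2}>0$; this is exactly the claim. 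I do not expect a genuine obstacle here — the argument is standard — so the only thing to watch is the elementary bookkeeping that tracks the dependence of all constants on $s$ alone and produces the logarithm with precisely the argument $2+\norm{u}_{H^s}/\norm{u}_{H^{1/2}}$.
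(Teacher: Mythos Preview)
Your argument is correct; it is the standard Brezis--Gallouet--Wainger frequency-splitting proof, and the bookkeeping with the choice $R=2+X^{1/(s-1/2)}$ is fine. The paper does not give its own proof of this lemma but simply cites \cite{KLR2013ARMAhalfwave}, Lemma~D.1, whose proof is exactly the argument you wrote.
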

See \cite{KLR2013ARMAhalfwave}, Lemma D.1 for a proof. 

The standard Sobolev embedding $H^{1/2}(\mathbb{R})\hookrightarrow L^p(\mathbb{R})$ for $p \in (0,\infty)$ 
will also be used implicitly, so we do not state it separately.

In addition, we will use a commutator estimate in the proof of Theorem~\ref{main result 1}.
\begin{lemma}\label{Lemma:commutator estimate for localizing mass}
    Let $R>0$ be a constant and $\chi : \bbR \rightarrow \bbC$ be a smooth function defined by
    \[
        \chi(x) \coloneqq   
        \begin{cases}
            0, \quad \text{for } |x|\leq 1&\\
            1, \quad \text{for } |x|\geq 2.
        \end{cases}
    \]
    Let $\chi_R(x) \coloneqq \chi(x/R)$. Then, we have following commutator estimate:
    \begin{equation}\label{commutator estimate for localizing mass}
    \norm{[\chi_R,iD]}_{L^2\rightarrow L^2} \lesssim \norm{\nabla \chi_R}_{L^{\infty}}.
    \end{equation}
\end{lemma}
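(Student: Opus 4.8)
\textbf{Proof proposal for Lemma~\ref{Lemma:commutator estimate for localizing mass}.}

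The plan is to exploit the explicit Fourier-multiplier structure of $D=|\nabla|$ and reduce the commutator bound to a pointwise estimate on the kernel of $D$ restricted to off-diagonal behavior. Concretely, I would write, for $f\in L^2$,
\[
    [\chi_R,iD]f(x) = i\int_{\R} K(x-y)\bigl(\chi_R(x)-\chi_R(y)\bigr)f(y)\,dy,
\]
where $K$ is the (distributional) convolution kernel of $D$, so that $\widehat{Df}(\xi)=|\xi|\widehat f(\xi)$; in one dimension $K(x) = c\,\mathrm{p.v.}\,|x|^{-2}$ as a tempered distribution up to the usual regularization. The factor $\chi_R(x)-\chi_R(y)$ vanishes on the diagonal and supplies a gain of one derivative: $|\chi_R(x)-\chi_R(y)|\le \norm{\nabla\chi_R}_{L^\infty}|x-y|$, which exactly cancels the extra singularity in $K$ and turns the Calder\'on--Zygmund--type kernel into one of Hilbert-transform type (homogeneous of degree $-1$), hence $L^2$-bounded with the stated constant.

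The cleaner route, which I would actually carry out, is to observe that $D=H\nabla$ where $H$ is the Hilbert transform (since $|\xi| = (-i\,\mathrm{sgn}\,\xi)(i\xi)$). Then
\[
    [\chi_R,iD] = [\chi_R,iH\nabla] = iH[\chi_R,\nabla] + i[\chi_R,H]\nabla
    = -iH(\nabla\chi_R) - i[\chi_R,H](\nabla\,\cdot).
\]
Wait---the second piece still carries a $\nabla$, so I instead expand as $[\chi_R,iH\nabla]f = iH(\chi_R\nabla f)-i\chi_R H(\nabla f) + \text{nothing}$, i.e. move the $\nabla$ onto test functions by $\nabla(\chi_R f) = (\nabla\chi_R)f + \chi_R\nabla f$, giving
\[
    [\chi_R, iD]f = iH\bigl((\nabla\chi_R)f\bigr) + iH(\chi_R\nabla f) - i\chi_R H(\nabla f)
    = iH\bigl((\nabla\chi_R)f\bigr) + i[H,\chi_R](\nabla f).
\]
Now the first term is bounded in $L^2$ by $\norm{H}_{L^2\to L^2}\norm{\nabla\chi_R}_{L^\infty}\norm{f}_{L^2}$, which is acceptable. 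For the second term, the commutator $[H,\chi_R]$ has convolution kernel $\tfrac1\pi\,\mathrm{p.v.}\,\tfrac{\chi_R(x)-\chi_R(y)}{x-y}$, which is bounded pointwise by $\tfrac1\pi\norm{\nabla\chi_R}_{L^\infty}$ and supported where $|x-y|\lesssim R$; thus $[H,\chi_R]$ maps $H^{-1}\to L^2$, or rather one integrates by parts once to move the derivative off $f$: $[H,\chi_R](\nabla f) = -\partial_x\bigl([H,\chi_R]f\bigr) + [H,\nabla\chi_R]f$, and $[H,\nabla\chi_R]$ is bounded on $L^2$ with norm $\lesssim \norm{\nabla^2\chi_R}_{L^\infty}\cdot R \lesssim \norm{\nabla\chi_R}_{L^\infty}$ by scaling, while $\partial_x([H,\chi_R]f)$ needs the kernel estimate $|\partial_x k_R(x,y)|\lesssim \norm{\nabla\chi_R}_{L^\infty}|x-y|^{-1}$, restoring an $L^2$-bounded Hilbert-type kernel.

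The main obstacle is this bookkeeping of where the derivative sits: naively $[H,\chi_R]\nabla$ looks like it loses a derivative, and the gain must be extracted entirely from the Lipschitz bound $|\chi_R(x)-\chi_R(y)|\le\norm{\nabla\chi_R}_{L^\infty}|x-y|$ combined with the scaling-invariance $\norm{\nabla^j\chi_R}_{L^\infty} = R^{-j}\norm{\nabla^j\chi}_{L^\infty} \sim_j R^{-1}\norm{\nabla\chi_R}_{L^\infty}\cdot R^{1-j}$, so all higher derivatives are controlled by $\norm{\nabla\chi_R}_{L^\infty}$ up to powers of $R$ that are absorbed by the kernel support $\{|x-y|\lesssim R\}$. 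A clean way to package all of this is to invoke the Calder\'on commutator estimate (or the Coifman--Meyer commutator theorem): for a Lipschitz function $a$, the operator with kernel $\tfrac{a(x)-a(y)}{(x-y)^2}$ is $L^2$-bounded with norm $\lesssim\norm{a'}_{L^\infty}$; applying this with $a=\chi_R$ and noting that the kernel of $[\chi_R,D]$ is precisely $c\,\tfrac{\chi_R(x)-\chi_R(y)}{(x-y)^2}$ (after the regularization of $|x-y|^{-2}$) yields \eqref{commutator estimate for localizing mass} directly. I would present the short self-contained argument via $D=H\nabla$ and the Calder\'on commutator, since the paper already works comfortably on the Fourier side.
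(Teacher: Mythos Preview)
The paper does not give its own proof of this lemma; it simply cites Stein's book. So there is no in-paper argument to compare against.

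Your final approach---recognizing the kernel of $[\chi_R,D]$ as $c\,\mathrm{p.v.}\,\frac{\chi_R(x)-\chi_R(y)}{(x-y)^2}$ and invoking Calder\'on's first commutator theorem to get the $L^2$ bound $\lesssim\norm{\chi_R'}_{L^\infty}$---is correct and is exactly the standard argument one finds in the cited reference. That part alone is a complete proof.

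The intermediate $D=H\nabla$ computation, however, does not close as written. When you reach $[H,\chi_R](\nabla f)$ and try to integrate by parts, the term $\partial_x([H,\chi_R]f)$ is not obviously $L^2$-bounded by $\norm{\nabla\chi_R}_{L^\infty}\norm{f}_{L^2}$: its kernel is $\partial_x\bigl(\tfrac{\chi_R(x)-\chi_R(y)}{x-y}\bigr) = \tfrac{\chi_R'(x)}{x-y} - \tfrac{\chi_R(x)-\chi_R(y)}{(x-y)^2}$, and the second piece is again the Calder\'on commutator you are trying to avoid. So the detour is circular, and you rightly abandon it. If you present a proof, just go straight to the Calder\'on commutator formulation and cite Calder\'on (or Coifman--Meyer, or Stein) for the $L^2$ bound; the $H\nabla$ rewriting buys nothing here.
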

See \cite{Steinharmonic-book} for a proof.

\vspace{5bp}
\noindent\textbf{Cauchy theory.}
We recall the local well-posedness of the half-wave equation \eqref{half-wave}. See \cite[Theorem D.1]{KLR2013ARMAhalfwave} for the proof.

\begin{theorem}[\cite{KLR2013ARMAhalfwave}, Theorem D.1]\label{Cauchy theory}
Let $s \ge 1/2$. For every initial datum $u_0 \in H^s(\mathbb{R})$, 
there exists a unique solution $u \in \mathcal{C}([t_0,T);H^s(\mathbb{R}))$ 
of \eqref{half-wave}, where $t_0 < T(u_0) \le \infty$ is its maximal time of existence. Moreover:
\begin{enumerate}
    \item (\textbf{Blow-up alternative in $H^{1/2}$}) Either $T(u_0)=+\infty$, or if $T(u_0)<\infty$, 
    then $\norm{u(t)}_{H^{1/2}} \to \infty$ as $t\to T-$.
    \item (\textbf{Continuous dependence}) If $s>1/2$, the flow map $u_0\mapsto u(t)$ is Lipschitz on the bounded subsets of $H^s(\mathbb{R})$.
    \item (\textbf{Global existence for subthreshold solutions}) If $\norm{u_0}_{L^2}<\norm{Q}_{L^2}$, 
    then $T(u_0)=+\infty$.
\end{enumerate}
\end{theorem}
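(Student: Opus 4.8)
The plan is to run a contraction-mapping argument on the Duhamel formulation, exploiting that the half-wave propagator carries no smoothing but is nevertheless an isometry on every Sobolev space. Since the Fourier multiplier $e^{-it|\xi|}$ has modulus one, $e^{-itD}$ is unitary on $H^s(\mathbb{R})$ for all $s$, and a solution of \eqref{half-wave} is a fixed point of
\[
\Phi(u)(t) = e^{-i(t-t_0)D}u_0 + i\int_{t_0}^t e^{-i(t-\tau)D}\,(|u|^2u)(\tau)\,d\tau .
\]
For $s>1/2$ the space $H^s(\mathbb{R})$ embeds into $L^\infty$ and is a Banach algebra, so $\||u|^2u\|_{H^s}\lesssim\|u\|_{H^s}^3$ and the cubic map is locally Lipschitz on $H^s$, with $\||u|^2u-|v|^2v\|_{H^s}\lesssim(\|u\|_{H^s}^2+\|v\|_{H^s}^2)\|u-v\|_{H^s}$. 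Hence on the ball $\{\|u\|_{C([t_0,t_0+\tau];H^s)}\le 2\|u_0\|_{H^s}\}$ the map $\Phi$ is a contraction provided $\tau\lesssim\|u_0\|_{H^s}^{-2}$, which yields a unique local solution; iterating produces the maximal solution on $[t_0,T(u_0))$ together with the dichotomy $T=+\infty$ or $\limsup_{t\to T}\|u(t)\|_{H^s}=\infty$. Assertion~(2) is read off from the same difference estimate by Gronwall on the common existence interval.

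Next I would upgrade the dichotomy to the $H^{1/2}$-blow-up alternative (1) by a persistence-of-regularity argument. Suppose $s>1/2$, $T(u_0)<\infty$, and $M:=\sup_{[t_0,T)}\|u(t)\|_{H^{1/2}}<\infty$ (finiteness of the supremum is equivalent to that of the limsup, by continuity of $t\mapsto\|u(t)\|_{H^{1/2}}$). Differentiating the homogeneous norm along the flow and using the self-adjointness of $D$ to annihilate the linear contribution, a Kato–Ponce fractional Leibniz estimate gives $\frac{d}{dt}\|u(t)\|_{\dot H^s}^2\lesssim\|u(t)\|_{L^\infty}^2\|u(t)\|_{\dot H^s}^2$. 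The point is to feed in Lemma~\ref{log loss of the L-infty estiamte}, namely $\|u\|_{L^\infty}^2\lesssim M^2\log(2+\|u\|_{H^s}/M)$, together with conservation of mass $\|u(t)\|_{L^2}=\|u_0\|_{L^2}$; writing $Y(t)=\|u(t)\|_{\dot H^s}^2$ this becomes an Osgood-type inequality $\dot Y\lesssim M^2\,Y\log(C+Y)$. Since $\int^\infty\frac{dy}{y\log(C+y)}=\infty$, Osgood's lemma forbids $Y$ from reaching $+\infty$ on $[t_0,T)$, so $\|u\|_{C([t_0,T);H^s)}<\infty$ and the solution extends past $T$, contradicting maximality. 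Thus finite-time blow-up must occur in $H^{1/2}$; the $H^{1/2}$-level instance of (1) follows once the endpoint theory is in place.

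For (3), assume $\|u_0\|_{L^2}<\|Q\|_{L^2}$. Conservation of mass and energy combined with the sharp Gagliardo–Nirenberg inequality recalled above yield the uniform bound $\|u(t)\|_{\dot H^{1/2}}^2\le 2E(u_0)\bigl(1-\|u_0\|_{L^2}^2/\|Q\|_{L^2}^2\bigr)^{-1}$ on the whole existence interval, hence $\sup_t\|u(t)\|_{H^{1/2}}<\infty$; by (1) this forces $T(u_0)=+\infty$.

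The remaining and genuinely delicate point is the endpoint $s=1/2$: $H^{1/2}(\mathbb{R})$ is not an algebra, a generic $H^{1/2}$ datum need not lie in $L^\infty$, and the cubic term only obeys $\||u|^2u\|_{H^{1/2}}\lesssim\|u\|_{H^{1/2}}(\|u\|_{L^\infty}^2+\|u\|_{H^{1/2}}^2)$, carrying the same logarithmic defect as above. Here I would construct $H^{1/2}$ solutions as limits of regularized ones — for instance the flows of the frequency-truncated equation with data $P_{\le n}u_0$, which conserve mass and a truncated energy — controlling $\|u_n(t)\|_{H^{1/2}}$ on a short interval through the logarithmic Sobolev inequality of Lemma~\ref{log loss of the L-infty estiamte}, and then passing to a weak-$\ast$, and then strong, limit; uniqueness in $C_tH^{1/2}$ would be obtained by estimating the difference of two solutions at a regularity at which the cubic nonlinearity is benign, again absorbing the borderline loss via the logarithmic Sobolev bound and a one-dimensional interpolation inequality in place of an unavailable $L^\infty$ control. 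Making these short-time bounds uniform in the regularization parameter — equivalently, closing the logarithmically critical estimate at exactly $H^{1/2}$ — is the crux of the whole Cauchy theory and the main obstacle; the other three assertions are routine adaptations of the energy method to a non-dispersive equation. Full details are in \cite{KLR2013ARMAhalfwave}.
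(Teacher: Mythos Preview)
The paper does not prove this theorem: it is quoted as a preliminary result from \cite{KLR2013ARMAhalfwave}, Theorem~D.1, with only the remark that part~(3) follows from the sharp energy lower bound \eqref{sharp lower bound for energy}. Your sketch is consistent with that approach---in particular your treatment of~(3) via mass and energy conservation plus sharp Gagliardo--Nirenberg is exactly what the paper records---and your identification of the logarithmic Sobolev inequality (Lemma~\ref{log loss of the L-infty estiamte}) as the tool bridging $H^{1/2}$ and $L^\infty$ matches how the reference handles both the persistence-of-regularity step and the endpoint $s=1/2$; since you already defer the full endpoint construction to \cite{KLR2013ARMAhalfwave}, there is nothing to correct.
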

The global existence for subthreshold solutions follows from the sharp energy lower bound
\begin{equation}\label{sharp lower bound for energy}
    E(u) \ge \frac12\norm{u}_{\dot{H}^{1/2}}^2
    \Bigl[1-\frac{\norm{u}_{L^2}^2}{\norm{Q}_{L^2}^2}\Bigr],
\end{equation}
which is from the sharp Gagliardo--Nirenberg inequality.

\vspace{5bp}
\noindent\textbf{Ground state.}
The half-wave equation \eqref{half-wave} admits stationary solutions. of the form $u(t,x)=e^{it}Q(x)$. Here, $Q$ is the ground state solution 
\begin{equation}\label{Q equation}
    DQ+Q-Q^3=0.
\end{equation}
This is a nonlocal elliptic equation. Frank and Lenzmann \cite{FrankLenzmann2013Acta} have shown that there exists a unique positive radial solution $Q$ to \eqref{Q equation}. Note that $E(e^{it}Q)=0$.

\vspace{5bp}
\noindent\textbf{Linearized operator and its coercivity.}
We now linearize \eqref{Q equation} and recall its basic spectral properties.  
For a complex-valued function $v$, define a linearized operator $L_v$ at $v$
\[
    L_v f \coloneqq Df + f - |v|^2 f - 2\,\Re\{\overline{v} f\}\,v.
\]

In particular, for $v=Q$, the soliton of \eqref{half-wave}, 
the linearized operator $L_Q$ satisfies the following generalized kernel relations
(see \cite{KLR2013ARMAhalfwave}):
\begin{equation}\label{generalized kernel relations}
    \begin{aligned}
        L_Q[iQ]&=0, \qquad L_Q[\nabla Q]=0,\\
        L_Q[\Lambda Q]&=-Q, \qquad L_Q[iG_1]=-i\nabla Q,\\
        L_Q[iS_1]&=i\Lambda Q, \qquad L_Q[\rho_1]=S_1.
    \end{aligned}
\end{equation}
where $S_1$, $G_1$, and $\rho_1$ are defined in (4.15), (4.16), and (5.13) of \cite{KLR2013ARMAhalfwave} and 
\begin{equation}\label{positivity of L_Q 0}
    (\Lambda Q,S_1)_r >0,\quad (\nabla Q, G_1)_r>0.
\end{equation}
We also recall the following coercivity property of $L_Q$.

\begin{lemma}[Coercivity of $L_Q$]\label{Coercivity 1}
There exists a universal constant $\kappa_0>0$ such that for all 
$\epsilon \in H^{1/2}(\mathbb{R})$,
\begin{equation}\label{Coercive state with kernel}
    (L_Q[\epsilon],\epsilon)_r 
    \ge \kappa_0 \norm{\epsilon}_{H^{1/2}}^2
    - \frac{1}{\kappa_0}
      \bigl\{(\epsilon,\phi)_r^2
      +(\epsilon,\nabla Q)_r^2
      +(\epsilon,iQ)_r^2\bigr\},
\end{equation}
where $\phi=\phi(x)>0$ is the unique (up to sign) real-valued ground-state eigenfunction 
of $L_Q$, normalized by $\norm{\phi}_{L^2}=1$ and satisfying $L_Q[\phi]=\lambda\phi$ 
for some $\lambda<0$.
\end{lemma}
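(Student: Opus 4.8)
The plan is to diagonalize $L_Q$ over real and imaginary parts and then invoke the known spectral structure of the two resulting scalar operators. Writing $\epsilon=\epsilon_1+i\epsilon_2$ with $\epsilon_1,\epsilon_2$ real and using that $Q$ is real, a direct computation gives $L_Q\epsilon=L_+\epsilon_1+iL_-\epsilon_2$, where $L_+g\coloneqq Dg+g-3Q^2g$ and $L_-h\coloneqq Dh+h-Q^2h$ act on real-valued functions, so that
\[
    (L_Q[\epsilon],\epsilon)_r=(L_+\epsilon_1,\epsilon_1)+(L_-\epsilon_2,\epsilon_2).
\]
Since $\phi$ is real it is in fact the ground-state eigenfunction of $L_+$, and one has $(\epsilon,\phi)_r=(\epsilon_1,\phi)$, $(\epsilon,\nabla Q)_r=(\epsilon_1,\nabla Q)$, $(\epsilon,iQ)_r=(\epsilon_2,Q)$, together with $\norm{\epsilon}_{H^{1/2}}^2\sim\norm{\epsilon_1}_{H^{1/2}}^2+\norm{\epsilon_2}_{H^{1/2}}^2$. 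Hence it suffices to prove, for all real $g,h\in H^{1/2}(\bbR)$ and a universal constant $\kappa>0$, the two scalar coercivity estimates $(L_+g,g)\ge\kappa\norm{g}_{H^{1/2}}^2-\kappa^{-1}\big[(g,\phi)^2+(g,\nabla Q)^2\big]$ and $(L_-h,h)\ge\kappa\norm{h}_{H^{1/2}}^2-\kappa^{-1}(h,Q)^2$.

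For each scalar operator I would argue in two steps --- $L^2$-coercivity modulo the indicated directions, then an upgrade to $H^{1/2}$. For the first step, $L_\pm$ are self-adjoint and bounded below on $L^2(\bbR)$ with form domain $H^{1/2}$, and since $Q$ decays, multiplication by $Q^2$ is a relatively compact perturbation of $D+1$; by Weyl's theorem $\sigma_{\mathrm{ess}}(L_\pm)=[1,\infty)$, so the spectrum of $L_\pm$ below $1$ is discrete. I then use the spectral facts established in~\cite{FrankLenzmann2013Acta} and used in~\cite{KLR2013ARMAhalfwave}: $L_-\ge 0$ with $\ker L_-=\mathrm{span}\{Q\}$ simple; $L_+$ has exactly one negative eigenvalue $\lambda<0$, simple, with strictly positive eigenfunction $\phi$, and $\ker L_+=\mathrm{span}\{\nabla Q\}$ simple. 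Combined with the discreteness of the spectrum below $1$, this yields a spectral gap: $(L_-h,h)\ge\delta\norm{h}_{L^2}^2$ for $h\perp Q$, and $(L_+g,g)\ge\delta\norm{g}_{L^2}^2$ for $g\perp\phi,\nabla Q$, with a universal $\delta>0$. For the upgrade to $H^{1/2}$, I use the identity $(L_\pm w,w)=\norm{w}_{\dot H^{1/2}}^2+\norm{w}_{L^2}^2-k\!\int Q^2w^2$ with $k\in\{1,3\}$ together with $\big|\!\int Q^2w^2\big|\le\norm{Q}_{L^\infty}^2\norm{w}_{L^2}^2$ to get $\norm{w}_{\dot H^{1/2}}^2\le(L_\pm w,w)+C\norm{w}_{L^2}^2$, then absorb the $L^2$-norm through the gap to conclude $(L_\pm w,w)\gtrsim\norm{w}_{H^{1/2}}^2$ on the respective orthogonal complements.

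To finish for $L_+$, I would decompose $g=(g,\phi)\phi+\norm{\nabla Q}_{L^2}^{-2}(g,\nabla Q)\nabla Q+w$, an $L^2$-orthogonal splitting since the three pieces lie in distinct spectral subspaces of $L_+$; then $(L_+g,g)=\lambda(g,\phi)^2+(L_+w,w)$, and combining $(L_+w,w)\gtrsim\norm{w}_{H^{1/2}}^2$ with the elementary bound $\norm{g}_{H^{1/2}}^2\lesssim(g,\phi)^2+(g,\nabla Q)^2+\norm{w}_{H^{1/2}}^2$ (valid because $\phi$ and $\nabla Q$ are fixed functions in $H^{1/2}(\bbR)$) gives the claimed estimate after absorbing the harmless term $\lambda(g,\phi)^2$. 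The argument for $L_-$ is identical and simpler, with $Q\in\ker L_-$ playing the role of both $\phi$ and $\nabla Q$ and no negative eigenvalue present. Adding the two scalar estimates and using $\norm{\epsilon}_{H^{1/2}}^2\sim\norm{\epsilon_1}_{H^{1/2}}^2+\norm{\epsilon_2}_{H^{1/2}}^2$ then yields \eqref{Coercive state with kernel} for a suitable $\kappa_0$. The only genuinely hard ingredient is the spectral nondegeneracy of $L_+$ --- that $\ker L_+$ is exactly $\mathrm{span}\{\nabla Q\}$ and that $L_+$ has precisely one negative eigenvalue --- which is the nonlocal analogue of the classical nondegeneracy of the linearized operator at an NLS ground state; this is not reproved here but quoted from~\cite{FrankLenzmann2013Acta}, exactly as in~\cite{KLR2013ARMAhalfwave}. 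The remaining steps are soft functional analysis; the only mild point to watch is that the decay of $Q$ is algebraic rather than exponential, which nonetheless suffices for the relative compactness of $Q^2(D+1)^{-1}$ (indeed it is Hilbert--Schmidt, since $(|\xi|+1)^{-1}\in L^2(\bbR)$ and $Q^2\in L^2(\bbR)$) and hence for the Weyl argument.
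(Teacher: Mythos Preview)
Your proposal is correct and is the standard route to this coercivity estimate. The paper does not actually give its own proof of this lemma; it is stated as a preliminary fact, and the closely related Lemma~\ref{Coercivity 2} is simply cited to \cite[Lemma~B.4]{KLR2013ARMAhalfwave}. Your argument --- diagonalizing $L_Q$ into $L_\pm$ via real/imaginary parts, invoking the spectral structure of $L_\pm$ from \cite{FrankLenzmann2013Acta}, obtaining $L^2$-coercivity on the orthogonal complement of the bad directions via the spectral gap, and then upgrading to $H^{1/2}$ by absorbing the relatively bounded potential term --- is exactly how such results are established in the references, so there is nothing to compare.
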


Lemma~\ref{Coercivity 1} gives a standard coercivity property of $L_Q$ 
away from the three distinguished directions 
$\{\phi,\nabla Q,iQ\}$. 
In practice, it is more convenient to impose orthogonality 
with respect to the generalized kernel 
$\{Q, S_1, G_1, i\rho_1\}$.
Since these two sets of modes span the same finite-dimensional subspace of 
$H^{1/2}$, Lemma~\ref{Coercivity 1} can be equivalently reformulated as follows
(see \cite[Lemma~B.4]{KLR2013ARMAhalfwave}):
\begin{lemma}\label{Coercivity 2}
    There exists a universal constant $\kappa_1>0$ such that for $\epsilon \in H^{1/2}(\mathbb{R})$, we have 
        \begin{equation}\label{Coercive state with generalized kernel}
            (L_Q[\epsilon],\epsilon)_r \geq \kappa_1\norm{\epsilon}_{H^{1/2}}^2 -\frac{1}{\kappa_1}\left\{ (\epsilon, Q)_r^2 + (\epsilon, S_1)_r^2+(\epsilon, G_1)_r^2+(\epsilon, i\rho_1)_r^2\right\}.
        \end{equation}
\end{lemma}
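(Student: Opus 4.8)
The plan is to deduce Lemma~\ref{Coercivity 2} from Lemma~\ref{Coercivity 1} by a soft minimization argument; this is in the spirit of \cite[Lemma~B.4]{KLR2013ARMAhalfwave}, and the only model--specific input is a short list of sign and parity relations among the modes. Throughout, $L_Q$ is self--adjoint for $(\cdot,\cdot)_r$; set $N(\epsilon):=(L_Q[\epsilon],\epsilon)_r=\norm{\epsilon}_{\dot H^{1/2}}^2+\norm{\epsilon}_{L^2}^2-\int Q^2\bigl(3(\Re\epsilon)^2+(\Im\epsilon)^2\bigr)$ and $G:=\mathrm{span}\{Q,S_1,G_1,i\rho_1\}$, a genuine $4$--dimensional subspace (since $Q,S_1,\rho_1$ are even while $G_1$ is odd). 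First I would record two elementary reductions. (i) It suffices to prove the exact--orthogonality bound $N(\epsilon)\gtrsim\norm{\epsilon}_{H^{1/2}}^2$ for all $\epsilon\perp_{L^2}G$: for general $\epsilon$, split $\epsilon=P_G\epsilon+\epsilon^\perp$ with $P_G$ the $L^2$--projection onto $G$, note $\norm{P_G\epsilon}_{H^{1/2}}\lesssim\sum_j|(\epsilon,g_j)_r|$ by invertibility of the Gram matrix of $\{Q,S_1,G_1,i\rho_1\}$, expand $N(\epsilon)=N(\epsilon^\perp)+2(L_Q[\epsilon^\perp],P_G\epsilon)_r+N(P_G\epsilon)$, and absorb the cross term by Young's inequality (using continuity of the bilinear form) to recover \eqref{Coercive state with generalized kernel}. (ii) Since $N(\epsilon)\ge\norm{\epsilon}_{\dot H^{1/2}}^2-C\norm{\epsilon}_{L^2}^2$, it is enough to prove the $L^2$--level bound $\mu:=\inf\{N(\epsilon):\epsilon\perp_{L^2}G,\ \norm{\epsilon}_{L^2}=1\}>0$ and then take a convex combination of the two inequalities.

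Next I would establish $\mu\ge 0$. The point is that $\{\epsilon\perp_{L^2}G\}\subset\{\epsilon\perp_{L^2}Q\}$, so it is enough to show $N(\epsilon)\ge 0$ for every $\epsilon\perp_{L^2}Q$. This is a Vakhitov--Kolokolov type statement: $L_Q$ has exactly one negative eigenvalue, with eigenfunction $\phi>0$ so that $(\phi,Q)_r>0$, and the relevant quantity $(L_Q^{-1}Q,Q)_r$---with $L_Q^{-1}$ the pseudo--inverse on $(\ker L_Q)^\perp$---equals $-(\Lambda Q,Q)_r=0$, because $Q=-L_Q[\Lambda Q]$ and $(\Lambda Q,Q)_r=0$ by integration by parts. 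The standard argument (monitoring $h(\mu'):=((L_Q-\mu')^{-1}Q,Q)_r$, which is strictly increasing on $(\lambda_{\min}(L_Q),0)$ with $h\to-\infty$ at the left endpoint and $h(0^-)=0$) then shows the constrained infimum over $\{\epsilon\perp_{L^2}Q\}$ cannot be negative; see \cite[Lemma~B.4]{KLR2013ARMAhalfwave}. Hence $\mu\ge 0$.

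It then remains to rule out $\mu=0$. Here I would use concentration--compactness: the term $\int Q^2(3(\Re\epsilon)^2+(\Im\epsilon)^2)$ is weakly continuous on $H^{1/2}$ because $Q$ decays, and $\norm{\cdot}_{\dot H^{1/2}}^2+\norm{\cdot}_{L^2}^2$ is weakly lower semicontinuous, so along a minimizing sequence $N(\epsilon_n)=N(\epsilon_*)+\norm{\epsilon_n-\epsilon_*}_{\dot H^{1/2}}^2+\norm{\epsilon_n-\epsilon_*}_{L^2}^2+o(1)$ for the weak limit $\epsilon_*$; combined with $\norm{\epsilon_n}_{L^2}=1$ this forces either $\mu\ge 1$ or strong convergence to a minimizer $\epsilon_*\neq 0$, $\norm{\epsilon_*}_{L^2}=1$, $\epsilon_*\perp_{L^2}G$. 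Assuming $\mu=0$ and taking such an $\epsilon_*$: since $N\ge 0$ on $G^\perp$ and $N(\epsilon_*)=0$, the first variation gives $L_Q[\epsilon_*]\in G$, i.e.
\[
    L_Q[\epsilon_*]=aQ+bS_1+cG_1+d\,i\rho_1
\]
for some real $a,b,c,d$. Testing this against the kernel elements $iQ,\ \nabla Q$ of $L_Q$ and against $\Lambda Q$ (using $L_Q[iQ]=L_Q[\nabla Q]=0$, $L_Q[\Lambda Q]=-Q$ together with $\epsilon_*\perp_{L^2}Q$), and using the parities together with $(\nabla Q,G_1)_r>0$, $(\Lambda Q,S_1)_r>0$ from \eqref{positivity of L_Q 0} and $(Q,\rho_1)_r=-(\Lambda Q,S_1)_r<0$, one gets successively $d=c=b=0$. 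Thus $L_Q[\epsilon_*]=aQ=-L_Q[a\Lambda Q]$, so $\epsilon_*+a\Lambda Q\in\ker L_Q=\mathrm{span}\{\nabla Q,iQ\}$, say $\epsilon_*=-a\Lambda Q+e\nabla Q+f\,iQ$. Imposing now $\epsilon_*\perp_{L^2}\{S_1,G_1,i\rho_1\}$ and using the same three non--degeneracies (with the relevant parity cancellations) forces $a=e=f=0$, hence $\epsilon_*=0$, contradicting $\norm{\epsilon_*}_{L^2}=1$. Therefore $\mu>0$, and Lemma~\ref{Coercivity 2} follows by (i)--(ii).

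I expect the main obstacle (apart from careful bookkeeping) to be the lower bound $\mu\ge 0$: it is the only step using spectral information about $L_Q$ beyond Lemma~\ref{Coercivity 1}, and it hinges on the mass--critical degeneracy $(\Lambda Q,Q)_r=0$, which makes the Vakhitov--Kolokolov quantity vanish so that orthogonality to $Q$ alone already neutralizes the single negative direction of $L_Q$. The exclusion of $\mu=0$ is then purely algebraic once \eqref{generalized kernel relations} and \eqref{positivity of L_Q 0} are in hand.
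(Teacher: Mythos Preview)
Your argument is correct and is essentially the standard proof that one finds in \cite[Lemma~B.4]{KLR2013ARMAhalfwave}, which is exactly what the present paper cites for this lemma without giving its own proof. The two reductions (i)--(ii), the Vakhitov--Kolokolov step $\mu\ge 0$ via $(L_Q^{-1}Q,Q)_r=-(\Lambda Q,Q)_r=0$, and the exclusion of $\mu=0$ by the Euler--Lagrange computation using \eqref{generalized kernel relations} and \eqref{positivity of L_Q 0} are all sound; the only places that deserve an extra line of justification are the existence of a minimizer (your Brezis--Lieb splitting together with the decay of $Q$ does give compactness, and the case $\epsilon_*=0$ indeed forces $\mu\ge 1$) and the fact that the Lagrange multiplier for the $L^2$ constraint vanishes because $\mu=0$.

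One remark: the paper's sentence that the two sets of modes ``span the same finite-dimensional subspace'' is imprecise (the dimensions differ), and your proof rightly does not rely on it. What is actually used is that the four functions $Q,S_1,G_1,i\rho_1$ detect, through \eqref{generalized kernel relations} and \eqref{positivity of L_Q 0}, every element of $\ker L_Q\oplus\mathrm{span}\{\Lambda Q\}$---precisely the content of your final algebraic elimination.
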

Combined with the generalized kernel relations in \eqref{generalized kernel relations},
this coercivity property plays a key role in revealing the instability mechanism
through the modulation equations and will be crucial to close the bootstrap argument
in Section~\ref{closing the bootstrap arguments}.

\section{Approximate solutions}\label{sec:approx sol}

We construct a one-parameter family of approximate solutions \( u_\eta(t) \) to \eqref{half-wave}, depending on a small parameter \( \eta > 0 \). This construction is motivated by the instability mechanism established for \eqref{NLS} in \cite{MRS2013AJM}, concerning the Bourgain--Wang blow-up solutions initially constructed in \cite{BourgainWang1997}.

Our approach is inspired by the instability mechanism developed in \cite{MRS2013AJM}, which employs a decomposition into a blow-up profile and a radiation component. Due to the absence of symmetries like the pseudo-conformal in the half-wave equation \eqref{half-wave}, directly applying the techniques from \cite{MRS2013AJM} is not feasible. We turn to the blow-up profile specifically developed for the half-wave equation by \cite{KLR2013ARMAhalfwave}, sharing similarities with the tail computation method developed in \cite{RaphaelRodnianski2012,MerleRaphaelRodnianski2013Invention,MerleRaphaelRodnianski2015CambJMath}. Using this structure, we introduce an instability parameter analogous to the pseudo-conformal direction's role in the NLS scenario.
To implement this strategy, we consider an approximate decomposition near \( t = 0 \) of the form
\[
u_\eta(t) \approx v_\eta(t) + z(t),
\]
where \( v_\eta \) is a singular profile depending on \( \eta \), and \( z \) is a radiation part independent of \( \eta \) that converges to an asymptotic profile \( z^* \in H^{1/2}(\mathbb{R}) \) as \( t \to 0^- \).

The profile $v_\eta$ is constructed by perturbing the minimal blow-up profile as described in \cite{KLR2013ARMAhalfwave} in an unstable manner. Meanwhile, the radiation component $z$ is crafted to be sufficiently small and flat near $(t,x) = (0,0)$. The construction of a suitable radiation part is nontrivial due to the nonlocal nature of the half-wave operator, prompting the use of a topological method to achieve the necessary flatness of the approximate solution. As a result, the interaction between $v_\eta$ and $z$ remains negligible as $t \to 0$, ensuring that the sum $v_\eta + z$ provides a good approximate solution to \eqref{half-wave} near the time of blow-up.
To finalize the description of the approximate solution, we solve the formal modulation law that governs the blow-up dynamics of the modulation parameters in $v_\eta$, as dictated by the profile's structure.

\subsection{Construction of the approximate radiation}\label{construction of the approximate radiation}

In this subsection, we construct an asymptotic profile $z^*$ together with its associated approximate radiation $z(t,x)$ solving the nonlinear equation \eqref{half-wave} with the initial data $z^*$ at $t=0$. The profile $z^*$ is carefully designed so that the resulting solution $z(t,x)$ exhibits sufficient flatness close to $(t,x)=(0,0)$. In \eqref{NLS}, the high degeneracy assumption on initial data $z^*$ naturally propagates in time, inducing sufficient flatness at $(t,x)=(0,0)$ by standard Cauchy theory combined with the smoothness of the nonlinearity; see \cite{BourgainWang1997,MRS2013AJM}. However, in our case, due to the nonlocal nature of the operator $D = |\nabla|$,
the evolution near the origin is influenced by the entire distribution of the initial data. This global dependence prevents spatial degeneracy near the origin from naturally converting into time degeneracy at $t=0$. Hence, the spatial degeneracy at the initial time is not sufficient on its own. To handle this difficulty, we need to use an additional argument to construct the asymptotic profile $z^*$ so that the corresponding solution $z(t,x)$ achieves the necessary flatness property near $(t,x)=(0,0)$. More precisely, we apply a topological argument based on the Borsuk–Ulam theorem to overcome nonlocal obstruction, which cannot be handled by standard local Cauchy theory arguments.

In the construction of $z^*$, we hope that $z^*$ is as generic as possible. For this purpose, for a given function $f\in S(\bbR)$ such that $\norm{f}_{H^{A_0}} \ll 1$ for large $A_0\ll 1$, we construct $z^*_f \in S(\mathbb{R})$, parameterized by $f$ and finitely many coefficients so that the corresponding solution $z(t,x)$ is expected to satisfy the required flatness conditions at $(t,x)=(0,0)$. Specifically, we design $z^*_f$ in its Fourier side
\begin{equation}\label{profile of z^*_f}
    \widehat{z^*_f} = \widehat{f}(\xi)(d_1 \xi + \cdot\cdot\cdot + d_N \xi^N).
\end{equation}
The construction relies on carefully adjusting the coefficients $\{d_j\}_{1\leq j \leq N}$ in $\widehat{z^*_f}$, with $N$ chosen sufficiently large relative to the required flatness order.

By virtue of Cauchy theory for \eqref{half-wave} in $H^{1/2+}(\mathbb{R})$ (as stated in Lemma~\ref{Cauchy theory}), the solution $z(t,x)$ corresponding to \eqref{half-wave} exhibits continuous dependence on the initial condition $z^*_f$. Notably, the flatness condition at the point $(t,x) = (0,0)$ can be expressed as a finite-dimensional algebraic system involving the unknowns $\{d_j\}$. Subsequently, the Borsuk--Ulam theorem ensures that there is a set of coefficients meeting these criteria, thus finalizing the construction. We now encapsulate this construction in the following proposition.

\begin{proposition}\label{construction of asymptotic profile}(Construction of the asymptotic profile)
Let $f \in \mathcal{S}(\mathbb{R}) \setminus \{0\}$ and $m \in \mathbb{N}$ be given, and define
\[
    N = N(m) \coloneqq (m+2)^4 + 1.
\]
There are constants $C \gg 1$ and $0<\alpha^*<1$ so that the following hold: Assume that 
\begin{equation}\label{smallness of f}
    \norm{f}_{H^{C\cdot(N+m+1)}} < \alpha^*\min\{1,E_0\}.
\end{equation}
Then, there exists a nonzero sequence $\{d_j\}_{1 \leq j \leq N}\in\C$ (depending on $f$ and $m$) such that the function $z_f^*$ defined by \eqref{profile of z^*_f} yields a solution $z(t,x)$ to \eqref{half-wave} with $z(0,\cdot)=z^*_f$ satisfying the following properties:

\begin{itemize}
    \item The solution $z(t,x)$ exists globally in time and satisfies 
    \begin{equation}\label{control of norm z(t) for geq 1/2}
        \|z(t)\|_{H^{m+1}} \lesssim_m \alpha^*, \quad \text{for all } t \in [-1,1].
    \end{equation}

    \item Moreover, $z(t,x)$ exhibits degeneracy at $(t,x) = 0$ in the following sense: for all $t,x \in [-1,1]$,
    \begin{equation}\label{degeneracy for z}
    \begin{aligned}
        |z(t,x)| &\lesssim_{m} \alpha^*(|t| + |x|)^{m+1}, \\
        |D z(t,x)| &\lesssim_{m} \alpha^*(|t| + |x|)^m, \\
        |\nabla z(t,x)| &\lesssim_{m} \alpha^*(|t| + |x|)^m.
    \end{aligned}
    \end{equation}
\end{itemize}
\end{proposition}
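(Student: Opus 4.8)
The plan is to set up the flatness conditions \eqref{degeneracy for z} as a finite system of polynomial equations in the coefficients $\{d_j\}$, and to solve it by a Borsuk--Ulam argument after checking that the system has the right parity and dimension count. First I would observe that, by the local well-posedness in $H^{1/2+}$ (Lemma~\ref{Cauchy theory}), for $f$ fixed with $\|f\|_{H^{C(N+m+1)}}$ sufficiently small, the solution $z(t,x)$ with $z(0,\cdot)=z_f^*$ exists on $[-1,1]$ and depends continuously (indeed smoothly, by persistence of regularity in $H^s$ for $s$ large) on the coefficient vector $d=(d_1,\dots,d_N)\in\mathbb{C}^N$, with $\|z(t)\|_{H^{m+1}}\lesssim_m\|z_f^*\|_{H^{m+1}}\lesssim_m\alpha^*$. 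This gives \eqref{control of norm z(t) for geq 1/2}. The key point is that the degeneracy \eqref{degeneracy for z} will follow, via Taylor expansion in $(t,x)$ around $(0,0)$ and Sobolev embedding applied to the high-regularity bound, once we arrange that all mixed space-time derivatives $\partial_t^{k_1}\nabla^{k_2}z(0,0)$ vanish for $k_1+k_2\le m$, together with $\partial_t^{k_3}\nabla^{k_4}(Dz)(0,0)=0$ for $k_3+k_4\le m-1$ (the $Dz$ conditions being needed since $D$ is nonlocal and not captured by pointwise vanishing of $z$).

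Next I would convert these into equations in $d$. Using the equation $i\partial_t z = Dz - |z|^2 z$, every time derivative $\partial_t^{k_1} z(0,0)$ can be written as a polynomial in spatial-derivative data of $z$ at $(0,0)$: the linear term $Dz$ is, on the Fourier side, $|\xi|\widehat{z_f^*}(\xi)$, whose spatial derivatives at $0$ are moments $\int |\xi|^{k}\xi^{\ell}\widehat{f}(\xi)(d_1\xi+\dots+d_N\xi^N)\,d\xi$, hence linear in $d$; and the nonlinear term $|z|^2z$ contributes cubic expressions. Iterating, $\partial_t^{k_1}\nabla^{k_2}z(0,0)$ is an odd polynomial in $d$ (every monomial has odd total degree in the components of $d$ and $\bar d$, since the nonlinearity is cubic and the linear flow preserves degree $1$), and likewise for the $Dz$-conditions. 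Writing $d = r\omega$ with $r=|d|$ and $\omega\in\mathbb{S}^{2N-1}\subset\mathbb{C}^N$, each equation becomes $r\,P_1(\omega) + r^3 P_3(\omega)+\dots = 0$; after normalizing, one reduces (by continuity/scaling, or just by looking at the leading linear part) to finding $\omega$ on the sphere annihilating the collection of \emph{odd} maps. The number of real equations is at most $2\big((m+1)^2 + m^2\big) < 2(m+2)^4 = 2(N-1)$, strictly less than $2N-1$, so the system defines an odd continuous map $\mathbb{S}^{2N-1}\to\mathbb{R}^{k}$ with $k<2N-1$, and Borsuk--Ulam produces $\omega$ with $P(\omega)=0$; taking $d=\varepsilon\omega$ for $\varepsilon$ small (to respect the smallness \eqref{smallness of f}) gives the desired nonzero coefficient vector. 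One must be slightly careful that the full (not merely leading-order) system is odd — this holds because the half-wave flow commutes with the scaling $d\mapsto -d$ composed with conjugation $z\mapsto -\bar z$? — more simply, because $z\mapsto -z$ is \emph{not} a symmetry, I would instead argue directly: each derivative functional $D_I(d):=\partial_t^{k_1}\nabla^{k_2}z(0,0)$ is a convergent series of monomials of odd degree in $(d,\bar d)$, so $D_I(-d)=-D_I(d)$, making the map $\mathbb{C}^N\to\mathbb{C}^{\#I}$ odd; restricting to the sphere and invoking Borsuk--Ulam in the form ``an odd continuous map $\mathbb{S}^{n}\to\mathbb{R}^{k}$ with $k\le n$ has a zero'' finishes it.

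The main obstacle is the conversion step and the degeneracy deduction: one must verify rigorously that finitely many derivative-vanishing conditions at $(0,0)$, in both $z$ and $Dz$, genuinely yield the pointwise bounds \eqref{degeneracy for z} on the whole square $[-1,1]^2$ — this requires that $z$ be smooth enough ($C^{m+1}$ in space-time, so one needs $\|z(t)\|_{H^{m+1}}$ bounded plus the equation to trade $\partial_t$ for $D$ and controlled loss), and that the Taylor remainder is controlled uniformly by $\alpha^*(|t|+|x|)^{m+1}$, which is where the choice $N=N(m)=(m+2)^4+1$ and the regularity exponent $C(N+m+1)$ in \eqref{smallness of f} enter (enough regularity to run persistence and to bound all the moment integrals defining the polynomial coefficients). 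The combinatorial bookkeeping — listing exactly which derivatives must vanish, checking the count $k<2N-1$, and confirming oddness of the \emph{full} nonlinear system — is routine but delicate, and is the part I expect to occupy most of the write-up; the topological input itself is a black box.
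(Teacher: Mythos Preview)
Your overall strategy---encode the vanishing conditions as an odd polynomial map in $d\in\mathbb{C}^N$, invoke Borsuk--Ulam on the sphere, then deduce the pointwise degeneracy from Taylor expansion plus high-regularity control---matches the paper's approach. But there is one genuine gap and one incorrect aside.

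\textbf{The gap.} After Borsuk--Ulam gives you $\omega\in\mathbb{S}^{2N-1}$ with $F(\omega)=0$, you then take $d=\varepsilon\omega$ ``to respect the smallness''. This does not work: the map $F$ is odd but \emph{not} homogeneous (it has degree-$1$, degree-$3$, degree-$5$, \dots\ parts coming from the iterated cubic nonlinearity), so $F(\omega)=0$ in no way implies $F(\varepsilon\omega)=0$. Your earlier sentence about ``looking at the leading linear part'' does not rescue this either, since you need the \emph{full} vanishing, not just the linear approximation. The paper's fix is simple and you already have the ingredients for it: keep $d=\omega$ on the unit sphere, so $\sum|d_j|^2=1$, and observe that then $\|z_f^*\|_{H^{C(m+1)}}\lesssim_m \|f\|_{H^{C(N+m+1)}}$ by Cauchy--Schwarz on the Fourier side. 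The smallness comes entirely from the hypothesis $\|f\|_{H^{C(N+m+1)}}<\alpha^*$; the coefficients $d_j$ stay of unit size and are \emph{not} the source of smallness.

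\textbf{The incorrect aside.} You write that $z\mapsto -z$ is not a symmetry of \eqref{half-wave}. It is: if $i\partial_t z=Dz-|z|^2z$, then $-z$ satisfies the same equation because the nonlinearity is cubic. This is exactly how the paper establishes oddness cleanly: since $-z$ is the solution with data $-z_f^*$, replacing $d\mapsto -d$ sends $z\mapsto -z$ and hence flips the sign of every derivative functional $\partial_t^{k_1}\nabla^{k_2}z(0,0)$. Your ``direct'' argument (all monomials have odd total degree in $(d,\bar d)$) is also correct, just more laborious; either route is fine, but the symmetry claim should be corrected.
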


\begin{proof}
    Given $m \in \mathbb{N}$ and $f \in \mathcal{S}(\mathbb{R}) \setminus \{0\}$, we begin by choosing coefficients $\{d_j\}_{1 \leq j \leq N} \subset \mathbb{C}^N$, depending on $f$ and $m$, such that the resulting solution $z(t,x)$ to \eqref{half-wave} satisfies the following degeneracy conditions at the origin:
    \begin{equation}
        \left( (\nabla^{k_1} \partial_t^{k_2} z)(0,0),\, (\nabla^{k_3} \partial_t^{k_4} D z)(0,0) \right) = (0,0), \label{eq: z degeneracy goal}
    \end{equation}
    for all multi-indices $k_1, k_2, k_3, k_4 \in \mathbb{N}_{\geq 0}$ with $k_1 + k_2 \leq m$ and $k_3 + k_4 \leq m-1$. This will be achieved using the local well-posedness theory for \eqref{half-wave} established in Lemma~\ref{Cauchy theory} and topological arguments.

   The key step in this argument is to observe that the left-hand side of \eqref{eq: z degeneracy goal} depends on $\{d_j\}$ via an odd polynomial. This structural property will allow us later to apply a topological argument.

    To verify this, we begin by noting that since $z$ is a solution to \eqref{half-wave}, we have
    \begin{equation}\label{radiation profile eq:1}
        \partial_t z = -i(Dz - |z|^2 z).
    \end{equation}
    By repeatedly applying \eqref{radiation profile eq:1} inductively, we express each time derivative in terms of spatial derivatives and nonlinearities. In particular, we obtain
    \begin{equation}\label{radiation profile eq:2}
        \begin{aligned}
            (\nabla^{k_1} \partial_t^{k_2} z)(t,x) =\; 
            &(-i)^{k_2} \nabla^{k_1}(D^{k_2} z)(t,x) \\
            &- \sum_{j=1}^{k_2} (-i)^j \nabla^{k_1} D^{j-1} \left( \partial_t^{k_2 - j} (|z|^2 z) \right)(t,x).
        \end{aligned}
    \end{equation}
    Using \eqref{radiation profile eq:2} inductively and applying the Leibniz rule at each step, we can express the term $\partial_t^{k_2 - j}(|z|^2 z)$ as a linear combination of expressions of the form
    \begin{equation}\label{radiation profile eq:3}
        D^{j_1}\left(P_1(z,\overline{z}) D^{j_2} \left(P_2(z,\overline{z}) D^{j_3} \left( \cdots D^{j_k} P_k(z,\overline{z}) \right) \right) \right),
    \end{equation}
    for some nonnegative integers $\{j_\ell\}_{1 \leq \ell \leq k}$ satisfying $0 \leq j_1 + \cdots + j_k \leq k_2 - j$, where each $P_j(z,\overline{z})$ is a monic polynomial in $z$ and $\overline{z}$.
    Substituting this expression into \eqref{radiation profile eq:2}, we see that all time derivatives in $(\nabla^{k_1} \partial_t^{k_2} z)(t,x)$ are rewritten in terms of spatial derivatives and nonlinear monomials in $z$ and $\overline{z}$. Then, by the Fourier inversion formula, we have
    \[
        D^{j_\ell}(z^a \overline{z}^b)(t,0) 
        = \int_{\mathbb{R}} |\xi|^{j_\ell} 
        \underbrace{\widehat{z} * \cdots * \widehat{z}}_{\text{$a$ times}} 
        * \underbrace{\widehat{\overline{z}} * \cdots * \widehat{\overline{z}}}_{\text{$b$ times}} (t,\xi) \, d\xi.
    \]
    Since we are evaluating at $(t,x) = (0,0)$, and the local well-posedness theory from Lemma~\ref{Cauchy theory} ensures that all derivatives of the solution at the origin coincide with those of the radiation profile $z_f^*$ at time $t=0$, we have
    \[
    (\nabla^p D^q P_j(z, \overline{z}))(0,0) 
    = (\nabla^p D^q P_j(z_f^*, \overline{z_f^*}))(0).
    \]
    Combining this with the expression in \eqref{radiation profile eq:2}, we conclude that each term in $(\nabla^{k_1} \partial_t^{k_2} z)(0,0)$ is a polynomial in $\{d_j\}$, since $z_f^*$ is explicitly constructed from $\{d_j\}$ via \eqref{profile of z^*_f}. The same conclusion holds for $(\nabla^{k_3} \partial_t^{k_4} D z)(0,0)$ by applying the same argument to one additional derivative. Moreover, since $-z$ is also a solution to \eqref{half-wave} with initial data $-z_f^*$, we have
    \begin{equation}\label{eq: z degeneracy k1 k2}
        (\nabla^{k_1} \partial_t^{k_2} [-z])(0,0) = - (\nabla^{k_1} \partial_t^{k_2} z)(0,0).
    \end{equation}
    This identity implies that $(\nabla^{k_1} \partial_t^{k_2} z)(0,0)$ must be an odd polynomial in $\{d_j\}$. Indeed, replacing $\{d_j\}$ by $\{-d_j\}$ corresponds to replacing $z_f^*$ with $-z_f^*$, and hence $z$ with $-z$, which flips the sign of the expression. Applying the same argument to $(\nabla^{k_3} \partial_t^{k_4} D z)(0,0)$, we conclude that it is also an odd polynomial in $\{d_j\}$. 

    We now turn to the existence of a nontrivial choice of coefficients $\{d_j\}$ satisfying \eqref{eq: z degeneracy goal}. This will be established through a topological argument. Let $F$ denote the left-hand side of \eqref{eq: z degeneracy goal}:
    \[
        F(d_1, \dots, d_N) 
        \coloneqq \left( (\nabla^{k_1} \partial_t^{k_2} z)(0,0),\, (\nabla^{k_3} \partial_t^{k_4} D z)(0,0) \right).
    \]
    From conditions $k_1 + k_2 \leq m$ and $k_3 + k_4 \leq m - 1$, the total number of equations is $\binom{m+2}{2} \times \binom{m+1}{2}$. Hence, we may view $F$ as a map
    \[
        F: \mathbb{C}^N \to \mathbb{C}^{\binom{m+2}{2} \times \binom{m+1}{2}} \simeq \mathbb{R}^{2(\binom{m+2}{2} \times \binom{m+1}{2})}.
    \]
    As shown above, $F$ is a polynomial function in $\{d_j\}$ and is odd, that is, $F(-d_1, \dots, -d_N) = -F(d_1, \dots, d_N)$. We now restrict the domain of $F$ to the unit sphere $\mathbb{S}^{2N-1} \subset \mathbb{C}^N \simeq \mathbb{R}^{2N}$ and regard $F$ as a continuous odd map
    \[
        F: \mathbb{S}^{2N-1} \to \mathbb{R}^{2(\binom{m+2}{2} \times \binom{m+1}{2})}.
    \]
    Since we defined $N(m) = (m+2)^4 + 1$, we have
    \[
    2N - 1 > 2 \left( \binom{m+2}{2} \times \binom{m+1}{2} \right),
    \]
    so, the dimension of the domain strictly exceeds that of the target space. By the Borsuk--Ulam theorem, any continuous odd map from $\mathbb{S}^n$ to $\mathbb{R}^n$ must vanish somewhere. Therefore, there exists a nontrivial choice of coefficients $\{d_j\} \subset \mathbb{S}^{2N - 1} \subset \mathbb{C}^N$ such that
    \[
        F(d_1, \dots, d_N) = 0.
    \]
    We now claim that there exist constants $C\gg 1$ and $0<\alpha^*\ll 1$ such that 
    \[
        \norm{f}_{H^{C\cdot(N+m+1)}} < \alpha^*\min\{1,E_0\}
    \]
    then both \eqref{control of norm z(t) for geq 1/2} and \eqref{degeneracy for z} hold. Under this smallness assumption on $f$, we first establish \eqref{control of norm z(t) for geq 1/2}.
    Since $\{d_j\} \subset \mathbb{S}^{2N-1}$ and the Cauchy-Schwarz inequality, for each $C>1$ we obtain
    \begin{equation}\label{smallness of z_f^*}
        \norm{z_f^*}_{H^{C\cdot(m+1)}}^2 \leq \int \langle \xi \rangle^{2C\cdot(m+1)}|\widehat{f}(\xi)|^2 (\sum_{j=1}^{N} |d_j|^2)\cdot(\sum_{j=1}^N |\xi|^{2j}) d\xi \lesssim_m \norm{f}_{H^{C\cdot{(N+m+1)}}}^2.
    \end{equation}
    Choose $\alpha^*>0$ small so that
    \begin{equation}\label{L^2 smallness of z(t)}
        \|z_f^*\|_{L^2} <\frac{1}{\sqrt{2}} \|Q\|_{L^2}.
    \end{equation}
    As a consequence, the global well-posedness of $z(t)$ follows from Lemma~\ref{Cauchy theory}.
    
    We now derive upper and lower bounds on the $H^{1/2}$ norm of $z(t)$ using energy and mass conservation. By combining the sharp lower bound of the energy \eqref{sharp lower bound for energy} with conservation laws \eqref{eq:conservation laws} and \eqref{eq:conservation laws}, we obtain
    \begin{equation}\label{sharp energy lower bound inequality}
    \begin{aligned}
        E(z_f^*) &= E(z(t)) 
        \geq \frac{1}{2} \|D^{1/2} z(t)\|_{L^2}^2 \left( 1 - \frac{\|z(t)\|_{L^2}^2}{\|Q\|_{L^2}^2} \right) 
        \geq \frac{1}{4} \|D^{1/2} z(t)\|_{L^2}^2, \\
        E(z(t)) &\geq \frac{1}{2} \|D^{1/2} z_f^*\|_{L^2}^2 \left( 1 - \frac{\|z_f^*\|_{L^2}^2}{\|Q\|_{L^2}^2} \right) 
        \geq \frac{1}{4} \|D^{1/2} z_f^*\|_{L^2}^2.
    \end{aligned}
    \end{equation}
    Therefore \eqref{sharp energy lower bound inequality} with mass conservation gives
    \begin{equation}\label{s=1/2 estimate}
        \frac{1}{2}\norm{z_f^*}_{H^{1/2}} \leq \norm{z(t)}_{H^{1/2}} \leq 2\norm{z_f^*}_{H^{1/2}}, \text{ for } t\in [-1,1].
    \end{equation}

    Using \eqref{s=1/2 estimate} combining with a Gronwall-type argument to the half-wave equation, following \cite[Theorem D.2]{KLR2013ARMAhalfwave}. Define
    \[
        g(t) \coloneqq \frac{\|z(t)\|_{H^\sigma}}{\sup_{t \in [0,1]} \|z(t)\|_{H^{1/2}}},
    \]
    so that the following estimate holds for all $t \in [0,1]$:
    \begin{equation}\label{Gronwall inequality}
        2 + g(t) \leq (2 + g(0)) e^{Ct},
    \end{equation}
    for some universal constant $C > 0$. From the lower bound of $\norm{z(t)}_{H^{1/2}}$ in \eqref{s=1/2 estimate}, we have 
    \[
        g(0) \leq 2\norm{z_f^*}_{H^{\sigma}}\norm{z_f^*}_{H^{1/2}}^{-1},
    \]
    and hence combining \eqref{Gronwall inequality} we obtain
    \[
        \frac{\norm{z(t)}_{H^{\sigma}}}{2\norm{z_f^*}_{H^{1/2}}} \leq g(t) \leq (2 + g(0))^{e^{C}}-2 \leq (2+2\norm{z_f^*}_{H^{\sigma}}\norm{z_f^*}_{H^{1/2}}^{-1})^{e^C}, \text{ for all } t \in [0,1].
    \]
    Therefore, the Gagliardo--Nirenberg inequality yields
    \begin{align*}
        \|z(t)\|_{H^\sigma} &\leq 2^{1+e^C}(\norm{z_f}_{H^{1/2}}^{e^{-C}} + \norm{z_f^*}_{H^{1/2}}^{\left(e^{-C}-1\right)}\norm{z_f^*}_{H^{\sigma}})^{e^C}\\
        &\leq C_\sigma \norm{z_f}_{H^{C\cdot \sigma}},
    \end{align*}
    for some constant $C \gg 1$.
    By the time-reversibility of the half-wave equation (i.e., replacing $z(t,x) \mapsto \overline{z}(-t,x)$), the same estimate holds for all $t \in [-1,0]$. In particular, if $\sigma = m+1$ then from \eqref{smallness of z_f^*} we have
    \[
         \norm{z(t)}_{H^{m+1}} \lesssim_m \norm{z_f^*}_{H^{C\cdot{m+1}}} \lesssim_m \alpha^*, \text{ for } t \in [-1,1].
    \]
    This completes the proof of \eqref{control of norm z(t) for geq 1/2}.

    It remains to prove \eqref{degeneracy for z}. By the Taylor expansion of $z(t,x)$ at $(t,x) = (0,0)$, for each $t,x \in [-1,1]$ we have
    \begin{equation}\label{Taylor expansion of z}
    \begin{aligned}
        z(t,x) - &\sum_{k_1 + k_2 \leq m} \frac{1}{k_1! k_2!} (\partial_t^{k_2} \nabla^{k_1} z)(0,0) t^{k_2} x^{k_1}\\
        &= \sum_{k_1 + k_2 = m+1} \frac{1}{k_1! k_2!} (\partial_t^{k_2} \nabla^{k_1} z)(\beta^*_{(t,x)} t, \beta^*_{(t,x)} x) t^{k_2} x^{k_1},
    \end{aligned}
    \end{equation}
    where $\beta^*_{(t,x)} \in (0,1)$ is a constant depending on $(t,x)$. Note that $(\beta^*_{(t,x)} t, \beta^*_{(t,x)} x) \in (-1,1)^2$.
    By the construction of the coefficients $\{d_j\}_{1 \leq j \leq N}$, the first $m$-th order derivatives of $z$ at $(0,0)$ vanish, so the second term on the left-hand side of \eqref{Taylor expansion of z} vanishes. To complete the proof of \eqref{degeneracy for z}, it remains to show that for all $k_1 + k_2 = m+1$,
    \[
        \| \partial_t^{k_2} \nabla^{k_1} z(\beta^*_{(t,x)} t) \|_{L^\infty} \lesssim_{m} \alpha^*.
    \]
    This estimate follows from \eqref{radiation profile eq:2} and the bound \eqref{control of norm z(t) for geq 1/2}, once we show that for each $1 \leq j \leq k_2$,
    \begin{equation}\label{radiation construction eq:4}
        \| \nabla^{k_1} D^{j-1} \partial_t^{k_2 - j} (|z|^2 z)(\beta^*_{(t,x)} t) \|_{L^\infty} \lesssim_{m} \alpha^*.
    \end{equation}
    Indeed, \eqref{radiation construction eq:4} follows from the fact that $\nabla^{k_2}D^{j-1}\partial_t^{k_2-j}(|z|^2 z)(t,x)$ can be expressed as a finite linear combination $\sum a_\mu \nabla^{k_1} D^{j-1} \mathcal{T}_\mu$, where each $\mathcal{T}_\mu$ is a term of the form \eqref{radiation profile eq:3} and $a_\mu \in \mathbb{C}$. To estimate these in $L^\infty$ in the $x$-variable, we apply Sobolev embedding on each factor and then the product rule iteratively:
    \begin{align*}
        & \| \nabla^{k_1} D^{j-1} D^{j_1} \left( P_1(z,\overline{z}) D^{j_2} \left( P_2(z,\overline{z}) D^{j_3} \left( \cdots D^{j_k} P_k(z,\overline{z}) \right) \right) \right) (\beta^*_{(t,x)} t) \|_{L^\infty} \\
        &\lesssim_m \| P_1(z,\overline{z})(\beta^*_{(t,x)} t) \|_{H^{1 + k_1 + j - 1 + j_1}} 
        \times \cdots \times 
        \| P_k(z,\overline{z})(\beta^*_{(t,x)} t) \|_{H^{1 + k_1 + j - 1 + j_1 + \cdots + j_k}} \\
        &\lesssim_{m} \| z(\beta^*_{(t,x)} t) \|_{H^{k_1 + k_2}}^M,
    \end{align*}
    for some $M \in \mathbb{N}$, where each $P_\ell(z,\overline{z})$ is a monic polynomial in $z$ and $\overline{z}$, and $\sum_{\ell=1}^k j_\ell \leq k_2 - j$. Since $|\beta^*_{(t,x)} t| < 1$ and hence $\norm{z(\beta^*_{(t,x)} t)}_{H^{m+1}} \lesssim_{m} \alpha^*$ from \eqref{control of norm z(t) for geq 1/2}. Therefore, we conclude that the desired degeneracy estimate for $z(t,x)$ at $(0,0)$ is satisfied. In the same manner, we obtain the corresponding degeneracy estimates for $Dz(t,x)$ and $\nabla z(t,x)$. This completes the proof of \eqref{degeneracy for z}.
\end{proof}

\begin{remark}
From now on, we fix the Schwartz function $f$. To guarantee that the radiation part $z$ is sufficiently decoupled from the singular profile $Q_{\mathcal{P}}$ at the energy level, we choose the flatness parameter $m$ to be large; specifically, we fix $m=32$ (see Lemma~\ref{interaction error order}). This choice ensures that the interaction error between $z$ and $Q_{\mathcal{P}}$ is sufficiently small, making it negligible in the modulation analysis which will be carried out in Section~\ref{Main thm to main bootstrap} and Section~\ref{proof of main bootstrap lemma}. Since these choices will be made uniformly throughout the remainder of the analysis, we treat $m$ and $N(m)$ as universal constants from this point onward. Hence, we fix $A_0 \coloneqq C\cdot(N(32)+32+1)$.
\end{remark}

\subsection{Construction of the blow-up profile}
In this subsection, we discuss the construction of the blow-up profile. Unlike the \eqref{NLS}, the equation \eqref{half-wave} does not possess symmetries such as pseudo-conformal or Galilean invariance. Consequently, identifying a natural blow-up profile via the pseudo-conformal phase, as done in \eqref{NLS}, is unachievable. Therefore, the authors in \cite{KLR2013ARMAhalfwave} developed a blow-up profile for the minimal mass blow-up solution by exploiting the generalized kernel of the linearized operator around the ground state $Q$ and performing a formal derivation of the related modulation law. Nevertheless, the constructed minimal mass blow-up solution in \cite{KLR2013ARMAhalfwave} serves as an analogue of the pseudo-conformal blow-up solution for \eqref{NLS} at the threshold, particularly in terms of its \textit{instability mechanism}. This is evident through the spectral property of the linearized operator $L_Q$. Specifically, $L_Q$ has a negative eigenvalue, paired with an eigenfunction $\phi$ that is not derived from any symmetry of equation \eqref{half-wave}. To control this instability, the authors deduce \eqref{Coercive state with generalized kernel} 
and maintain the smallness of $(\epsilon,Q)_r$ by enforcing the mass threshold condition. This inner product captures the core of the instability. 
To quantify this instability, we incorporate a (fixed) unstable parameter $\eta$ into the profile. This alteration enables the profile to exceed the mass threshold condition unless $\eta$ is equal to zero. This is a similar scenario as demonstrated in \eqref{NLS}  \cite{MRS2013AJM} , in which the authors demonstrate the instability of pseudo-conformal blow-up.
We use the \textit{tail computation} to construct the modified profile, developed in \cite{RaphaelRodnianski2012,MerleRaphaelRodnianski2013Invention,MerleRaphaelRodnianski2015CambJMath}.  We note that in \cite[Proposition 4.1]{KLR2013ARMAhalfwave} the authors use tail computation to construct the profile $Q_{(b,\nu)}$. We are in a similar line but add an instability parameter $\eta$  for Bourgain--Wang type blow-up solutions. On the way of the construction of the profile, we also identify the associated modulation laws and blow-up laws at the formal level.

We will introduce an additional fixed parameter $\eta >0$, where $\eta \ll 1$, to develop a set of solutions $v_{\eta} $. Considering the anticipated instability, we demonstrate that for small $\eta$, where $\eta \ne 0$, $v_{\eta}$ does not experience blow-up, and instead, a solution of the Bourgain-Wang type blow-up solution is obtained as the limit as $\eta \to 0$. We initiate this process by applying a renormalization to the solution $v_\eta$ of \eqref{half-wave}, with the renormalized solution denoted by  $v_\eta^{\flat}$ ;
\begin{equation*}
	\frac{ds}{dt} := \frac{1}{\lambda},\quad y:=\frac{x-\nut{x}}{\lambda},\quad v_\eta^{\flat}(s,y) := \left.\frac{1}{\lambda(t)^{\frac{1}{2}}}v_\eta(t,\frac{x-\nut{x}(t)}{\lambda(t)})e^{i\gamma(t)}\right|_{t=t(s)},
\end{equation*}
where $\lambda(t)$ is a scaling parameter, $\nut{x}(t)$ is a translation parameter, and $\gamma(t)$ is a phase rotation parameter. Then, $v_\eta^{\flat}$ solves  
\begin{equation}\label{eq:renormalised v}
	(\partial_s-\frac{\lambda_s}{\lambda}\Lambda-\frac{\nut{x}_s}{\lambda}\nabla + \tilde{\gamma}_s i)v_\eta^{\flat} +i( Dv_\eta^{\flat} + v_\eta^{\flat} - |v_\eta^{\flat}|^2v_\eta^{\flat} )=0,
\end{equation}
where $\tilde{\gamma}_s = \gamma_s-1$. 

Consider an approximate solution of the form 
$v_\eta^{\flat} \approx Q_{(b,\nu)} + \eta R$, and explain how to choose the corrector profile $R : \mathbb{R} \rightarrow \mathbb{C}$. Our goal is to construct a blow-up solution that blows up with the same rate as the minimal blow-up solution, which has a positive energy. To guarantee this, we need to choose a direction such that $v_{\eta}^{\flat}$ has subthreshold mass, 
and so the energy is positive. Since $b$ and $\nu$ are slowly modulated parameters 
such that $Q_{(b,\nu)}$ is close to $Q$, a direct calculation gives
\begin{equation}\label{mass escape ansatz}
    \frac{\partial}{\partial \eta}
    \left\{\norm{Q_{(b,\nu)}+\eta R}_{L^2}^2\right\}\bigg|_{\eta=0} 
    = 2(Q_{(b,\nu)},R)_r \approx 2(Q,R)_r.
\end{equation}

Recalling the generalized kernel relations in \eqref{generalized kernel relations}, 
we adopt the notation
\[
    R_{1,0,0}\coloneqq S_1, \qquad R_{0,1,0}\coloneqq G_1, \qquad R_{0,0,1} \coloneqq \rho_1,
\]  
We choose 
\[
    R = R_{0,0,1} = \rho_1, \qquad L_Q[R]=R_{1,0,0}=S_1,
\]
so that, using $L_Q[\Lambda Q]=-Q$, the identity \eqref{mass escape ansatz} yields
\[
    2(Q,R)_r = -2(\Lambda Q, R_{1,0,0})_r < 0.
\]
This implies that for small $\eta>0$, $Q_{(b,\nu)}+\eta R$ satisfies the under-mass-threshold condition, and from the sharp lower bound for energy \eqref{sharp lower bound for energy}, $v_\eta$ enters into the non blow-up regime.
We introduce the corrector profiles by $P$ with modulation parameters $(b,\nu,\eta)$, and denote the modified profile $Q_{\calP}$ with a general expansion of the form
\begin{equation}
		\begin{aligned}
			Q_{\mathcal{P}} &=Q+P= Q + \sum_{p,q,r\in \mathbb{N}_0}(ib)^p(i\nu)^q\eta^r R_{p,q,r}.
		\end{aligned} \label{eq:modi profile general expan}
\end{equation}
In the context mentioned above, we define $R_{0,0,1} \coloneqq R$. Specifically, $R_{1,0,0}$, $R_{0,1,0}$, and $R_{0,0,1}$ are associated with the directions $ib$, $i\nu$, and $\eta$, respectively. These belong to the generalized kernel of the linearized operator $L_Q$. Using the tail computation procedure, we will construct a Taylor series expansion up to a designated order, ensuring the profile error reaches an acceptable level. We begin by inserting $v_\eta^{\flat}=Q_{\calP}$ into \eqref{eq:renormalised v}: %From the choice of $R_{0,0,1}$ so that $L_Q[R_{0,0,1}] = R_{1,0,0}$, we set up formal modulation law and making profile error $\Psi_{\mathcal{P}}$ small as
\begin{align*}
    (\partial_s&-\frac{\lambda_s}{\lambda}\Lambda-\frac{\nut{x}_s}{\lambda}\nabla + \tilde{\gamma}_s i)Q_{\mathcal{P}} +i( DQ_{\mathcal{P}} + Q_{\mathcal{P}} - |Q_{\mathcal{P}}|^2Q_{\mathcal{P}} )
    \\
    =&-i\overrightarrow{\td{\text{Mod}}} \cdot \overrightarrow{\td V}+iL_QP-i\calN_{\calP}
\end{align*}
where $\overrightarrow{\td{\text{Mod}}}\cdot \overrightarrow{\td V}$ is modulation terms and is given by
\begin{align*}
    \overrightarrow{\td{\text{Mod}}} &= \left(\frac{\lambda_s}{\lambda}, \frac{\nut{x}_s}{\lambda}, \tilde{\gamma}_s,b_s , \nu_s ,\eta_s \right), \quad
    \\
    \overrightarrow{\td V} &= (- i\Lambda Q_{\mathcal{P}}, - i\nabla Q_{\mathcal{P}}, - Q_{\mathcal{P}}, i\partial_b Q_{\mathcal{P}}, i\partial_\nu Q_{\mathcal{P}}, i\partial_\eta Q_{\mathcal{P}} ).
\end{align*}
The nonlinear terms $\mathcal{N}_{\mathcal{P}}$ are the collection of cubic or higher terms with respect to $R_{p,q,r}$,
\begin{align}\label{definition of calN_P}
		\calN_{\calP}\coloneqq|Q_{\mathcal{P}}|^2Q_{\mathcal{P}}-Q^3-Q^2P-2\Re(Q^2P).
\end{align}

We will construct the profile \( Q_{\mathcal{P}} \) in such a way that the term \( iL_Q P - i\mathcal{N}_{\mathcal{P}} \) decomposes into one part that is absorbed into \( \overrightarrow{\td V} \), allowing us to determine the modulation laws, while ensuring that the remaining terms are small.
%From the expansion \eqref{eq:modi profile general expan}, and under the assumption that the modulation parameters satisfy $\calP\approx 0$, the profile vector $\overrightarrow{V}$ can be approximated by
%\begin{align*}
%    \overrightarrow{V}\approx (- i\Lambda Q, - i\nabla Q, - Q, -R_{1,0,0}, -R_{0,1,0}, iR_{0,0,1}),
%\end{align*}
%whose components span the generalized kernel of the operator $L_Qi$. 

%\textcolor{red}{Since we assume \textit{a priori} smallness on $\mathcal{P}$, we correct the modulation law by subtracting the low order terms with respect to $\mathcal{P}$ in $\Psi_{\mathcal{P}}$. With sufficiently small profile error $\Psi_{\mathcal{P}}$, we expect a formal law:
%\begin{align*}
%    \frac{\lambda_s}{\lambda}+b =0,\quad \frac{\nut{x}_s}{\lambda}-\nu =0,\quad \tilde{\gamma}_s=0,\quad 
%    \nu_s + b\nu =0, \quad b_s+\frac{1}{2}b^2+\eta =0.
%\end{align*}
%Then, we have 
%\begin{equation}\label{formal order of paramters}
%    b^2 + \eta \sim \lambda,\quad \nu \sim \lambda.
%\end{equation}}

According to this strategy, we state the construction of the modified profile.
\begin{proposition}[Modified profile]\label{Singular profile}
  Let $\mathcal{P}=(b,\nu,\eta)$  be a modulation parameter collection with
  \begin{align*}
		\bbN_{\calP}\coloneqq\{(p,q,r)\in \bbN_0^3, 1\leq p+2q+2r < 6 \}.
	\end{align*}There is  a modified profile $Q_{\mathcal{P}}\in H^1(\bbR)$ of the form 
    \begin{equation}\label{modified profile of Q}
		\begin{aligned}
			Q_{\mathcal{P}} &=Q+P= Q + \sum_{(p,q,r)\in \bbN_{\calP}}(ib)^p(i\nu)^q\eta^r R_{p,q,r},
		\end{aligned}
	\end{equation}
    where   $R_{p,q,r}\in H^1(\bbR)$ real profiles, and constants  $c_1,c_2,c_3\in \bbR$, $c_4<0$ so that the profile error $\Psi_{\mathcal{P}}$ satisfies the approximate identity
	\begin{equation}\label{modified profile eror}
		\begin{aligned}
		    \Psi_{\mathcal{P}}	
            =& iL_{Q}P- i\mathcal{N}_{\mathcal{P}} + b\Lambda Q_{\mathcal{P}} - (\nu+c_2b^2\nu)\nabla Q_{\mathcal{P}}\\
            &-\left(\frac{b^2}{2} +\eta+c_1b^4+ c_3b^2\eta+c_4\nu^2\right)\partial_bQ_{\mathcal{P}} - b\nu\partial_{\nu}Q_{\mathcal{P}},
		\end{aligned}
	\end{equation}
	and estimates, for $m=0,1$,
	\begin{equation}\label{profile error estimate}
		\norm{\Psi_{\mathcal{P}}}_{H^m} \lesssim_m \sum_{p+2q+2r\geq6} b^p\nu^q\eta^r,\quad 
        |\nabla ^m \Psi_{\mathcal{P}}(y)| \lesssim_m \sum_{p+2q+2r\geq6} b^p\nu^q\eta^r\langle y \rangle ^{-2}.
	\end{equation}
	Moreover, for each \( (p,q,r) \) the profiles $R_{p,q,r}$ satisfy the following regularity and decay bounds:
	\begin{equation}\label{decay of R(pqr)}
		\begin{aligned}
			&\norm{R_{p,q,r}}_{H^m}+\norm{\Lambda R_{p,q,r}}_{H^m}+\norm{\Lambda^{2}R_{p,q,r}}_{H^m} \lesssim_m 1 \\
			&|R_{p,q,r}(y)|+|\Lambda R_{p,q,r}(y)|+|\Lambda^2 R_{p,q,r}(y)| \lesssim \langle y \rangle ^{-2}.
		\end{aligned}
	\end{equation}
\end{proposition}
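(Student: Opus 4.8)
The plan is to run a \emph{tail computation} in the spirit of \cite{RaphaelRodnianski2012,KLR2013ARMAhalfwave}, now carrying the extra $\eta$-direction, and to build $Q_{\mathcal{P}}=Q+P$ order by order in the total weight $N\coloneqq p+2q+2r$ of the multi-index $(p,q,r)$, for $N=1,\dots,5$. First I substitute $v_\eta^{\flat}=Q_{\mathcal{P}}$ into \eqref{eq:renormalised v}, replace $\lambda_s/\lambda,\nut{x}_s/\lambda,\tilde{\gamma}_s,b_s,\nu_s,\eta_s$ by the formal laws obtained from $\overrightarrow{\td{\text{Mod}}}=0$ (that is, $\lambda_s/\lambda=-b$, $\nut{x}_s/\lambda=\nu+c_2b^2\nu$, $\tilde{\gamma}_s=0$, $b_s=-\tfrac{b^2}{2}-\eta-c_1b^4-c_3b^2\eta-c_4\nu^2$, $\nu_s=-b\nu$, $\eta_s=0$), and expand every term in powers of $(b,\nu,\eta)$. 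Since $\partial_b$ and $\partial_\nu$ lower a weight, the transport terms $b\Lambda Q_{\mathcal{P}}$ and $(\nu+c_2b^2\nu)\nabla Q_{\mathcal{P}}$ raise it by the weight of their prefactor, and $\mathcal{N}_{\mathcal{P}}$ is quadratic-or-higher in $P$, collecting the coefficient of $(ib)^p(i\nu)^q\eta^r$ at weight $N$ yields an equation of the schematic form $L_Q^{\pm}[R_{p,q,r}]=\mathcal{F}_{p,q,r}$, where $L_Q^{+}g\coloneqq Dg+g-Q^2g$ and $L_Q^{-}g\coloneqq Dg+g-3Q^2g$ capture the action of $L_Q$ on purely imaginary and on real functions — the sign being $+$ when $p+q$ is odd and $-$ when $p+q$ is even — and $\mathcal{F}_{p,q,r}$ is an explicit expression in $Q$, the previously constructed profiles $R_{p',q',r'}$ of strictly smaller weight, their $\Lambda$- and $\nabla$-derivatives, and the constants $c_1,\dots,c_4$. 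Because $Q$ is real and even, a parity/reality bookkeeping forces every $R_{p,q,r}$ to be real with a definite parity, and the lowest orders reproduce $R_{1,0,0}=S_1$, $R_{0,1,0}=G_1$, $R_{0,0,1}=\rho_1$ directly from the kernel relations \eqref{generalized kernel relations}.

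At each step I have to invert $L_Q^{\pm}$. Since $L_Q^{+}$ is self-adjoint with $\ker L_Q^{+}=\mathbb{R}Q$ and $L_Q^{-}$ is self-adjoint with $\ker L_Q^{-}=\mathbb{R}\nabla Q$ (the single negative eigenvalue of $L_Q^{-}$ being irrelevant for solvability), the equation $L_Q^{\pm}[R_{p,q,r}]=\mathcal{F}_{p,q,r}$ is solvable precisely when $\mathcal{F}_{p,q,r}\perp Q$, respectively $\perp\nabla Q$; I then remove the one-dimensional ambiguity by an orthogonality normalization and invert using the regularity and decay estimates for $L_Q^{-1}$ from \cite{KLR2013ARMAhalfwave}. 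Most of these solvability conditions hold for free — either the source has the wrong parity to pair with $Q$ or $\nabla Q$, or the pairing vanishes by an algebraic identity inherited from the formal conservation of mass, momentum, and energy of the approximate dynamics, exactly as in the $\eta=0$ construction of \cite[Proposition~4.1]{KLR2013ARMAhalfwave}. The genuinely new ones, created by the $\eta$-direction and by the mixed terms in $b,\nu,\eta$ at weight four, are those I pay for with the four free constants: $c_1$ (coefficient of $b^4$), $c_2$ (of $b^2\nu$ in the translation law), $c_3$ (of $b^2\eta$), and $c_4$ (of $\nu^2$). Each such condition collapses, via $L_Q[\Lambda Q]=-Q$, $L_Q[iS_1]=i\Lambda Q$, $L_Q[iG_1]=-i\nabla Q$, $L_Q[\rho_1]=S_1$, to a scalar equation affine in the relevant constant with nonzero coefficient; for $c_4$ the pairing that appears is a positive multiple of $(\Lambda Q,S_1)_r>0$, equivalently of $-(Q,\rho_1)_r>0$, which forces $c_4<0$. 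This sign is precisely the spectral imprint of the mass-deficient direction $\rho_1$ (see Remark~\ref{where and why we choose c_j}).

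Once all the $R_{p,q,r}$, $(p,q,r)\in\mathbb{N}_{\mathcal{P}}$, and the constants $c_j$ are fixed, the error $\Psi_{\mathcal{P}}$ defined by \eqref{modified profile eror} is, by construction, the sum of the leftover terms of total weight $\ge 6$, that is a \emph{finite} sum $\sum_{p+2q+2r\ge 6}b^p\nu^q\eta^r\,\Xi_{p,q,r}$ in which each $\Xi_{p,q,r}$ is an explicit finite combination of $Q$, the $R_{p',q',r'}$, and their derivatives. The bounds \eqref{decay of R(pqr)} follow by induction on the weight: at each step $\mathcal{F}_{p,q,r}$ lies in $\langle y\rangle^{-2}H^{k}$ for every $k$ — it is built from $Q\sim\langle y\rangle^{-2}$, from the lower profiles (which decay like $\langle y\rangle^{-2}$ by the inductive hypothesis), and from finitely many of their derivatives — so the inversion estimates give $\langle y\rangle^{2}R_{p,q,r}\in H^{k+1}$ and hence $|R_{p,q,r}(y)|\lesssim\langle y\rangle^{-2}$; applying $\Lambda$ and $\Lambda^{2}$ to the defining equations and using $[D,\Lambda]=D$ together with the fast decay of $yQQ'$ propagates the same estimates to $\Lambda R_{p,q,r}$ and $\Lambda^{2}R_{p,q,r}$. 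Plugging \eqref{decay of R(pqr)} into the explicit $\Xi_{p,q,r}$ and using that $H^{1}(\mathbb{R})$ is an algebra and that products of $\langle y\rangle^{-2}$-decaying functions still decay like $\langle y\rangle^{-2}$ then gives \eqref{profile error estimate}.

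The hard part will be the cancellation bookkeeping of the second paragraph: determining which of the finitely many solvability conditions are automatic (and for what reason — a parity obstruction, or a particular conservation-law identity) and which must be actively imposed, and then checking that the latter can be met \emph{simultaneously} by one choice of $(c_1,c_2,c_3)\in\mathbb{R}^{3}$ and $c_4<0$, with no conflicting demands on a single constant and with the sign of $c_4$ consistent. Nonlocality is what makes this painful: unlike for \eqref{NLS}, the pairings $(\mathcal{F}_{p,q,r},Q)_r$ and $(\mathcal{F}_{p,q,r},\nabla Q)_r$ cannot be evaluated by elementary integration by parts, so the identities I need must be extracted from the algebraic structure of $L_Q^{\pm}$ and from \eqref{generalized kernel relations}, \eqref{positivity of L_Q 0}; keeping the parity and reality structure straight through the cubic term $\mathcal{N}_{\mathcal{P}}$ and the mixed contributions in $b,\nu,\eta$ is the delicate point.
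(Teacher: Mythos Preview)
Your overall plan is the paper's plan: the paper also orders by the total weight $k=p+2q+2r$, solves $L_Q[R_{p,q,r}]=\mathcal{F}_{p,q,r}$ successively for $k=1,\dots,5$, and invokes Lemma~\ref{lem:L inverse bdd} to propagate the $\langle y\rangle^{-2}$ decay. Two points, however, are not as you describe them, and the first is a genuine gap.

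\textbf{The counting at $k=5$.} At weight five there are \emph{six} multi-indices, $(5,0,0),(3,1,0),(3,0,1),(1,2,0),(1,1,1),(1,0,2)$, and each carries one nontrivial solvability condition (orthogonality to $Q$ or to $\nabla Q$, depending on parity). You have only four free constants. The paper uses $c_1,c_2,c_3,c_4$ to kill the conditions at $(5,0,0),(3,1,0),(3,0,1),(1,2,0)$ respectively (each constant first enters linearly at $k=4$ and is fixed at $k=5$), and then must \emph{verify by hand} that the remaining two conditions, at $(1,1,1)$ and $(1,0,2)$, are automatically satisfied. This is not a parity miracle and not a conservation-law one-liner: the paper spends the computations labeled $I_1,I_2,I_3$ and $J_1,\dots,J_3,K_1,\dots,K_5$ tracing cancellations through the equations \eqref{order b^2}--\eqref{order eta^2}, the commutators \eqref{comm 1}--\eqref{comm 3}, and the identity \eqref{relation of R(200) and R(100)} to close these two checks. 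Your proposal names ``no conflicting demands on a single constant'' as the thing to check, but the real issue is two conditions with \emph{no} constants left; you need an argument for why they vanish.

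\textbf{The sign of $c_4$.} Your attribution of $c_4<0$ to the mass-deficient direction $\rho_1$ is incorrect. The affine coefficient of $c_4$ in the $(1,2,0)$ solvability condition is indeed a multiple of $(\Lambda Q,S_1)_r>0$, but that only fixes the denominator; the sign of $c_4$ comes from the numerator, which after the simplifications in Appendix~\ref{Proof of c_4<0} reduces to $-(i^{-1}L_Q[iR_{0,1,0}],R_{0,1,0})_r$, a quantity built from the \emph{translation} mode $R_{0,1,0}=G_1$. The final value is
\[
c_4=-\frac{(i^{-1}L_Q[iR_{0,1,0}],R_{0,1,0})_r}{2(i^{-1}L_Q[iR_{1,0,0}],R_{1,0,0})_r}<0
\]
by \eqref{positivity of L_Q 0}. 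The instability role of $\rho_1$ is real, but it enters elsewhere (the mass variation \eqref{mass escape ansatz}), not in the sign of $c_4$.
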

\begin{remark}
   The constants \( c_1, c_2, c_3, c_4 \in \mathbb{R} \) that appear in modulation laws are fixed and do not change with time. Unlike previous research ~\cite{KLR2013ARMAhalfwave}, where a less precise control of the profile error sufficed, our analysis requires a more sophisticated profile construction and higher precision in modulation laws to minimize the profile error. Further details will be discussed in Section~\ref{Set up of the modulation analysis}
\end{remark}
\begin{remark}[Formal modulation law]\label{rem:formal law}
  Based on Proposition~\ref{Singular profile}, we arrive at a formal ODE system of modulation parameters, given by:
\begin{align}
    \begin{array}{ccc}
        \frac{\lambda_s}{\lambda} + b = 0, & \frac{\nut{x}_s}{\lambda} - \nu - c_2 b^2 \nu = 0, & \tilde{\gamma}_s = 0, \\
        b_s + \frac{b^2}{2} + \eta + c_1 b^4 + c_3 b^2 \eta + c_4 \nu^2 = 0, & \nu_s + b \nu = 0, & \eta_s = 0.
    \end{array} \label{eq:formal modul}
\end{align}
It is important to highlight that this system is expressed in the renormalized time \( s \), where \( s = \infty \) indicates the blow-up time \( t = 0 \). Although the complete system \eqref{eq:formal modul} features nonlinear terms that complicate direct analysis, these higher-order terms can be considered as perturbations on the assumption \( b(t=0) = 0 \), as the modulation parameter \( b \) is small close to time $t=0$. This leads us to study a simplified ODE system, which encapsulates the leading-order dynamics:
\begin{align}
    \begin{array}{ccc}
        \frac{\lambda_s}{\lambda} + b = 0, & \frac{\nut{x}_s}{\lambda} - \nu = 0, & \tilde{\gamma}_s = 0, \\
        b_s + \frac{b^2}{2} + \eta + c_4 \nu^2 = 0, & \nu_s + b \nu = 0, & \eta_s = 0.
    \end{array} \label{eq:formal modul perturb}
\end{align}
Note that \( c_4 < 0 \) (See Appendix~\ref{Proof of c_4<0}, for its proof). \eqref{eq:formal modul perturb} admits three conserved quantities:
\begin{align}
    \frac{b^2+2\eta-2c_4\nu^2}{\lambda} \equiv \ell,\quad \frac{\nu}{\lambda}=\nu_0, \quad \eta\equiv \eta_0. \label{eq:formal conserv}
\end{align}
From this, we compute $\lambda_{tt}$,
\begin{align*}
    \lambda_{tt}=-b_t=\frac{\ell}{2}+2c_4\nu_0^2\lambda.
\end{align*}
First, if \( \nu_0 = 0 \), the equation reduces to
\begin{align}
    b(t)=-\frac{\ell}{2}t,\quad \lambda(t)=\frac{\ell}{4}t^2+\frac{2\eta_0}{\ell} ,\quad t<0. \label{eq:lambda without nu0}
\end{align}
On the other hand, if \( \nu_0 \neq 0 \), then \( \lambda \) satisfies a second-order linear ODE:
\begin{align*}
    \lambda_{tt}=\frac{\ell}{2}+2c_4\nu_0^2\lambda,
\end{align*}
with initial conditions
\begin{align*}
    \lambda_t(0) = -b(0) = 0, \quad \lambda(0) = \frac{\ell - \sqrt{\ell^2 - 16|c_4| \nu_0^2\cdot \eta_0}}{4|c_4| \nu_0^2}.
\end{align*}
Here, the initial condition is determined by the conserved quantities in \eqref{eq:formal conserv} and the simplified system \eqref{eq:formal modul perturb}. The corresponding solution is given by:
\begin{equation}
    \begin{aligned}
    \lambda(t;\eta_0,\nu_0)&=  \frac{\ell}{4|c_4|\nu_0^2} 
    - \frac{\sqrt{\ell^2 - 16|c_4|\nu_0^2\cdot \eta_0}}{4|c_4|\nu_0^2}\cos\left(\sqrt{2|c_4|\nu_0^2} t\right),
    \\
    b(t;\eta_0,\nu_0)&=-\frac{\sqrt{\ell^2 - 16|c_4|\nu_0^2 \cdot \eta_0}}{2\sqrt{2|c_4|\nu_0^2}} \sin\left(\sqrt{2|c_4|\nu_0^2} t\right).
\end{aligned} \label{eq:lambda with nu0}
\end{equation}
One can verify that taking the limit $\nu_0\to 0$,  \eqref{eq:lambda with nu0} converges to \eqref{eq:lambda without nu0}. Moreover, when $\eta_0=0$, the solution satisfies $\lambda(t;0,\nu_0)\to 0$ as $t\to 0$. This sheds light on the fact that the Galilean parameter does not introduce any instability into the blow-up dynamics.
\end{remark}

Before we start the proof of Proposition~\ref{Singular profile}, we recall a technical lemma concerning $(L_Q)^{-1}$:
\begin{lemma}[\cite{KLR2013ARMAhalfwave}, Appendix A]\label{lem:L inverse bdd}
    Let $f,g\in H^k(\bbR)$ be  real-valued functions for some $k\geq 0$ and suppose $(if,iQ)_r=0$ and $(g,\nabla Q)_r=0$. Then we have the regularity bounds
    \begin{align*}
        \|\langle x\rangle^2 L_Q^{-1}[if]\|_{H^{k+1}}\lesssim_k \|\langle x\rangle^2f\|_{H^{k}},\quad 
        \|\langle x\rangle^2L_Q^{-1}[g]\|_{H^{k+1}}\lesssim_k \|\langle x\rangle^2g\|_{H^{k}},
    \end{align*}
    and the decay estimates
    \begin{align*}
        \|L_Q^{-1}[if]\|_{L^\infty}\lesssim_k \|f\|_{L^\infty},\quad 
        \|L_Q^{-1}[g]\|_{L^\infty}\lesssim_k \|g\|_{L^\infty}.
    \end{align*}
\end{lemma}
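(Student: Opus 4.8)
The plan is to exploit the reality of $Q$ to diagonalize $L_Q$ and then argue separately on each block. Writing $h=h_1+ih_2$ with $h_1,h_2$ real one has $L_Q h=L_+h_1+iL_-h_2$, where $L_+:=D+1-3Q^2$ and $L_-:=D+1-Q^2$ are self-adjoint on $L^2(\mathbb{R})$ with domain $H^1$. Since $f,g$ in the statement are real-valued, the condition $(if,iQ)_r=0$ is precisely $f\perp Q$ and $L_Q^{-1}[if]=i\,L_-^{-1}[f]$, while $L_Q^{-1}[g]=L_+^{-1}[g]$ with $g\perp\nabla Q$; so it suffices to prove the two bounds for $L_-^{-1}$ on $\{Q\}^{\perp}$ and $L_+^{-1}$ on $\{\nabla Q\}^{\perp}$. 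The spectral input we use, all from \cite{FrankLenzmann2013Acta} and recalled in \cite{KLR2013ARMAhalfwave}, is: $Q^2$ is a relatively compact perturbation of $D$, so $\sigma_{\mathrm{ess}}(L_\pm)=[1,\infty)$ by Weyl's theorem; $\ker L_-=\mathrm{span}\{Q\}$ with $L_-\ge 0$; and $\ker L_+=\mathrm{span}\{\nabla Q\}$ with $L_+$ having exactly one negative eigenvalue (positive ground state $\phi$) and being nonnegative on the rest. Consequently each $L_\pm$ restricts to an isomorphism of $(\ker L_\pm)^{\perp}$ onto itself with a bounded inverse on $L^2$; this is the meaning of $L_\pm^{-1}$, and it gives $\|L_\pm^{-1}h\|_{L^2}\lesssim\|h\|_{L^2}$. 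The gain of one derivative then comes from the ellipticity of $D$ at high frequency: writing $u=L_\pm^{-1}h$, i.e.\ $Du=h-u+Vu$ with $V\in\{Q^2,3Q^2\}$ Schwartz, one has $\|u\|_{H^{k+1}}\lesssim\|Du\|_{H^k}+\|u\|_{H^k}\lesssim\|h\|_{H^k}+\|u\|_{H^k}$, and iterating down from the $L^2$ bound yields the unweighted estimate $\|u\|_{H^{k+1}}\lesssim\|h\|_{H^k}$.

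\textbf{Weighted bounds by commuting the weight.} The potential terms commute with multiplication by $\langle x\rangle^2$, so $[L_\pm,\langle x\rangle^2]=[D,\langle x\rangle^2]=[D,x^2]$; since $[D,x]$ is the bounded Fourier multiplier with symbol $-i\,\mathrm{sgn}\,\xi$, the operator $[D,x^2]$ loses only one power of the weight, i.e.\ $\|[D,\langle x\rangle^2]v\|_{H^k}\lesssim\|\langle x\rangle v\|_{H^k}$ modulo lower-order and finite-rank corrections that are absorbed using the decay of $u$. I would then bootstrap over the weight exponent: assuming $\|\langle x\rangle^{\sigma'}u\|_{H^{k+1}}\lesssim\|\langle x\rangle^{\sigma'}h\|_{H^k}$ for every $\sigma'<\sigma$ (with $\sigma\le2$), the identity $L_\pm(\langle x\rangle^{\sigma}u)=\langle x\rangle^{\sigma}h+[D,\langle x\rangle^{\sigma}]u$, together with the $L^2$ coercivity of $L_\pm$ on $(\ker L_\pm)^{\perp}$ and the elliptic regularity above, gives $\|\langle x\rangle^{\sigma}u\|_{H^{k+1}}\lesssim\|\langle x\rangle^{\sigma}h\|_{H^k}+\|\langle x\rangle^{\sigma-1}u\|_{H^k}+|(u,\langle x\rangle^{\sigma}e)_r|$, where $e\in\{Q,\nabla Q\}$ spans $\ker L_\pm$. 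The first two terms are controlled by hypothesis; for the kernel-component term one uses that $e$ decays like $\langle x\rangle^{-2}$, closing by Cauchy--Schwarz when $\sigma<3/2$ (so $\langle x\rangle^{\sigma}e\in L^2$) and by H\"older against $\|u\|_{L^1}$ when $3/2\le\sigma\le2$, where $\|u\|_{L^1}$ is finite and controlled once the pointwise $\langle x\rangle^{-2}$ decay of $u$ is in hand. Taking $\sigma=2$ produces the stated weighted estimates.

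\textbf{The $L^{\infty}$ decay estimate.} Here the key is that the symbol $(|\xi|+1)^{-1}$ is smooth away from $\xi=0$ with only a Lipschitz (conical) singularity there, so the convolution kernel $G$ of $(D+1)^{-1}$ satisfies $|G(x)|\lesssim\langle x\rangle^{-2}$, hence $G\in L^1(\mathbb{R})$. Factoring $L_\pm=(D+1)\bigl(I-(D+1)^{-1}V\bigr)$, the operator $I-(D+1)^{-1}V$ is invertible on $(\ker L_\pm)^{\perp}$ modulo the finite-dimensional kernel and, for $L_+$, the negative eigendirection $\phi$, and all the correction terms are finite-rank operators whose integral kernels are built from $G$ and the $\langle x\rangle^{-2}$-decaying functions $Q,\nabla Q,\phi$; each such kernel is $L^1$ in $y$ uniformly in $x$. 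Therefore $L_\pm^{-1}$ maps $L^{\infty}$ boundedly into $L^{\infty}$, which is the claim, and this same analysis supplies the pointwise $\langle x\rangle^{-2}$ decay of $u$ used in the weighted step.

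\textbf{Main obstacle.} The genuine difficulty throughout is the nonlocality of $D=|\nabla|$: for a local Schr\"odinger operator $-\Delta+V$ the weighted and pointwise bounds would be routine ODE-type arguments, whereas here one is forced to rely on pseudodifferential commutator estimates to move polynomial weights past $D$ and on a careful analysis of the symbol $(|\xi|+1)^{-1}$ to extract the kernel decay. The sharpest point is that $Q$, $\nabla Q$ and the resolvent kernel $G$ all decay only like $\langle x\rangle^{-2}$ — precisely the borderline rate that is integrable but not square-integrable against the quadratic weight — so the weighted estimate cannot be obtained in one step (a naive Schur test on the kernel of $L_\pm^{-1}$ against $\langle x\rangle^2$ fails in one of the two variables) and must instead be propagated through the bootstrap in the weight exponent, with the kernel-component term handled differently on the ranges $\sigma<3/2$ and $\sigma\ge3/2$.
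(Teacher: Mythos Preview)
The paper does not prove this lemma; it simply cites \cite[Appendix A]{KLR2013ARMAhalfwave}. Your sketch follows the standard route one finds there: block-diagonalize $L_Q$ into $L_\pm$, use the known spectral picture ($\sigma_{\mathrm{ess}}=[1,\infty)$, one-dimensional kernels, one negative eigenvalue for $L_+$) to get $L^2$-invertibility on the orthogonal complements, upgrade regularity by ellipticity of $D$, push polynomial weights through via $[D,x^2]$ commutators, and read off the $L^\infty$ bound from the $\langle x\rangle^{-2}$ decay of the resolvent kernel of $(D+1)^{-1}$. This is the right architecture and matches what the cited appendix does.

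Two presentational points. First, your weighted bootstrap for $\sigma\ge 3/2$ invokes $\|u\|_{L^1}$, which you justify by the pointwise $\langle x\rangle^{-2}$ decay produced in the $L^\infty$ step; that is fine logically, but it means the $L^\infty$/decay analysis must come \emph{before} the weighted $H^{k+1}$ estimate in any clean write-up, not after. Second, the assertion that $(I-(D+1)^{-1}V)^{-1}$ is bounded on $L^\infty$ ``modulo finite-rank corrections with $L^1$-in-$y$ kernels'' is the one place where a reader will want more than a sentence: one has to actually use that the eigenfunctions $Q$, $\nabla Q$, $\phi$ inherit the $\langle x\rangle^{-2}$ decay of the resolvent kernel (via $L_\pm e=\lambda e\Rightarrow e=(D+1)^{-1}[(V+\lambda)e]$ and bootstrapping), so that the spectral projectors are integral operators with the claimed kernel bounds. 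None of this is a gap, but it is where the work lies.
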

The proof of Lemma \ref{lem:L inverse bdd} can be found in \cite[Appendix A]{KLR2013ARMAhalfwave}.

%렘마/명제/정리의 Statement와 증명 사이에 부연설명이 껴 있을경우는, 반드시 밑에처럼 \verb|[Proof of Proposition~\ref{Singular profile}]| 와 같은식으로 적어줘야함.

\begin{proof}[Proof of Proposition~\ref{Singular profile}] 
    We define the total order of parameters as  $k \coloneqq p + 2q + 2r$, where $(p,q,r) \in \mathbb{N}_{\mathcal{P}}$ . The order of parameter is motivated by the relative sizes of the parameters in the formal laws  \eqref{eq:formal conserv}, $b^2+\eta\sim \lambda$ and $\nu\sim \lambda$. 
    
    We begin by briefly outlining the strategy. The goal is to minimize the profile error  $\Psi_{\mathcal{P}}$ of modified profile $Q_\mathcal{P} \coloneqq Q+P= Q + \sum_{p,q,r\in \bbN_{\calP}}(ib)^p(i\nu)^q\eta^r R_{p,q,r}$, where
    \begin{equation}
        \begin{aligned}
            \Psi_{\mathcal{P}} &\coloneqq iL_QP - \mathcal{P}_1(b,\nu,\eta)\partial_b Q_\mathcal{P} - \mathcal{P}_2(b,\nu,\eta)\partial_v Q_{\mathcal{P}}\\
            &\quad + (b + \mathcal{P}_3(b,\nu,\eta))\Lambda Q_{\mathcal{P}} 
            - (\nu + \mathcal{P}_4(b,\nu,\eta))\nabla Q_{\mathcal{P}} - i\mathcal{N}_{\mathcal{P}}, \\
             \calN_{\calP} &\coloneqq|Q_{\mathcal{P}}|^2Q_{\mathcal{P}}-Q^3-Q^2P-2\Re(Q^2P).
        \end{aligned}
    \end{equation}
We will iteratively determine polynomials $\{\mathcal{P}_j\}_{j=1,2,3,4}$ and the corrector profiles $R_{p,q,r}$ to eliminate all terms in $\Psi_{\mathcal{P}}$ of total parameter order $k\le 5$. For instance, to eliminate terms of the order $(ib)^1(i\nu)^0\eta^0$ in  $\Psi_{\mathcal{P}}$, we choose $R_{1,0,0}$ by $$R_{1,0,0} = i^{-1}(L_Q)^{-1}[i\Lambda Q],$$
so that the $b$-coefficient terms in $\mathcal{P}_j$ vanish, and no order $k=1$ term appears in $\Psi_{\mathcal{P}}$. To carry out this procedure for each order $k$, we need to investigate the solvability of the equation $L_Q[f] = g$. A necessary and sufficient condition for the existence of a solution $f\in L^2(\mathbb{R})$ is that $g\in L^2(\mathbb{R})$ satisfies the orthogonality conditions $(g,\nabla Q)_r = (g,iQ)_r = 0$. 
We begin by investigating the decay of $R_{p,q,r}$. Due to the structure of the Hilbert transform and the solvability conditions by choosing the coefficients of $\mathcal{P}_{j}$ to satisfy the required orthogonality conditions, the corrector terms $R_{p,q,r}(y)$ exhibit decay of order $\langle y \rangle^{-2}$. This decay is followed by using Lemma~\ref{lem:L inverse bdd} inductively. We first construct profiles $R_{p,q,r}(y)$ and coefficients of $\mathcal{P}_j$ and then prove the decay estimate \eqref{decay of R(pqr)}.
    Before we proceed to the proof, we recall the commutator formula.
    \begin{equation}\label{comm 1}
        [\Lambda , \nabla] = -\nabla,
    \end{equation}
    \begin{equation}\label{comm 2}
        [i^{-1}L_Q,\nabla]f = -i\nabla(Q^2)f - 2i\Re\{\nabla(Q^2)f\},
    \end{equation}
    \begin{equation}\label{comm 3}
        [i^{-1}L_Q,\Lambda]f = -i(Df + y\nabla(Q^2)f + 2\Re(y\nabla(Q^2)f)),
    \end{equation}
    for some function $f:\mathbb{R} \rightarrow \mathbb{C}$.

    \vspace{10bp}
    \textbf{Construction of $R_{p,q,r}(y)$.}   We construct the profiles $R_{p,q,r}$ in increasing order of the total parameter $k=p + 2q + 2r$.

    \vspace{5bp}
    
    \noindent $\boldsymbol{k=0}$ \textbf{:}
    Recalling the kernel relation of $L_Q$:
    \begin{equation*}
        i^{-1}L_{Q}[iQ] = i^{-1}L_{Q}[\nabla Q]  =0, \qquad i^{-1}L_Q[\Lambda Q]= iQ,
    \end{equation*}
    we observe that $k=0$-terms in $\Psi_{\mathcal{P}}$.
    
    \vspace{5bp}
    \noindent $\boldsymbol{k=1}$ \textbf{:} There is only one case: $(p,q,r) = (1,0,0)$.

    \vspace{5bp}
    \noindent\textbullet\  Order $\mathcal{O}(b)$: Collecting $\mathcal{O}(b)$-terms in \eqref{modified profile eror} we solve  
    \begin{equation}\label{order b}
        i^{-1}L_{Q} [iR_{1,0,0}] = \Lambda Q.
    \end{equation}
    We have $(i\Lambda Q, iQ)_r = (\Lambda Q,Q)=0$, since $\Lambda $ is antisymmetric. In addition, from real-imaginary consideration we have $(i\Lambda Q, \nabla Q)_r=0$. Thus, the solvability condition of the linearized operator $L_Q$ is satisfied and there is a unique solution $R_{1,0,0}$ to \eqref{order b}. Note that $R_{1,0,0}$ is even and $(iR_{1,0,0},iQ)_r = 0$.

    \vspace{5bp}
    \noindent $\boldsymbol{k=2}$ \textbf{:} We consider the case $(p,q,r) = (2,0,0),\ (0,1,0),\ (0,0,1)$.

    \vspace{5bp}
    \noindent\textbullet \ Order $\mathcal{O}(b^2)$: Collecting $\mathcal{O}(b^2)$-terms in \eqref{modified profile eror} we solve   
    \begin{equation}\label{order b^2}
        \begin{aligned}
            L_Q [R_{2,0,0}] &= -\frac{1}{2}R_{1,0,0} + \Lambda R_{1,0,0} - R_{1,0,0}^2Q \eqqcolon \text{(RHS)}_{b^2},
        \end{aligned}
    \end{equation}
    Note that $\Lambda R_{1,0,0} $ and $ R^2_{1,0,0} $ are from $b\Lambda Q_{\mathcal{P}}$ and $- i\mathcal{N}_{\mathcal{P}}$ in \eqref{modified profile eror} respectively. The first term $-\frac{1}{2}R_{1,0,0}$ arises from the $b^2$-term in the coefficient of $\partial_bQ_{\mathcal{P}}$ in \eqref{modified profile eror}. The coefficient $-\frac{1}{2}$ is determined to satisfy the solvability condition at order $O(b^3)$. We will see later. The solvability condition for \eqref{order b^2} is obvious, from even-odd or real-imaginary products. In addition, the unique solution $R_{2,0,0}$ is even and satisfies $(R_{2,0,0},\nabla Q)_r=0$.
    
    \vspace{5bp}
    \noindent \textbullet \ Order $\mathcal{O}(\nu)$: Collecting $\mathcal{O}(\nu)$-term in \eqref{modified profile eror} we solve
    \begin{equation}\label{order v}
        i^{-1}L_{Q} [iR_{0,1,0}] = - \nabla Q.
    \end{equation}
    Note that $-\nabla Q$ is from $-\nu\nabla Q_{\mathcal{P}}$. The solvability condition at order for \eqref{order v} is obvious, from even-odd or real-imaginary products. Furthermore, the unique solution $R_{0,1,0}$ is odd and satisfies $(i\nabla Q,iQ)_r = 0$.

    \vspace{5bp}
    \noindent \textbullet \ Order $\mathcal{O}(\eta)$: Collecting $\mathcal{O}(\eta)$-term in \eqref{modified profile eror} we solve
    \begin{equation}\label{order eta}
        L_Q[R_{0,0,1}] = R_{1,0,0}.
    \end{equation}
    Note that $R_{1,0,0}$ comes from the multiplication with the $\eta$-term in the coefficient of $\partial_b Q_{\mathcal{P}}$, as well as from the $R_{1,0,0}$-term in $\partial_b Q_{\mathcal{P}}$ in \eqref{modified profile eror}. From \eqref{order b}, the solvability condition is satisfied due to the real-imaginary product, and $R_{1,0,0}$ is an even function.

    \vspace{5bp}
    \noindent $\boldsymbol{k=3}$ \textbf{:} There are three cases: $(p,q,r) = (3,0,0),\ (1,1,0), \ (1,0,1)$.

    \vspace{5bp}
    \noindent \textbullet \ Order $\mathcal{O}(b^3)$: Collecting $\mathcal{O}(b^3)$-term in \eqref{modified profile eror} we solve
    \begin{equation}\label{order b^3}
        \begin{aligned}
            i^{-1}L_Q[iR_{3,0,0}] &= -R_{2,0,0} + \Lambda R_{2,0,0} + \text{NL}_{b^3} \eqqcolon \text{(RHS)}_{b^3},\\
            \text{NL}_{b^3} &= 2R_{2,0,0}R_{1,0,0}Q - R_{1,0,0}^3 . 
        \end{aligned}
    \end{equation}
    Note that $\Lambda R_{2,0,0}$ and $\text{NL}_{b^3}$ originate from the terms $b\Lambda Q_{\mathcal{P}}$ and $-i\mathcal{N}_{\mathcal{P}}$ in \eqref{modified profile eror}, respectively. The first term $-R_{2,0,0}$ arises from the $b^2$-term in the coefficient of $\partial_b Q_{\mathcal{P}}$. The coefficient $-\frac{1}{2}$ in front of $b^2$ is chosen to satisfy the solvability condition, and this choice was first introduced in \cite[Proposition 4.1]{KLR2013ARMAhalfwave}. For the reader's convenience, we recall that this choice of $-\frac{1}{2}$ ensures that the solvability condition 
    $$(i(\text{RHS})_{b^3}, iQ)_r = 0$$
    is satisfied.
    Indeed, from \eqref{order b}, we have
    \begin{align*}
        (i(\text{RHS})_{b^3},iQ)_r 
        &= -(R_{2,0,0},Q)_r + \left(R_{2,0,0}, -i^{-1}L_Q[iR_{1,0,0}] + 2Q^2 R_{1,0,0}\right)_r - (R_{1,0,0}^3,Q)_r \\
        &= (R_{2,0,0}, i^2 Q)_r + \frac{1}{2} \|R_{1,0,0}\|_{L^2}^2.
    \end{align*}
    We now claim that
    \begin{equation}\label{relation of R(200) and R(100)}
        (R_{2,0,0}, i^2 Q)_r + \frac{1}{2} \|R_{1,0,0}\|_{L^2}^2 = 0.
    \end{equation}
    In fact, using the identity $i^{-1}L_Q[\Lambda Q] = iQ$ together with \eqref{order b^2}, we compute
    \begin{align*}
        \text{(LHS of \eqref{relation of R(200) and R(100)})} 
        &= (L_Q R_{2,0,0}, \Lambda Q)_r + \frac{1}{2} (R_{1,0,0}, R_{1,0,0})_r \\
        &= -\frac{1}{2} (iR_{1,0,0}, L_Q[iR_{1,0,0}] - iR_{1,0,0})_r \\
        &\quad + (\Lambda(iR_{1,0,0}), L_Q[iR_{1,0,0}])_r - (R_{1,0,0}^2, Q \Lambda Q)_r.
    \end{align*}
    Moreover, by the commutator identity \eqref{comm 2}, we have
    \begin{equation*}
        (\Lambda(iR_{1,0,0}), L_Q[iR_{1,0,0}])_r = \frac{1}{2} \left( \{ D + y \nabla(Q^2) \} [iR_{1,0,0}], iR_{1,0,0} \right)_r.
    \end{equation*}
    Combining the above identity with the pointwise identity $-\frac{y}{2} \nabla(Q^2) + Q \Lambda Q = \frac{1}{2} Q^2$, we obtain \eqref{relation of R(200) and R(100)}.

    On the other hand, the real-imaginary orthogonality condition yields $(i(\text{RHS})_{b^3}, \nabla Q)_r = 0$. Hence, there exists a unique solution $R_{3,0,0}$ to \eqref{order b^3}. Moreover, $R_{3,0,0}$ is an even function satisfying the orthogonality condition $(iR_{3,0,0}, iQ)_r = 0$.

    \vspace{5bp}
    \noindent \textbullet \  Order $\mathcal{O}(b\nu)$: Collecting $\mathcal{O}(b\nu)$-term in \eqref{modified profile eror} we solve
    \begin{equation}\label{order bv}
        \begin{aligned}
            L_Q[R_{1,1,0}] &= -R_{0,1,0} + \Lambda R_{0,1,0} - \nabla R_{1,0,0} + \text{NL}_{b\nu} \eqqcolon \text{(RHS)}_{b\nu},\\
            \text{NL}_{b\nu} &= - 2R_{1,0,0}R_{0,1,0}Q.
        \end{aligned}
    \end{equation}
    Note that the terms $\Lambda R_{0,1,0}$, $-\nabla R_{0,1,0}$, and $\mathrm{NL}_{b\nu}$ originate from the expansion of $b\Lambda Q_{\mathcal{P}}$, $\nu\nabla Q_{\mathcal{P}}$, and $-i\mathcal{N}_{\mathcal{P}}$, respectively. The first term $-R_{0,1,0}$ arises from $-b\nu$-term in the coefficient of $\partial_\nu Q_{\mathcal{P}}$. The choice of coefficient $-1$ for the $b\nu$-term, which was found in \cite[Proposition 4.1]{KLR2013ARMAhalfwave}, is designed to satisfy the solvability condition. For the reader's convenience, we recall that this choice ensures that $((\mathrm{RHS})_{b\nu}, \nabla Q)_r=0$. 

    Indeed, applying the commutator formula \eqref{comm 1}, we compute
    \begin{align*}
        (\Lambda R_{0,1,0}, \nabla Q)_r 
        &= \big([\Lambda, \nabla] R_{0,1,0}, Q\big)_r + \big(\nabla R_{0,1,0}, \Lambda Q\big)_r \\
        &= \big(R_{0,1,0}, \nabla Q\big)_r + \big(\nabla R_{0,1,0}, i^{-1} L_Q [i R_{1,0,0}]\big)_r,
    \end{align*}
    where in the last equality we have used \eqref{order b}. 

    On the other hand, by applying \eqref{comm 2} and \eqref{order v}, we obtain
    \begin{align*}
        -\big(\nabla R_{1,0,0}, \nabla Q\big)_r 
        &= \big(\nabla R_{1,0,0}, i^{-1} L_Q [i R_{0,1,0}]\big)_r \\
        &= \big([i^{-1} L_Q, \nabla] i R_{1,0,0}, R_{0,1,0}\big)_r + \big(\nabla (i^{-1} L_Q [i R_{1,0,0}]), R_{0,1,0}\big)_r \\
        &= \big(\nabla (Q^2) R_{1,0,0}, R_{0,1,0}\big)_r - \big(\nabla R_{0,1,0}, i^{-1} L_Q [i R_{1,0,0}]\big)_r.
    \end{align*}
    Combining the above identities, we verify that $((\mathrm{RHS})_{b\nu}, \nabla Q)_r=0$. Moreover, a direct computation using the real-imaginary pairing yields $((\mathrm{RHS})_{b\nu}, iQ)_r=0$. Therefore, one obtains the existence and uniqueness of an odd solution $R_{1,0,0}$ to \eqref{order bv} satisfying the orthogonality condition $(R_{1,0,0}, \nabla Q)_r = 0$.

    \vspace{5bp}
    \noindent \textbullet \ Order $\mathcal{O}(b\eta)$: Collecting $\mathcal{O}(b\eta)$-term in \eqref{modified profile eror} we solve
    \begin{equation}\label{order b eta}
        \begin{aligned}
            i^{-1}L_Q[iR_{1,0,1}] &= 2R_{2,0,0} + \Lambda R_{0,0,1} + \text{NL}_{b\eta} \eqqcolon (\text{RHS})_{b\eta},\\
            \text{NL}_{b\eta} &= 2R_{0,0,1}R_{1,0,0}Q.
        \end{aligned}
    \end{equation}
    Note that $\Lambda R_{0,0,1}$ and $\mathrm{NL}_{b\eta}$ originate from the expansion of $b\Lambda Q_{\mathcal{P}}$ and $-i\mathcal{N}_{\mathcal{P}}$, respectively. In addition, the first term $2R_{2,0,0}$ arises from the product of the $\eta$-term in the coefficient of $\partial_b Q_{\mathcal{P}}$ and the term $2i^2bR_{2,0,0}$ appearing in $\partial_b Q_{\mathcal{P}}$ in \eqref{modified profile eror}. We now verify the solvability condition.

    By using \eqref{order b}, \eqref{order eta}, and the relation between $R_{2,0,0}$ and $R_{1,0,0}$ given in \eqref{relation of R(200) and R(100)}, we compute
    \begin{align*}
        (i(\mathrm{RHS})_{b\eta}, iQ)_r 
        &= \|R_{1,0,0}\|_{L^2}^2 + \big(R_{0,0,1}, 2Q^2 R_{1,0,0} - i^{-1} L_Q[iR_{1,0,0}] \big)_r \\
        &= \|R_{1,0,0}\|_{L^2}^2 - \big(L_Q[R_{0,0,1}], R_{1,0,0}\big)_r \\
        &= 0.
    \end{align*}
    On the other hand, a direct computation using the real-imaginary pairing yields $(i(\mathrm{RHS})_{b\eta}, \nabla Q)_r = 0$. Therefore, one obtains the existence and uniqueness of an even solution to \eqref{order b eta} satisfying the orthogonality condition $(R_{0,0,1}, iQ)_r=0$.
    \vspace{5bp}

    \noindent $\boldsymbol{k=4}$ \textbf{:} There are six cases: $(p,q,r) = (4,0,0),\ (2,1,0),\ (2,0,1),\ (0,1,1)$, $\ (0,2,0),\ (0,0,2)$.
    
    Since $k$ is even, the index $p$ is also even. Thus, we distinguish between the cases where $q$ is even or odd. A straightforward calculation yields
    \begin{equation}\label{dichoto for q}
        \begin{aligned}
            i^{-1}L_Q[i^{p+q}R_{p,q,r}] &= (\text{odd function}), \quad \text{if } q \equiv 1 \pmod{2}, \\
            L_Q[i^{p+q}R_{p,q,r}] &= (\text{even function}), \quad \text{if } q \equiv 0 \pmod{2},
        \end{aligned}
    \end{equation}
    for $k = p + 2q + 2r = 4$. 

    As a consequence, from even-oddness and real-imaginary product yield the solvability conditions of \eqref{dichoto for q} for arbitrary $c_1, c_2, c_3, c_4 \in \mathbb{R}$ appearing in \eqref{modified profile eror}. Each constant $c_j$ that arises in the equation for $R_{p,q,r}$ will be determined at the next order corresponding to the index $(p+1,q,r)$. For later use at order $k=5$, we record here the equation for $R_{p,q,r}$ with $p + 2q + 2r = 4$.

    \vspace{5bp}
    \noindent \textbullet \ Order $\mathcal{O}(b^4)$: Collecting $\mathcal{O}(b^4)$-term in \eqref{modified profile eror} we solve
    \begin{equation}\label{order b^4}
        \begin{aligned}
            L_Q[R_{4,0,0}] &= c_1R_{1,0,0} -\frac{3}{2}R_{3,0,0} + \Lambda R_{3,0,0} + \text{NL}_{b^4} \eqqcolon (\text{RHS})_{b^4},\\
            \text{NL}_{b^4} &= 3R_{2,0,0}^2Q - 2R_{3,0,0}R_{1,0,0}Q - R_{2,0,0}R_{1,0,0}^2.
        \end{aligned}
    \end{equation}
    We note that $c_1 R_{1,0,0}$ arises from the multiplication of $-c_1b^4$-term in the coefficient of $\partial_b Q_{\mathcal{P}}$ and $iR_{1,0,0}$ appearing in $\partial_b Q_{\mathcal{P}}$. Also, $ - \frac{3}{2} R_{3,0,0}$ arises from the multiplication of $-\frac{1}{2}b^2$-term in the coefficient of $\partial_b Q_{\mathcal{P}}$ and $3i^3b^2R_{3,0,0}$-term appearing in $\partial_b Q_{\mathcal{P}}$. Moreover, the terms $\Lambda R_{3,0,0}$ and $\mathrm{NL}_{b^4}$ originate from the expansion of $b\Lambda Q_{\mathcal{P}}$ and $-i \mathcal{N}_{\mathcal{P}}$, respectively. Thus, applying the even–odd and real–imaginary products yields the solvability conditions for \eqref{order b^4}, and hence for each $c_2 \in \bbR$, there exists a unique even solution $R_{4,0,0}$ to \eqref{order b^4}, depending on $c_2$, satisfying $(R_{4,0,0}, \nabla Q)_r = 0$.

    \vspace{5bp}
    \noindent \textbullet \ Order $\mathcal{O}(b^2\nu)$: Collecting $\mathcal{O}(b^2\nu)$-term in \eqref{modified profile eror} we solve
    \begin{equation}\label{order b^2v}
        \begin{aligned}
            i^{-1}L_Q[iR_{2,1,0}] &= c_2\nabla Q -\frac{3}{2}R_{1,1,0} + \Lambda R_{1,1,0} -\nabla R_{2,0,0} + \text{NL}_{b^2\nu} \\
            &\eqqcolon (\text{RHS})_{b^2\nu},\\
            \text{NL}_{b^2\nu} &= 2R_{1,1,0}R_{1,0,0}Q + 2R_{2,0,0}R_{0,1,0}Q - 3R_{0,1,0}R_{1,0,0}^2.
        \end{aligned}
    \end{equation}
    Note that $c_2 \nabla Q$ comes from the multiplication of $c_2 b^2 \nu$-term in the coefficient of $\nabla Q_{\mathcal{P}}$ and $\nabla Q$, appearing in $\nabla Q_{\mathcal{P}}$. Also, $-\frac{3}{2} R_{1,1,0}$ is the sum of two contributions: the product of the $-\frac{1}{2}b^2$-term in the coefficient in $\partial_b Q_{\mathcal{P}}$ and $i^2\nu R_{1,1,0}$, and the product of the $-b\nu$-term in the coefficient of $\partial_\nu Q_{\mathcal{P}}$ with $i^2bR_{1,1,0}$.
    Finally, $\Lambda R_{1,1,0}$ arises from the expansion of $b \Lambda Q_{\mathcal{P}}$, and $-\nabla R_{2,0,0}$ arises from the expansion of $-\nu \nabla Q_{\mathcal{P}}$. Finally, $\mathrm{NL}_{b^2 \nu}$ arises from the expansion of $-i \mathcal{N}_{\mathcal{P}}$. From even-odd or real-imaginary product yield that solvability conditions of \eqref{order b^2v} are obviously satisfied. Therefore, for each $c_2 \in \mathbb{R}$, there exists a unique odd solution $R_{2,1,0}$ to \eqref{order b^2v}, depending on $c_2$, and orthogonal to $iQ$.  

    \vspace{5bp}
    \noindent \textbullet \ Order $\mathcal{O}(b^2\eta)$: Collecting $\mathcal{O}(b^2\eta)$-term in \eqref{modified profile eror} we solve
    \begin{equation}\label{order b^2 eta}
        \begin{aligned}
            L_Q[R_{2,0,1}] &= -c_3R_{1,0,0} + 3R_{3,0,0} - \frac{1}{2}R_{1,0,1} + \Lambda R_{1,0,1} + \text{NL}_{b^2\eta} \\
            &\eqqcolon (\text{RHS})_{b^2\eta},\\
            \text{NL}_{b^2\eta} &= 6R_{2,0,0}R_{0,0,1}Q -2R_{1,0,1}R_{1,0,0}Q - R_{0,0,1}R_{1,0,0}^2.
        \end{aligned}
    \end{equation}
    Note that $-c_3R_{1,0,0}$ arises from the multiplication of $-c_3b^2\eta$-term in the coefficient of $\partial_b Q_{\mathcal{P}}$ and $iR_{1,0,0}$ appearing in the expansion of $\partial_b Q_{\mathcal{P}}$. Also,   $3R_{3,0,0}-\frac{1}{2}R_{1,0,1}$ arises from the $\eta$-term and $-\frac{1}{2}b^2$-term in the coefficient of $\partial_b Q_{\mathcal{P}}$. In addition, $\Lambda R_{1,0,1}$ and $\text{NL}_{b^2\eta}$ come from the expansion of $b\Lambda Q_{\mathcal{P}}$ and $-i\mathcal{N}_{\mathcal{P}}$, respectively. Thus, applying the even–odd and real–imaginary products yield the solvability conditions for \eqref{order b^2 eta}. Hence, for each $c_3 \in \bbR$, there is a unique even solution $R_{2,0,1}$ to \eqref{order b^2 eta} depending on $c_3$ so that orthogonal to $\nabla Q$.

    \vspace{5bp}
    \noindent \textbullet \ Order $\mathcal{O}(\nu^2)$: Collecting $\mathcal{O}(\nu^2)$-term in \eqref{modified profile eror} we solve
    \begin{equation}\label{order v^2}
        \begin{aligned}
            L_Q[R_{0,2,0}] &= -c_4R_{1,0,0} - \nabla R_{0,1,0} + \text{NL}_{\nu^2} \eqqcolon (\text{RHS})_{\nu^2}\\
            \text{NL}_{\nu^2} &= - R_{0,1,0}^2Q.
        \end{aligned}
    \end{equation}
    We note that $-c_4R_{1,0,0}$ arises from the multiplication of $-c_4\nu^2$ in the coefficient of $\partial_\nu Q_{\mathcal{P}}$ and $R_{1,0,0}$ in the expansion of $\partial_b Q_{\mathcal{P}}$ in \eqref{modified profile eror}. Also, $-\nabla R_{0,1,0}$ and $\text{NL}_{\nu^2}$ are from the expansion of $-\nu\nabla Q_{\mathcal{P}}$ and $-\mathcal{N}_{\mathcal{P}}$, respectively. Thus, applying the even–odd and real–imaginary products yield the solvability conditions for \eqref{order v^2}. Therefore, for each $c_4 \in \bbR$, there is a unique even solution to \eqref{order v^2} depending on $c_4$, orthogonal to $\nabla Q$.

    \vspace{5bp}
    \noindent \textbullet \ Order $\mathcal{O}(\nu\eta)$: Collecting $\mathcal{O}(\nu\eta)$-term in \eqref{modified profile eror} we solve
    \begin{equation}\label{order v eta}
        \begin{aligned}
            i^{-1}L_Q[iR_{0,1,1}] &= R_{1,1,0} - \nabla R_{0,0,1} + \text{NL}_{\nu\eta} \eqqcolon (\text{RHS})_{\nu\eta} \\
            \text{NL}_{\nu\eta} &= 2R_{0,1,0}R_{0,0,1}Q.
        \end{aligned}
    \end{equation}
    Note that $R_{1,1,0}$ arises from the multiplication of $\eta$-term in the coefficient of $\partial_b Q_{\mathcal{P}}$ and $i^2\nu R_{1,1,0}$ in the expansion of $\partial_bQ_{\mathcal{P}}$ in \eqref{modified profile eror}. In addition, $-\nabla R_{0,0,1}$ and $\text{NL}_{\nu\eta}$ are from the expansion of $\nabla Q_{\mathcal{P}}$ and $-\mathcal{N}_{\mathcal{P}}$, respectively. Thus, applying the even–odd and real–imaginary products yield the solvability conditions for \eqref{order v eta}. Hence, there is a unique odd solution $R_{0,1,1}$ to \eqref{order v eta}, orthogonal to $iQ$.
    
    \vspace{5bp}
    \noindent \textbullet \ Order $\mathcal{O}(\eta^2)$: Collecting $\mathcal{O}(\eta^2)$-term in \eqref{modified profile eror} we solve
    \begin{equation}\label{order eta^2}
        \begin{aligned}
            L_Q[R_{0,0,2}] &= R_{1,0,1} + \text{NL}_{\eta^2} \eqqcolon (\text{RHS})_{\eta^2}\\
            \text{NL}_{\eta^2} &= 3R_{0,0,1}^2Q.
        \end{aligned}
    \end{equation}
    We note that $R_{1,0,1}$ arises from the $\eta$-term in the coefficient of $\partial_b Q_{\mathcal{P}}$ and $i\eta R_{1,0,1}$ in the expansion of $\partial_b Q_{\mathcal{P}}$ in \eqref{modified profile eror}. In addition, $\text{NL}_{\eta^2}$ arises from the expansion of $-i\mathcal{N}_{\mathcal{P}}$. Thus, applying the even–odd and real–imaginary products yield the solvability conditions for \eqref{order eta^2}. Therefore, we conclude that there exists a unique even solution to \eqref{order eta^2}, orthogonal to $\nabla Q$.

    \vspace{5bp}
    \noindent $\boldsymbol{k=5}$ \textbf{:} There are six cases: $(p,q,r) = (5,0,0),\ (3,1,0),\ (3,0,1),\ (1,2,0),\ (1,1,1)$, and $(1,0,2)$. In this order, we determine the coefficients $c_j$ to satisfy the solvability conditions for $L_Q$ at the order $\mathcal{O}(b^p\nu^q\eta^r)$, where $k = p+2q+2r= 5$. More precisely, we first fix $c_1$ by satisfying the solvability conditions for $L_Q$ at $(p,q,r)=(5,0,0)$, and determine $c_2$ from $(3,1,0)$, $c_3$ from $(3,0,1)$, and $c_4$ from $(1,2,0)$. Once these coefficients are fixed, we verify that the corresponding solvability conditions at all remaining order $(p,q,r)$ with $p + 2q + 2r = 5$ are satisfied for the fixed $c_j$.

    We note that at the total order $k = 4$, the solvability conditions for constructing the profiles $R_{4,0,0}$, $R_{2,1,0}$, $R_{2,0,1}$, and $R_{0,2,0}$ are automatically satisfied for each $c_j \in \mathbb{R}$. This is because the right-hand sides of the corresponding equations (see \eqref{order b^4}, \eqref{order b^2v}, \eqref{order b^2 eta}, \eqref{order v^2}) are orthogonal to $\ker L_Q$, due to structural properties such as even–odd parity or real–imaginary cancellation. As a result, each profile can be constructed with an affine dependence on the corresponding parameter $c_j$.

   At order $k = 5$, however, such structural cancellations no longer hold, and the solvability conditions must be taken into account explicitly. To ensure compatibility with these conditions and reduce the modified profile error, we introduced the parameters $\{c_j\}$. This differs from the construction in \cite[Proposition 4.1]{KLR2013ARMAhalfwave}, where no such correction was needed since the profile error was not required to be as small. 
   
   While the profiles $R_{p,q,r}$ at order $k = 4$ are not explicitly linear in $c_j$, the right-hand sides of their governing equations \eqref{order b^4}, \eqref{order b^2v}, \eqref{order b^2 eta}, and \eqref{order v^2} admit an affine structure with respect to $c_j$, and this structure propagates into the solvability conditions of $L_Q$ at order $k=5$. As a result, we are able to derive scalar equations for each $c_j$ from the solvability conditions of $L_Q$ at each order $k = 5$.
    
    We use the positivity of the linearized operator $L_Q$ in \eqref{positivity of L_Q 0} to ensure the uniqueness of the choice of parameter $c_j$: 
    \begin{equation}\label{positivity of L_Q}
        (i^{-1}L_Q[iR_{1,0,0}],R_{1,0,0})_r >0,\quad (i^{-1}L_Q[iR_{0,1,0}],R_{0,1,0})_r >0.
    \end{equation}

    \vspace{5bp}
    \noindent \textbullet \ Order $\mathcal{O}(b^5)$ : Collecting $\mathcal{O}(b^5)$-term in \eqref{modified profile eror} we solve
    \begin{equation}\label{order b^5}
        \begin{aligned}
            &i^{-1}L_Q[iR_{5,0,0}] = 2c_1 R_{2,0,0} -2R_{4,0,0} + \Lambda R_{4,0,0} +\text{NL}_{b^5} \eqqcolon (\text{RHS})_{b^5}\\
            \text{NL}_{b^5} & = 2R_{4,0,0}R_{1,0,0}Q + 2R_{3,0,0}R_{2,0,0}Q - 3R_{3,0,0}R_{1,0,0}^2 + R_{2,0,0}^2R_{1,0,0}. 
        \end{aligned}
    \end{equation}
    Note that $2c_1R_{2,0,0}$ arises from the multiplication of $-c_1b^4$-term of the coefficient of $\partial_b Q_{\mathcal{P}}$ and $2i^2bR_{2,0,0}$ in the expansion of $\partial_b Q_{\mathcal{P}}$, and $-2R_{4,0,0}$ is from $-\frac{1}{2}b^2$-term of the coefficient of $\partial_b Q_{\mathcal{P}}$ and $4i^4b^3 R_{4,0,0}$ of the expansion of $\partial_b Q_{\mathcal{P}}$. Also, $\Lambda Q_{\mathcal{P}}$ and $\text{NL}_{b^5}$ are from the expansion of $b\Lambda Q_{\mathcal{P}}$ and $-i\mathcal{N}_{\mathcal{P}}$, respectively. 
    To obtain the existence of $R_{5,0,0}$, we must choose $c_1$ so that the solvability condition $(i(\text{RHS}_{b^5}), iQ)_r = 0$ is satisfied. From \eqref{order b^4}, $R_{4,0,0}$ depends on $c_1$ through the relation
    \[
    L_Q[R_{4,0,0}] = c_1 R_{1,0,0} + \text{(other terms)}.
    \]
    To isolate the $c_1$-dependence, we define the inhomogeneous component of the right-hand side of \eqref{order b^4} as
    \begin{equation}
    A_{b^4} \coloneqq (\text{RHS})_{b^4} - c_1 R_{1,0,0},
    \end{equation}
    so that \eqref{order b^4} becomes $L_Q[R_{4,0,0}] = c_1 R_{1,0,0} + A_{b^4}$. This decomposition allows us to track the $c_1$-dependence of $R_{4,0,0}$ explicitly when evaluating the solvability condition in \eqref{order b^5}.
    Also, to isolate the contribution of $c_1$, we define 
    \[
        \tilde{\text{NL}}_{b^5} \coloneqq \text{NL}_{b^5} - 2R_{4,0,0}R_{1,0,0}Q,
    \]
    so that the solvability condition becomes
    \[
        (2c_1 R_{2,0,0} - 2R_{4,0,0} + \Lambda R_{4,0,0} + 2R_{4,0,0}R_{1,0,0}Q, Q)_r = -(i \tilde{\text{NL}}_{b^5}, iQ)_r.
    \]
    We now compute each term on the left-hand side in terms of $c_1$. Using \eqref{order b^4}, \eqref{order b}, and the identity $i^{-1}L_Q[\Lambda Q] = iQ$, we obtain
    \[
        -2(R_{4,0,0}, Q)_r = 2(L_Q[R_{4,0,0}], \Lambda Q)_r = 2c_1 (i^{-1}L_Q[iR_{1,0,0}], R_{1,0,0})_r + 2(A_{b^4}, \Lambda Q)_r,
    \]
    and similarly,
    \begin{align*}
        (\Lambda R_{4,0,0}, Q)_r + (2R_{4,0,0}R_{1,0,0}, Q)_r 
        &= (R_{4,0,0}, -i^{-1}L_Q[iR_{1,0,0}] + 2Q^2 R_{1,0,0})_r \\
        &= -c_1 \|R_{1,0,0}\|_{L^2}^2 - (A_{b^4}, R_{1,0,0})_r.
    \end{align*}
    Combining all contributions and using \eqref{relation of R(200) and R(100)}, we obtain a scalar equation of the form $Ac_1 + B = 0$, which yields the explicit formula
    \begin{equation}\label{value of c_1}
        c_1 = \frac{(A_{b^4}, R_{1,0,0} + 2\Lambda Q)_r - (i \tilde{\text{NL}}_{b^5}, iQ)_r}{2(i^{-1}L_Q[iR_{1,0,0}], R_{1,0,0})_r},
    \end{equation}
    from \eqref{positivity of L_Q}. Also, from the real-imaginary product, we conclude that $(iR_{5,0,0},\nabla Q)_r = 0$. Hence, there exists a unique even solution $R_{5,0,0}$ to \eqref{order b^5}, orthogonal to $iQ$.

    \noindent \textbullet\ Order $\mathcal{O}(b^3\nu)$ : Collecting $\mathcal{O}(b^3\nu)$-term in \eqref{modified profile eror} we solve
    \begin{equation}\label{order b^3v}
        \begin{aligned}
            L_Q[R_{3,1,0}]
            &= c_2\nabla R_{1,0,0} - 2R_{2,1,0} + \Lambda R_{2,1,0} -\nabla R_{3,0,0} + \text{NL}_{b^3\nu}\\
            &\eqqcolon (\text{RHS})_{b^3\nu},\\
             \text{NL}_{b^3\nu} &=  -2R_{2,1,0}R_{1,0,0}Q -2R_{3,0,0}R_{0,1,0}Q - 2R_{2,0,0}R_{0,1,0}R_{1,0,0} \\
             & \quad + 6R_{1,1,0}R_{2,0,0}Q - R_{1,1,0}R_{1,0,0}^2.
        \end{aligned}
    \end{equation}
    Note that $c_2\nabla R_{1,0,0}$ arises from the multiplication of $-c_2b^2\nu$-term in the coefficient of $\nabla Q_{\mathcal{P}}$ and $ib\nabla R_{1,0,0}$ in the expansion of $\nabla Q_{\mathcal{P}}$, and $-\nabla (ib)^3R_{3,0,0}$ is from $\nu \nabla Q_{\mathcal{P}}$. Also, $-2R_{2,1,0}$ arises from combining two contributions: multiplication of $-\frac{1}{2}b^2$-term in the coefficient of $\partial_b Q_{\mathcal{P}}$ and $2i^3b\nu R_{2,1,0}$ in the expansion of $\partial_b Q_{\mathcal{P}}$ and multiplication of $-b\nu$-term of the coefficient of $\partial_\nu Q_{\mathcal{P}}$ and $i^3b^2R_{2,1,0}$ in the expansion of $\partial_\nu Q_{\mathcal{P}}$. Finally, $\text{NL}_{b^3\nu}$ is from $-i\mathcal{N}_{\mathcal{P}}$ in \eqref{modified profile eror}. 
    
    To obtain the existence of $R_{2,1,0}$, we must choose $c_2 \in \mathbb{R}$ so that the solvability condition associated with \eqref{order b^3v}, namely
    \[
        ((\text{RHS})_{b^3\nu}, \nabla Q)_r = 0,
    \]
    is satisfied. As in the case of $c_1$, we separate the $c_2$-dependence from the right-hand side by introducing
    \[
        A_{b^2\nu} \coloneqq (\text{RHS})_{b^2\nu} - c_2 \nabla Q,
    \]
    so that the contribution of $c_2$ can be treated explicitly. We also define 
    \[
        \tilde{\text{NL}}_{b^3\nu} \coloneqq \text{NL}_{b^3\nu} + 2R_{2,1,0}R_{1,0,0}Q,
    \]
    in order to separate $c_2$-dependence from \eqref{order b^5}. With these definitions, the solvability condition becomes
    \begin{equation}\label{equation for c_2}
    \begin{aligned}
        (c_2\nabla R_{1,0,0} - 2R_{2,1,0} + \Lambda R_{2,1,0} - 2R_{2,1,0}R_{1,0,0}Q, \nabla Q)_r = (\nabla R_{3,0,0} - \tilde{\text{NL}}_{b^3\nu}, \nabla Q)_r.
    \end{aligned}
    \end{equation}
    To compute the left-hand side of \eqref{equation for c_2} in terms of $c_2$, we apply the commutator identity \eqref{comm 1}, \eqref{comm 2} along with the identity $i^{-1}L_Q[\nabla Q] = R_{0,1,0}$ from \eqref{order v} and $i^{-1}L_Q[iR_{2,1,0}] = c_2\nabla Q+A_{b^2\nu}$ from \eqref{order b^2v}. Using these, we express each term on the left-hand side of \eqref{equation for c_2} as follows:
    \begin{align*}
        (\Lambda R_{2,1,0}, \nabla Q)_r 
        &= - (R_{2,1,0}, \nabla(i^{-1}L_Q[iR_{1,0,0}]))_r + (R_{2,1,0}, \nabla Q)_r, \\
        (-2R_{2,1,0}R_{1,0,0}Q, \nabla Q)_r 
        &= - (R_{2,1,0}, \nabla(Q^2 R_{1,0,0}))_r = - (R_{2,1,0}, [i^{-1}L_Q, \nabla]iR_{1,0,0})_r.
    \end{align*}
    Combining all contributions yields
    \begin{align*}
        (\text{LHS of } \eqref{equation for c_2}) 
        &= c_2(\nabla R_{1,0,0}, \nabla Q)_r - (R_{2,1,0}, \nabla Q)_r \\
        &\quad - (R_{2,1,0}, \nabla(i^{-1}L_Q[iR_{1,0,0}]))_r - (R_{2,1,0}, [i^{-1}L_Q, \nabla]iR_{1,0,0})_r \\
        &= c_2(\nabla Q, R_{0,1,0})_r + (A_{b^2\nu}, R_{0,1,0})_r - (A_{b^2\nu}, \nabla R_{1,0,0})_r.
    \end{align*}
    Therefore, from \eqref{positivity of L_Q} we obtain the explicit formula
    \begin{equation}\label{value of c_2}
        c_2 = - \frac{(A_{b^2\nu}, \nabla R_{1,0,0} - R_{0,1,0})_r + (\nabla R_{3,0,0} - \tilde{\text{NL}}_{b^3\nu}, \nabla Q)_r}{(i^{-1}L_Q[iR_{0,1,0}], R_{0,1,0})_r}.
    \end{equation}
    Also, from the real-imaginary product, we conclude that $(R_{3,1,0}, iQ)_r = 0$. Hence, there exists a unique odd solution $R_{3,1,0}$ to \eqref{order b^3 eta}, orthogonal to $\nabla Q$.

    \noindent \textbullet \ Order $\mathcal{O}(b^3\eta)$ : Collecting $\mathcal{O}(b^3\eta)$-term in \eqref{modified profile eror} we solve
    \begin{equation}\label{order b^3 eta}
        \begin{aligned}
            i^{-1}L_Q[iR_{3,0,1}] &= -2c_3R_{2,0,0} + 4R_{4,0,0} - R_{2,0,1} + \Lambda R_{2,0,1} + \text{NL}_{b^3\eta} \\
            &\eqqcolon (\text{RHS})_{b^3\eta},\\
            \text{NL}_{b^3\eta} &= 2R_{2,0,1}R_{1,0,0}Q + 2QR_{1,0,1}R_{2,0,0} - 3R_{1,0,1}R_{1,0,0}^2 \\
            & \quad + 2R_{3,0,0}R_{0,0,1}Q + 2R_{2,0,0}R_{0,0,1}R_{1,0,0}.
        \end{aligned}
    \end{equation}
    Note that $-2c_3R_{2,0,0}$ arises from the multiplication of $-c_3b^2\eta$-term in the coefficient of $\partial_b Q_{\mathcal{P}}$ and $2i^2bR_{2,0,0}$ in the expansion of $\partial_b Q_{\mathcal{P}}$, and $4R_{4,0,0}$ and $-R_{2,0,1}$ are from the multiplication of $\eta$-term, $-\frac{1}{2}b^2$-term in the coefficient of $\partial_b Q_{\mathcal{P}}$ and $4i^4b^3R_{4,0,0}$, $2i^2b\eta R_{2,0,1}$ in the expansion of $Q_{\mathcal{P}}$, respectively. Also, $\Lambda R_{2,0,1}$ and $\text{NL}_{b^3\eta}$ are from the expansion of $b\Lambda Q_{\mathcal{P}}$, $-i\mathcal{N}_{\mathcal{P}}$, respectively. 
    From real-imaginary product, we have $((\text{RHS})_{b^3\eta},\nabla Q)_r = 0$. Then, the solvability of \eqref{order b^3 eta} is equivalent to
    \begin{equation}\label{equation for c_3}
    \begin{aligned}
        (-2c_3R_{2,0,0} - R_{2,0,1} + \Lambda &R_{2,0,1} + 2R_{2,0,1}R_{1,0,0}Q,Q)_r \\ &= (i[-4R_{4,0,0} - \tilde{\text{NL}}_{b^3\eta}],iQ)_r,
    \end{aligned}
    \end{equation}
    where 
    \begin{equation*}
        \tilde{\text{NL}}_{b^3\eta} \coloneqq  \text{NL}_{b^3\eta} - 2R_{2,0,1}R_{1,0,0}Q.
    \end{equation*}
    From $i^{-1}L_Q[\Lambda Q]=iQ$, \eqref{order b^2v}, and \eqref{order b} we have
    \begin{align*}
        \quad \quad  &(-R_{2,0,1}+\Lambda R_{2,0,1} + 2R_{2,0,1}R_{1,0,0}Q,Q)_r \\
        & = -(iR_{2,0,1},i^{-1}L_Q[\Lambda Q])_r + (R_{2,0,1},-\Lambda Q+ 2R_{1,0,0}Q^2)_r \\
        & = (i^{-1}L_Q[R_{2,0,1}],i^{-1}\Lambda Q)_r + (R_{2,0,1}, 2R_{1,0,0}Q^2 - i^{-1}L_Q[iR_{1,0,0}])_r\\
        & = -c_3(i^{-1}L_Q[iR_{1,0,0}], R_{1,0,0})_r + c_3\norm{R_{1,0,0}}_{L^2}^2  -(A_{b^2\eta},R_{1,0,0}-\Lambda Q)_r,
    \end{align*}
    where we denote $A_{b^2\eta}$ by
    \begin{equation*}
        A_{b^2\eta} \coloneqq (\text{RHS})_{b^2\eta} + c_3 R_{1,0,0},
    \end{equation*}
    to separate the $c_3$-dependence from the right-hand side of \eqref{order b^2 eta}. Hence, from \eqref{positivity of L_Q} we set $c_3$ as
    \begin{equation}\label{value of c_3}
        c_3 = \frac{(A_{b^2\eta},R_{1,0,0}-\Lambda Q)_r+(i[4R_{4,0,0} + \tilde{\text{NL}}_{b^3\eta}],iQ)_r}{(i^{-1}L_Q[iR_{1,0,0}], R_{1,0,0})_r}.
    \end{equation}
    Therefore, there is a unique even solution $R_{3,0,1}$ to \eqref{order b^3 eta}, orthogonal to $iQ$.

    \noindent \textbullet\ Order $\mathcal{O}(b\nu^2)$ : Collecting $\mathcal{O}(b\nu^2)$-term in \eqref{modified profile eror} we solve
    \begin{equation}\label{order bv^2}
        \begin{aligned}
            i^{-1}L_Q[iR_{1,2,0}] &= -2c_4R_{2,0,0} - \nabla R_{1,1,0} -2R_{0,2,0} + \Lambda R_{0,2,0} + \text{NL}_{b\nu^2} \\
            &\eqqcolon (\text{RHS})_{b\nu^2},\\
            \text{NL}_{b\nu^2} &= 2R_{0,2,0}R_{1,0,0}Q +2R_{1,1,0}R_{0,1,0}Q - 3R_{0,1,0}^2R_{1,0,0}.
        \end{aligned}
    \end{equation}
    We note that $-2c_4R_{2,0,0}$ arises from multiplication of $-c_4\nu^2$-term in the coefficient of $\partial_b Q_{\mathcal{P}}$ and $2i^2bR_{2,0,0}$ in the expansion of $\partial_bQ_{\mathcal{P}}$, and $-2R_{0,2,0}$ is from multiplication of $-b\nu$-term in the coefficient of $\partial_\nu Q_{\mathcal{P}}$ and $2i^2\nu R_{0,2,0}$ in the expansion of $\partial_\nu Q_{\mathcal{P}}$. Also, $-\nabla R_{1,1,0}$, $\Lambda R_{0,2,0}$, and $\text{NL}_{b\nu^2}$ are from the expansion of $-\nu Q_{\mathcal{P}}$, $b\Lambda Q_{\mathcal{P}}$, and $-i\mathcal{N}_{\mathcal{P}}$, respectively.

    From real-imaginary product, we obtain $(iR_{1,2,0}, \nabla Q)_r = 0$.
    To this end, we choose $c_4 \in \bbR$ to satisfy the solvability condition $(iR_{1,2,0},iQ)_r = 0$. To separate $c_4$-dependence from right-hand side of \eqref{order v^2} and \eqref{order bv^2}, we define
     $\tilde{\text{NL}}_{b\nu^2} \coloneqq \text{NL}_{b\nu^2} - 2R_{0,2,0}R_{1,0,0}Q$  and $A_{\nu^2} \coloneqq (\text{RHS})_{\nu^2} + c_4R_{1,0,0}$,
    Then, the solvability condition of \eqref{order bv^2} is equivalent to 
    \begin{equation}\label{equation for c_4}
        \begin{aligned}
            (-2c_4R_{2,0,0} - 2R_{0,2,0} + \Lambda R_{0,2,0} &+ 2R_{0,2,0}R_{1,0,0}Q,Q)_r \\
            &= (i[\nabla R_{1,1,0} - \tilde{\text{NL}}_{b\nu^2}],iQ)_r.
        \end{aligned}
    \end{equation}
    We express the left-hand side of \eqref{equation for c_4} in terms of $c_4$: From the relation $i^{-1}L_Q[\Lambda Q] = iQ$ and \eqref{order v^2}, we obtain
    \begin{align*}
        (-2R_{0,2,0},Q)_r &= 2(R_{0,2,0},L_Q[\Lambda Q])_r \\
        &= -2c_4(i^{-1}L_Q[iR_{1,0,0}],R_{1,0,0})_r + 2(\Lambda Q,A_{\nu^2})_r,
    \end{align*}
    and 
    \begin{align*}
        \quad \quad \ \ (\Lambda R_{0,2,0}+2R_{0,2,0}R_{1,0,0}Q,Q)_r 
        &= (R_{0,2,0},-i^{-1}L_Q[iR_{1,0,0}]+2Q^2R_{1,0,0})_r \\
        &= c_4 \norm{R_{1,0,0}}_{L^2}^2 - (A_{\nu^2},R_{1,0,0})_r.
    \end{align*}
    Therefore, from \eqref{relation of R(200) and R(100)}, (LHS) of \eqref{equation for c_4} becomes
    \begin{align*}
        (&\text{(LHS) of \eqref{equation for c_4}}) \\
        &= -2c_4(i^{-1}L_Q[iR_{1,0,0}],R_{1,0,0})_r + (A_{\nu^2},2\Lambda Q - R_{1,0,0})_r.
    \end{align*}
    Hence, we set $c_4$ as
    \begin{equation}\label{value of c_4}
        c_4 = \frac{(A_{\nu^2},2\Lambda Q-R_{1,0,0})_r - (i[\nabla R_{1,1,0} - \tilde{\text{NL}}_{b\nu^2}],iQ)_r}{2(i^{-1}L_Q[iR_{1,0,0}],R_{1,0,0})_r}.
    \end{equation}
    In fact, this yields $c_4 < 0$. For this, see Appendix~\ref{Proof of c_4<0}. 
    Also, real-imaginary product gives $(iR_{1,2,0},\nabla Q)_r=0$. Therefore, there is a unique even solution $R_{1,2,0}$ to \eqref{order bv^2}, orthogonal to $iQ$.
    
    So far, we have fixed $c_1$ in \eqref{value of c_1}, $c_2$ in \eqref{value of c_2}, $c_3$ in \eqref{value of c_3}, and $c_4$ in \eqref{value of c_4}, so as to satisfy the solvability conditions at the multi-indices $(p,q,r)$ and $(p+1,q,r)$ simultaneously, for $(p,q,r) = (4,0,0)$, $(2,1,0)$, $(2,0,1)$, and $(0,2,0)$, respectively. We now claim that, for these choices of $\{c_j\}$ and the corresponding constructed profiles $R_{p,q,r}$, the solvability conditions at the remaining multi-indices $(1,0,2)$ and $(1,1,1)$ are also satisfied, without the need to introduce any further correction to the polynomial $\mathcal{P}_j(b,\nu,\eta)$. This ensures that the profiles $R_{1,0,2}$ and $R_{1,1,1}$ can be constructed accordingly, and that the remainder $\Psi_{\mathcal{P}}$ is of order at least $k = 6$ with respect to the parameters $b$, $\nu$, and $\eta$. To verify this, we check the solvability conditions at the orders $\mathcal{O}(b\eta^2)$ and $\mathcal{O}(b\nu\eta)$ explicitly.

    \vspace{5bp}
    \noindent \textbullet\ Order $\mathcal{O}(b\eta^2)$ : Collecting $\mathcal{O}(b\eta^2)$-term in \eqref{modified profile eror} we solve
    \begin{equation}\label{order b eta^2}
        \begin{aligned}
            i^{-1}L_Q[iR_{1,0,2}] &= 2R_{2,0,1} + \Lambda R_{0,0,2} + \text{NL}_{b\eta^2} \eqqcolon (\text{RHS})_{b\eta^2},\\
            \text{NL}_{b\eta^2} &= 2R_{0,0,2}R_{1,0,0}Q + 2R_{1,0,1}R_{0,0,1}Q + R_{0,0,1}^2R_{1,0,0}.
        \end{aligned}
    \end{equation}
    We note that $2R_{2,0,1}$ arises from the multiplication of $\eta$-term in the coefficient of $\partial_b Q_{\mathcal{P}}$ and $2i^2bR_{2,0,1}$ in the expansion of $\partial_b Q_{\mathcal{P}}$. Also, $\Lambda R_{0,0,2}$ and $\text{NL}_{b\eta^2}$ are from the expansion of $b\Lambda Q_{\mathcal{P}}$ and $-i\mathcal{N}_{\mathcal{P}}$, respectively.
    From the real-imaginary product, we deduce that  
    \[
        (iR_{1,0,2}, \nabla Q)_r = 0.
    \]  
    Hence, verifying the solvability condition of \eqref{order b eta^2} is equivalent to checking  
    \begin{equation}\label{solva of b eta^2}
        2(iR_{2,0,1}, iQ)_r = (i[-\Lambda R_{0,0,2} - \text{NL}_{b\eta^2}], iQ)_r.
    \end{equation}
    In order to verify the solvability condition, we explicitly compute both the left-hand side and the right-hand side of \eqref{solva of b eta^2}, and confirm that they coincide.

    We begin by analyzing the left-hand side, namely the inner product $(R_{2,0,1}, Q)_r$. Recall that once $c_3$ is fixed as in \eqref{value of c_3}, the function $R_{2,0,1}$ is determined accordingly. In particular, from \eqref{equation for c_3}, we obtain
    \begin{equation}\label{b eta^2 : eq1}
        \begin{aligned}
            (R_{2,0,1}, Q)_r &= (-2c_3 R_{2,0,0} + \Lambda R_{2,0,1} + 2R_{2,0,1} R_{1,0,0} Q, Q)_r \\
            &\quad + (i[4R_{4,0,0} + \tilde{\text{NL}}_{b^3\eta}], iQ)_r \\
            &= -(A_{b^2\eta}, R_{1,0,0})_r + (\tilde{\text{NL}}_{b^3\eta}, Q)_r + 4(R_{4,0,0}, Q)_r.
        \end{aligned}
    \end{equation}
    Thus, it remains to compute the inner products $(A_{b^2\eta}, R_{1,0,0})_r$ and $(R_{4,0,0}, Q)_r$.  
    From the equation \eqref{order b^2 eta}, we obtain
    \[
    \begin{aligned}
        -(A_{b^2\eta}, R_{1,0,0})_r 
        &= -3(R_{3,0,0}, R_{1,0,0})_r + (R_{1,0,1}, R_{1,0,0} + L_Q[R_{2,0,0}])_r + R_1,
    \end{aligned}
    \]
    where $R_1$ collects the remainder terms:
    \begin{equation*}
        R_1 \coloneqq -(\text{NL}_{b^2\eta}, R_{1,0,0})_r + (R_{1,0,1} R_{1,0,0}^2, Q)_r.
    \end{equation*}
    On the other hand, since $R_{4,0,0}$ is determined once the coefficient $c_1$ is fixed in \eqref{value of c_1}, the identity \eqref{equation for c_4} implies
    \begin{equation}\label{b eta^2 : eq2}
        \begin{aligned}
            4(R_{4,0,0}, Q)_r 
            &= 2(2c_1 R_{2,0,0} + \Lambda R_{4,0,0} + 2R_{4,0,0} R_{1,0,0} Q, Q)_r + 2(i\tilde{\text{NL}}_{b^5}, iQ)_r \\
            &= -2(A_{b^4}, R_{1,0,0})_r + 2(\tilde{\text{NL}}_{b^5}, Q)_r \\
            &= 3(R_{3,0,0}, R_{1,0,0})_r - 2(\Lambda R_{3,0,0}, R_{1,0,0})_r + R_2,
        \end{aligned}
    \end{equation}
    where $R_2$ consists of the nonlinear remainder terms:
    \begin{equation*}
        R_2 \coloneqq -2(\text{NL}_{b^4}, R_{1,0,0})_r + 2(\tilde{\text{NL}}_{b^5}, Q)_r.
    \end{equation*}
    Therefore, combining \eqref{b eta^2 : eq1} and \eqref{b eta^2 : eq2}, we group the terms in $(R_{2,0,1}, Q)_r$ into three parts for the convenience of computation.
    
    We first define $I_1$ to be the collection of inner products involving only $R_{p,0,0}$ for $p \leq 3$:
    \begin{equation}\label{I_1}
        \begin{aligned}
            I_1 &\coloneqq 4(R_{4,0,0}, Q)_r - 3(R_{3,0,0}, R_{1,0,0})_r \\
            &= 2(R_{3,0,0}, \Lambda R_{1,0,0} - Q R_{1,0,0}^2 + 2R_{2,0,0} Q^2)_r 
            - 4(R_{2,0,0}^2 Q, R_{1,0,0})_r \\
            &\quad + 2(R_{2,0,0}, R_{1,0,0}^3)_r \\
            &= -2\|R_{2,0,0}\|_{L^2}^2 + (R_{3,0,0}, R_{1,0,0})_r.
        \end{aligned}
    \end{equation}
    The last equality follows from \eqref{order b^2} and \eqref{order b^3} by a straightforward computation.

    Next, we define $I_2$ to be the contribution from the terms involving $R_{1,0,1}$. Using \eqref{order b eta}, we compute
    \begin{equation}\label{I_2}
        \begin{aligned}
            I_2 &\coloneqq (R_{1,0,1}, R_{1,0,0} + L_Q[R_{2,0,0}] + 2R_{2,0,0} Q^2)_r \\
            &= (R_{1,0,1}, R_{1,0,0})_r + (i^{-1} L_Q[iR_{1,0,1}], R_{2,0,0})_r \\
            &= (R_{1,0,1}, R_{1,0,0})_r + (R_{2,0,0}, \Lambda R_{0,0,1})_r \\
            &\quad + 2(R_{2,0,0} R_{0,0,1}^2, Q)_r + 2\|R_{2,0,0}\|_{L^2}^2.
        \end{aligned}
    \end{equation}

    Finally, we define $I_3$ to be the remainder of $(R_{2,0,1}, Q)_r$ after subtracting $I_1$ and $I_2$. From \eqref{order eta} and    \eqref{order b^3}, we have
    \begin{equation}\label{I_3}
        \begin{aligned}
            I_3 &= (R_{3,0,0}, 2Q^2 R_{0,0,1})_r - 4(R_{2,0,0} R_{0,0,1} R_{1,0,0}, Q)_r + (R_{0,0,1}, R_{1,0,0}^3)_r \\
            &= -(R_{3,0,0}, R_{1,0,0})_r + (R_{3,0,0}, i^{-1} L_Q[iR_{0,0,1}])_r \\
            &\quad + (R_{1,0,0}^3, R_{0,0,1})_r - 4(R_{2,0,0} R_{0,0,1}^2, Q)_r \\
            &= -(R_{3,0,0}, R_{1,0,0})_r - (R_{2,0,0}, R_{0,0,1})_r + (\Lambda R_{2,0,0}, R_{0,0,1})_r \\
            &\quad - 2(R_{2,0,0} R_{0,0,1}^2, Q)_r.
        \end{aligned}
    \end{equation}
    Hence, combining \eqref{I_1}, \eqref{I_2}, and \eqref{I_3}, and using the identity
    \[
        (R_{2,0,1}, Q)_r = I_1 + I_2 + I_3,
    \]
    we conclude that
    \begin{equation*}
        (R_{2,0,1}, Q)_r = (R_{1,0,1}, R_{1,0,0})_r - (R_{2,0,0}, R_{0,0,1})_r.
    \end{equation*}

    We now turn to the right-hand side of the solvability condition \eqref{solva of b eta^2}.  
    From \eqref{order b eta}, \eqref{order eta}, and \eqref{order b}, a direct computation yields
    \begin{align*}
        \text{(RHS) of } \eqref{solva of b eta^2} 
        &= (R_{0,0,2}, \Lambda Q - 2Q^2 R_{1,0,0})_r 
            - (R_{0,0,1}^2 R_{1,0,0}, Q)_r - (R_{1,0,1}, 2R_{0,0,1} Q^2)_r \\
        &= (L_Q[R_{0,0,2}], R_{1,0,0})_r - (R_{0,0,1}^2 R_{1,0,0}, Q)_r - (R_{1,0,1}, 2R_{0,0,1} Q^2)_r \\
        &= 2(R_{1,0,1}, R_{1,0,0})_r - (i^{-1} L_Q[iR_{1,0,1}], R_{0,0,1})_r + 2(Q R_{0,0,1} R_{1,0,0}, R_{0,0,1})_r \\
        &= (R_{1,0,1}, R_{1,0,0})_r - (R_{2,0,0}, R_{0,0,1})_r \\
        &= 2(R_{2,0,1}, Q)_r.
    \end{align*}
    Therefore, the left-hand side and right-hand side of \eqref{solva of b eta^2} coincide, and we conclude that there is a unique $R_{1,0,2}$ to \eqref{order b eta^2}, orthogonal to $iQ$.
    
    \vspace{5bp}
    \noindent \textbullet\ Order $\mathcal{O}(b\nu\eta)$ : Collecting $\mathcal{O}(b\nu\eta)$-term in \eqref{modified profile eror} we solve
    \begin{equation}\label{order b eta v}
        \begin{aligned}
            L_Q[R_{1,1,1}] &= 2R_{2,1,0} - R_{0,1,1} + \Lambda R_{0,1,1} - \nabla R_{1,0,1} + \text{NL}_{b\nu\eta}\\
            &\eqqcolon (\text{RHS})_{b\nu\eta},\\
            \text{NL}_{b\nu\eta} & = 6R_{1,1,0}R_{0,0,1}Q -2R_{1,0,0}R_{0,1,1}Q
            -2R_{1,0,1}R_{0,1,0}Q \\
            &\quad - 2R_{1,0,0}R_{0,1,0}R_{0,0,1}
        \end{aligned}
    \end{equation}
    Note that $2R_{2,1,0}$ arises from multiplication of $-\eta$-term in the coefficient of $\partial_b Q_{\mathcal{P}}$ and $2i^3b\nu R_{2,1,0}$ in the expansion of $\partial_b Q_{\mathcal{P}}$, and $-R_{0,1,1}$ arises from multiplication of $-b\nu$-term in the coefficient of $\partial_\nu Q_{\mathcal{P}}$ and $i\eta R_{0,1,1}$ in the expansion of $\partial_\nu Q_{\mathcal{P}}$. In addition $\Lambda R_{0,1,1}$, $-\nabla R_{1,0,1}$, and $\text{NL}_{b\nu\eta}$ are from the expansion of $b\Lambda Q_{\mathcal{P}}$, $-\nu \nabla Q_{\mathcal{P}}$, and $-i\mathcal{N}_{\mathcal{P}}$, respectively.
    From real-imaginary product, we have $(R_{1,1,1},iQ)_r = 0$. 
    Hence, it follows that the existence and uniqueness of a solution $R_{1,1,1}$ is equivalent to the condition
    \begin{equation}\label{solvability of bv eta}
        ((\text{RHS})_{b\nu\eta}, \nabla Q)_r = 0,
    \end{equation}
    which ensures the solvability of \eqref{order b eta v}. To compute the left-hand side of \eqref{solvability of bv eta}, we group the relevant terms into three parts, according to their dependence on $R_{0,1,1}$, $R_{1,0,1}$, and $R_{1,1,0}$, respectively.

    We first define $J_1$ to be the collection of terms involving $R_{0,1,1}$:
    \begin{equation}\label{J_1}
        \begin{aligned}
            J_1 &\coloneqq (-R_{0,1,1} + \Lambda R_{0,1,1} - 2R_{0,1,1} R_{1,0,0} Q, \nabla Q)_r \\
            &= -(R_{0,1,1}, \nabla Q)_r + ([\Lambda, \nabla] R_{1,1,0}, Q)_r + (\nabla R_{1,1,0}, \Lambda Q)_r \\
            &\quad - (R_{0,1,1}, R_{1,1,0} \nabla(Q^2))_r \\
            &= (\nabla R_{0,1,1}, i^{-1} L_Q[i R_{1,0,0}])_r - (R_{0,1,1}, [i^{-1} L_Q, \nabla](i R_{1,0,0}))_r \\
            &= -(i^{-1} L_Q[i R_{0,1,1}], \nabla R_{1,0,0})_r.
        \end{aligned}
    \end{equation}

    Next, we define $J_2$ to be the contribution from the terms involving $R_{1,0,1}$:
    \begin{equation}\label{J_2}
        \begin{aligned}
            J_2 &\coloneqq (-\nabla R_{1,0,1} - 2R_{1,0,1} R_{0,1,0} Q, \nabla Q)_r \\
            &= (\nabla R_{1,0,1}, i^{-1} L_Q[i R_{0,1,0}])_r - (R_{1,0,1}, [i^{-1} L_Q, \nabla](i R_{0,1,0}))_r \\
            &= -(i^{-1} L_Q[i R_{1,0,1}], \nabla R_{0,1,0})_r.
        \end{aligned}
    \end{equation}

    Finally, we define $J_3$ as the remaining contribution involving $R_{1,1,0}$:
    \begin{equation}\label{J_3}
        \begin{aligned}
            J_3 &\coloneqq (6R_{1,1,0} R_{0,0,1} Q, \nabla Q)_r = (R_{1,1,0}, i[i^{-1} L_Q, \nabla] R_{0,0,1})_r \\
            &= (L_Q[R_{1,1,0}], \nabla R_{0,0,1})_r + (\nabla R_{1,1,0}, L_Q[R_{0,0,1}])_r \\
            &= (L_Q[R_{1,1,0}], \nabla R_{0,0,1})_r + (\nabla R_{1,1,0}, R_{1,0,0})_r.
        \end{aligned}
    \end{equation}
    In \eqref{J_1}, \eqref{J_2}, and \eqref{J_3}, we have used the commutator identities \eqref{comm 1} and \eqref{comm 2}. By combining these with the structural identities \eqref{order v eta}, \eqref{order b eta}, and \eqref{order bv}, and the commutator formula \eqref{comm 1}, we arrive at
    \begin{align*}
        (i(\text{RHS})_{b\nu\eta}, \nabla Q)_r 
        &= J_1 + J_2 + J_3 + 2(R_{2,1,0}, \nabla Q)_r - 2(R_{1,0,0} R_{0,1,0} R_{0,0,1}, \nabla Q)_r \\
        &= -(R_{1,1,0} - \nabla R_{0,0,1} + 2R_{0,1,0}R_{0,0,1}Q, \nabla R_{1,0,0})_r \\
        &\quad - (2R_{2,0,0}+\Lambda R_{0,0,1} + 2R_{0,0,1}R_{1,0,0}Q,\nabla R_{0,1,0})_r \\
        &\quad + (-R_{0,1,0}+\Lambda R_{0,1,0}-\nabla R_{1,0,0}-2R_{1,0,0}R_{0,1,0}Q,\nabla R_{0,0,1})_r\\
        &\quad + (\nabla R_{1,1,0},R_{1,0,0})_r - 2(R_{1,0,0} R_{0,1,0} R_{0,0,1}, \nabla Q)_r\\
        &= 2(\nabla R_{1,1,0}, R_{1,0,0})_r + 2(\nabla R_{2,0,0}, R_{0,1,0})_r + 2(R_{2,0,1}, \nabla Q)_r.
    \end{align*}

    Therefore, we now compute the inner product $(R_{2,0,1}, \nabla Q)_r$. Since the constant $c_2$ is chosen so that the solvability condition \eqref{equation for c_2} is satisfied, we obtain
    \begin{equation}\label{cal inner product of R(210) with Q'}
        (R_{2,1,0}, \nabla Q)_r 
        = -(A_{b^2\nu}, \nabla R_{1,0,0})_r + (-\nabla R_{3,0,0} + \tilde{\text{NL}}_{b^3\nu}, \nabla Q)_r.
    \end{equation}

    We now estimate the right-hand side of \eqref{cal inner product of R(210) with Q'} term by term. We begin with the first term, $-(A_{b^2\nu}, \nabla R_{1,0,0})_r$, by recalling that 
    \begin{align*}
        A_{b^2\nu} &= (\text{RHS})_{b^2\nu} - c_2 \nabla Q\\
            &=  -\frac{3}{2}R_{1,1,0}+\Lambda R_{1,1,0} -\nabla R_{2,0,0} + \text{NL}_{b^2\nu}.
    \end{align*}
    and decompose it into two parts according to the presence of $R_{1,1,0}$.

    We denote by $K_1$ the contribution involving $R_{1,1,0}$. Using \eqref{order b^2} and integration by parts, we obtain
    \begin{equation}\label{K_1}
        \begin{aligned}
            K_1 &\coloneqq \left(\tfrac{3}{2} R_{1,1,0} - \Lambda R_{1,1,0} - 2R_{1,1,0} R_{1,0,0} Q, \nabla R_{1,0,0}\right)_r \\
            &= - (\nabla R_{1,1,0}, R_{1,0,0})_r - (\nabla R_{1,1,0}, L_Q[R_{2,0,0}])_r + (R_{1,1,0} R_{1,0,0}^2, \nabla Q)_r.
        \end{aligned}
    \end{equation}
    The last equality follows from \eqref{order b^2}.
    
    Next, we define $K_2$ to be the remaining contribution from $-(A_{b^2\nu}, \nabla R_{1,0,0})_r$ after subtracting $K_1$. Integration by parts yields
    \begin{equation}\label{K_2}
        \begin{aligned}
            K_2 &\coloneqq \left(\nabla R_{2,0,0} - 2R_{2,0,0} R_{0,1,0} Q + 3 R_{0,1,0} R_{1,0,0}^2, \nabla R_{1,0,0}\right)_r \\
            &= (\nabla R_{2,0,0}, \nabla R_{1,0,0})_r - 2(R_{2,0,0} R_{0,1,0} \nabla R_{1,0,0}, Q)_r - (\nabla R_{0,1,0}, R_{1,0,0}^3)_r.
        \end{aligned}
    \end{equation}

    We now turn to the second term on the right-hand side of \eqref{cal inner product of R(210) with Q'}, namely
    \[
        (-\nabla R_{3,0,0} + \tilde{\text{NL}}_{b^3\nu}, \nabla Q)_r.
    \]
    We divide this into three groups of inner products. We begin with the contribution involving $R_{3,0,0}$, which we denote by $K_3$. Using \eqref{order v}, \eqref{order b^3}, and the commutator identities \eqref{comm 1} and \eqref{comm 2}, we compute
    \begin{equation}\label{K_3}
        \begin{aligned}
            K_3 &\coloneqq (\nabla R_{3,0,0} + 2R_{3,0,0} R_{0,1,0} Q, -\nabla Q)_r \\
            &= (\nabla R_{3,0,0}, i^{-1} L_Q[i R_{0,1,0}])_r - (R_{3,0,0}, [i^{-1} L_Q, \nabla](i R_{0,1,0}))_r \\
            &= -(i^{-1} L_Q[i R_{3,0,0}], \nabla R_{0,1,0})_r \\
            &= (R_{2,0,0} - \Lambda R_{2,0,0} - 2R_{2,0,0} R_{1,0,0} Q + R_{1,0,0}^3, \nabla R_{0,1,0})_r \\
            &= -(\nabla R_{2,0,0}, \Lambda R_{0,1,0})_r - 2(R_{2,0,0} \nabla R_{0,1,0} R_{1,0,0}, Q)_r 
            + (\nabla R_{0,1,0}, R_{1,0,0}^3)_r.
        \end{aligned}
    \end{equation}

    Next, we define $K_4$ to be the group of terms involving $R_{1,1,0}$. Using the commutator formula \eqref{comm 2}
    together with \eqref{order b^2} and \eqref{order bv}, we obtain
    \begin{equation}\label{K_4}
        \begin{aligned}
            K_4 &\coloneqq (6R_{1,1,0} R_{2,0,0} Q - R_{1,1,0} R_{1,0,0}^2, \nabla Q)_r \\
            &= (R_{1,1,0}, i[i^{-1} L_Q, \nabla] R_{2,0,0})_r - (R_{1,1,0} R_{1,0,0}^2, \nabla Q)_r \\
            &= (L_Q[R_{1,1,0}], \nabla R_{2,0,0})_r + (\nabla R_{1,1,0}, L_Q[R_{2,0,0}])_r - (R_{1,1,0} R_{1,0,0}^2, \nabla Q)_r \\
            &= -(R_{0,1,0}, \nabla R_{2,0,0})_r + (\nabla R_{2,0,0}, \Lambda R_{0,1,0})_r 
            - 2(\nabla R_{2,0,0} R_{0,1,0} R_{0,0,1}, Q)_r \\
            &\quad + (\nabla R_{1,1,0}, L_Q[R_{2,0,0}])_r - (R_{1,1,0} R_{1,0,0}^2, \nabla Q)_r 
            - (\nabla R_{2,0,0}, \nabla R_{1,0,0})_r.
        \end{aligned}
    \end{equation}

    Let $K_5$ be the rest of $(-\nabla R_{3,0,0} + \tilde{\text{NL}}_{b^3\nu},\nabla Q)_r$ after subtracting $K_3+K_4$:
    \begin{equation}\label{K_5}
        K_5 \coloneqq -2(R_{2,0,0}R_{0,1,0}R_{1,0,0},\nabla Q)_r.
    \end{equation}
    From \eqref{K_1}, \eqref{K_2}, \eqref{K_3}, \eqref{K_4}, and \eqref{K_5},
    we obtain
    \begin{align*}
        (R_{2,1,0},\nabla Q)_r &= K_1+K_2+K_3+K_4+K_5 \\
        &=-(\nabla R_{2,0,0},R_{0,1,0})_r - (\nabla R_{1,1,0},R_{1,0,0})_r.
    \end{align*}
    Thus, we conclude that there exists a unique solution $R_{2,1,0}$ to \eqref{order b eta v}, which is orthogonal to $\nabla Q$.

    \textbf{Step 2}
     We now claim \eqref{decay of R(pqr)}. We recall that the decay property of $Q$: For $k=0,1$, since $L_Q[\Lambda Q] = i^2Q$ and $(Q,\Lambda Q)_r=0$, from \cite[Lemma A.1]{KLR2013ARMAhalfwave} we have
    \begin{equation*}
        \norm{\Lambda^k Q}_{H^m} \lesssim_m 1,\quad |\Lambda^k Q(y)| \lesssim \langle y \rangle^{-2}.
    \end{equation*}
    We remark a commutator formula 
    \begin{align}
        [i^{-1}L_Q,\Lambda]f = -i(Df + y\nabla(Q^2) + 2\Re(\nabla(Q^2)f)). \label{eq:LQ Lambda commute}
    \end{align}
    From this, we obtain the following identity:
    \begin{equation*}
        i^{-1}L_Q[i\left\{\Lambda^2 Q + \Lambda Q + \frac{\norm{\Lambda Q}_{L^2}^2}{\norm{Q}_{L^2}^2}Q\right\}] = - (y\nabla (Q)^2 + Q^2)\Lambda Q. 
    \end{equation*}
    Thus, from \cite[Lemma A.1]{KLR2013ARMAhalfwave} again, we have
    \begin{equation*}
        \norm{\Lambda^2 Q}_{H^m} \lesssim 1,\quad |\Lambda^2 Q(x)| \lesssim \langle y \rangle^{-2}.
    \end{equation*}
    Since $i^{-1}L_Q[iR_{1,0,0}] = \Lambda Q$, we obtain $\Lambda i^{-1}L_Q[iR_{1,0,0}] = \Lambda Q$. Thus, by \eqref{eq:LQ Lambda commute}, we conclude
    \begin{equation*}
        \norm{\Lambda^k R_{1,0,0}}_{H^m} \lesssim_m 1,\quad |\Lambda^k R_{1,0,0}(y)| \lesssim \langle y \rangle^{-2},
    \end{equation*}
    for $k=0,1,2$. Using Lemma~\ref{lem:L inverse bdd} inductively gives \eqref{decay of R(pqr)}.
\end{proof}
\begin{remark}\label{where and why we choose c_j}
    Here we note where the constant $c_1,c_2,c_3,c_4 \in \mathbb{R}$ comes from.
    We find $c_j$ to satisfy the solvability conditions for each order $k=4, 5$. This is equivalent to solving the systems of nonlocal ordinary differential equations of $\{R_{p,q,r}\}_{k\in\{4,5\}}$ and $c_j$, \eqref{order b^4}-\eqref{order eta^2}, \eqref{order b^5}, \eqref{order b^3v}, \eqref{order b^3 eta}, \eqref{order bv^2}, and \eqref{order b eta^2}. The constant $c_1$ first appears in the equation of $R_{4,0,0}$ in \eqref{order b^4}. Since, for each $c_1$, the solvability condition of equation \eqref{order b^4} is automatically satisfied, we choose $R_{4,0,0}$ depending on $c_1$. Then, we choose $c_1$ as in \eqref{value of c_1} to satisfy the solvability condition for the equation for $R_{5,0,0}$ \eqref{order b^5}. Similarly, we choose $c_2, c_3$ and $c_4$ to satisfy the solvability conditions for the equations at $(p,q,r)$ and $(p+1,q,r)$ simultaneously, at $(p,q,r) = (2,1,0), (2,0,1)$ and $(0,2,0)$, respectively. In other words, we fix $c_2$ in \eqref{value of c_2}, $c_3$ in \eqref{value of c_3}, and $c_4$ in \eqref{value of c_2}. In fact, this choice of $c_j$ with \eqref{order v^2} and \eqref{order v eta} gives the solvability conditions for equations \eqref{order b eta^2} and \eqref{order b eta v}. So, we can find $R_{1,0,2}$ and $R_{1,1,1}$ so that the profile error for $Q_{\mathcal{P}}$, $\Psi_{\mathcal{P}}$, is of the order of more than or equal to $k=6$.
\end{remark}

\subsection{Formal modulation law}
In this section, we provide an asymptotic solution to the formal modulation law \eqref{eq:formal modul}. Specifically, we verify that the leading order behavior of the solution presented in Remark~\ref{rem:formal law} asymptotically satisfies \eqref{eq:formal modul}. Since $\eta_s = 0$ is one of the equations in \eqref{eq:formal modul}, it follows that $\eta$ is constant. We consider an arbitrarily small constant $\eta > 0$, and under this assumption, we rewrite \eqref{eq:formal modul} as the following system of ordinary differential equations for $(\lambda_\eta, \nut{x}_\eta, \gamma_\eta, \nu_\eta, b_\eta)(t)$:
\begin{equation}\label{formal modulation law}
    \begin{cases}
         (b_{\eta})_s + \left(\frac{1}{2}+c_3\eta\right)b_{\eta}^2 + c_1b_{\eta}^4 + c_4\nu_{\eta}^2 = -\eta, &\\
         (\nu_{\eta})_s + b_{\eta}\nu_{\eta}=0, &\\
         (\gamma_{\eta})_s = - 1, &\\
         -\dfrac{(\lambda_{\eta})_s}{\lambda_{\eta}} = b_{\eta}, &\\
         \dfrac{(\nut{x}_{\eta})_s}{\lambda_{\eta}} - \nu_{\eta} - c_2b_{\eta}^2\nu_{\eta} = 0, &\\
         \dfrac{ds}{dt} = \dfrac{1}{\lambda_{\eta}},
    \end{cases}
\end{equation}
where the constants $c_1, c_2, c_3 \in \mathbb{R}$ and $c_4 < 0$ are defined in Proposition~\ref{Singular profile}. The initial data $(\lambda_\eta,b_\eta,\nu_\eta,\nut{x}_\eta, \gamma_\eta)(0)$ will be chosen later in Lemma~\ref{approximated dynamics}.

We now define the universal constants $C_0$ and $D_0$ by
\begin{equation}\label{definition of C_0 and D_0}
    C_0 \coloneqq \sqrt{\frac{\frac{1}{2}(i^{-1}L_Q[iR_{1,0,0}], R_{1,0,0})_r}{E_0 - E(z_f^*)}},\quad 
    D_0 \coloneqq \frac{P_0 - P(z_f^*)}{2(i^{-1}L_Q[R_{0,1,0}], R_{0,1,0})_r}.
\end{equation}
Note that, since we assume $\alpha^* \ll 1$ so that $E(z_f^*) < E_0$, and by the positivity property \eqref{positivity of L_Q}, the constants $C_0$ and $D_0$ are well-defined and strictly positive.

We fix a small constant $0 < \eta^* \ll 1$, depending only on $\{c_j\}_{1 \leq j \leq 4}$ and the constants $C_0$ and $D_0$, and choose $t_0 < 0$, independent of $\eta \in (0, \eta^*)$, so that the solution $(\lambda_\eta, b_\eta, \nu_\eta, \nut{x}_\eta, \gamma_\eta)(t)$ exists and satisfies the asymptotics \eqref{asymtotic for parameters for t} on the interval $[t_0, 0]$ for each $\eta \in (0, \eta^*)$.

In order to solve the full system \eqref{formal modulation law} on $[t_0, 0]$, we first solve the reduced system consisting only of the parameters $(\lambda_\eta, b_\eta, \nu_\eta)(t)$, which is
\begin{equation}\label{model for lambda b v}
    \begin{aligned}
        &({b}_\eta)_s + \left(\frac{1}{2} + c_3\eta\right){b}_{\eta}^2 + \eta + c_4\nu_{\eta}^2 + c_1b_\eta^4 = 0, \\
        &(\nu_{\eta})_s + b_\eta\nu_{\eta} = 0, \quad \frac{(\lambda_{\eta})_s}{\lambda_{\eta}} = -b_{\eta}, \quad \frac{ds}{dt} = \frac{1}{\lambda_{\eta}}.
    \end{aligned}
\end{equation}

We then derive the asymptotics of $(\lambda_\eta, b_\eta, \nu_\eta)(t)$ on the interval $[t_0, 0]$ for each small $\eta > 0$. Once the reduced dynamics have been established, we choose suitable initial data for $\gamma_{\eta}(0)$ and $\nut{x}_{\eta}(0)$, and subsequently derive the asymptotics of $\gamma_{\eta}(t)$ and $\nut{x}_{\eta}(t)$ on the interval $[t_0, 0]$.

\begin{lemma}\label{approximated dynamics}
There exists a small constant $\eta^* > 0$, depending only on $C_0$, $D_0$, and $\{c_j\}_{1 \leq j \leq 4}$, such that the following holds. For each $\eta \in (0, \eta^*)$, we choose the initial data
\begin{equation}\label{asymtotic for initial data of parameters}
    \lambda_\eta(0) = 2C_0^2\eta, \quad b_\eta(0) = 0, \quad \nu_\eta(0) = D_0 \lambda_\eta(0), \quad \text{and} \quad \nut{x}_\eta(0) = 0.
\end{equation}
Then there exists a constant $t_0 < 0$, independent of $\eta \in (0, \eta^*)$, such that for all $t \in [t_0, 0]$, we have the following asymptotics:
\begin{equation}\label{asymtotic for parameters for t}
    \begin{aligned}
        \lambda_{\eta}(t) &= \left(\frac{1}{4C_0^2} + o_{\eta \to 0+}(1)\right)t^2 + 2C_0^2\eta + \mathcal{O}(t^4), \\
        b_{\eta}(t) &= -\left(\frac{1}{2C_0^2} + o_{\eta \to 0+}(1)\right)t + \mathcal{O}(t^3), \\
        \nu_{\eta}(t) &= \left(\frac{D_0}{4C_0^2} + o_{\eta \to 0+}(1)\right)t^2 + 2C_0^2D_0\eta + \mathcal{O}(t^4), \\
        \nut{x}_{\eta}(t) &= \left(\frac{D_0}{12C_0^2} + o_{\eta \to 0+}(1)\right)t^3 + 2C_0^2D_0\eta t + \mathcal{O}(t^5).
    \end{aligned}
\end{equation}
Moreover, for fixed $t_0 < 0$, let $(\lambda_\eta,b_\eta,\nu_\eta)(t)$ be a solution to \eqref{model for lambda b v} with initial data \eqref{asymtotic for initial data of parameters}. We define $\gamma_\eta(0)$ by
\begin{equation}\label{initial data of gmm_eta}
        \gmm_\eta(0) \coloneqq \int_{-\eta}^0 \frac{1}{\lambda_\eta(\tau)} \,d\tau + \frac{4C_0^2}{t_0} - \int_{-\eta}^{t_0} \frac{1}{\left(\frac{1}{4C_0^2} + o_{\eta \to 0+}(1)\right)\tau^2 + 2C_0^2\eta} \,d\tau.
\end{equation}
Then, we have the law of $\gmm_\eta(t)$:
\begin{equation}
    \gamma_\eta(t) = \int_{t_0}^t \frac{1}{\left(\frac{1}{4C_0^2} + o_{\eta \to 0+}(1)\right)\tau^2 + 2C_0^2\eta} \,d\tau + \frac{4C_0^2}{t_0} + \mathcal{O}(|t+\eta|).
\end{equation}
In particular, we have
\begin{equation}\label{order of b,eta,lambda}
    b_\eta^2(t) + \eta \sim \lambda_\eta(t).
\end{equation}
\end{lemma}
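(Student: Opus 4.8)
The plan is to collapse the system \eqref{formal modulation law} to a single scalar second-order ODE for $\lambda_\eta$, solve that ODE via an almost-conserved quantity, and then recover $b_\eta,\nu_\eta,\gamma_\eta,\nut{x}_\eta$ by direct integration, keeping every estimate uniform in $\eta\in(0,\eta^*)$. First, the $\nu_\eta$- and $\lambda_\eta$-equations in \eqref{formal modulation law} combine into $\partial_s(\nu_\eta/\lambda_\eta)=0$, so the initial data \eqref{asymtotic for initial data of parameters} forces the exact identity $\nu_\eta(t)\equiv D_0\,\lambda_\eta(t)$ throughout the interval of existence. Substituting this into the $b_\eta$-equation and passing to physical time via $\partial_s=\lambda_\eta\partial_t$ (so $\partial_t\lambda_\eta=-b_\eta$), the $(\lambda_\eta,b_\eta,\nu_\eta)$-subsystem reduces to
\begin{equation*}
    \lambda_\eta\,\partial_t^2\lambda_\eta \;=\; \eta + \Big(\tfrac12+c_3\eta\Big)(\partial_t\lambda_\eta)^2 + c_1(\partial_t\lambda_\eta)^4 + c_4 D_0^2\lambda_\eta^2,\qquad \lambda_\eta(0)=2C_0^2\eta,\ \ \partial_t\lambda_\eta(0)=0.
\end{equation*}
Its leading-order version $\lambda\,\partial_t^2\lambda=\eta+\tfrac12(\partial_t\lambda)^2$ is solved explicitly by $\bar\lambda_\eta(t):=\tfrac1{4C_0^2}t^2+2C_0^2\eta$, which carries the same initial data and coincides, to leading order, with the formula of Remark~\ref{rem:formal law} (with the Galilean parameter $\nu_0=D_0$ and $\ell=C_0^{-2}$).

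The core of the argument is an almost-conserved quantity, the full-system analogue of the conserved $\ell$ of Remark~\ref{rem:formal law}. Multiplying the scalar ODE by $2\,\partial_t\lambda_\eta/\lambda_\eta^2$ and regrouping gives
\begin{equation*}
    \frac{d}{dt}\,G_\eta \;=\; 2c_3\eta\,\frac{(\partial_t\lambda_\eta)^3}{\lambda_\eta^2} + 2c_1\,\frac{(\partial_t\lambda_\eta)^5}{\lambda_\eta^2},\qquad G_\eta \;:=\; \frac{(\partial_t\lambda_\eta)^2+2\eta}{\lambda_\eta} - 2c_4 D_0^2\lambda_\eta.
\end{equation*}
I would run a continuity argument on a maximal interval $[t_*,0]$ under the bootstrap hypotheses $\tfrac12\bar\lambda_\eta\le\lambda_\eta\le2\bar\lambda_\eta$ and $|b_\eta|\lesssim|t|$. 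These give $|\partial_t\lambda_\eta|\lesssim|t|$ and $\lambda_\eta\gtrsim t^2+\eta$, so the right-hand side above is $\mathcal{O}\big(\eta|t|^3(t^2+\eta)^{-2}+|t|^5(t^2+\eta)^{-2}\big)$; splitting the integral at $|\tau|=\sqrt\eta$ bounds the total variation of $G_\eta$ over $[t_*,0]$ by $\mathcal{O}(\eta\log(1/\eta)+t_*^2)$. Since $G_\eta(0)=C_0^{-2}+\mathcal{O}(\eta)$ by \eqref{asymtotic for initial data of parameters}, choosing a fixed small $t_0<0$ makes $G_\eta$ as close to $C_0^{-2}$ as desired on $[t_0,0]$, which strictly improves the bootstrap bounds and simultaneously yields uniform-in-$\eta$ existence on $[t_0,0]$. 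With $G_\eta$ pinned, the sign $\partial_t\lambda_\eta=-b_\eta\le0$ for $t<0$ (since $b_\eta(0)=0$ and $(b_\eta)_s=-\eta-c_4D_0^2\lambda_\eta(0)^2<0$ at $t=0$ for $\eta$ small) turns $G_\eta=\text{const}+(\text{small})$ into the separable relation $\partial_t\lambda_\eta=-\sqrt{\ell_\eta\lambda_\eta+2c_4D_0^2\lambda_\eta^2-2\eta}$ up to a controlled remainder; an elementary quadrature plus comparison with $\bar\lambda_\eta$ then produces the expansion of $\lambda_\eta$ in \eqref{asymtotic for parameters for t}. The bound \eqref{order of b,eta,lambda} follows from $b_\eta^2+2\eta=\ell_\eta\lambda_\eta+\mathcal{O}(\lambda_\eta^2)$, the expansion of $b_\eta=-\partial_t\lambda_\eta$ by differentiation, and that of $\nu_\eta$ from $\nu_\eta=D_0\lambda_\eta$.

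The remaining parameters are obtained by quadrature against these asymptotics. Integrating the $\gamma_\eta$-equation in physical time, $\partial_t\gamma_\eta=-1/\lambda_\eta$, and feeding in $\lambda_\eta(\tau)=(\tfrac1{4C_0^2}+o_{\eta\to0^+}(1))\tau^2+2C_0^2\eta+\mathcal{O}(\tau^4)$, the main term is $\int(\tfrac1{4C_0^2}\tau^2+2C_0^2\eta)^{-1}\,d\tau$, of size $\eta^{-1/2}$, which produces the arctangent profile; the particular value $\gamma_\eta(0)$ prescribed in \eqref{initial data of gmm_eta} is precisely the one for which, after the $\eta^{-1/2}$-sized main terms cancel, the remainder is $\mathcal{O}(|t+\eta|)$ — the key elementary fact being that $\lambda_\eta=2C_0^2\eta(1+\mathcal{O}(\eta))$ on $[-\eta,0]$, so $\int_{-\eta}^0(1/\lambda_\eta)$ matches the corresponding reference integral up to $\mathcal{O}(\eta)$. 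For $\nut{x}_\eta$, the $\nut{x}_\eta$-equation together with $\partial_s=\lambda_\eta\partial_t$ and $\nu_\eta=D_0\lambda_\eta$ gives $\partial_t\nut{x}_\eta=D_0\lambda_\eta(1+c_2b_\eta^2)$; integrating the $\lambda_\eta$-expansion from $0$ with $\nut{x}_\eta(0)=0$ yields the claimed cubic-in-$t$ asymptotics.

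The main obstacle is the uniformity in $\eta$: the scalar ODE degenerates as $\eta\to0$ — one has $\lambda_\eta(0)=2C_0^2\eta\to0$ and the natural time scale near $t=0$ is $\sqrt\eta$ — so a naive Picard/Gr\"onwall estimate on the fixed interval $[t_0,0]$ is not uniform. The resolution is exactly the use of the scale-invariant quantity $G_\eta$, whose variation over $[t_0,0]$ is small uniformly down to $\eta=0$; once $G_\eta$ is controlled, everything reduces to elementary (if somewhat delicate) quadrature and Taylor expansion. A secondary technical point is the bookkeeping for $\gamma_\eta$, where an $\eta^{-1/2}$-sized main term must be isolated before the $\mathcal{O}(|t+\eta|)$ remainder can be extracted, which is the reason for the otherwise opaque choice of $\gamma_\eta(0)$ in \eqref{initial data of gmm_eta}.
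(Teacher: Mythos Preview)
Your approach is correct and follows essentially the same strategy as the paper: reduce to the $(\lambda_\eta,b_\eta)$-subsystem via the exact conservation $\nu_\eta/\lambda_\eta\equiv D_0$, control a scale-invariant almost-conserved quantity by a bootstrap argument uniform in $\eta$, then recover the remaining parameters by quadrature. The two implementations differ only in packaging: the paper works in the rescaled time $s$ and chooses the slightly more refined quantity
\[
I_\eta=\frac{b_\eta^2+\frac{2}{1+2c_3\eta}\eta+\frac{2c_4}{-1+2c_3\eta}\nu_\eta^2}{\lambda_\eta^{1+2c_3\eta}},
\]
whose $s$-derivative is the single term $-2c_1b_\eta^5/\lambda_\eta^{1+2c_3\eta}$, whereas your $G_\eta$ in physical time produces two error terms, $2c_3\eta(\partial_t\lambda_\eta)^3/\lambda_\eta^2$ and $2c_1(\partial_t\lambda_\eta)^5/\lambda_\eta^2$. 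Your estimate of the integrated variation, $\mathcal{O}(\eta\log(1/\eta)+t_0^2)$, is correct and handles the extra term at no real cost, so the simpler $G_\eta$ is arguably cleaner. One minor point: your sign convention $\partial_t\gamma_\eta=-1/\lambda_\eta$ matches the displayed system \eqref{formal modulation law} but the paper's proof actually uses $(\gamma_\eta)_t=1/\lambda_\eta$; this is an inconsistency within the paper, not in your argument.
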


\begin{proof}
    We first consider the reduced subsystem of \eqref{formal modulation law} that governs the dynamics of $(\lambda_\eta, b_\eta, \nu_\eta)$, together with the scaling relation $ds/dt = 1/\lambda_\eta$, as given in \eqref{model for lambda b v}. We choose the initial data 
    \[
        (\lambda_{\eta}(0), b_{\eta}(0), \nu_{\eta}(0)) \coloneqq (2C_0^2\eta,\, 0,\, 2C_0^2D_0\eta).
    \]
    For each $\eta > 0$, we define $t_\eta^{\text{exist}} < 0$ to be the maximal backward lifespan of the solution $(\lambda_\eta(t), b_\eta(t), \nu_\eta(t))$ to \eqref{model for lambda b v}, with the above initial data at $t = 0$.

    Let $K > 0$ be a large constant to be chosen later. From now on, we restrict our attention to sufficiently small $\eta > 0$: we define 
    \[
        0 < \eta^* = \eta^*(c_1, c_3, c_4, C_0, D_0, K) \ll 1
    \]
    so that
    \[
        |2c_3\eta^*| \ll 1,\quad |c_1\eta^*| \ll 1,\quad \eta^*K \ll 1,\quad \text{and}\quad \eta^* \ll \frac{1}{|c_4|(D_0^2+1)}.
    \]
    Then, for each $\eta \in (0, \eta^*)$, we can choose a small $t_\eta = t_\eta(c_1, c_3, c_4, C_0, D_0, K) < 0$ such that 
    \[
        t_\eta \in (\max\{t_\eta^{\text{exist}}, -1\}, 0),
    \]
    and the following estimates hold for all $t \in [t_\eta, 0]$:
    \begin{equation}\label{eq:almost conserve aprior}
        \begin{aligned}
            |b_\eta(t)| &\leq 4\sqrt{\eta^*}, \\
            |\lambda_\eta(t)| &\leq \max\left\{ \frac{1}{32|c_4|C_0^2(D_0^2+1)},\, 1 \right\}, \\
            \left|\frac{b_\eta^4(t)}{\lambda_\eta^{2 + 2c_3\eta}(t)}\right| &\leq 2K.
        \end{aligned}
    \end{equation}
    These estimates follow from the smallness of the initial data, namely $\lambda_\eta(0) \sim \eta > 0$ and $b_\eta(0) = 0$.
    
    We claim that the tuple $(\lambda_\eta, b_\eta, \nu_\eta)$ admits the following almost conserved quantity \( I_\eta : (t_\eta^{\mathrm{exist}}, 0] \to \mathbb{R} \), defined by
    \begin{equation}\label{almost conservative quantity}
        I_\eta(t) \coloneqq \frac{b_\eta^2(t) + \dfrac{2}{1 + 2c_3\eta} \eta + \dfrac{2c_4}{-1 + 2c_3\eta} \nu_\eta^2(t)}{\lambda_\eta^{1 + 2c_3\eta}(t)}.
    \end{equation}
    Differentiating \eqref{almost conservative quantity} with respect to the rescaled time variable \( s \), and using \eqref{model for lambda b v}, we compute
    \begin{align*}
        \frac{dI_\eta}{ds} &= \frac{2b_\eta}{\lambda_\eta^{1 + 2c_3\eta}} \left\{ (b_\eta)_s + \left( \frac{1}{2} + 2c_3\eta \right) b_\eta^2 + \eta + c_4 \nu_\eta^2 \right\} \\
        &\quad + \frac{4c_4 \nu_\eta}{(-1 + 2c_3\eta) \lambda_\eta^{1 + 2c_3\eta}} \left\{ (\nu_\eta)_s + b_\eta \nu_\eta \right\} \\
        &= -\frac{2c_1 b_\eta^5}{\lambda_\eta^{1 + 2c_3\eta}}.
    \end{align*}
    Using the a priori bounds \eqref{eq:almost conserve aprior}, and integrating in \( s \), we translate the result to the original time variable \( t \) via the identity \( \frac{ds}{dt} = \frac{1}{\lambda_\eta} \). This yields the estimate
    \begin{equation}\label{estimate for I_eta and I_eta'}
        I_\eta(t) = I_\eta(0) + f_\eta(t), \quad \text{where} \quad |f_\eta(t)| \leq 16 |c_1 t| K \sqrt{\eta^*}, \quad |f_\eta'(t)| \leq 4K |c_1 b_\eta(t)|.
    \end{equation}
    Finally, from the choice of initial data \( (\lambda_\eta(0), b_\eta(0), \nu_\eta(0)) \), we compute
    \begin{equation}\label{I_eta(0)}
        \begin{aligned}
            I_\eta(0) &= \frac{1}{\lambda_\eta^{1 + 2c_3\eta}(0)} \left( \frac{2\eta}{1 + 2c_3\eta} + \frac{2c_4}{-1 + 2c_3\eta} \nu_\eta^2(0) \right) \\
            &= \frac{1}{2C_0^2 \eta} \cdot \frac{(2C_0^2\eta)^{-2c_3\eta}}{1 + 2c_3\eta} \cdot 2\eta + \frac{1}{(2C_0^2\eta)^{1 + 2c_3\eta}} \cdot \frac{2c_4}{-1 + 2c_3\eta} \cdot D_0^2 (2C_0^2\eta)^2 \\
            &= \frac{1}{C_0^2} + o_{\eta \to 0+}(1),
        \end{aligned}
    \end{equation}
    since $\eta^\eta\rightarrow 1$ as $\eta \rightarrow 0+$. Hence, for all $t \in [t_\eta, 0]$, we have the identity
    \begin{equation}\label{formal modulation law : eq1}
        \lambda_\eta^{1 + 2c_3\eta}(t)\left( I_\eta(0) + f_\eta(t) \right) + \frac{2c_4}{1 - 2c_3\eta} \nu_\eta^2(t) = b_\eta^2(t) + \frac{2\eta}{1 + 2c_3\eta}.
    \end{equation}
    Using \eqref{formal modulation law : eq1}, the approximation \eqref{I_eta(0)}, and the fact that $c_4 < 0$, we obtain a uniform lower bound for $\lambda_\eta(t)$ as follows:
    \begin{equation}\label{lower bound for lambda_eta}
        \lambda_\eta(t) \geq \left\{ \left( \frac{1}{C_0^2} + o_{\eta \to 0+}(1) + 16K |c_1| \sqrt{\eta^*} \right)^{-1} \cdot \frac{2\eta}{1 + 2c_3\eta} \right\}^{\frac{1}{1 + 2c_3\eta}} > 0.
    \end{equation}
    Differentiating \eqref{formal modulation law : eq1} with respect to $t$, and using the exact conservation law
    $$
    \frac{\nu_\eta(t)}{\lambda_\eta(t)} = D_0,
    $$
    we obtain the following identity for $\lambda_\eta'(t)$:
    \begin{equation}\label{lambda_eta'(t)}
        \begin{aligned}
            \lambda_\eta'(t) &= \frac{d}{dt}\left[\left(I_\eta(0)+f_{\eta}(t)+\frac{2c_4D_0^2}{1-2c_3\eta}\lambda_\eta^{1-2c_3\eta}\right)^{-\frac{1}{1+2c_3\eta}}\left(b_\eta^2(t)+\frac{2\eta}{1+2c_3\eta}\right)^{\frac{1}{1+2c_3\eta}}\right]\\
            & = \left\{\left(I_{\eta}(0)+f_{\eta}(t) + \frac{2c_4D_0^2}{1-2c_3\eta}\lambda_\eta^{1-2c_3\eta}(t) \right)^{-\frac{2+2c_3\eta}{1+2c_3\eta}} \times \left(b_\eta^2(t)+\frac{2\eta}{1+2c_3\eta}\right) \right.\\
            & \quad \times (f'_{\eta}(t) + 2c_4D_0^2\lambda_\eta^{-2c_3\eta}(t)(\lambda_\eta'(t)) + \frac{1}{1+2c_3\eta}\left\{2b_\eta(t)b_{\eta}'(t)\right\} \\
            &\quad \left. \times
            \left(I_{\eta}(0)+f_{\eta}(t) + \frac{2c_4D_0^2}{1-2c_3\eta}\lambda_\eta^{1-2c_3\eta}(t) \right)^{-\frac{1}{1+2c_3\eta}} \right\} \times \left(b_\eta^2(t) + \frac{2\eta}{1+2c_3\eta}\right)^{-\frac{2c_3\eta}{1+2c_3\eta}}.
        \end{aligned}
    \end{equation}
    Now, from \eqref{estimate for I_eta and I_eta'}, we may reduce $t_\eta < 0$ (still denoted by the same symbol) so that 
    $$
        4K |c_1 t_\eta| < \frac{1}{8C_0^2},
    $$
    and possibly reduce $\eta^* > 0$ (still denoted by $\eta^*$), so that for every $\eta \in (0, \eta^*)$, we obtain the following lower bound:
    \begin{equation}\label{formal modulation law estimate 1}
        \left| I_\eta(0) + f_\eta(t) + \frac{2c_4 D_0^2}{1 - 2c_3 \eta} \lambda_\eta^{1 - 2c_3 \eta}(t) \right| \geq \frac{1}{8C_0^2}.
    \end{equation}
    Furthermore, from \eqref{estimate for I_eta and I_eta'} and the relation $(\lambda_\eta)_t = -b_\eta$, we have
    \begin{equation}\label{formal modulation law estimate 2}
        \left| f_\eta'(t) + 2c_4 D_0^2 \lambda_\eta^{-2c_3\eta}(t) \lambda_\eta'(t) \right| 
        \leq \left( 4|c_1|K + 4|c_4| D_0^2 \right) |b_\eta(t)|.
    \end{equation}
    Here we also use the uniform lower bound of $\lambda_\eta(t)$ from \eqref{lower bound for lambda_eta}, and the fact that $\eta^{\eta} \to 1$ as $\eta \to 0+$. In addition, from the a priori bound $|b_\eta(t)|^2 \leq 16 \eta^*$ in \eqref{eq:almost conserve aprior}, we estimate
    \begin{equation}\label{formal modulation law estimate 3}
        \left( b_\eta^2(t) + \frac{2\eta}{1 + 2c_3 \eta} \right)^{ -\frac{2c_3 \eta}{1 + 2c_3 \eta} } 
        \leq \max \left\{ (20\eta^*)^{ -\frac{2c_3 \eta}{1 + 2c_3 \eta} },\ (2\eta)^{ -\frac{2c_3 \eta}{1 + 2c_3 \eta} } \right\} \leq 4.
    \end{equation}

    Combining \eqref{formal modulation law estimate 1}, \eqref{formal modulation law estimate 2}, \eqref{formal modulation law estimate 3}, and the differential identity \eqref{lambda_eta'(t)}, and possibly further reducing $\eta^*$ (still denoted by $\eta^*$), we obtain a uniform bound that holds for all $\eta \in (0, \eta^*)$ and all $t \in [t_\eta, 0]$.
    Since we have chosen $\eta^* K \ll 1$, there exists a small universal constant $0 < c_0 \ll 1$, independent of $\eta$ and $K$, such that the following estimate holds:
    \begin{equation}\label{uniform estimate of formal modulation law}
        -b_\eta(t) = \mathcal{O} \left( c_0 b_\eta(t) + b_\eta(t) b_\eta'(t) \right).
    \end{equation}
    On the other hand, from \eqref{model for lambda b v} and \eqref{lower bound for lambda_eta} we have
    \begin{align*}
        (b_\eta)_t = -\frac{1}{\lambda_\eta}\left\{\left(\frac{1}{2}+c_3\eta + c_4b_\eta^2\right)b_\eta^2 + c_4\nu_{\eta}^2\right\} < -\frac{1}{\lambda_\eta}\left(\frac{1}{4}b_\eta^2 + \eta + o_{\eta\rightarrow 0+}(\eta)\right) < 0.
    \end{align*}
    Hence, $b_\eta(t)$ is positive for all $t\in [t_\eta,0)$, since $b_\eta(0)=0$. By dividing $b_\eta(t)$ both sides of \eqref{uniform estimate of formal modulation law}, we obtain
    \[
        -1 \sim (b_{\eta})_t
    \]
    uniformly in small $\eta>0$ and $K>0$. Therefore, from $(\lambda_{\eta})_t = -b_\eta$ and $\lambda_\eta(0) = 2C_0^2\eta$, we obtain the rough asymptotics
    \begin{equation}\label{rought asymptotic of formal modulation law}
        b_{\eta}(t) \sim -t,\quad \lambda_{\eta}(t) \sim t^2 + \eta \quad \text{for all } t \in [t_\eta,0],
    \end{equation}
    uniformly in $\eta \in (0,\eta^*)$ and $K>0$. As a consequence, there exists a constant $C>0$ independent of $\eta$ and $K$ such that
    \[
        \left|\frac{b_\eta^4(t)}{\lambda_\eta^{2+2c_3\eta}(t)}\right| \leq C\frac{t^4}{(t^2+\eta)^{2+2c_3\eta}} \quad \text{for } t \in [t_\eta,0].
    \]
    Now, observe that for all $(t,\eta) \in (-t^*,0) \times (0,t^*)$ with some sufficiently small $t^*>0$, we have
    \[
        \frac{|t|^{2-\eta}}{t^2+\eta} \leq 1.
    \]
    Therefore, by possibly reducing $\eta^*>0$ (but keeping the same notation), there exist constants $K>0$ (sufficiently large) and $t_\eta < t^{**} < 0$, independent of $\eta$, such that
    \[
        \frac{t^4}{(t^2+\eta)^{2+2c_3\eta}} \leq \frac{K}{2C} \quad \text{for all } t \in [-t^{**},0],
    \]
    for every $\eta \in (0,\eta^*)$. Furthermore, from \eqref{rought asymptotic of formal modulation law}, there exists a small number $t_0$ with $-t^{**} < t_0 < 0$ such that
    \[
        |b_{\eta}(t)| \leq 2\sqrt{\eta^*}, \quad |\lambda_{\eta}(t)| \leq \frac{1}{2} \max\left\{\frac{1}{32|c_4|C_0^2(D_0^2+1)}, 1\right\},
    \]
    for all $t \in [t_0,0]$. This concludes the bootstrap argument and shows that the range of $t$ on which \eqref{eq:almost conserve aprior} holds can be chosen uniformly in $\eta$. In addition, note that from \eqref{rought asymptotic of formal modulation law}, and \eqref{estimate for I_eta and I_eta'} we conclude that 
    \[
        |f_{\eta}(t)| \lesssim t^2,\quad \text{ for } t \in [t_0,0].
    \]
    Now, using \eqref{formal modulation law estimate 1}, \eqref{formal modulation law estimate 2}, \eqref{formal modulation law estimate 3}, and \eqref{rought asymptotic of formal modulation law}, we rewrite \eqref{lambda_eta'(t)} as
    \[
        -b_{\eta}(t) = \mathcal{O}(b_\eta(t)\lambda_\eta(t)) + 2b_\eta(t)b'_\eta(t)(1+o_{\eta\rightarrow 0+}(1))\left(\frac{1}{C_0^2}+o_{\eta\rightarrow 0+}(1) + \mathcal{O}(t^2)\right)^{-1},
    \]
    for $t \in [t_0,0]$. Since $b_{\eta}(t)$ does not vanish, we divide both sides by $b_\eta(t)$ and integrate from $t$ to $0$. Using the fact that $b_\eta(0) = 0$, we obtain
    \[
        b_{\eta}(t) = -\left(\frac{1}{2C_0^2}+o_{\eta\rightarrow 0+}(1)\right)t + \mathcal{O}(t^3). 
    \]
    In addition, since $(\lambda_\eta)_t = -b_\eta$ and $\lambda_\eta(0) = 2C_0^2\eta$, we integrate to get
    \begin{equation}\label{exact law of lambda_eta}
        \lambda_\eta(t) = \left(\frac{1}{4C_0^2} + o_{\eta \rightarrow 0+}(1)\right)t^2 + 2C_0^2\eta + \mathcal{O}(t^4).
    \end{equation}
    From the exact conservation law $(\nu_\eta/\lambda_\eta)(t) = D_0$, we deduce
    \[
        \nu_\eta(t) = \left(\frac{D_0}{4C_0^2} + o_{\eta \rightarrow 0+}(1)\right)t^2 + 2C_0^2D_0\eta + \mathcal{O}(t^4).
    \]
    Finally, we obtain $\nut{x}_\eta(t)$ from the relation $(\nut{x}_\eta)_t = \nu_\eta + c_2b_\eta^2 \nu_\eta$, along with the initial condition $\nut{x}_\eta(0) = 0$. We now choose the initial data $\gamma_\eta(0)$ as in \eqref{initial data of gmm_eta}.
    Since $(\gamma_\eta)_t = \frac{1}{\lambda_\eta}$, we obtain $\gmm_\eta(t)$:
    \begin{align*}
        \gmm_\eta(t) &= \int_{-\eta}^t \frac{1}{\lambda_\eta(\tau)}\,d\tau + \frac{4C_0^2}{t_0} - \int_{-\eta}^{t_0} \frac{1}{\left(\frac{1}{4C_0^2} + o_{\eta \to 0+}(1)\right)\tau^2 + 2C_0^2\eta} \,d\tau \\
        &= \int_{t_0}^t \frac{1}{\left(\frac{1}{4C_0^2} + o_{\eta \to 0+}(1)\right)\tau^2 + 2C_0^2\eta} \,d\tau + \frac{4C_0^2}{t_0} \\
        &\quad + \mathcal{O}\left(\int_{-\eta}^t \left|\frac{1}{\lambda_\eta(\tau)}-\frac{1}{\left(\frac{1}{4C_0^2} + o_{\eta \to 0+}(1)\right)\tau^2 + 2C_0^2\eta}\right|\,d\tau\right).
    \end{align*}
    From \eqref{exact law of lambda_eta}, we obtain
    \[
        \left|\frac{1}{\lambda_\eta(\tau)}-\frac{1}{\left(\frac{1}{4C_0^2} + o_{\eta \to 0+}(1)\right)\tau^2 + 2C_0^2\eta}\right| \lesssim \left|\frac{\tau^4}{(\tau^2+\eta)^2}\right| \leq 1.
    \]
    This proves the law of $\gmm_\eta$.
\end{proof}

\begin{remark}[Choice of initial data of formal modulation parameters]
The specific choice of the initial modulation parameters 
\[
    (\lambda_{\eta}, b_{\eta}, \nu_{\eta}, \nut{x}_{\eta}, \gamma_{\eta})(0)
\]
in Lemma~\ref{approximated dynamics} is tailored to construct a Bourgain--Wang type solution $u(t)$ with prescribed conserved quantities
\[
    E(u(t)) = E_0, \qquad P(u(t)) = P_0,
\]
for any given $(E_0,P_0) \in \mathbb{R}_+ \times \mathbb{R}$.  

As explained in Proposition~\ref{energy and momentum expansion for singular profile}, the energy $E(Q_{\mathcal{P}})$ admits an expansion with respect to $(b,\eta)$, which follows from the Pohozaev identity for the ground state $Q$, and the momentum $P(Q_{\mathcal{P}})$ can be expanded with respect to $\nu$. These expansions allow us to adjust $b_\eta(0)$ and $\nu_\eta(0)$ so that the resulting solution reaches the desired conserved quantities.

The constants $C_0$ and $D_0$ are chosen in order to make the radiation part $z$ and the singular profile $Q_{\mathcal{P}}$ as decoupled as possible from $H^{1/2}$. In other words, $z$ is constructed so that its interaction with $Q_{\mathcal{P}}$ is negligible in the leading order, and $C_0$, $D_0$ are fixed accordingly to reflect this choice.

For further details on these expansions, see Proposition~\ref{energy and momentum expansion for singular profile}, and for how this parameter selection is used in the final construction, see the proof of Theorem~\ref{main result 1}.
\end{remark}

\begin{remark}[On the unstable direction]
The formal modulation law \eqref{formal modulation law} suggests 
$\nu_\eta(0)\sim \eta$, in agreement with the momentum relation. 
Note that the translation mode $iR_{0,1,0}$ belongs to the purely imaginary
generalized kernel and therefore does not contribute to the $L^2$-mass 
variation in the unstable direction. 
Hence, the anticipated $\eta$-direction $R_{0,0,1}$ is indeed the appropriate
choice for the unstable mode. 
In fact,
\[
    \|Q_{\mathcal{P}(b_\eta(0),\nu_{\eta}(0),\eta)}\|_{L^2}^2 
    = \|Q\|_{L^2}^2 
      - 2\eta(i^{-1}L_Q[iR_{1,0,0}],R_{0,0,1})_r
      + \mathcal{O}(\eta^2) < \norm{Q}_{L^2}^2,
\]
for small $\eta>0$, which formally confirms that the $\eta$-direction corresponds to the instability mechanism.
\end{remark}

\section{Modulation Analysis}\label{Main thm to main bootstrap}
In the rest of the paper, we construct a one-parameter family of solutions $\{u_{\eta}\}_{\eta>0}$, from which, taking $\eta\to 0$ up to sequence, we construct a blow-up solution with the prescribed asymptotic profile $z_f^*$. The construction is based on the approximate dynamics developed in Section~\ref{sec:approx sol}, where both the flow $z(t)$ of \eqref{half-wave} with initial data $z_f^*$ (Proposition~\ref{construction of asymptotic profile}) and the modified singular profile $Q_{\mathcal{P}}$ (Proposition~\ref{Singular profile}) were introduced. We consider initial data of the form
\begin{equation*}
    u_{\eta}(0,x)
    = \frac{1}{\lambda_{\eta}^{1/2}(0)}\,
      Q_{\mathcal{P}(b_{\eta},\nu_{\eta}(0),\eta)}\!\left(
          \frac{x - \nut{x}_{\eta}(0)}{\lambda_{\eta}(0)}
      \right)
      e^{i\gamma_{\eta}(0)}
      + z_f^*(x),
\end{equation*}
where the parameters $(\lambda_{\eta}(0),\nu_{\eta}(0),\gamma_{\eta}(0),b_{\eta}(0),\nu_{\eta}(0))$ are chosen in Lemma~\ref{approximated dynamics}. 

Starting from these initial data, we solve the half-wave equation backward in time and, for each $\eta>0$, obtain a solution $u_\eta(t)$ that exists uniformly with respect to $\eta$. We represent $u_\eta$ in the form  
\begin{equation}\label{decomposition of u_eta}
    u_{\eta}(t)
    = \bigl[ Q_{\mathcal{P}(b(t),\nu(t),\eta)} + \epsilon \bigr]^{\sharp}
      + z(t),
\end{equation}
where the perturbation $\epsilon$ is controlled so that the modulation parameters 
\[
    (\lambda(t), \bar{x}(t), \gamma(t), b(t), \nu(t))
\]
remain close to the approximate parameters 
\[
    (\lambda_{\eta}(t), \bar{x}_{\eta}(t), \gamma_{\eta}(t), b_{\eta}(t), \nu_{\eta}(t)),
\]
in Lemma~\ref{approximated dynamics}.
Finally, by sending $\eta \to 0^+$, we construct a Bourgain--Wang type blow-up solution to \eqref{half-wave} with the prescribed asymptotic profile and establish its instability from the construction.

\subsection{Set up of the modulation analysis}\label{Set up of the modulation analysis} Based on the construction of the modified profile $Q_{\mathcal{P}(b,\nu,\eta)}$, we incorporate the modulation parameters and decompose the solutions $u_{\eta}$ to \eqref{half-wave} as 
\begin{equation}\label{u_eta form}
    u_{\eta}(t,x) = \frac{1}{\lambda^{\frac{1}{2}}(t)}\left[Q_{\mathcal{P}(b(t),\nu(t),\eta)} + \epsilon \right]\left(t,\frac{x-\nut{x}(t)}{\lambda(t)}\right)e^{i\gamma(t)} + z(t,x).
\end{equation}
Here, $z(t,x)$ is a solution to \eqref{half-wave} with a prescribed value at time $t=0$, given by $z(0,\cdot)=z_f^*$, as constructed in Proposition~\ref{construction of asymptotic profile}. The modified profile for the blow-up component, $Q_{\mathcal{P}}$, is derived in Proposition~\ref{Singular profile} and involves a fixed parameter $\eta>0$ and dynamic parameters $(\lambda,\nut{x},\gamma,b,\nu)(t)$. Our aim is to monitor how $\epsilon$ evolves and demonstrate that $\epsilon \to 0$ in certain sense. This justify the formal modulation law \eqref{formal modulation law}. This formal law \eqref{formal modulation law} is derived with the intention of minimizing profile error $\Psi_{\mathcal{P}}$, as discussed in Proposition~\ref{Singular profile}. Therefore, exploring the dynamics of $\epsilon$ is a key step.

Since $u_{\eta}$ is a solution of \eqref{half-wave}, we normalize it by $u^{\flat}_{\eta}$:
\begin{equation}\label{renormalised solution v}
     u^{\flat}_{\eta}(s,y) = \left.\lambda^{\frac{1}{2}}e^{-i\gamma}u_\eta(t,\lambda y+\nut{x})\right|_{t=t(s)},\quad  \frac{ds}{dt} \coloneqq \frac{1}{\lambda},\quad y\coloneqq \frac{x-\nut{x}}{\lambda},
\end{equation}
where $\lambda(t)$ is the $L^2$-critical scaling parameter, $\nut{x}(t)$ is the translation parameter, and $\gamma(t)$ is the phase rotation parameter. Then, $u^{\flat}_{\eta}$ is a solution of 
\begin{equation}\label{renormalised equation for v}
    i\partial_su^{\flat}_{\eta} - Du^{\flat}_{\eta} - u^{\flat}_{\eta} + |u^{\flat}_{\eta}|^2u^{\flat}_{\eta} = i \frac{\lambda_s}{\lambda}\Lambda u^{\flat}_{\eta} + i\frac{\nut{x}_s}{\lambda}\nabla u^{\flat}_{\eta} + \tilde{\gamma}_su^{\flat}_{\eta},
\end{equation}
where $\tilde{\gamma}_s = \gamma_s-1$.
Recalling the definition of the $\sharp$-notation
\begin{equation*}
    f^{\sharp}(t,x) \coloneqq \frac{1}{\lambda^{1/2}}f(s,\frac{x-\nut{x}}{\lambda})e^{i\gamma} \bigg|_{s=s(t)},
\end{equation*}
\eqref{u_eta form} yields
\begin{equation*}
\begin{aligned}
    u_\eta &= Q_{\mathcal{P}}^\sharp + z + \epsilon^{\sharp}, \\
    u^\flat_{\eta} &= Q_{\mathcal{P}} + z^\flat + \epsilon,
\end{aligned}
\end{equation*}
where $Q^{\sharp}_{\mathcal{P}}$ and $z^{\flat}$ are
\begin{equation*}
    \begin{aligned}
        &Q^{\sharp}_{\mathcal{P}}(t,x) \coloneqq \frac{1}{\lambda^{\frac{1}{2}}}Q_{\mathcal{P}}\left(\frac{x-\nut{x}}{\lambda}\right)e^{i\gamma},\quad 
        z^{\flat}(s,y) \coloneqq \lambda^{\frac{1}{2}}z(t,\lambda y + \nut{x})e^{-i\gamma}.
    \end{aligned}
\end{equation*}

We introduce $W$ notation for the summation of a singular profile and an asymptotic profile in the $(t,x)$-variable. Hence, the profile error $\Psi$ in \eqref{Q_P+z profile error} of $W$ contains the interaction of the singular and regular parts caused by the term $|u_{\eta}|^2u_{\eta}$.
\begin{equation}\label{def of W}
    W \coloneqq Q_{\mathcal{P}}^{\sharp} + z,\quad W^{\flat} = Q_{\mathcal{P}}+z^{\flat},
\end{equation}
and
\begin{equation*}
    u_\eta=W+\epsilon^{\sharp},\quad u^\flat_{\eta}=W^{\flat} + \epsilon.
\end{equation*}
From the construction of $Q_{\mathcal{P}}^\sharp$ and $z$, we have trivial bounds from scaling:
\begin{equation}\label{a priori bound for W}
    \norm{W}_{L^2} \lesssim 1,\quad \norm{W}_{\dot{H}^{\frac{1}{2}}}\lesssim \lambda^{-\frac{1}{2}},\quad \norm{W}_{\dot{H}^1} \lesssim \lambda^{-1},
\end{equation}
and, the profile $W$ satisfies 
\begin{equation}\label{equation for W}
    i\partial_tW - DW + |W|^2W = \frac{1}{\lambda^{\frac{3}{2}}}\Psi\left(s,\frac{x-\nut{x}}{\lambda}\right)e^{i\gamma} \eqqcolon \frac{1}{\lambda}\Psi^\sharp,
\end{equation}
where, $\frac{1}{\lambda}\Psi^{\sharp}$ is the profile error of $W$ defined by
\begin{equation}\label{Q_P+z profile error}
    \begin{aligned}
        \Psi &\coloneqq \text{Mod} - \Psi_{\mathcal{P}} + \text{IN}.
    \end{aligned}
\end{equation}
Here, $\Psi_{\mathcal{P}}$ is the profile error of $Q_{\mathcal{P}}$ given by \eqref{modified profile eror}. Also, we denote Mod by the usual dot product of the modulation vector $\overrightarrow{\text{Mod}}(t)$ and the modulation direction $\overrightarrow{V}$:
\begin{equation}\label{definition of Mod}
    \text{Mod} \coloneqq \overrightarrow{\text{Mod}}(t) \cdot \overrightarrow{V},
\end{equation}
where 
\begin{equation}\label{definition of modulation eq and modulation vector}
    \begin{aligned}
        \overrightarrow{\text{Mod}}(t) &\coloneqq
        \begin{pmatrix}
            \frac{\lambda_s}{\lambda} + b \\
            \frac{\nut{x}_s}{\lambda} - \nu - c_2b^2\nu\\
            \tilde{\gamma}_s\\
            b_s+\left(\frac{1}{2}+c_3\eta\right)b^2 + \eta + c_1b^4 + c_4\nu^2 \\
            \nu_s + b\nu \\
        \end{pmatrix}, \quad
        \overrightarrow{V} \coloneqq
        \begin{pmatrix}
            i\partial_b Q_{\mathcal{P}} \\
            i\partial_v Q_{\mathcal{P}} \\
            - i\Lambda Q_{\mathcal{P}} \\
            - i\nabla Q_{\mathcal{P}} \\
            - Q_{\mathcal{P}}
        \end{pmatrix}.
    \end{aligned}
\end{equation}
Finally, we define the interaction of $Q_{\mathcal{P}}$ and radiation $z^{\flat}$ by IN, which is derived from nonlinearity $|u_\eta|^2u_\eta$:
\begin{equation}\label{interaction definition}                       \text{IN}\coloneqq2|Q_{\mathcal{P}}|^2z^{\flat}+2Q_{\mathcal{P}}|z^{\flat}|^2+Q_{\mathcal{P}}^2\overline{z^{\flat}}+\overline{Q_{\mathcal{P}}}(z^{\flat})^2.
\end{equation}

Now, we set up an equation for the error $\epsilon^{\sharp}$. Since $\epsilon^{\sharp}=u_{\eta}-W$, error term $\epsilon^{\sharp}$ satisfies the equation
\begin{equation}\label{equation for epsilon sharp}
    i\partial_t \epsilon^{\sharp} - D\epsilon^{\sharp} + (|u_{\eta}|^2u_{\eta}-|W|^2W) = -\frac{1}{\lambda}\Psi^{\sharp}.
\end{equation}
A direct calculation gives
\begin{equation*}
    |u_{\eta}|^2u_{\eta}-|W|^2W = \frac{1}{\lambda^{\frac{3}{2}}}\left[2\epsilon|Q_{\mathcal{P}}|^2+Q_{\mathcal{P}}^2\bar{\epsilon}+R(\epsilon)\right]\left(s,\frac{x-\nut{x}}{\lambda}\right)e^{i\gamma},
\end{equation*}
where $R(\epsilon)$ is a remaining higher order of $\epsilon$ with the interaction of radiation and $\epsilon$, that is.
\begin{equation}\label{remainder definition}
    \begin{aligned}
        R(\epsilon) &\coloneqq 2\left(|Q_{\mathcal{P}}+z^{\flat}|^2-|Q_{\mathcal{P}}|^2 \right) \epsilon + \left((Q_{\mathcal{P}}+z^{\flat})^2 - (Q_{\mathcal{P}})^2\right)\bar{\epsilon} \\
        &\quad +\left(2(Q_{\mathcal{P}}+z^{\flat})|\epsilon|^2 + (\overline{Q_{\mathcal{P}}+z^{\flat}})\epsilon^2+\epsilon|\epsilon|^2\right).
    \end{aligned}
\end{equation}
Then, renormalizing to the $(s,y)$ scale, \eqref{equation for epsilon sharp} is equivalent to
\begin{equation}\label{equation for epsilon}
    \begin{aligned}
        \partial_s\epsilon + &iL_{Q_{\mathcal{P}}}[\epsilon] -(\nu + c_2b^2\nu)\nabla \epsilon + b\Lambda \epsilon \\
        & = \overrightarrow{\text{Mod}}(t) \cdot (-\partial_b Q_{\mathcal{P}}, - \partial_\nu Q_{\mathcal{P}}, \Lambda(Q_{\mathcal{P}}+\epsilon), \nabla(Q_{\mathcal{P}}+\epsilon), -i(Q_{\mathcal{P}}+\epsilon))\\
        & \quad + i(\text{IN} - \Psi_{\mathcal{P}} + R(\epsilon)).
    \end{aligned}
\end{equation}
Here, $L_{Q_{\mathcal{P}}}$ is a linearized operator at $v=Q_{\mathcal{P}}$.
Note that the relation for $L_Q$ and $L_{Q_{\mathcal{P}}}$.
\begin{equation}\label{difference of L_(Q_p) and L_Q}
    \begin{aligned}
        L_{Q_{\mathcal{P}}}[\epsilon] &= L_Q[\epsilon] + 2(Q^2-|Q_{\mathcal{P}}|^2)\epsilon + (Q^2-Q_{\mathcal{P}}^2)\overline{\epsilon}
        \\
        &=D\epsilon+\epsilon-|Q_{\mathcal{P}}|^2\epsilon-2\Re\left\{\ol{Q_{\mathcal{P}}}\epsilon\right\}Q_{\mathcal{P}}.
    \end{aligned}
\end{equation}
Let $\mathcal{E}(s,y)$ be a function to denote the small part in the inhomogeneous part of \eqref{equation for epsilon} defined by
\begin{equation}\label{definition of mathcal{E}}
    \mathcal{E} \coloneqq \text{IN} - \Psi_{\mathcal{P}} + R(\epsilon).
\end{equation}
Then, the evolution of $\epsilon$ is 
\begin{equation}\label{approximated evolution of epsilon}
    \partial_s \epsilon + iL_Q[\epsilon] \approx \overrightarrow{\text{Mod}}(t) \cdot \overrightarrow{V} + i\mathcal{E}.
\end{equation}
As we have five modulation parameters $(\lambda,\nut{x},\gamma,\nu,b)(t)$. We fix these modulation parameters and $\epsilon(s,y)$ by imposing an orthogonality on $\epsilon$,
in order to guarantee the coercivity of the linearized operator $L_Q$ and maintaining smallness of $\epsilon$ backward in time. The natural candidates of the orthogonal directions are the generalized kernel of the linearized operator $L_Q$, which is coming from the symmetry group of the equation \eqref{half-wave}. But, as in the Lemma~\ref{Coercivity 1} and Lemma~\ref{Coercivity 2}, the existence of a negative eigenvalue and corresponding eigenfunction of $L_Q$, which is not from the symmetry of \eqref{half-wave}, leads our choice of direction as follows.
\begin{equation*}
    (\epsilon,i\Lambda Q_{\mathcal{P}})_r = (\epsilon,i\partial_b Q_{\mathcal{P}})_r=(\epsilon,i\partial_\eta Q_{\mathcal{P}})_r=(\epsilon, i\nabla Q_{\mathcal{P}})_r=(\epsilon,i\partial_\nu Q_{\mathcal{P}})_r = 0.
\end{equation*}
These five orthogonal conditions give modulation estimates for $(\lambda,\nut{x},\gamma,\nu,b)$ and yield degeneracy for $(\epsilon,Q)_r$. 
Indeed, by differentiating five orthogonal identity with respect to $s$-variable and substituting $\partial_s\epsilon$ using \eqref{approximated evolution of epsilon}, we obtain
\begin{equation}\label{formal calculation of modulation estimates}
\begin{aligned}
    0&=\partial_s(\epsilon,i\Lambda Q_{\mathcal{P}})_r \approx
    (\epsilon,Q)_r + \left|b_s + \left(\frac{1}{2}+c_3\eta\right)b^2 + \eta + c_1b^4 + c_4\nu^2 \right| + (\mathcal{E},\Lambda Q)_r,\\
    0&= \partial_s (\epsilon,i\partial_b Q_{\mathcal{P}})_r \approx
    (\epsilon,i\Lambda Q)_r + \left|\frac{\lambda_s}{\lambda}+b\right| + (i\mathcal{E},R_{1,0,0})_r,\\
    0&= \partial_s (\epsilon,i\partial_\eta Q_{\mathcal{P}})_r \approx
    (\epsilon,R_{1,0,0})_r + |\tilde{\gamma}_s| + (\mathcal{E},R_{0,0,1})_r,\\
    0&= \partial_s (\epsilon, i\nabla Q_{\mathcal{P}})_r \approx
    b(\epsilon,i\nabla Q)_r + |\nu_s + b\nu| + (\mathcal{E},\nabla Q)_r,\\
    0&= \partial_s (\epsilon,i\partial_\nu Q_{\mathcal{P}})_r \approx
    (\epsilon, i\nabla Q)_r + \left|\frac{\nut{x}_s}{\lambda} - \nu - c_2b^2\nu\right| + (i\mathcal{E}, R_{0,1,0})_r.
\end{aligned}
\end{equation}
Now, we investigate the derivative of $(\epsilon, Q)_r$ with respect to $s$.
\begin{equation}
    \partial_s (\epsilon, Q_{\mathcal{P}})_r \approx (\epsilon, L_Q[iQ])_r +|(\partial_b Q_{\mathcal{P}}, Q_{\mathcal{P}})_r||\overrightarrow{\text{Mod}}(t)| + (i\mathcal{E},Q)_r. 
\end{equation}
From \eqref{formal calculation of modulation estimates} and orthogonality conditions for $\epsilon$, we have $|\overrightarrow{\text{Mod}}(t)| \lesssim |(\epsilon,Q)_r| + |(i\mathcal{E}, \langle \cdot \rangle^{-2})_r|$, forming the decaying property of the profile $Q_{\mathcal{P}}$ in the Proposition~\ref{Singular profile}. Then, identity $\norm{R_{1,0,0}}_{L^2}^2 = 2(Q,R_{2,0,0})_r$ sheds light to 
\begin{equation*}
    (\partial_b Q_{\mathcal{P}},Q_{\mathcal{P}})_r = \mathcal{O}(b^2 + \nu + \eta).
\end{equation*}
Since we consider the regime $b^2 + \eta \lesssim \lambda$ and $\nu \sim \lambda$ from the formal modulation law \eqref{formal modulation law} and the smallness of profile error $\Psi$ in Lemma~\ref{interaction error order}, we have additional $\lambda^{1/2-}$ gain of smallness of $(\epsilon, Q_{\mathcal{P}})_r$ than the trivial bound $\norm{\epsilon}_{L^2}$ from the Cauchy--Schwarz inequality. Simultaneously, we obtain modulation estimate
\begin{equation*}
    |\overrightarrow{\text{Mod}}(t)| \lesssim \lambda^{1/2-}\norm{\epsilon}_{L^2} .
\end{equation*}
The rigorous proof for modulation estimates and degeneracy of unstable direction is in the Lemma~\ref{M estimate} and Proposition~\ref{Modulation estimates}.

The precise setup is as follows. For each small $\eta>0$, we consider a initial data
\begin{equation*}
    u_{\eta}(0,x) \coloneqq \frac{1}{\lambda_{\eta}(0)}Q_{\mathcal{P}(b_\eta(0),\nu_{\eta}(0),\eta)}\left(\frac{x-\nut{x}_{\eta}(0)}{\lambda_{\eta}(0)}\right)e^{i\gamma_{\eta}(0)} + z_f^*(0),
\end{equation*}
where $(\lambda_\eta,\nut{x}_{\eta},\gamma_\eta,b_\eta,\nu_{\eta})(0)$ is chosen in the Lemma~\ref{approximated dynamics}, and $z_f^*$ is an asymptotic profile constructed in the Proposition~\ref{construction of asymptotic profile}.
We evolve $u_{\eta}$ by \eqref{half-wave} with the above initial data backward in time. As we assume the decomposition 
\begin{equation*}
    u_{\eta}(t,x) = \frac{1}{\lambda^{\frac{1}{2}}(t)}\left[Q_{\mathcal{P}(b(t),\nu(t),\eta)} + \epsilon \right]\left(t,\frac{x-\nut{x}(t)}{\lambda(t)}\right)e^{i\gamma(t)} + z(t,x),
\end{equation*}
where $z$ is a flow of \eqref{half-wave} with initial data $z_f^*$, $\epsilon$ is a function of $u_{\eta}, z$ and modulation parameters $\lambda, \nut{x}, \gamma , b$ and $\nu$.

\begin{remark}
    In view of the definition of $\mathcal{E}$ in \eqref{definition of mathcal{E}} and the evolution equation for $\epsilon^{\sharp}$ in \eqref{equation for epsilon sharp}, the profile error $\Psi_{\mathcal{P}}$ associated with the singular profile $Q_{\mathcal{P}}$ (constructed in Proposition~\ref{Singular profile}) plays a crucial role in the modulation estimates, particularly in controlling the phase parameter $\gamma$. 

    In fact, if $\|\Psi_{\mathcal{P}}\|_{L^2} = \mathcal{O}(\lambda^2)$ as in \cite{KLR2013ARMAhalfwave}, the modulation analysis yields only a relatively rough bound 
    \[
        |\tilde{\gamma}_s| \lesssim \lambda^{\frac{3}{2}+\omega}
        \quad \text{(see Proposition~\ref{Modulation estimates})},
    \]
    which in turn leads to 
    \[
        |\gamma - \gamma_\eta| \lesssim \alpha^* \lambda^\omega
    \]
    after closing the bootstrap argument. However, such a weak control on phase modulation is not sufficient to establish the key energy-level estimate \eqref{estimate for u_c for H^{1/2}} required in the proof of Theorem~\ref{main result 1} when passing to the limit $\eta \to 0$.  

    For this reason, it is essential to improve the precision of the singular profile by pushing the expansion of $\Psi_{\mathcal{P}}$ to a higher order, specifically beyond $k=5$. This higher order tail computation in Proposition~\ref{Singular profile} ensures that the phase modulation remains sharp enough to achieve the desired energy-level control for the limiting Bourgain--Wang solution.
\end{remark}

\subsection{Reduction of Main theorem to main bootstrap lemma}
In this section, we construct a family of solutions $\{u_{\eta}\}_{\eta \in (0,\eta^*]}$ to \eqref{half-wave} such that the limit function of $u_0$ of $u_\eta$ as $\eta \rightarrow 0+$ up to subsequence is our desired blow-up solution via a compactness argument.

\begin{proposition}[Construction of one parameter family of solutions]\label{construction of one parameter family of solutions}
    Let $m = 32$, $\delta = \frac{1}{16}$, and $\omega = \frac{1}{8}$. Fix any nonzero function $f \in \mathcal{S}(\mathbb{R})$. Let $\alpha^* \ll 1$ be a sufficiently small constant, possibly smaller than the one chosen in Proposition~\ref{construction of asymptotic profile}. Let $z_f^*$ be the function constructed in Proposition~\ref{construction of asymptotic profile}, and denote by $z(t,x)$ the solution to the half-wave equation with initial data $z(0,x) = z_f^*(x)$. Then, there exists a constant $\eta^* = \eta^*(\alpha^*) > 0$ such that the following holds:
    
    For each $\eta \in (0, \eta^*(\alpha^*))$, consider the initial data at $t = 0$ given by
    \begin{equation}\label{initial data of u_eta}
        u_\eta(0,x) = \frac{1}{\lambda_\eta(0)} Q_{\mathcal{P}(b_\eta(0), \nu_\eta(0), \eta)}\left( \frac{x - \nut{x}_\eta(0)}{\lambda_\eta(0)} \right) e^{i\gamma_\eta(0)} + z_f^*(0),
    \end{equation}
    where the parameters $(\lambda_\eta(0), b_\eta(0), \nu_\eta(0), \nut{x}_\eta(0), \gamma_\eta(0))$ are those chosen in Lemma~\ref{approximated dynamics}, and $Q_{b_{\eta},\nu_{\eta},\eta}$ is a modified profile constructed in Proposition~\ref{Singular profile}.

    Then, there exists a constant $t_1 \in [-1, 0)$, independent of $\eta$, satisfying $|t_1| < |t_0|$, where $t_0 < 0$ is the constant from Lemma~\ref{approximated dynamics}, and a solution $u_\eta$ to the half-wave equation~\eqref{half-wave} defined on $[t_1, 0]$ such that
    \[
        u_\eta \in \mathcal{C}([t_1, 0]; H^{1/2+\delta}(\mathbb{R})),
    \]
    and $u_\eta$ does not blow up for $t \in [t_1, -t_1]$.

    In addition, $u_\eta$ admits a geometric decomposition: for each $\eta \in (0, \eta^*(\alpha^*))$ and for all $t \in [t_1, 0)$,
    \begin{equation}\label{decomposition of solution u_eta}
        u_{\eta}(t,x) = \frac{1}{\lambda^{1/2}(t)}\left[Q_{\mathcal{P}(b(t),\nu(t),\eta)} + \epsilon \right]\left(t,\frac{x-\nut{x}(t)}{\lambda(t)}\right)e^{i\gamma(t)} + z(t,x),
    \end{equation}
    with the following uniform control:
    \begin{equation}\label{uniform control for modulation parameters}
        \begin{aligned}
            \frac{|\lambda-\lambda_\eta|}{\lambda^{2+\omega}} + \frac{|b-b_\eta|}{\lambda^{3/2+\omega}} &+ \frac{|\nu-\nu_\eta|}{\lambda^{2+\omega}} + \frac{|\nut{x}-\nut{x}_\eta|}{\lambda^{5/2+\omega}} + \frac{|\gamma - \gamma_\eta|}{\lambda^{1/2+\omega}}\\
            &+\frac{\norm{\epsilon}_{H^{1/2}}}{\lambda^{3/2+2\omega}} + \frac{\norm{\epsilon^{\sharp}}_{H^{1/2+\delta}}}{\lambda^{1-2\delta}} \leq C\alpha^*.
        \end{aligned}
    \end{equation}
    Here, $(\lambda_\eta, b_\eta, \nu_\eta, \nut{x}_\eta, \gamma_\eta)(t)$ satisfies the formal modulation law~\eqref{formal modulation law} for $t \in [t_1, 0]$, and $C > 0$ is a universal constant.
\end{proposition}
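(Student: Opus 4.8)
The argument is a backward bootstrap in the geometric decomposition \eqref{decomposition of solution u_eta}, anchored at the singular time $t=0$. By the choice of initial data \eqref{initial data of u_eta} and Lemma~\ref{approximated dynamics}, at $t=0$ one has the exact identities $\epsilon(0)\equiv 0$ and $(\lambda,b,\nu,\nut{x},\gamma)(0)=(\lambda_\eta,b_\eta,\nu_\eta,\nut{x}_\eta,\gamma_\eta)(0)$, so the five orthogonality conditions on $\epsilon$ hold trivially there. First I would invoke a standard modulation lemma (the implicit function theorem applied to $(\epsilon,i\Lambda Q_{\mathcal{P}})_r=(\epsilon,i\partial_bQ_{\mathcal{P}})_r=(\epsilon,i\partial_\eta Q_{\mathcal{P}})_r=(\epsilon,i\nabla Q_{\mathcal{P}})_r=(\epsilon,i\partial_\nu Q_{\mathcal{P}})_r=0$ near $u_\eta(0)$) to obtain, on a maximal interval $[T_\eta^\ast,0]$, $C^1$ modulation parameters close to the formal ones and a perturbation $\epsilon$ solving \eqref{equation for epsilon}, hence the renormalized flow \eqref{approximated evolution of epsilon}. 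The bootstrap assumption on a subinterval $[T^\ast,0]$ is that the left-hand side of \eqref{uniform control for modulation parameters} is $\le K\alpha^\ast$ together with the crude bound $\|\epsilon^\sharp\|_{H^{1/2+\delta}}\le 1$ (needed for Lemma~\ref{log loss of the L-infty estiamte}), $K$ a large universal constant to be fixed; the goal is to improve each bound by a factor $\tfrac12$ and then extend $[T^\ast,0]$ by continuity.

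The heart of the proof is closing three estimates. \emph{(i) Modulation estimates.} Differentiating the orthogonality conditions in $s$ and inserting \eqref{equation for epsilon}, and using the decay of $Q_{\mathcal{P}}$ and the profiles $R_{p,q,r}$ from Proposition~\ref{Singular profile} together with the smallness of the interaction/profile error $\mathcal{E}$ (Lemma~\ref{interaction error order}, which in turn uses the high space--time degeneracy of $z$ at $(0,0)$ from Proposition~\ref{construction of asymptotic profile}), gives $|\overrightarrow{\text{Mod}}(t)|\lesssim|(\epsilon,Q)_r|+\alpha^\ast\lambda^{2+}$; then differentiating $(\epsilon,Q_{\mathcal{P}})_r$ and using $(\partial_bQ_{\mathcal{P}},Q_{\mathcal{P}})_r=\mathcal{O}(b^2+\nu+\eta)=\mathcal{O}(\lambda)$ (a consequence of $\|R_{1,0,0}\|_{L^2}^2=2(Q,R_{2,0,0})_r$ and the kernel relations) produces the gain $|(\epsilon,Q)_r|\lesssim\lambda^{1/2-}\|\epsilon\|_{L^2}$, whence $|\overrightarrow{\text{Mod}}(t)|\lesssim\lambda^{1/2-}\|\epsilon\|_{L^2}+\alpha^\ast\lambda^{2+}$. \emph{(ii) $H^{1/2}$ energy--virial estimate.} Using the functional $\mathcal{J}_A$ of \eqref{proof strategy : energy functional J_A}, I would prove coercivity $\mathcal{J}_A\gtrsim\tfrac1\lambda\|\epsilon\|_{H^{1/2}}^2$ from Lemma~\ref{Coercivity 2}, after checking that the four generalized-kernel directions are controlled by the orthogonality conditions and the modulation smallness, and the almost-monotonicity $\tfrac{d}{dt}\mathcal{J}_A\ge -o\!\big(\tfrac1\lambda\|\epsilon\|_{H^{1/2}}^2\big)$ plus a source bounded by $(\alpha^\ast)^2$ times a positive power of $\lambda$, with the Raphaël--Szeftel virial term $\tfrac b2\Im(\dots)$ absorbing the non-coercive $ib\Lambda\epsilon$ contribution; since $\epsilon(0)=0$ forces $\mathcal{J}_A(u_\eta(0))=0$, integrating from $0$ and using the exact profile $\lambda_\eta(t)\sim t^2+\eta$ of Lemma~\ref{approximated dynamics} yields $\|\epsilon(t)\|_{H^{1/2}}\lesssim\alpha^\ast\lambda^{3/2+2\omega}$. \emph{(iii) $H^{1/2+\delta}$ estimate.} Working with $\epsilon^\sharp$ in the $(t,x)$-scale via \eqref{equation for epsilon sharp}, the leading term $D\epsilon^\sharp$ drops out of $\tfrac12\tfrac{d}{dt}\|\epsilon^\sharp\|_{\dot H^{1/2+\delta}}^2$ by self-adjointness of $D$, the remaining cubic terms are handled by fractional Leibniz rules and Lemma~\ref{log loss of the L-infty estiamte}, and the source $\tfrac1\lambda\Psi^\sharp$ is controlled using $\|\Psi_{\mathcal{P}}\|_{H^m}\lesssim\lambda^3$ from \eqref{profile error estimate} interpolated to $H^{1/2+\delta}$; this gives $\|\epsilon^\sharp(t)\|_{H^{1/2+\delta}}\lesssim\alpha^\ast\lambda^{1-2\delta}$.

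With these bounds, $|\overrightarrow{\text{Mod}}(t)|\lesssim\alpha^\ast\lambda^{2+\omega}$, so comparing the exact modulation system with the formal law \eqref{formal modulation law} (solved exactly by $(\lambda_\eta,b_\eta,\nu_\eta,\nut{x}_\eta,\gamma_\eta)$) and integrating the differences from $t=0$, where the parameters agree, recovers $|\lambda-\lambda_\eta|$, $|b-b_\eta|$, $|\nu-\nu_\eta|$, $|\nut{x}-\nut{x}_\eta|$, $|\gamma-\gamma_\eta|$ with the claimed weights in \eqref{uniform control for modulation parameters}; all time integrals converge uniformly in $\eta$ exactly because of the sharp powers of $\lambda$ above. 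Fixing $K$ above the universal constants and $\alpha^\ast$ small closes the bootstrap, and a continuity argument extends the interval to a fixed $[t_1,0]$ with $t_1$ independent of $\eta$. Running the same analysis forward from $t=0$ (easier, since $\lambda_\eta(t)\sim t^2+\eta$ is increasing there and stays $\gtrsim\eta$) gives the decomposition on $[0,-t_1]$, and then $\|u_\eta(t)\|_{H^{1/2}}\lesssim\lambda^{-1/2}+\|\epsilon\|_{H^{1/2}}\lesssim\eta^{-1/2}$ on $[t_1,-t_1]$, so by the blow-up alternative in Theorem~\ref{Cauchy theory} the solution $u_\eta$ does not blow up on $[t_1,-t_1]$.

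The main obstacle is closing the energy estimates \emph{uniformly in $\eta$}: because the renormalized time satisfies $\int^0 dt/\lambda\sim\int^0 dt/(t^2+\eta)\sim\eta^{-1/2}\to\infty$ as $\eta\to0$, a naive Grönwall argument fails and one must track the precise $\lambda$-decay of every error term. This is what forces the sharp modulation estimate $|\overrightarrow{\text{Mod}}|\lesssim\lambda^{1/2-}\|\epsilon\|_{L^2}$ and the higher-order profile error $\|\Psi_{\mathcal{P}}\|_{H^m}\lesssim\lambda^3$ (the reason Proposition~\ref{Singular profile} pushes the tail computation through order $k=5$), and it requires a careful treatment of the nonlocal interaction term $\mathcal{E}$, which is small only because $z$ was engineered to vanish to high space--time order at $(0,0)$. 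A secondary obstacle is the $H^{1/2+\delta}$ control, where the lack of local smoothing and of small-data scattering for \eqref{half-wave} rules out Strichartz estimates and forces the Fourier-side fractional-calculus argument.
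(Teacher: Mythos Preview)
Your plan is correct and matches the paper's approach: reduction to a main bootstrap lemma anchored at $t=0$ (where $\epsilon(0)=0$), closed via exactly the three ingredients you list---modulation estimates exploiting the degeneracy of $(\epsilon,Q_{\mathcal{P}})_r$, the energy--virial functional $\mathcal{J}_A$ for $H^{1/2}$, and the Fourier-side $H^{1/2+\delta}$ estimate---followed by integration of the parameter differences from $t=0$. The only minor difference is the last step: for non-blowup on $[t_1,-t_1]$ the paper simply invokes the time-reversibility of \eqref{half-wave} (if $u(t)$ solves it, so does $\overline{u}(-t)$) rather than re-running the forward analysis as you propose.
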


\begin{proof}[Proof of Proposition~\ref{construction of one parameter family of solutions} assuming Lemma~\ref{Main bootstrap}]
    Let $\eta^*>0$ be a constant which is chosen in the Lemma~\ref{approximated dynamics}. For $\eta \in (0,\eta^*)$, let $u_{\eta}: (T^{(\eta)}_{\text{maximal}},0]\times \mathbb{R} \rightarrow \mathbb{C}$ be the maximal lifespan solution of the half-wave equation \eqref{half-wave} with the initial data \eqref{initial data of u_eta},
    \begin{equation*}
        u_{\eta}(0,x) \coloneqq \frac{1}{\lambda_{\eta}(0)}Q_{\mathcal{P}(b_\eta(0),\nu_{\eta}(0),\eta)}\left(\frac{x-\nut{x}_{\eta}(0)}{\lambda_{\eta}(0)}\right)e^{i\gamma_{\eta}(0)} + z_f^*(0),
    \end{equation*}
    where $(\lambda_\eta,\nut{x}_{\eta},\gamma_\eta,b_\eta,\nu_\eta)(0)$ is initial data chosen in the Lemma~\ref{approximated dynamics}. Indeed, since $u_\eta(0) \in H^{1/2+\delta}(\mathbb{R})$, from the Lemma~\ref{Cauchy theory}, we have $u_\eta \in \mathcal{C}((T^{(\eta)}_{\text{maximal}},0];H^{1/2+\delta}(\mathbb{R}))$. 

    Possibly reducing $\eta^*>0$ than the one chosen in Lemma~\ref{approximated dynamics}, for each $\eta \in (0,\eta^*)$ we define $\epsilon(t) \in H^{1/2+\delta}$ and modulation parameters $(\lambda,b,\nu,\nut{x},\gamma)(t)$ as
    \begin{equation}\label{definition of epsilon}
        \frac{1}{\lambda^{1/2}(t)}\epsilon\left(t,\frac{x-\nut{x}(t)}{\lambda}\right)e^{i\gamma(t)} \coloneqq 
        u_{\eta}(t,x) - \frac{1}{\lambda^{1/2}(t)}Q_{\mathcal{P}(b(t),\nu(t),\eta)}\left(\frac{x-\nut{x}(t)}{\lambda(t)}\right)e^{i\gamma(t)}-z(t,x),
    \end{equation}
    with initial data $(\lambda,\nut{x},\gamma,b,\nu)(0) = (\lambda_\eta,\nut{x}_{\eta},\gamma_{\eta},b_\eta,\nu_{\eta})(0)$ and orthogonal conditions on $\epsilon$
        \begin{align}
        (\epsilon, i\Lambda Q_\calP)_r &=0, \label{orthogonality for Lambda Q}
        \\
        (\epsilon, i\partial_b Q_\calP)_r&=0, \label{orthogonality for partial_b Q}
        \\
        (\epsilon, i\partial_{\eta} Q_\calP)_r &=0, \label{orthogonality for partial_eta Q}
        \\
        (\epsilon, i\nabla Q_\calP)_r &=0, \label{orthogonality for nabla Q}
        \\
        (\epsilon, i\partial_{\nu} Q_\calP)_r &=0. \label{orthogonality for partial_v Q}
    \end{align}
    For each $\eta \in (0,\eta^*)$, there is a time $T_{\text{dec}}^{(\eta)}<0$ so that the above geometric decomposition \eqref{definition of epsilon} holds and modulation parameters $(\lambda,\nut{x},\gmm,b,\nu)(t)$ are uniquely determined by the orthogonality conditions \eqref{orthogonality for Lambda Q}--\eqref{orthogonality for partial_v Q}, for each $t \in (T_{\text{dec}}^{(\eta)},0]$. Indeed, from Lemma~\ref{decomposition lemma}, we define
    \begin{equation}\label{definition of T_dec}
        T_{\text{dec}}^{(\eta)} \coloneqq \inf\{\tilde{t} <0 \:|\: \tilde{t} \in \mathcal{A}_{\text{dec}}^{(\eta)} \},
    \end{equation}
    where $\mathcal{A}_{\text{dec}}^{(\eta)}$ is defined in \eqref{definition of A_{dec}}. Equivalently, $T_{\text{dec}}^{(\eta)}$ is a maximal time of existence of modulation parameters $(\lambda,b,\nu,\nut{x},\gamma)(t)$ of following systems:
    \begin{equation}\label{decomposition of solution u_eta-2}
        \begin{aligned}
            \begin{cases}
                u_\eta(t,x) = \frac{1}{\lambda^{1/2}(t)}Q_{\mathcal{P}(b(t),\nu(t),\eta)}\left(\frac{x-\nut{x}(t)}{\lambda(t)}\right)e^{i\gamma(t)} + z(t,x) + \epsilon^{\sharp}(t,x),&\\
                \partial_t(\epsilon,i\Lambda Q_{\mathcal{P}})_r = \partial_t(\epsilon,i\partial_b Q_{\mathcal{P}})_r=\partial_t(\epsilon,i\partial_\eta Q_{\mathcal{P}})_r
                =\partial_t(\epsilon, i\nabla Q_{\mathcal{P}})_r=\partial_t(\epsilon,i\partial_\nu Q_{\mathcal{P}})_r = 0,&\\
                (\lambda,b,\nu,\nut{x},\gmm)(0) = (\lambda_{\eta},b_{\eta},\nu_{\eta},\nut{x}_{\eta},\gmm_{\eta})(0).
            \end{cases}
        \end{aligned}
    \end{equation}
    We show that there is a constant $t_1<0$-independent of $\eta$ so that 
    \eqref{decomposition of solution u_eta} and \eqref{uniform control for modulation parameters} holds for $t \in [t_1,0]$, by introducing the main bootstrap lemma.
    \begin{lemma}\label{Main bootstrap}(Main bootstrap)
        Let $\omega = 1/8$. There exists $t_1<0\ (t_0<t_1)$ such that for all sufficiently small $\eta>0$ and $\alpha^*>0$, we have the following property. Assume that the decomposition
        \eqref{decomposition of solution u_eta-2} of $u_\eta$ satisfies the weak bootstrap assumptions
        \begin{equation}\label{bootstrap bounds}
            \begin{aligned}
                &\sup_{t\in[t_2,0]}\left\{\frac{\norm{\epsilon}^2_{H^{\frac{1}{2}}}}{\lambda^{3+4\omega}}+\frac{\norm{\epsilon^{\sharp}}_{H^{\frac{1}{2}+\delta}}^2}{\lambda^{2-4\delta}}\right\} \leq 1,\\
                &\sup_{t\in [t_2,0]}\left\{\frac{|\lambda_{\eta}-\lambda|}{\lambda^{2+\omega}} + \frac{|b_{\eta}-b|}{\lambda^{3/2+\omega}}+\frac{|\nu_{\eta}-\nu|}{\lambda^{2+\omega}} + \frac{|\nut{x}_{\eta}-\nut{x}|}{\lambda^{5/2+\omega}}\right\} \leq 1,
            \end{aligned}
        \end{equation}
        for some $t_2 \in [t_1,0] \cap (T_{\text{dec}}^{(\eta)},0]$. Then, for all $t \in [t_2,0]$ we have 
        \begin{equation}\label{uniform control of parameters and error}
            \begin{aligned}
                \frac{|\lambda-\lambda_\eta|}{\lambda^{2+\omega}} + \frac{|b-b_\eta|}{\lambda^{3/2+\omega}} &+ \frac{|\nu-\nu_\eta|}{\lambda^{2+\omega}} + \frac{|\nut{x}-\nut{x}_\eta|}{\lambda^{5/2+\omega}} + \frac{|\gamma - \gamma_\eta|}{\lambda^{1/2+\omega}}\\
                &+\frac{\norm{\epsilon}_{H^{1/2}}^2}{\lambda^{3+4\omega}} + \frac{\norm{\epsilon^{\sharp}}_{H^{1/2+\delta}}^2}{\lambda^{2-4\delta}} \leq C\alpha^* < \frac{1}{2},
            \end{aligned}
        \end{equation}
        where $C>0$ is a universal constant. 
    \end{lemma}
    Now, possibly replacing $\eta^*$ smaller than the one chosen in Lemma~\ref{approximated dynamics} so that there is a constant $t_1<0$ in Lemma~\ref{Main bootstrap} independent of $\eta \in (0,\eta^*)$. 
    Indeed, from the choice of initial data of modulation parameter $(\lambda,\nut{x},\gamma, b,\nu)(0) = (\lambda_{\eta},\nut{x}_{\eta},\gamma_{\eta}, b_{\eta},\nu_{\eta})(0)$ as in Lemma~\ref{approximated dynamics}, modulation parameters and error satisfy \eqref{bootstrap bounds} in a small neighborhood of $0$. From a standard continuity argument, the bootstrap conclusion \eqref{uniform control of parameters and error} is satisfied on the time interval $[t_1,0] \cap (T_{\text{dec}}^{(\eta)},0]$. In particular, since the modulation parameters do not blow up on $[t_1,0] \cap (T_{\text{dec}}^{(\eta)},0]$, we conclude that $T_{\text{dec}}^{(\eta)}\leq t_1$. Finally, using the fact that if $u(t)$ is a solution to \eqref{half-wave}, then $\overline{u}(-t)$ is also a solution. Hence, time reversible property of the half-wave equation gives that $u_{\eta}(t)$ does not blow up on $[t_1,-t_1]$.
\end{proof}

\subsection{Proof of Theorem~\ref{main result 1} and Theorem~\ref{main result 2}}
We construct a Bourgain--Wang type solution to \eqref{half-wave}. We use a compactness argument. Let $\{u_\eta\}_{0<\eta<\eta^*}$ be a family of solutions to the half wave equation \eqref{half-wave} which is constructed in the Proposition \ref{construction of one parameter family of solutions}. First, note that $u_\eta \in \mathcal{C}([t_1,0];H^{1/2+\delta}(\mathbb{R}))$, where $t_1<0$ is a constant independent of $\eta$.

\begin{proof}[Proof of Theorem~\ref{main result 1}] 

Let $t_1<0$ and $\eta^*>0$ be the constants chosen in Lemma~\ref{Main bootstrap}.  
We claim that the family $\{u_\eta(t_1)\}_{0<\eta<\eta^*}$ is precompact in $H^{1/2}(\mathbb{R})$.  

First, the uniform boundedness of $\|\epsilon^{\sharp}\|_{H^{1/2+\delta}}$ in \eqref{uniform control of parameters and error} implies
\begin{equation}\label{eq:1}
    \|u_{\eta}(t_1)\|_{H^{1/2+\delta}}
    \lesssim \lambda(t_1)^{-\frac{1}{2}-\delta}
    + \lambda(t_1)^{1-2\delta}
    \leq M(t_1),
\end{equation}
for some $M(t_1)>0$, uniformly in $\eta$.  

Next, we show the tightness of $\{u_\eta(t_1)\}_\eta$ in $H^{1/2}$ by localizing the mass.  
Let $\chi:\mathbb{R}\to\mathbb{R}$ be a smooth cutoff such that $\chi(x)=0$ for $|x|\le1$ and $\chi(x)=1$ for $|x|\ge2$, and define $\chi_R(x):=\chi(x/R)$.  
Using the mass conservation law and Lemma~\ref{Lemma:commutator estimate for localizing mass}, we have, uniformly for $\eta\in(0,\eta^*)$,
\begin{equation}\label{eq:2 - tightness}
\begin{aligned}
    \left|\frac{d}{dt}\int_{\mathbb{R}}\chi_R|u_\eta(t)|^2dx\right|
    &\lesssim 
    \left|\int_{\mathbb{R}}u_\eta(t)\,[\chi_R,iD]\,\overline{u_\eta}(t)\,dx\right|
    \\
    &\lesssim
    \|\nabla\chi_R\|_{L^\infty}\|u_\eta(t)\|_{L^2}^2
    \lesssim \frac{1}{R}.
\end{aligned}
\end{equation}
Here, we used
\[
    \|u_\eta(t)\|_{L^2}
    \le \|Q_{\mathcal{P}}\|_{L^2} + \|z_f^*\|_{L^2}
    \le \|Q\|_{L^2} + \alpha^*
    \lesssim 1.
\]
Integrating \eqref{eq:2 - tightness} from $t_1$ to $0$, and combining with the uniform $H^{1/2+\delta}$-bound \eqref{eq:1}, we deduce the tightness in $H^{1/2}$:
\begin{equation*}
    \lim_{R\to\infty}\;\sup_{0<\eta<\eta^*}
    \|u_\eta(t_1)\|_{H^{1/2}(|x|>R)}=0.
\end{equation*}

Therefore, the uniform boundedness in $H^{1/2+\delta}$ together with the tightness in $H^{1/2}$ implies the following: there exists a sequence $\eta_n \to 0^+$ ($\{\eta_n\}_{n \in \bbN} \subset (0,\eta^*)$) such that $u_{\eta_n}(t_1)$ converges weakly in $H^{1/2+\delta}$. Moreover, there exists a subsequence (still denoted by $\eta_n$) such that $u_{\eta_n}(t_1)$ converges strongly in $H^{1/2}$.  
We denote by $u_c(t_1)\in H^{1/2+\delta}$ the strong limit of this subsequence.

Now, let $u_c \in \mathcal{C}([t_1,T_c);H^{1/2+\delta}(\mathbb{R}))$ be a solution to \eqref{half-wave} with initial data $u_c(t_1)$. We claim that $u_c$ is a Bourgain--Wang type solution associated with energy and momentum
\begin{equation}\label{given energy and momentum}
    E(u_c(t)) = E_0,\text{ and } P(u_c(t)) = P_0,
\end{equation}
for given $(E_0,P_0) \in \bbR_+ \times \bbR$, which blows up at $T=0$. Indeed, from $H^{1/2}-$ continuity of the flow ensures that for $t \in [t_1,0)$ we have
\begin{equation}\label{strong convergence of H^{1/2}}
    u_{\eta_n}(t) \rightarrow u_c(t) \text{  in  } H^{1/2}, \quad \text{as } n \to \infty.
\end{equation}
Let $(\lambda, b, \nu, \nut{x}, \gamma)(t)$ be a geometrical decomposition associated to $u_\eta$. Form strong convergence in $H^{1/2}$ \eqref{strong convergence of H^{1/2}} and uniqueness of modulation parameters, $u_c$ admits on $[t_1,0)$ a geometrical decomposition of the form
\begin{equation}
    u_c(t,x) = \frac{1}{\lambda_c^{\frac{1}{2}}}[Q_{\mathcal{P}(b_c,\nu_c,0)} + \epsilon_c]\left(t,\frac{x-\nut{x}_c}{\lambda_c}\right)e^{i\gamma_c} + z(t,x),
\end{equation}
and
\begin{equation}\label{decompose of u_c}
    \lambda \rightarrow \lambda_c, \quad b \rightarrow b_c, \quad \nu\rightarrow\nu_c,\quad \nut{x} \rightarrow \nut{x}_c,\quad \gamma \rightarrow\gamma_c, \text{  and  } \epsilon \rightarrow \epsilon_c \text{ in } H^{1/2},
\end{equation}
for each $t \in [t_1,0)$ as $\eta_n \rightarrow 0$. Therefore, from \eqref{uniform control of parameters and error} in Lemma~\ref{Main bootstrap}, we obtain
\begin{equation}\label{difference estimate for modulation parameter}
\begin{aligned}
    &|\lambda_c-\lambda_0|+ |\nu_c - \nu_0| \lesssim \alpha^*|t|^{4+2\omega},\quad |b_c - b_0| \lesssim \alpha^*|t|^{3+2\omega},\\
    &|\nut{x}_c-\nut{x}_0| \lesssim \alpha^*|t|^{5+2\omega}, \quad |\gamma_c - \gamma_0| \lesssim |t|^{1+2\omega},\\
    & \norm{\epsilon_c}_{H^{1/2}} \lesssim \alpha^*|t|^{3+4\omega}, \quad \norm{\epsilon_c^{\sharp}}_{H^{1/2+\delta}} \lesssim \alpha^*|t|^{1-2\delta},
\end{aligned}
\end{equation}
where
\[
    (\lambda_0,\nut{x}_0,\gmm_0,b_0,\nu_0)(t) \coloneqq \lim_{n \to \infty}(\lambda_{\eta_n},\nut{x}_{\eta_n},\gmm_{\eta_n},b_{\eta_n},\nu_{\eta_n})(t)
\]
Hence, letting $\eta_n \to 0^+$ as given by \eqref{asymtotic for parameters for t} in 
Lemma~\ref{approximated dynamics}, we obtain
\begin{equation}\label{asymptotics for modulation parameters}
    \begin{aligned}
        \lambda_c(t) &= \left(\frac{1}{4C_0^2}+ o_{t \to 0-}(1)\right)t^2, \quad 
        b_c(t) = -\left(\frac{1}{2C_0^2}+o_{t \to 0-}(1)\right)t, \\
        \nu_c(t) &= \left(\frac{D_0}{4C_0^2}+o_{t \to 0-}(1)\right)t^2 , \quad
        \nut{x}_c(t) = \left(\frac{D_0}{12C_0^2}+o_{t \to 0-}(1)\right)t^3,\\
        \gamma_c(t) &= \left(4C_0^2+o_{t \to 0-}(1)\right)\frac{1}{t}.
    \end{aligned}
\end{equation}
Therefore, \eqref{decompose of u_c}, \eqref{difference estimate for modulation parameter} and $\lambda_c(t) \sim t^2$ from \eqref{asymptotics for modulation parameters} give
\begin{equation}\label{estiamte for u_c for L^2}
    \begin{aligned}
        &\norm{u_c(t) - \frac{1}{\lambda_0^{1/2}}Q_{\mathcal{P}_0}\left(\frac{\cdot-\nut{x}_0}{\lambda_0}\right)e^{i\gamma_0} -z(t)}_{L^2} \\
        & \lesssim |b_c(t)-b_0(t)|\norm{\partial_b Q_{\mathcal{P}} |_{\mathcal{P}(b_0,\nu_0,0)}}_{L^2} + |\nu_c(t)-\nu_0(t)|\norm{\partial_\nu Q_{\mathcal{P}}|_{\mathcal{P}(b_0,\nu_0,0)}}_{L^2}\\
        & \quad + \left|1-\left(\frac{\lambda_c(t)}{\lambda_0(t)}\right)^{\frac{1}{2}}\right|\norm{\Lambda Q_{\mathcal{P}(b_0,\nu_0,0)}}_{L^2} + \left|\frac{\nut{x}_c(t)-\nut{x}_0(t)}{\lambda_0(t)}\right|\norm{Q_{\mathcal{P}(b_0,\nu_0,0)}}_{L^2}\\
        & \quad + |\gmm_c(t)-\gmm_0(t)|\norm{iQ_{\mathcal{P}(b_0,\nu_0,0)}}_{L^2} + \norm{\epsilon_c}_{L^2} \lesssim \alpha^*|t|^{1+2\omega}.
    \end{aligned}
\end{equation}
In addition, a similar calculation yields
\begin{equation}\label{estimate for u_c for H^{1/2}}
    \norm{u_c(t) - \frac{1}{\lambda_0^{1/2}}Q_{\mathcal{P}_0}\left(\frac{\cdot-\nut{x}_0}{\lambda_0}\right)e^{i\gamma_0} -z(t)}_{H^{1/2}} \lesssim \alpha^*|t|^{2\omega}.
\end{equation}

To this end, we prove that the constructed solution $u_c$ satisfies \eqref{given energy and momentum}. Using Proposition~\ref{energy and momentum expansion for singular profile} with $\eta_n \to 0+$ and \eqref{asymptotics for modulation parameters}, \eqref{strong convergence of H^{1/2}} combining with the mass and energy conservation laws, we obtain
\begin{align*}
    E(u_c(t)) = \lim_{t \to 0-}\left(\frac{(i^{-1}L_Q[iR_{1,0,0}],R_{1,0,0})_r}{2}\cdot \frac{b_c^2(t)}{\lambda_c(t)} + E(z_f^*) + \mathcal{O}(|t|^{\frac{1}{2}})\right) = E_0, \\
    P(u_c(t)) = \lim_{t \to 0-}\left(2(i^{-1}L_Q[iR_{0,1,0}],R_{0,1,0})_r\cdot \frac{\nu_c(t)}{\lambda_c(t)} + P(z_f^*) + \mathcal{O}(|t|^{\frac{1}{2}})\right) = P_0,
\end{align*}
from the choice of $C_0$ and $D_0$ in \eqref{definition of C_0 and D_0}.
\end{proof}

\begin{proof}[Proof of  Theorem~\ref{main result 2}]
    The proof is almost immediate by combining Proposition~\ref{construction of one parameter family of solutions} with the proof of Theorem~\ref{main result 1}.  
    For a given $(E_0, P_0) \in \mathbb{R}_+ \times \mathbb{R}$, let  
    \[
        u_c \in \mathcal{C}([t_1,0); H^{1/2+\delta})
    \]  
    be the Bourgain--Wang type solution associated with the prescribed energy and momentum, constructed in the proof of Theorem~\ref{main result 1}.  

    Now, we choose a sequence of functions $\{\tilde{u}_{n}\}_{n} \subset H^{1/2+\delta}$ defined by  
    \[
        \tilde{u}_n(x) \coloneqq 
        \frac{1}{\lambda_{\eta_n}^{1/2}(0)}\,
        Q_{\mathcal{P}(0, \nu_{\eta_n}(0), \eta_n)}\!\left(\frac{x-\nut{\eta_n}(0)}{\lambda_{\eta_n}(0)}\right)
        e^{i\gmm_{\eta_n}(0)} \;+\; z_f^*(x),
    \]  
    where $\eta_n$ is a subsequence chosen in the proof of Theorem~\ref{main result 1}, and  
    $(\lambda_{\eta_n}, \nut{x}_{\eta_n}, \gmm_{\eta_n}, b_{\eta_n}, \nu_{\eta_n})(0)$  
    are the initial data selected in Lemma~\ref{approximated dynamics} for each $n \in \mathbb{N}$.  

    Then, by the Cauchy theory (Lemma~\ref{Cauchy theory}), the solution to \eqref{half-wave} with initial data $\tilde{u}_{n}$ satisfies  
    \[
        u_{n}(t) = u_{\eta_n}(t), \quad \text{for all } t \in [t_1, -t_1].
    \]  
    Hence $u_n \in \mathcal{C}([t_1, -t_1]; H^{1/2+\delta})$ by Proposition~\ref{construction of one parameter family of solutions}.  
    Therefore, from the construction of $u_c$ in the proof of Theorem~\ref{main result 1} we obtain a sequence $\{u_n(t_1)\}$ such that  
    \[
        u_n(t_1) \to u_c(t_1) \quad \text{in } H^{1/2} \text{ as } n \to \infty,
    \]  
    but $u_n$ does not blow up at $T=0$.
\end{proof}

\begin{remark}[Notion of instability]\label{remark:instability}
    Our result of the instability of Bourgain--Wang type solutions is slightly weaker than those that are somewhat less robust compared to the results in \eqref{NLS} \cite{MRS2013AJM} and the Chern-Simons-Schr\"odinger equation \cite{KimKwon2019}. Specifically, we did not construct a one-parameter family of solutions demonstrating blow-up solution instability. This limitation stems from the absence of conditional uniqueness in the constructed Bourgain--Wang type solutions. Instead, we are able to construct a sequence of solutions 
    $\{u_n\}$ such that $u_n(t_1) \to u_c(t_1)$ in $H^{1/2}$ as $n \to \infty$ for some $t_1<0$, 
    but the instability is obtained only along a carefully selected subsequence of modulation 
    parameters $\{\eta_n\}$ satisfying $\eta_n \to 0$. 
   The conditional uniqueness question is more complex than initially presumed. It is important to acknowledge that the uniqueness of the minimal blow-up solution remains unresolved, as noted in \cite{KLR2013ARMAhalfwave}. We believe that these two issues belong to the same category and can be handled simultaneously. 
 
\end{remark}

\section{Proof of the main bootstrap, Lemma~\ref{Main bootstrap}}\label{proof of main bootstrap lemma}
In this section, we prove the main bootstrap lemma, Lemma~\ref{Main bootstrap}. We now fix small constants $\alpha^*>0$ and $\eta^*>0$ as follows. The constant $\alpha^*$ is selected in accordance with Lemma~\ref{construction of asymptotic profile}, while $\eta^*$ is chosen to be less than the threshold used in the decomposition of the solution $u_\eta$ in \eqref{decomposition of solution u_eta} for $t \in (T_{\text{dec}}^{(\eta)}, 0]$. Throughout the proof, these notations will remain unchanged, and any reduction in their values, if necessary, will be explicitly stated, maintaining the original notation.

In the following, we fix $\eta \in (0, \eta^*)$. For simplicity in notation, we often omit the fixed parameter $\eta$, such as using $u$ to indicate $u_\eta$. By Lemma~\ref{approximated dynamics} and the weak bootstrap assumptions \eqref{bootstrap bounds} in Lemma~\ref{Main bootstrap}, we obtain the following rough bounds:
\begin{equation}\label{orders of modulation parameters}
    \nu(t) \sim \lambda(t),\quad b^2(t)+\eta \lesssim \lambda(t),\quad |\nut{x}(t)| \lesssim |t| \lambda(t)
\end{equation}
and 
\begin{equation}\label{rough bound for lambda}
    |\lambda(t)| \leq \frac{1}{2}, \quad |t| \lesssim \lambda^{1/2}
\end{equation}
for $t \in (T_{\text{dec}}^{(\eta)}, 0] \cap [t_0,0]$. We first choose $t_1<0$ so that
$t_1 \coloneqq t_0$. By possibly reducing $t_1<0$ but still denoting the same notation, we are going to choose our desired $t_1<0$ in Lemma~\ref{Main bootstrap}.

\subsection{Estimates of the profile error, $\mathcal{E}$}
As mentioned above, we claim the smallness of profile error $\mathcal{E}$, defined in \eqref{definition of mathcal{E}},  which contains the interaction term $\text{IN}$ in \eqref{interaction definition}, the profile error of the singular profile $\Psi_{\mathcal{P}}$ in \eqref{modified profile eror}, and a higher-order term with respect to $\epsilon$ $R(\epsilon)$ in \eqref{remainder definition}.
Because of the slow decay of the singular profile $Q_{\mathcal{P}}(y) \sim \langle y \rangle^{-2}$ as $|y| \rightarrow \infty$ in Proposition~\ref{Singular profile}, the interaction of the asymptotic profile and the singular part does not become arbitrarily small. Hence, we introduce the lemma which quantitatively controls the term containing the profile error $\mathcal{E}$. The bound $\eqref{interaction degeneracy}$, \eqref{estimate of profile error}, and \eqref{L2 norm of IN and profile error of Q_P}, found in the lemma below, will be applied subsequently in the modulation estimate (Proposition~\ref{Modulation estimates}) and the energy estimate (Lemma~\ref{lemma : differentiation of Energy functional}).

\begin{lemma}\label{interaction error order}
    Let the interaction term $\mathrm{IN}$, the nonlinear remainder $R(\epsilon)$, and the profile error $\Psi_{\mathcal{P}}$ be defined as above. Fix \( n = 8 \). For all \( \beta \in [1,2] \), the interaction between \( z^\flat \) and other terms appearing in \eqref{equation for epsilon} satisfies localized decay estimates:  
\begin{equation}\label{interaction degeneracy}
    \left\| z^{\flat} \langle \cdot \rangle^{-2\beta} \right\|_{L^{\infty}} \lesssim \alpha^* \lambda^{\frac{1}{2} + \left(2 - \frac{2}{n} \right)\beta},
\end{equation}
and
\begin{equation}\label{interaction degeneracy 2}
    \left| \int \langle y \rangle^{-2} z^\flat \, dy \right| \lesssim \alpha^* \lambda^{\frac{5}{4}}.
\end{equation}

From \eqref{interaction degeneracy}, \eqref{interaction degeneracy 2}, we have the estimates for the interaction of \( z^{\flat} \) with the profile error and nonlinear terms:
\begin{equation}\label{estimate of profile error}
    \begin{aligned}
        \left| \int \mathrm{IN} \cdot \langle y \rangle^{-2} \, dy \right| &\lesssim \alpha^* \lambda^{\frac{5}{2} + \omega}, \\
        \left| \int R(\epsilon) \cdot \langle y \rangle^{-2} \, dy \right| &\lesssim \left(\alpha^* + \lambda^\omega\right)\lambda^{5/2+\omega}, \\
        \left| \int \Psi_{\mathcal{P}}(y) \cdot \langle y \rangle^{-2} \, dy \right| &\lesssim  \lambda^{3}.
    \end{aligned}
\end{equation}
In particular, we have
\begin{equation}\label{cal(E)-esitmate}
    |(i\mathcal{E}, \langle \cdot \rangle^{-2})_r| \lesssim (\alpha^* + \lambda^\omega)\lambda^{\frac{5}{2}+\omega}.
\end{equation}

Finally, we also have the $L^2$ estimates for the interaction term and the profile error:
\begin{equation}\label{L2 norm of IN and profile error of Q_P}
    \left\| \mathrm{IN} \right\|_{L^2} \lesssim \alpha^* \lambda^{\frac{1}{2} + \left(2 - \frac{2}{n} \right)}, \quad 
    \left\| \Psi_{\mathcal{P}} \right\|_{L^2} \lesssim \lambda^3.
\end{equation}

\end{lemma}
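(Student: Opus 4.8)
The plan is to reduce the whole lemma to the single localized pointwise bound $\eqref{interaction degeneracy}$ for the rescaled radiation $z^{\flat}(s,y)=\lambda^{1/2}z(t,\lambda y+\nut{x})e^{-i\gamma}$, and then to deduce everything else by pairing it against the slow decay $|Q_{\mathcal{P}}(y)|\lesssim\langle y\rangle^{-2}$ (which follows from $\eqref{decay of R(pqr)}$ together with the algebraic decay $|Q(y)|\lesssim\langle y\rangle^{-2}$ of the ground state), the trivial scaling bounds $\|z^{\flat}\|_{L^{\infty}}\lesssim\alpha^{*}\lambda^{1/2}$ and $\|z^{\flat}\|_{L^{2}}=\|z(t)\|_{L^{2}}\lesssim\alpha^{*}$ coming from $\eqref{control of norm z(t) for geq 1/2}$ and Sobolev embedding, the bootstrap bound $\|\epsilon\|_{H^{1/2}}\lesssim\lambda^{3/2+2\omega}$ of $\eqref{bootstrap bounds}$, the structural estimate $\eqref{profile error estimate}$ for $\Psi_{\mathcal{P}}$, and elementary H\"older/Sobolev inequalities. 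Throughout I freely use the regime bounds $|t|\lesssim\lambda^{1/2}$, $|\nut{x}|\lesssim\lambda^{3/2}$, $\nu\sim\lambda$, $b^{2}+\eta\lesssim\lambda$, $\lambda\le\tfrac12$ from $\eqref{orders of modulation parameters}$--$\eqref{rough bound for lambda}$.

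\emph{The core estimate $\eqref{interaction degeneracy}$.} I split $\mathbb{R}$ into the inner region $|y|\le\tfrac13\lambda^{-(1-1/n)}$ and its complement, with $n=8$. On the inner region $\lambda|y|\lesssim\lambda^{1/n}$, so $\lambda y+\nut{x}\in[-1,1]$, and the space--time degeneracy $\eqref{degeneracy for z}$ gives $|z^{\flat}(s,y)|\lesssim\alpha^{*}\lambda^{1/2}(|t|+|\lambda y+\nut{x}|)^{m+1}\lesssim\alpha^{*}\lambda^{1/2}(\lambda^{1/2}+\lambda^{1/n})^{m+1}\lesssim\alpha^{*}\lambda^{1/2+(m+1)/n}$; since $m=32$ and $n=8$ one has $(m+1)/n=\tfrac{33}{8}\ge(2-\tfrac2n)\beta$ for all $\beta\le2$, so even discarding the weight the inner region is bounded by $\alpha^{*}\lambda^{1/2+(2-2/n)\beta}$. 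On the outer region $\langle y\rangle^{-2\beta}\lesssim\lambda^{2\beta(1-1/n)}=\lambda^{(2-2/n)\beta}$, which together with the crude bound $|z^{\flat}|\lesssim\alpha^{*}\lambda^{1/2}$ produces exactly $\alpha^{*}\lambda^{1/2+(2-2/n)\beta}$; adding the two regions proves $\eqref{interaction degeneracy}$ for every $\beta\in[1,2]$. I expect this to be the main obstacle of the lemma: because $D=|\nabla|$ is nonlocal, $z$ is only known to be flat \emph{near the space--time origin}, and turning that local flatness into the scale-dependent decay of $z^{\flat}$ forces the optimization of the split exponent $1-1/n$ and consumes the large flatness order $m=32$ fixed in Proposition~\ref{construction of asymptotic profile}.

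\emph{The first-moment bounds.} For $\eqref{interaction degeneracy 2}$ I split at $R:=\lambda^{-(1-1/n)}$, bounding $\int_{|y|\le R}\langle y\rangle^{-2}|z^{\flat}|\le\|z^{\flat}\langle\cdot\rangle^{-2}\|_{L^{\infty}}\cdot|\{|y|\le R\}|$ with $\eqref{interaction degeneracy}$ at $\beta=1$, and $\int_{|y|>R}\langle y\rangle^{-2}|z^{\flat}|\le\|z^{\flat}\|_{L^{\infty}}\int_{|y|>R}\langle y\rangle^{-2}$; both are $\lesssim\alpha^{*}\lambda^{3/2-1/n}=\alpha^{*}\lambda^{11/8}\le\alpha^{*}\lambda^{5/4}$. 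For $\mathrm{IN}$ in $\eqref{interaction definition}$, each monomial carries one factor $z^{\flat}$ and one or two factors $Q_{\mathcal{P}}$ (hence $\langle\cdot\rangle^{-2}$ decay); pairing the weight $\langle\cdot\rangle^{-2}$ and the $Q_{\mathcal{P}}$-factors with one $z^{\flat}$ via $\eqref{interaction degeneracy}$ at an admissible $\beta$ and estimating any remaining $z^{\flat}$-factor by $\|z^{\flat}\|_{L^{\infty}}$ or $\|z^{\flat}\|_{L^{2}}$ gives $\bigl|\int\mathrm{IN}\,\langle y\rangle^{-2}\bigr|\lesssim\alpha^{*}\lambda^{5/2+\omega}$ (with $\omega=\tfrac18$ one in fact gains several extra powers of $\lambda$, since $|Q_{\mathcal{P}}|^{2}$ and $|Q_{\mathcal{P}}|$ decay enough to allow $\beta$ near the endpoints). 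For $R(\epsilon)$ in $\eqref{remainder definition}$ I treat the $z^{\flat}\epsilon$- and $(z^{\flat})^{2}\epsilon$-type terms via $\eqref{interaction degeneracy}$, keeping a factor $\langle\cdot\rangle^{-1}\in L^{2}$ to pair with $\|\epsilon\|_{L^{2}}$ by Cauchy--Schwarz, and the $\epsilon^{2}$- and $\epsilon^{3}$-type terms via $H^{1/2}\hookrightarrow L^{4},L^{6}$ and $\|\epsilon\|_{H^{1/2}}\lesssim\lambda^{3/2+2\omega}$, obtaining $\bigl|\int R(\epsilon)\langle y\rangle^{-2}\bigr|\lesssim(\alpha^{*}+\lambda^{\omega})\lambda^{5/2+\omega}$. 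For $\Psi_{\mathcal{P}}$, $\eqref{profile error estimate}$ yields $|\Psi_{\mathcal{P}}(y)|\lesssim\bigl(\sum_{p+2q+2r\ge6}b^{p}\nu^{q}\eta^{r}\bigr)\langle y\rangle^{-2}$, and in the present regime each such monomial is $\lesssim\lambda^{(p+2q+2r)/2}\le\lambda^{3}\le\lambda^{\omega}\lambda^{5/2+\omega}$, so $\bigl|\int\Psi_{\mathcal{P}}\langle y\rangle^{-2}\bigr|\lesssim\lambda^{3}$. Since $\mathcal{E}=\mathrm{IN}-\Psi_{\mathcal{P}}+R(\epsilon)$, the triangle inequality then gives $\eqref{cal(E)-esitmate}$.

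\emph{The $L^{2}$ bounds $\eqref{L2 norm of IN and profile error of Q_P}$.} For $\|\mathrm{IN}\|_{L^{2}}$, on the terms linear in $z^{\flat}$ I pair $|Q_{\mathcal{P}}|^{2}\langle\cdot\rangle^{2}\in L^{2}$ with $\|z^{\flat}\langle\cdot\rangle^{-2}\|_{L^{\infty}}$, and on the quadratic ones I write, say, $Q_{\mathcal{P}}|z^{\flat}|^{2}=(Q_{\mathcal{P}}z^{\flat})\overline{z^{\flat}}$, using $\|Q_{\mathcal{P}}z^{\flat}\|_{L^{\infty}}\le\|z^{\flat}\langle\cdot\rangle^{-2}\|_{L^{\infty}}$ together with $\|z^{\flat}\|_{L^{2}}\lesssim\alpha^{*}$; both give $\lesssim\alpha^{*}\lambda^{1/2+(2-2/n)}$. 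Finally $\|\Psi_{\mathcal{P}}\|_{L^{2}}\lesssim\sum_{p+2q+2r\ge6}b^{p}\nu^{q}\eta^{r}\lesssim\lambda^{3}$ as above, which completes the proof.
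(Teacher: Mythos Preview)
Your proof is correct and follows essentially the same strategy as the paper: the inner/outer split at radius $\sim\lambda^{-(1-1/n)}$, exploiting the space--time degeneracy $\eqref{degeneracy for z}$ on the inner region and the crude bound $|z^{\flat}|\lesssim\alpha^{*}\lambda^{1/2}$ plus weight decay on the outer region, then feeding $\eqref{interaction degeneracy}$ into H\"older/Sobolev together with $|Q_{\mathcal P}|\lesssim\langle y\rangle^{-2}$, the bootstrap bound on $\epsilon$, and $\eqref{profile error estimate}$ for $\Psi_{\mathcal P}$. The only cosmetic differences are the harmless factor $1/3$ in your threshold and your bookkeeping for $\eqref{interaction degeneracy 2}$ (you get $\alpha^{*}\lambda^{11/8}\le\alpha^{*}\lambda^{5/4}$, the paper gets a slightly different but equally sufficient power), and the paper spells out a couple of the H\"older pairings for $\mathrm{IN}$ and $R(\epsilon)$ more explicitly than you do.
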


\begin{proof}
From the construction of the asymptotic profile flow $z(t,x)$ in Proposition~\ref{construction of asymptotic profile}, we have the pointwise bound
$$
|z(t,x)| \lesssim \alpha^* (|t| + |x|)^{m+1}.
$$
Renormalizing $z$ to the $(s,y)$-scale, we obtain
\begin{equation}\label{renormalized degeneracy of z}
    |z^{\flat}(s,y)| \lesssim \alpha^* \lambda^{\frac{1}{2}} (|t| + |\lambda y + \nut{x}|)^{m+1}.
\end{equation}

We estimate the $L^{\infty}$-norm of $z^{\flat}(s,y)\langle y \rangle^{-2\beta}$ for $\beta \in [1,2]$. To this end, we divide the domain into two regions: the inner region $|y| \leq \lambda^{-1 + \frac{1}{n}}$, and the outer region $|y| \geq \lambda^{-1 + \frac{1}{n}}$, with $n = 8$.

In the inner region, using \eqref{asymtotic for parameters for t} and \eqref{renormalized degeneracy of z}, we estimate
\begin{equation}\label{inner region}
    |z^{\flat}(s,y)\langle y \rangle^{-2\beta}| \lesssim \alpha^* \lambda^{\frac{1}{2}} \left( |t| + \lambda^{\frac{1}{n}} \right)^{m+1} \leq \alpha^* \lambda^{\frac{1}{2} + \frac{m+1}{n}}.
\end{equation}

In the outer region, from \eqref{control of norm z(t) for geq 1/2} we obtain
\begin{equation}\label{outer region}
    |z^{\flat}(s,y)\langle y \rangle^{-2\beta}| \lesssim \alpha^*\lambda^{\frac{1}{2}} \langle \lambda^{-1 + \frac{1}{n}} \rangle^{-2\beta} \leq \alpha^* \lambda^{\frac{1}{2} + \left(2 - \frac{2}{n} \right)\beta}.
\end{equation}

Since $m+1 = 33 > 32 = 4n$, it follows from \eqref{inner region} and \eqref{outer region} that
\begin{equation}\label{reason for loss of smallness due to 2/n}
    \norm{z^{\flat} \langle \cdot \rangle^{-2\beta}}_{L^{\infty}} \lesssim \alpha^* \max \left\{ \lambda^{\frac{1}{2} + \frac{m+1}{n}}, \lambda^{\frac{1}{2} + \left(2 - \frac{2}{n} \right)\beta} \right\} = \alpha^* \lambda^{\frac{1}{2} + \left(2 - \frac{2}{n} \right)\beta}.
\end{equation}

Moreover, from \eqref{inner region}, we compute
\begin{align*}
    \left| \int \langle y \rangle^{-2} z^{\flat}(s,y) \, dy \right| 
    &\leq \int_{|y| \leq \lambda^{-1 + \frac{1}{n}}} \langle y \rangle^{-2} |z^{\flat}| \, dy + \int_{|y| \geq \lambda^{-1 + \frac{1}{n}}} \langle y \rangle^{-2} |z^{\flat}| \, dy \\
    &\lesssim \alpha^* \lambda^{\frac{1}{2} + \frac{m+1}{n} - 1 - \frac{1}{n}} + \alpha^* \int_{|\tilde{y}| \geq 1} \frac{\lambda^{1/2}}{\lambda^{-1 + \frac{1}{n}}} \langle \tilde{y} \rangle^{-2} d\tilde{y} \\
    &\lesssim \alpha^* \lambda^{1 + \frac{1}{4}}.
\end{align*}

From the schematic forms \eqref{interaction definition} and \eqref{remainder definition}, we have
$$
\mathrm{IN} \sim Q_{\mathcal{P}}^2 z^{\flat} + Q_{\mathcal{P}} (z^{\flat})^2, \qquad R(\epsilon) \sim (z^{\flat})^2 \epsilon + W^{\flat} \epsilon^2 + \epsilon^3.
$$
Applying the Gagliardo–Nirenberg inequality and using the bootstrap bounds for $\epsilon$ from \eqref{bootstrap bounds}, we estimate:
\begin{align*}
    \left| \int \mathrm{IN} \cdot \langle y \rangle^{-2} \, dy \right| 
    &\lesssim \norm{Q_{\mathcal{P}}}_{L^1} \norm{z^{\flat} Q_{\mathcal{P}} \langle \cdot \rangle^{-2}}_{L^{\infty}} + \norm{Q_{\mathcal{P}}}_{L^1}^{1/2} \norm{z^{\flat}}_{L^2} \norm{\langle \cdot \rangle^{-2} |Q_{\mathcal{P}}|^{1/2} z^{\flat}}_{L^{\infty}} \\
    &\lesssim \alpha^* \lambda^{\frac{5}{2} + \omega}, \\
    \left| \int R(\epsilon) \cdot \langle y \rangle^{-2} \, dy \right|
    &\lesssim \norm{z^{\flat} \langle \cdot \rangle^{-2}}_{L^{\infty}} \norm{z^{\flat}}_{L^2} \norm{\epsilon}_{L^2} 
        + \norm{W^{\flat}\langle \cdot \rangle^{-2}}_{L^{\infty}}  \norm{\epsilon}_{L^2}^2 \\
    &\quad + \norm{\langle \cdot \rangle^{-2}}_{L^2} \norm{\epsilon}_{H^{1/2}}^2 \norm{\epsilon}_{L^2} \\
    &\lesssim \left(\alpha^* + \lambda^\omega\right)\lambda^{5/2+\omega}.
\end{align*}

From \eqref{profile error estimate} in Proposition~\ref{Singular profile}, we know that $\Psi_{\mathcal{P}} f \in L^1(\mathbb{R})$, and we have
$$
\left| \int \Psi_{\mathcal{P}}(y) f(y) \, dy \right| \lesssim \lambda^3, 
\qquad 
\norm{\Psi_{\mathcal{P}}}_{L^2} \lesssim \lambda^3.
$$

Finally, a direct computation with the Cauchy–Schwartz inequality yields
$$
\norm{\mathrm{IN}}_{L^2} \lesssim \norm{Q_{\mathcal{P}} z^{\flat}}_{L^{\infty}} \lesssim \alpha^* \lambda^{\frac{1}{2} + \left(2 - \frac{2}{n} \right)}.
$$

\end{proof}

\begin{remark}
    In the estimate for the interaction term of $f$ and the flow of the asymptotic profile $z^{\flat}$ in \eqref{interaction degeneracy}, there is a loss of size  $\frac{2}{n}$, which cannot be removed. In view of \eqref{reason for loss of smallness due to 2/n}, we will choose large $m$ so that the loss becomes small. But since we will keep $z^*$ to be generic in \textit{finite} co-dimension set, $m$ and $n$ are finite and the loss is indispensable.
\end{remark}

\subsection{Modulation estimates}
In this section, we derive modulation estimates and smallness of unstable direction from the orthogonal relation on $\epsilon$, \eqref{orthogonality for Lambda Q}-\eqref{orthogonality for partial_v Q}. More precisely, we derive modulation estimate by differentiating the orthogonality conditions on $\epsilon$ with respect to rescaled time variable $s$. Thus, we investigate the inner product of $\partial_s \epsilon$ with orthogonal directions $\{i\Lambda Q_{\mathcal{P}}, i\nabla Q_{\mathcal{P}}, i\partial_b Q_{\mathcal{P}}, i\partial_\nu Q_{\mathcal{P}}, i\partial_\eta Q_{\mathcal{P}}\}$. 
Hence, recall the evolution of \eqref{equation for epsilon},
\begin{align*}
    \partial_s\epsilon + &iL_{Q_{\mathcal{P}}}[\epsilon] -(\nu + c_2b^2\nu)\nabla \epsilon + b\Lambda \epsilon \\
    &= \overrightarrow{\text{Mod}}(t) \cdot (-\partial_b Q_{\mathcal{P}}, - \partial_\nu Q_{\mathcal{P}}, \Lambda(Q_{\mathcal{P}}+\epsilon), \nabla(Q_{\mathcal{P}}+\epsilon), -i(Q_{\mathcal{P}}+\epsilon)) + i\Psi.
\end{align*}
Also, from \eqref{difference of L_(Q_p) and L_Q} and \eqref{modified profile of Q}, we have
\begin{equation}\label{diff for mod eq}
        L_{Q_{\mathcal{P}}}[\epsilon] = L_Q[\epsilon] - 2ibQR_{1,0,0}\overline{\epsilon} + \mathcal{O}(|\lambda|\epsilon).
\end{equation}
Now, for notational convenience, we denote $M(\epsilon)$ and $\overrightarrow{V}(\epsilon)$ by
\begin{align*}
        M(\epsilon) &\coloneqq i^{-1}L_{Q_{\mathcal{P}}}[\epsilon] + (\nu+c_2b^2\nu)\nabla \epsilon - b\Lambda \epsilon,\\
        \overrightarrow{V}(\epsilon) &\coloneqq (-\partial_b Q_{\mathcal{P}}, - \partial_\nu Q_{\mathcal{P}}, \Lambda(Q_{\mathcal{P}}+\epsilon), \nabla(Q_{\mathcal{P}}+\epsilon), -i(Q_{\mathcal{P}}+\epsilon)),
\end{align*}
respectively.
Then, \eqref{equation for epsilon} is written
\begin{equation}
    \partial_s \epsilon = M(\epsilon) + \overrightarrow{\text{Mod}}(t)\cdot \overrightarrow{V}(\epsilon) + i\Psi.
\end{equation}
We start from the estimate of the inner product of $M(\epsilon)$ orthogonal directions.
\begin{lemma}\label{M estimate}
    Under the weak bootstrap assumption \eqref{bootstrap bounds}, we have
    \begin{align}
            &(M(\epsilon), i\Lambda Q_{\mathcal{P}})_r = (\epsilon,Q_{\mathcal{P}})_r + \mathcal{O}(\lambda \norm{\epsilon}_{L^2}), \label{Lambda Q}\\
            & (M(\epsilon), i\partial_bQ_{\mathcal{P}})_r =  \mathcal{O}(\lambda \norm{\epsilon}_{L^2}), \label{partial_b Q}\\
            & (M(\epsilon), i\partial_\eta Q_{\mathcal{P}})_r = \mathcal{O}(\lambda\norm{\epsilon}_{L^2}), \label{partial_eta Q} \\
            & (M(\epsilon), i\nabla Q_{\mathcal{P}})_r = 
            \mathcal{O}(\lambda \norm{\epsilon}_{L^2}), \label{nabla Q}\\
            & (M(\epsilon), i\partial_\nu Q_{\mathcal{P}})_r = 
            \mathcal{O}(\lambda\norm{\epsilon}_{L^2}). \label{partial_v Q}
    \end{align}
\end{lemma}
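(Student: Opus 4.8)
The plan is to compute each of the five inner products $(M(\epsilon),iV)_r$, with $V$ ranging over $\Lambda Q_{\mathcal{P}},\ \partial_b Q_{\mathcal{P}},\ \partial_\eta Q_{\mathcal{P}},\ \nabla Q_{\mathcal{P}},\ \partial_\nu Q_{\mathcal{P}}$, by splitting $M(\epsilon)=i^{-1}L_{Q_{\mathcal{P}}}[\epsilon]+(\nu+c_2b^2\nu)\nabla\epsilon-b\Lambda\epsilon$ into its three pieces and transferring every operator off $\epsilon$ onto the smooth, rapidly decaying direction $V$. Since $L_{Q_{\mathcal{P}}}$ is self-adjoint with respect to $(\cdot,\cdot)_r$ and $\Lambda,\nabla$ are antisymmetric, one gets
\[
(M(\epsilon),iV)_r=-(\epsilon,L_{Q_{\mathcal{P}}}[V])_r+b(\epsilon,i\Lambda V)_r-(\nu+c_2b^2\nu)(\epsilon,i\nabla V)_r.
\]
By the a priori bounds $b^2+\eta\lesssim\lambda$, $\nu\sim\lambda$ from \eqref{orders of modulation parameters} and the uniform control of $V$ and its derivatives in $L^2$ from \eqref{decay of R(pqr)} (with the decay of $Q$), the last term is already $\mathcal{O}(\lambda\norm{\epsilon}_{L^2})$. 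So everything reduces to computing $L_{Q_{\mathcal{P}}}[V]$ precisely enough to see that its $\mathcal{O}(1)$-size part is annihilated against $\epsilon$ by the orthogonality conditions \eqref{orthogonality for Lambda Q}--\eqref{orthogonality for partial_v Q}, and that its $\mathcal{O}(b)=\mathcal{O}(\lambda^{1/2})$-size part cancels the $b(\epsilon,i\Lambda V)_r$ term.

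The algebraic input is the profile equation \eqref{modified profile eror} itself: using $DQ+Q-Q^3=0$ one rewrites it as
\[
DQ_{\mathcal{P}}+Q_{\mathcal{P}}-|Q_{\mathcal{P}}|^2Q_{\mathcal{P}}=G,\quad G:=-i\Psi_{\mathcal{P}}+i\bigl(b\Lambda Q_{\mathcal{P}}-(\nu+c_2b^2\nu)\nabla Q_{\mathcal{P}}-\mathcal{P}_1\partial_b Q_{\mathcal{P}}-b\nu\,\partial_\nu Q_{\mathcal{P}}\bigr),
\]
with $\mathcal{P}_1=\tfrac{b^2}{2}+\eta+c_1b^4+c_3b^2\eta+c_4\nu^2$. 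Applying $\Lambda$ (using the commutator $[\Lambda,D]=-D$, the analogue of \eqref{comm 1}), applying $\nabla$, and differentiating in $b,\nu,\eta$ yields
\[
L_{Q_{\mathcal{P}}}[\Lambda Q_{\mathcal{P}}]=-Q_{\mathcal{P}}+(1+\Lambda)G,\qquad L_{Q_{\mathcal{P}}}[\nabla Q_{\mathcal{P}}]=\nabla G,\qquad L_{Q_{\mathcal{P}}}[\partial_p Q_{\mathcal{P}}]=\partial_p G\quad(p\in\{b,\nu,\eta\}).
\]
Because $G=ib\Lambda Q_{\mathcal{P}}+\mathcal{O}(\lambda)$ — the remaining contributions carry factors $\nu$, $\eta$, $b^2$, or $\Psi_{\mathcal{P}}$, which by \eqref{orders of modulation parameters} and \eqref{profile error estimate} pair with $\epsilon$ to $\mathcal{O}(\lambda\norm{\epsilon}_{L^2})$ even after applying $1+\Lambda$ — the leading behaviors are $(1+\Lambda)G=ib(\Lambda+\Lambda^2)Q_{\mathcal{P}}+\mathcal{O}(\lambda)$, $\nabla G=ib(1+\Lambda)\nabla Q_{\mathcal{P}}+\mathcal{O}(\lambda)$, $\partial_b G=i\Lambda Q_{\mathcal{P}}+ib(\Lambda-1)\partial_b Q_{\mathcal{P}}+\mathcal{O}(\lambda)$, $\partial_\eta G=-i\partial_b Q_{\mathcal{P}}+ib\Lambda\partial_\eta Q_{\mathcal{P}}+\mathcal{O}(\lambda)$, and $\partial_\nu G=-i\nabla Q_{\mathcal{P}}+ib(\Lambda-1)\partial_\nu Q_{\mathcal{P}}+\mathcal{O}(\lambda)$.

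Substituting these into the formula for $(M(\epsilon),iV)_r$ yields the cancellations. For $V=\Lambda Q_{\mathcal{P}}$ the $\Lambda^2 Q_{\mathcal{P}}$ contributions from $-(\epsilon,(1+\Lambda)G)_r$ and from $b(\epsilon,i\Lambda V)_r$ cancel, leaving $(\epsilon,Q_{\mathcal{P}})_r-b(\epsilon,i\Lambda Q_{\mathcal{P}})_r+\mathcal{O}(\lambda\norm{\epsilon}_{L^2})=(\epsilon,Q_{\mathcal{P}})_r+\mathcal{O}(\lambda\norm{\epsilon}_{L^2})$ by \eqref{orthogonality for Lambda Q}. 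In the remaining four cases no genuine leading term survives: for $V=\partial_\eta Q_{\mathcal{P}}$ the $\mathcal{O}(1)$-term $(\epsilon,i\partial_b Q_{\mathcal{P}})_r$ is killed by \eqref{orthogonality for partial_b Q}; for $V=\nabla Q_{\mathcal{P}}$ there is no $\mathcal{O}(1)$-term and the borderline term $-b(\epsilon,i\nabla Q_{\mathcal{P}})_r$ left after cancelling $b(\epsilon,i\Lambda V)_r$ vanishes by \eqref{orthogonality for nabla Q}; for $V=\partial_b Q_{\mathcal{P}}$ (resp.\ $V=\partial_\nu Q_{\mathcal{P}}$) one uses \eqref{orthogonality for Lambda Q} and \eqref{orthogonality for partial_b Q} (resp.\ \eqref{orthogonality for nabla Q} and \eqref{orthogonality for partial_v Q}) in succession to dispose of the $\Lambda Q_{\mathcal{P}}$-(resp.\ $\nabla Q_{\mathcal{P}}$-)term and then of the $\partial_p Q_{\mathcal{P}}$-term, while the $b\Lambda\partial_p Q_{\mathcal{P}}$-term cancels exactly against Term C. Every remaining error pairing against $\epsilon$ is $\mathcal{O}(\lambda\norm{\epsilon}_{L^2})$: second-order modulation monomials are $\mathcal{O}(\lambda)$ by \eqref{orders of modulation parameters}, the $R_{p,q,r}$ and their $\Lambda$-, $\nabla$-, $\partial_{b,\nu,\eta}$-derivatives are uniformly $L^2$-bounded by \eqref{decay of R(pqr)}, and $\Psi_{\mathcal{P}}$ with its derivatives is $\mathcal{O}(\lambda^3)$ by \eqref{profile error estimate}.

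I expect the main obstacle to be the borderline $\mathcal{O}(b\norm{\epsilon}_{L^2})=\mathcal{O}(\lambda^{1/2}\norm{\epsilon}_{L^2})$ contributions, which are one order of $\lambda^{1/2}$ too large on their own: one comes from the renormalization term $-b\Lambda\epsilon$, and a matching one from the $ib\Lambda Q_{\mathcal{P}}$ part of $G$ propagating through $L_{Q_{\mathcal{P}}}[V]$. These must be shown to combine into $-b(\epsilon,iV')_r$ for one of the orthogonal directions $V'$, which then vanishes. Making this work relies on the exact commutator $[\Lambda,D]=-D$ (so that applying $\Lambda$ to the profile equation produces $(1+\Lambda)G$, not merely $\Lambda G$), on the precise leading-order expansions of the $\partial_p G$ above, and on carrying $Q_{\mathcal{P}}$ rather than $Q$ throughout, so that the surviving term in the scaling direction is exactly $(\epsilon,Q_{\mathcal{P}})_r$ as claimed.
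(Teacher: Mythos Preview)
Your proposal is correct and proceeds along a genuinely different, more structural route than the paper. The paper expands each test direction to first order in $b$ (e.g.\ $\Lambda Q_{\mathcal P}=\Lambda Q+ib\Lambda R_{1,0,0}+\mathcal O(\lambda)$), replaces $L_{Q_{\mathcal P}}$ by $L_Q-2ibQR_{1,0,0}\overline{(\,\cdot\,)}+\mathcal O(\lambda)$ via \eqref{diff for mod eq}, and then closes each $\mathcal O(b)$ cancellation by invoking the specific hierarchy equations \eqref{order b^2}, \eqref{order b eta}, \eqref{order bv} together with the commutator identities \eqref{comm 1}--\eqref{comm 3}. Your argument instead works entirely at the level of the full profile $Q_{\mathcal P}$: writing $DQ_{\mathcal P}+Q_{\mathcal P}-|Q_{\mathcal P}|^2Q_{\mathcal P}=G$ and differentiating gives the closed formulas $L_{Q_{\mathcal P}}[\Lambda Q_{\mathcal P}]=-Q_{\mathcal P}+(1+\Lambda)G$, $L_{Q_{\mathcal P}}[\nabla Q_{\mathcal P}]=\nabla G$, $L_{Q_{\mathcal P}}[\partial_p Q_{\mathcal P}]=\partial_p G$, so that after transferring $L_{Q_{\mathcal P}}$ off $\epsilon$ the borderline $\mathcal O(b)$ terms come in two matching copies (one from $-b\Lambda\epsilon$, one from $ib\Lambda Q_{\mathcal P}\subset G$) and collapse to $-b(\epsilon,iV')_r$ for one of the orthogonal directions $V'$. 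This avoids any appeal to the individual $R_{p,q,r}$ equations and makes transparent why the five orthogonality conditions are exactly what is needed; the paper's approach, by contrast, verifies the same cancellations one coordinate at a time and implicitly rederives the structural identities you state. Both methods rely on the same a priori inputs (\eqref{orders of modulation parameters}, \eqref{decay of R(pqr)}, \eqref{profile error estimate}), and the $\Lambda^2 R_{p,q,r}$ bounds in \eqref{decay of R(pqr)} are precisely what your route needs to control $(1+\Lambda)G$.
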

\begin{proof}
Note that from \eqref{orders of modulation parameters}, we have $b^2+\eta \lesssim \lambda$ and $\lambda \sim \nu$. 

\vspace{10pt}
\noindent\textbf{Estimate \eqref{Lambda Q}:} Since $\Lambda Q_{\mathcal{P}} = \Lambda Q + ib\Lambda R_{1,0,0} + \mathcal{O}(\lambda)$, we have
\begin{align}
    (-M(\epsilon), i\Lambda Q_{\mathcal{P}})_r
    =& (L_{Q_{\mathcal{P}}}[\epsilon],\Lambda Q_{\mathcal{P}})_r + b(\Lambda \epsilon, i\Lambda Q)_r + \mathcal{O}(\lambda\norm{\epsilon}_{L^2}) \nonumber\\
    =& (L_Q[\epsilon],\Lambda Q)_r  \label{eq:M esti Lambda 1}
    \\
    &+ b\{(L_Q[\epsilon],i\Lambda R_{1,0,0})_r + (\Lambda \epsilon, i\Lambda Q)_r -2(iQR_{1,0,0},\epsilon\Lambda Q)_r\}  \label{eq:M esti Lambda 2}
    \\
    &+ \mathcal{O}(\lambda\norm{\epsilon}_{L^2}). \nonumber
\end{align}
For \eqref{eq:M esti Lambda 1}, using $L_Q[\Lambda Q]=-Q$ and \eqref{modified profile of Q}, we have
\begin{align*}
    \eqref{eq:M esti Lambda 1}
    =-(\epsilon,Q)
    =-(\epsilon,Q_{\mathcal{P}})_r+ b(\epsilon,iR_{1,0,0})_r + \mathcal{O}(\lambda\norm{\epsilon}_{L^2}).
\end{align*}
    From the commutator formula \eqref{eq:LQ Lambda commute} and $L_Q[iR_{1,0,0}]=i\Lambda Q$, we compute \eqref{eq:M esti Lambda 2} as follows:
\begin{align*}
    &(i^{-1}L_Q[\epsilon],\Lambda R_{1,0,0})_r + (\Lambda \epsilon, i\Lambda Q)_r -2(iQR_{1,0,0},\epsilon\Lambda Q)_r \\
    &= (i^{-1}\epsilon,i^{-1}L_Q[\Lambda iR_{1,0,0}])_r - (i^{-1}\epsilon,\Lambda (i^{-1}L_Q[iR_{1,0,0}]))_r -2(\epsilon,iQ\Lambda QR_{1,0,0})_r \\
    &= (\epsilon, i\{DR_{1,0,0} + y\nabla(Q^2)R_{1,0,0}\})_r - (\epsilon,i\{Q^2R_{1,0,0} + y\nabla(Q^2)R_{1,0,0}\})_r \\
    &= -(\epsilon, iR_{1,0,0})_r + (\epsilon,L_Q[iR_{1,0,0}])_r
\end{align*}
From the orthogonal condition on $\epsilon$ \eqref{orthogonality for Lambda Q}, we have
\begin{equation*}
    0=(\epsilon,i\Lambda Q_{\mathcal{P}})_r = (\epsilon, i\Lambda Q)_r + \mathcal{O}(b\norm{\epsilon}_{L^2}) = (\epsilon, L_Q[iR_{1,0,0}])_r + \mathcal{O}(b\norm{\epsilon}_{L^2}).
\end{equation*}
Thus, we obtain, 
\begin{align*}
    \eqref{eq:M esti Lambda 2}=
    -b(\epsilon, iR_{1,0,0})_r+ \mathcal{O}(\lambda \norm{\epsilon}_{L^2}).
\end{align*}
We deduce
\begin{align*}
    \eqref{eq:M esti Lambda 1}+\eqref{eq:M esti Lambda 2}
    =-(\epsilon,Q_{\mathcal{P}})_r+ \mathcal{O}(\lambda \norm{\epsilon}_{L^2}),
\end{align*}
which implies
\begin{equation*}
    (-i^{-1}L_{Q_{\mathcal{P}}}[\epsilon]  - (\nu+c_2b^2\nu)\nabla \epsilon + b\Lambda \epsilon, i\Lambda Q)_r = -(\epsilon,Q_{\mathcal{P}})_r + \mathcal{O}(\lambda \norm{\epsilon}_{L^2}).
\end{equation*}

\vspace{10pt}
\noindent\textbf{Estimate \eqref{partial_b Q}:}
 Since $\partial_b Q_{\mathcal{P}} = iR_{1,0,0} - 2bR_{2,0,0} + \mathcal{O}(\lambda)$, we have
\begin{align*}
    (-i^{-1}L_{Q_{\mathcal{P}}}[\epsilon]  &- (\nu+c_2b^2\nu)\nabla \epsilon + b\Lambda \epsilon, i\partial_bQ_{\mathcal{P}})_r \\
    &= (L_{Q_{\mathcal{P}}}[\epsilon],iR_{1,0,0}-2bR_{2,0,0})_r - b(\Lambda \epsilon, R_{1,0,0})_r + \mathcal{O}(\lambda \norm{\epsilon}_{L^2})\\
    &= (L_Q[\epsilon],iR_{1,0,0})_r + b\{(\epsilon, \Lambda R_{1,0,0})_r-2(L_Q[\epsilon],R_{2,0,0})_r \\
    & \quad -2(\epsilon,QR_{1,0,0}^2)_r\} + \mathcal{O}(\lambda\norm{\epsilon}_{L^2}).
\end{align*}
From \eqref{order b^2}, we have
\begin{align*}
    &(\epsilon, \Lambda R_{1,0,0})_r-2(L_Q[\epsilon],R_{2,0,0})_r -2(\epsilon,QR_{1,0,0}^2)_r \\
    &= (\epsilon,\Lambda R_{1,0,0})_r -2(\epsilon,-\frac{1}{2}R_{1,0,0} + \Lambda R_{1,0,0} -QR_{1,0,0}^2)_r -2(\epsilon,R_{1,0,0}^2Q)_r\\
    &= (\epsilon,R_{1,0,0})_r - (\epsilon,\Lambda R_{1,0,0})_r.
\end{align*}
From the orthogonal condition \eqref{orthogonality for partial_b Q}, and \eqref{modified profile of Q}, we obtain
\begin{equation*}
    (\epsilon,i\partial_b Q_{\mathcal{P}})_r = -(\epsilon,R_{1,0,0})_r + \mathcal{O}(b \norm{\epsilon}_{L^2}).
\end{equation*}
Also, from $L_Q[iR_{1,0,0}]=i\Lambda Q$, $Q_{\mathcal{P}} = Q+ibR_{1,0,0} + \mathcal{O}(\lambda)$ and the orthogonal condition for $\epsilon$ in \eqref{orthogonality for Lambda Q}, we have
\begin{align*}
    &(-i^{-1}L_{Q_{\mathcal{P}}}[\epsilon]  - (\nu+c_2b^2\nu)\nabla \epsilon + b\Lambda \epsilon, i\partial_bQ_{\mathcal{P}})_r \\
    &= (\epsilon,i\Lambda Q_{\mathcal{P}})_r + \mathcal{O}(\lambda \norm{\epsilon}_{L^2}) = \mathcal{O}(\lambda \norm{\epsilon}_{L^2}).
\end{align*}

\vspace{10pt}
\noindent\textbf{Estimate \eqref{partial_eta Q}:}
 Since $\partial_\eta Q_{\mathcal{P}} = R_{0,0,1} + ibR_{1,0,1} + \mathcal{O}(\lambda)$, we have
\begin{align*}
     (-i^{-1}L_{Q_{\mathcal{P}}}[\epsilon] &- (\nu+c_2b^2\nu)\nabla \epsilon + b\Lambda \epsilon, i\partial_\eta Q_{\mathcal{P}})_r \\
     & = (L_Q[\epsilon],R_{0,0,1})_r + b\{(L_Q[\epsilon],iR_{1,0,1})_r \\
     &\quad - 2(iQR_{0,0,1}\overline{\epsilon},R_{0,0,1})_r + (\Lambda \epsilon,iR_{0,0,1})_r\} + \mathcal{O}(\lambda \norm{\epsilon}_{L^2}). 
\end{align*}
From \eqref{order b eta}, we have
\begin{align*}
    &(L_Q[\epsilon],iR_{1,0,1})_r - 2(iQR_{0,0,1}\overline{\epsilon},R_{0,0,1})_r + (\Lambda \epsilon, iR_{0,0,1})_r \\
    &= (i^{-1}\epsilon,2R_{2,0,0}+\Lambda R_{0,0,1} + 2QR_{0,0,1}R_{1,0,0})_r\\
    & \quad - 2(\epsilon, iQR_{0,0,1}R_{1,0,0})_r - (\epsilon, \Lambda R_{0,0,1})_r\\
    &= (\epsilon, 2iR_{2,0,0})_r.
\end{align*} 
Thus, from the orthogonality condition on $\epsilon$, \eqref{orthogonality for partial_b Q}, and \eqref{order eta}, we obtain,
\begin{align*}
    (-i^{-1}L_{Q_{\mathcal{P}}}[\epsilon] &- (\nu+c_2b^2\nu)\nabla \epsilon + b\Lambda \epsilon, i\partial_\eta Q_{\mathcal{P}})_r \\
    & = -(\epsilon, i\{iR_{1,0,0}-2bR_{2,0,0}\})_r + \mathcal{O}(\lambda \norm{\epsilon}_{L^2}) \\
    &= -(\epsilon,i\partial_bQ_{\mathcal{P}})_r + \mathcal{O}(\lambda\norm{\epsilon}_{L^2}).
\end{align*}

\vspace{10pt}
\noindent\textbf{Estimate \eqref{nabla Q}:}
 Since $\nabla Q_{\mathcal{P}}=\nabla Q + ib\nabla R_{1,0,0} + \mathcal{O}(\lambda)$, we have
\begin{align*}
     (-i^{-1}L_{Q_{\mathcal{P}}}[\epsilon] & - (\nu+c_2b^2\nu)\nabla \epsilon + b\Lambda \epsilon, i\nabla Q_{\mathcal{P}})_r \\
     &= (L_Q[\epsilon],\nabla Q)_r + b\{(L_Q[\epsilon],i\nabla R_{1,0,0})_r \\
     & \quad -2(iR_{1,0,0}Q\overline{\epsilon},\nabla Q)_r + (\Lambda \epsilon, i\nabla Q)_r\} + \mathcal{O}(\lambda \norm{\epsilon}_{L^2}).
\end{align*}
From the commutator formula $[i^{-1}L_Q, \nabla]f = -i\nabla(Q^2)f - 2\Re\{\nabla(Q^2)f\}$, $[\Lambda ,\nabla] = -\nabla$ and \eqref{order b} which is $L_Q[iR_{1,0,0}] = i\Lambda Q$, we obtain
\begin{align*}
    &(L_Q[\epsilon],i\nabla R_{1,0,0})_r -2(iR_{1,0,0}Q\overline{\epsilon},\nabla Q)_r + (\Lambda \epsilon, i\nabla Q)_r \\
    &= (i^{-1}\epsilon,i^{-1}L_Q[\nabla iR_{1,0,0}])_r - (\epsilon,iR_{1,0,0}\nabla (Q^2))_r -(\Lambda \epsilon, i\nabla Q)_r \\
    &= (i^{-1}\epsilon, [i^{-1}L_Q,\nabla](iR_{1,0,0}))_r + (i^{-1}\epsilon, \nabla (\Lambda Q))_r\\
    & \quad - (\epsilon,i\Lambda\nabla Q)_r - (\epsilon,i\nabla(Q^2)R_{1,0,0})_r\\
    &= (\epsilon, i\nabla Q)_r
\end{align*}
Now, by the orthogonal condition, we obtain $(\epsilon, i\nabla Q)_r = \mathcal{O}(b\norm{\epsilon}_{L^2})$. Since $\nabla Q$ is a kernel of $L_Q$, we conclude
\begin{equation*}
    (-i^{-1}L_{Q_{\mathcal{P}}}[\epsilon] - (\nu+c_2b^2\nu)\nabla \epsilon + b\Lambda \epsilon, i\nabla Q_{\mathcal{P}})_r =\mathcal{O}(\lambda \norm{\epsilon}_{L^2}).
\end{equation*}

\vspace{10pt}
\noindent \textbf{Estimate \eqref{partial_v Q}:}
 Since $\partial_\nu Q_{\mathcal{P}} = iR_{0,1,0} - bR_{1,1,0} + \mathcal{O}(\lambda)$, we have
\begin{align*}
    (-i^{-1}L_{Q_{\mathcal{P}}}[\epsilon] - &(\nu+c_2b^2\nu)\nabla \epsilon + b\Lambda \epsilon, i\partial_\nu Q_{\mathcal{P}})_r \\
    &= (L_Q[\epsilon], iR_{0,1,0})_r -b\{(L_Q[\epsilon],R_{1,1,0})_r \\
    &\quad + 2(\epsilon,QR_{1,0,0}R_{0,1,0})_r + (\Lambda \epsilon,R_{0,1,0})_r\} + \mathcal{O}(\lambda\norm{\epsilon}_{L^2}).
\end{align*}
From, \eqref{order bv} we have
\begin{align*}
    &(L_Q[\epsilon],R_{1,1,0})_r + 2(\epsilon,QR_{1,0,0}R_{0,1,0})_r + (\Lambda \epsilon,R_{0,1,0})_r \\
    &= (\epsilon, L_Q[R_{1,1,0}])_r +(\epsilon,2QR_{1,0,0}R_{0,1,0})_r - (\epsilon,\Lambda R_{0,1,0})_r \\
    &= -(\epsilon, R_{0,1,0})_r - (\epsilon, \nabla R_{1,0,0})_r.
\end{align*}
Thus, from the orthogonality condition for $\epsilon$, \eqref{orthogonality for partial_v Q} and \eqref{orthogonality for nabla Q}, we have
\begin{align*}
    &(-i^{-1}L_{Q_{\mathcal{P}}}[\epsilon] - (\nu+c_2b^2\nu)\nabla \epsilon + b\Lambda \epsilon, i\partial_\nu Q_{\mathcal{P}})_r \\
    &= -(\epsilon, i\{\nabla Q + ib\nabla R_{1,0,0}\})_r - b(\epsilon, i^2R_{0,1,0})_r + \mathcal{O}(\lambda \norm{\epsilon}_{L^2}) \\
    &= -(\epsilon, \nabla Q_{\mathcal{P}})_r - b(\epsilon,i\partial_\nu Q_{\mathcal{P}})_r + \mathcal{O}(\lambda\norm{\epsilon}_{L^2}).
\end{align*}
\end{proof}

Now, we are ready to derive modulation estimates and degeneracy of unstable direction. By using the Lemma~\ref{interaction error order} and Lemma~\ref{M estimate}, we obtain the estimates below.

\begin{proposition}[Modulation estimates]\label{Modulation estimates}The following holds true: 
    \begin{enumerate}
        \item (Degeneracy of the unstable direction) There holds
        \begin{equation}\label{unstable direction degeneracy}
        |(\epsilon,Q_{\mathcal{P}})_r| \lesssim (\lambda^{\omega}+\alpha^*)\lambda^{2+\omega}.
        \end{equation}

        \item (Modulation equation) We have the modulation estimates
        \begin{equation}\label{Modulation equation}
        |\text{Mod}(t)| \lesssim (\lambda^{\omega}+\alpha^*)\lambda^{2+\omega},
        \end{equation}
        and improved bounds
        \begin{equation}\label{improved bounds}
            \left|\frac{\lambda_s}{\lambda}+b\right| + |\nu_s+b\nu|+\left|\frac{\nut{x}_s}{\lambda}-\nu-c_2b^2\nu\right|\lesssim (\lambda^{\omega}+\alpha^*)\lambda^{\frac{5}{2}+\omega}.
        \end{equation}
    \end{enumerate}
\end{proposition}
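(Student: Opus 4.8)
The plan is to derive the modulation estimates by differentiating the five orthogonality conditions \eqref{orthogonality for Lambda Q}--\eqref{orthogonality for partial_v Q} in the rescaled time $s$, using the $\epsilon$-equation \eqref{equation for epsilon} rewritten as $\partial_s\epsilon = M(\epsilon) + \overrightarrow{\mathrm{Mod}}(t)\cdot\overrightarrow{V}(\epsilon) + i\Psi$. For each orthogonality condition $(\epsilon, i W_j)_r = 0$ with $W_j\in\{\Lambda Q_{\mathcal P},\partial_b Q_{\mathcal P},\partial_\eta Q_{\mathcal P},\nabla Q_{\mathcal P},\partial_\nu Q_{\mathcal P}\}$, differentiating gives
\[
0 = (\partial_s\epsilon, iW_j)_r + (\epsilon, i\partial_s W_j)_r,
\]
and since $\partial_s W_j$ is itself a modulation-driven combination ($\partial_s Q_{\mathcal P}$ is a linear combination of $\partial_b Q_{\mathcal P}, \partial_\nu Q_{\mathcal P}$ with coefficients $b_s,\nu_s$ plus the $\Lambda$, $\nabla$, phase contributions), the second term is $O(|\overrightarrow{\mathrm{Mod}}(t)|\,\|\epsilon\|_{L^2})$ and is absorbed into the left side once $\lambda$ is small. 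Substituting $\partial_s\epsilon$ and invoking Lemma~\ref{M estimate} for the $(M(\epsilon), iW_j)_r$ pieces, Lemma~\ref{interaction error order} (in particular \eqref{cal(E)-esitmate}) for the $(i\mathcal E, iW_j)_r$ pieces (here $\Psi = \mathrm{Mod} - \Psi_{\mathcal P} + \mathrm{IN}$, so $i\Psi$ contributes $\mathrm{Mod}$ terms plus $i\mathcal E$-type terms up to the $R(\epsilon)$ distinction), and computing the diagonal pairings $(\overrightarrow{V}(\epsilon)\cdot\,\cdot\,, iW_j)_r$, one obtains a linear system of the schematic shape
\begin{align*}
    &\Big|\tfrac{\lambda_s}{\lambda}+b\Big| \lesssim |(\epsilon, i\Lambda Q)_r| + (\lambda^\omega+\alpha^*)\lambda^{5/2+\omega},\\
    &|\tilde\gamma_s| \lesssim |(\epsilon, R_{1,0,0})_r| + (\lambda^\omega+\alpha^*)\lambda^{5/2+\omega},\\
    &|\nu_s+b\nu| \lesssim b\,|(\epsilon, i\nabla Q)_r| + (\lambda^\omega+\alpha^*)\lambda^{5/2+\omega},\\
    &\Big|\tfrac{\nut x_s}{\lambda}-\nu-c_2b^2\nu\Big| \lesssim |(\epsilon, i\nabla Q)_r| + (\lambda^\omega+\alpha^*)\lambda^{5/2+\omega},\\
    &\Big|b_s+\big(\tfrac12+c_3\eta\big)b^2+\eta+c_1b^4+c_4\nu^2\Big| \lesssim |(\epsilon, Q_{\mathcal P})_r| + (\lambda^\omega+\alpha^*)\lambda^{5/2+\omega}.
\end{align*}
Here the first four right-hand sides feature the \emph{imaginary}-type pairings $(\epsilon, i\Lambda Q)_r$, $(\epsilon, i\nabla Q)_r$, $(\epsilon, R_{1,0,0})_r$, which are $O(b\|\epsilon\|_{L^2})$ because the orthogonality conditions are stated against $iQ_{\mathcal P}$-type directions and $Q_{\mathcal P} = Q + ibR_{1,0,0}+\cdots$; this is the source of the extra $b\sim\lambda^{1/2}$ gain yielding the improved bounds \eqref{improved bounds}.

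The central difficulty, and the step I would treat most carefully, is the degeneracy estimate \eqref{unstable direction degeneracy} for $(\epsilon, Q_{\mathcal P})_r$, since all the other modulation bounds feed back through it and $Q_{\mathcal P}$ is not in the kernel of $L_Q$ (it pairs with $\Lambda Q$). The approach is to compute $\partial_s(\epsilon, Q_{\mathcal P})_r$ directly: substituting \eqref{equation for epsilon},
\[
\partial_s(\epsilon, Q_{\mathcal P})_r = (M(\epsilon), Q_{\mathcal P})_r + \overrightarrow{\mathrm{Mod}}(t)\cdot(\overrightarrow{V}(\epsilon), Q_{\mathcal P})_r + (i\Psi, Q_{\mathcal P})_r + (\epsilon, \partial_s Q_{\mathcal P})_r.
\]
The first term, after integrating $L_Q$ by parts and using $L_Q[iQ]=0$ together with the commutator identities, reduces to $(\epsilon, L_Q[iQ])_r$-type expressions plus $O(b\|\epsilon\|_{L^2}) + O((\lambda^\omega+\alpha^*)\lambda^{\cdots})$ — the key algebraic input being $(\partial_b Q_{\mathcal P}, Q_{\mathcal P})_r = O(b^2+\nu+\eta)$, which follows from the identity $\|R_{1,0,0}\|_{L^2}^2 = 2(Q, R_{2,0,0})_r$ (equivalently \eqref{relation of R(200) and R(100)}) so that the leading $O(b)$ term in $(\partial_b Q_{\mathcal P}, Q_{\mathcal P})_r$ cancels. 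Since $|\overrightarrow{\mathrm{Mod}}(t)|\lesssim |(\epsilon,Q_{\mathcal P})_r| + (\lambda^\omega+\alpha^*)\lambda^{5/2+\omega}$ from the first four equations above (which do not involve $(\epsilon,Q_{\mathcal P})_r$), one gets
\[
\left|\partial_s(\epsilon, Q_{\mathcal P})_r\right| \lesssim \lambda\,\|\epsilon\|_{L^2} + (b^2+\nu+\eta)\big(|(\epsilon,Q_{\mathcal P})_r| + (\lambda^\omega+\alpha^*)\lambda^{5/2+\omega}\big) + (\lambda^\omega+\alpha^*)\lambda^{\cdots}.
\]

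To close, I would pass to the $t$ variable via $ds/dt = 1/\lambda$, integrate from $t$ to $0$ using $\epsilon(0)=0$ (hence $(\epsilon, Q_{\mathcal P})_r|_{t=0}=0$), and absorb the $\lambda^{-1}(b^2+\nu+\eta)|(\epsilon,Q_{\mathcal P})_r|\lesssim |(\epsilon,Q_{\mathcal P})_r|$ term by a Gronwall argument on the short interval $[t_1,0]$ (shrinking $t_1$ if needed). Feeding in the bootstrap bound $\|\epsilon\|_{H^{1/2}}\lesssim \lambda^{3/2+2\omega}$ from \eqref{bootstrap bounds} and $b^2+\eta\lesssim\lambda\sim\nu$ from \eqref{orders of modulation parameters}, the dominant contribution is $\int_t^0 \lambda^{-1}\cdot\lambda\cdot\lambda^{3/2+2\omega}\,d\tau \sim \lambda^{5/2+2\omega}$ together with the $\lambda^{-1}\cdot\lambda\cdot(\lambda^\omega+\alpha^*)\lambda^{5/2+\omega}$ contribution, and one checks this is $\lesssim(\lambda^\omega+\alpha^*)\lambda^{2+\omega}$ as claimed — the time integration against $ds/dt=1/\lambda$ and the fact that $\lambda\sim\tau^2$ near $\tau=0$ converts the pointwise estimates into the stated powers. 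Once \eqref{unstable direction degeneracy} is established, \eqref{Modulation equation} is immediate from the fifth modulation equation, and \eqref{improved bounds} follows from the first, third, and fourth together with the imaginary-pairing gain $|(\epsilon, i\Lambda Q)_r| + |(\epsilon, i\nabla Q)_r| \lesssim b\|\epsilon\|_{L^2} \lesssim \lambda^{1/2}\cdot\lambda^{3/2+2\omega} = \lambda^{2+2\omega}$, which sits comfortably below $\lambda^{5/2+\omega}$ after the extra $\lambda^{1/2}$ coming from the structure of the orthogonality conditions. The main obstacle throughout is bookkeeping the precise orders: one must verify that every error term genuinely carries the threshold power $\lambda^{5/2+\omega}$ (not merely $\lambda^{2+\omega}$), which is exactly why the higher-order profile construction in Proposition~\ref{Singular profile} (pushing $\Psi_{\mathcal P}$ to order $k\ge 6$, so $\|\Psi_{\mathcal P}\|_{L^2}\lesssim\lambda^3$) and the decoupling estimate $|(i\mathcal E,\langle\cdot\rangle^{-2})_r|\lesssim(\alpha^*+\lambda^\omega)\lambda^{5/2+\omega}$ of Lemma~\ref{interaction error order} are indispensable.
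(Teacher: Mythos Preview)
Your proposal is correct and follows essentially the same approach as the paper: differentiate the five orthogonality conditions, invoke Lemmas~\ref{M estimate} and~\ref{interaction error order} to obtain the a priori bound $|\overrightarrow{\mathrm{Mod}}(t)|\lesssim |(\epsilon,Q_{\mathcal P})_r|+(\lambda^\omega+\alpha^*)\lambda^{5/2+\omega}$, then compute $\partial_s(\epsilon,Q_{\mathcal P})_r$ directly using the cancellation $(\partial_b Q_{\mathcal P},Q_{\mathcal P})_r=O(\lambda)$ from \eqref{relation of R(200) and R(100)} and integrate backward from $t=0$ where $\epsilon(0)=0$. One minor correction on the mechanism for \eqref{improved bounds}: the extra $\lambda^{1/2}$ gain there comes from the off-diagonal modulation couplings being $O(b)=O(\lambda^{1/2})$ (so that $\lambda^{1/2}|\overrightarrow{\mathrm{Mod}}(t)|\lesssim(\lambda^\omega+\alpha^*)\lambda^{5/2+\omega}$ after \eqref{Modulation equation}), not from the imaginary-type pairings $(\epsilon,i\Lambda Q)_r$, $(\epsilon,i\nabla Q)_r$ you single out---those are already absorbed into the $O(\lambda\|\epsilon\|_{L^2})$ conclusion of Lemma~\ref{M estimate}.
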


\begin{proof}
Typically, modulation laws are derived from differentiating orthogonality conditions.

   \noindent \textbf{Step 1. } Law for $b$: We differentiate the orthogonal condition \eqref{orthogonality for Lambda Q} on $\epsilon$ with respect to $s$. Then, we have
    \begin{align*}
        \frac{d}{ds}(\epsilon,i\Lambda Q_{\mathcal{P}})_r = (\partial_s \epsilon,i\Lambda Q_{\mathcal{P}})_r + (\epsilon,\partial_si\Lambda Q_{\mathcal{P}})_r = 0.
    \end{align*}
    From the construction of $Q_{\mathcal{P}}$ in \eqref{modified profile of Q}, we have
    \begin{equation*}
        |(\epsilon,\partial_s i\Lambda Q_{\mathcal{P}})_r| \lesssim \norm{\epsilon}_{L^2}(|\overrightarrow{\text{Mod}}(t)| + \lambda).
    \end{equation*}
    On the other hand, from \eqref{equation for epsilon}, we have
    \begin{align*}
        (\partial_s \epsilon, i\Lambda Q_{\mathcal{P}})_r &= (M(\epsilon),i\Lambda Q_{\mathcal{P}})_r + (\overrightarrow{\text{Mod}}\cdot \overrightarrow{V}(\epsilon),i\Lambda Q_{\mathcal{P}})_r + (i\Psi, i\Lambda Q_{\mathcal{P}})_r. 
    \end{align*}
    From \eqref{Lambda Q}, we have
    \begin{equation*}
        (M(\epsilon),i\Lambda Q_{\mathcal{P}})_r = (\epsilon,Q_{\mathcal{P}})_r + \mathcal{O}(\lambda \norm{\epsilon}_{L^2}).
    \end{equation*}
    Since, from \eqref{decay of R(pqr)}, we have $\Lambda Q_{\mathcal{P}}(y) \sim \langle y \rangle^{-2}$ as $|y| \rightarrow \infty$. Thus, Lemma \ref{interaction error order} implies
    \begin{equation*}
        |(i\mathcal{E}, i\Lambda Q_{\mathcal{P}})_r| \lesssim (\lambda^\omega+\alpha^*)\lambda^{\frac{5}{2}+\omega}.
    \end{equation*}
    Finally we estimate $(\overrightarrow{\text{Mod}}\cdot \overrightarrow{V}(\epsilon),\Lambda Q_{\mathcal{P}})_r$. From \eqref{orders of modulation parameters} and the evenness and oddness property of $R_{p,q,r}$ in Proposition \ref{Singular profile}, we have
    \begin{equation}\label{innerproduct with Lambda Q}
        \begin{aligned}
            (-\partial_b Q_{\mathcal{P}},i\Lambda Q_{\mathcal{P}})_r &= -(iR_{1,0,0} - 2bR_{2,0,0},i\{\Lambda Q + ib\Lambda Q\})_r + \mathcal{O}(\lambda)\\
            &= -(R_{1,0,0},i^{-1}L_Q[iR_{1,0,0}])_r + \mathcal{O}(\lambda), \\
            (-\partial_\nu Q_{\mathcal{P}}, i\Lambda Q_{\mathcal{P}})_r &= -(iR_{0,1,0}-bR_{1,1,0},i\{\Lambda Q + ib\Lambda R_{1,0,0}\})_r + \mathcal{O}(\lambda)\\
            &= -(iR_{0,1,0},i\Lambda Q)_r + \mathcal{O}(\lambda) = \mathcal{O}(\lambda),\\
            (\Lambda Q_{\mathcal{P}} + \Lambda \epsilon,i\Lambda Q_{\mathcal{P}})_r &= \mathcal{O}(\norm{\epsilon}_{L^2}),\\
            (\nabla Q_{\mathcal{P}}+\nabla \epsilon, i\Lambda Q_{\mathcal{P}})_r &= (\nabla Q + ib\nabla R_{1,0,0},i\{\Lambda Q + ib\Lambda R_{1,0,0}\})_r + \mathcal{O}(\lambda + \norm{\epsilon}_{L^2})\\
            &= \mathcal{O}(\lambda),\\
            (-i(Q_{\mathcal{P}}+\epsilon),i\Lambda Q_{\mathcal{P}})_r &= \mathcal{O}(\norm{\epsilon}_{L^2}).
        \end{aligned}
    \end{equation}
    Therefore, we have
    \begin{equation}\label{modulation equation for b}
        \left|b_s + \left(\frac{1}{2}+ c_3\eta\right)b^2 + \eta + c_1b^4 + c_4\nu^2\right| \lesssim |(\epsilon,Q_{\mathcal{P}})_r| + \lambda|\overrightarrow{\text{Mod}}(t)| + \lambda\norm{\epsilon}_{L^2} + (\lambda^{\omega}+\alpha^*)\lambda^{\frac{5}{2}+\omega}.
    \end{equation}
    \noindent \textbf{Step 2. } Law for $\lambda$: Differentiating the orthogonal condition \eqref{orthogonality for partial_b Q} on $\epsilon$ with respect to $s$, we obtain
    \begin{align*}
        \frac{d}{ds}(\epsilon,i\partial_b Q_{\mathcal{P}})_r = (\partial_s \epsilon,i\partial_b Q_{\mathcal{P}})_r + (\epsilon,\partial_si\partial_b Q_{\mathcal{P}})_r = 0.
    \end{align*}
    From the construction of $Q_{\mathcal{P}}$ in \eqref{modified profile of Q}, we have
    \begin{equation*}
        |(\epsilon,\partial_s i\partial_b Q_{\mathcal{P}})_r| \lesssim \norm{\epsilon}_{L^2}(|\overrightarrow{\text{Mod}}(t)| + \lambda).
    \end{equation*}
    On the other hand, from \eqref{equation for epsilon}, we have
    \begin{align*}
        (\partial_s \epsilon, i\partial_b Q_{\mathcal{P}})_r &= (M(\epsilon),i\partial_b Q_{\mathcal{P}})_r + (\overrightarrow{\text{Mod}}\cdot \overrightarrow{V}(\epsilon),i\partial_b Q_{\mathcal{P}})_r + (i\Psi, i\partial_b Q_{\mathcal{P}})_r. 
    \end{align*}
    From \eqref{partial_b Q}, we have
    \begin{equation*}
        (M(\epsilon),i\partial_b Q_{\mathcal{P}})_r =  \mathcal{O}(\lambda \norm{\epsilon}_{L^2}).
    \end{equation*}
    Thanks to \eqref{decay of R(pqr)}, we have $\partial_b Q_{\mathcal{P}}(y) \sim \langle y \rangle^{-2}$ as $|y| \rightarrow \infty$. Hence, Lemma \ref{interaction error order} yields
    \begin{equation*}
        |(i\mathcal{E}, i\partial_b Q_{\mathcal{P}})_r| \lesssim (\lambda^{\omega}+\alpha^*)\lambda^{\frac{5}{2}+\omega}.
    \end{equation*}
    Finally, we estimate $(\overrightarrow{\text{Mod}}\cdot \overrightarrow{V}(\epsilon),i\partial_b Q_{\mathcal{P}})_r$. From \eqref{orders of modulation parameters} and evenness and oddness property of $R_{p,q,r}$ in Proposition \ref{Singular profile}, we have
    \begin{align*}
        (-\partial_b Q_{\mathcal{P}},i\partial_b Q_{\mathcal{P}})_r &=0,\\
        (-\partial_\nu Q_{\mathcal{P}}, i\partial_b Q_{\mathcal{P}})_r &= -(iR_{0,1,0}-bR_{1,1,0},i\{iR_{1,0,0}-2bR_{2,0,0}\})_r + \mathcal{O}(\lambda)\\
        &= -b(R_{1,1,0},R_{1,0,0}) +\mathcal{O}(\lambda) = \mathcal{O}(b),\\
        (\Lambda Q_{\mathcal{P}} + \Lambda \epsilon,i\partial_b Q_{\mathcal{P}})_r &= (\Lambda Q + ib\Lambda R_{1,0,0},i\{iR_{1,0,0}-2bR_{2,0,0}\})_r + \mathcal{O}(\lambda + \norm{\epsilon}_{L^2})\\
        &= -(R_{1,0,0}, i^{-1}L_Q[iR_{1,0,0}])_r + \mathcal{O}(\lambda),\\
        (\nabla Q_{\mathcal{P}}+\nabla \epsilon, i\partial_b Q_{\mathcal{P}})_r &= (\nabla Q + ib\nabla R_{1,0,0},i\{iR_{1,0,0}-2bR_{2,0,0}\})_r + \mathcal{O}(\lambda + \norm{\epsilon}_{L^2})\\
        &= \mathcal{O}(\lambda),\\
        (-i(Q_{\mathcal{P}}+\epsilon),i\partial_b Q_{\mathcal{P}})_r &= 
        -(Q+ibR_{1,0,0},iR_{1,0,0}-2bR_{2,0,0})_r + \mathcal{O}(\lambda + \norm{\epsilon}_{L^2}) \\
        &= \mathcal{O}(\lambda).
    \end{align*}
    The last equality follows from \eqref{relation of R(200) and R(100)}.
    Therefore, we have
    \begin{equation}\label{modulation equation for lambda}
        \left|\frac{\lambda_s}{\lambda}+b\right| \lesssim  \lambda^{\frac{1}{2}}|\overrightarrow{\text{Mod}}(t)| + \lambda\norm{\epsilon}_{L^2} + (\lambda^{\omega}+\alpha^*)\lambda^{\frac{5}{2}+\omega}.
    \end{equation}
    
    \noindent \textbf{Step 3. } Law for $\tilde{\gamma}$: Similarly, from the orthogonal condition \eqref{orthogonality for partial_eta Q}, we derive
    \begin{align*}
        \frac{d}{ds}(\epsilon,i\partial_\eta Q_{\mathcal{P}})_r = (\partial_s \epsilon,i\partial_\eta Q_{\mathcal{P}})_r + (\epsilon,\partial_si\partial_\eta Q_{\mathcal{P}})_r = 0.
    \end{align*}
    From the construction of $Q_{\mathcal{P}}$ in \eqref{modified profile of Q}, we have
    \begin{equation*}
        |(\epsilon,\partial_s i\partial_\eta Q_{\mathcal{P}})_r| \lesssim \norm{\epsilon}_{L^2}(|\overrightarrow{\text{Mod}}(t)| + \lambda).
    \end{equation*}
    On the other hand, from \eqref{equation for epsilon}, we have
    \begin{align*}
        (\partial_s \epsilon, i\partial_\eta Q_{\mathcal{P}})_r &= (M(\epsilon),i\partial_\eta Q_{\mathcal{P}})_r + (\overrightarrow{\text{Mod}}\cdot \overrightarrow{V}(\epsilon),i\partial_\eta Q_{\mathcal{P}})_r + (i\Psi, i\partial_\eta Q_{\mathcal{P}})_r. 
    \end{align*}
    From \eqref{partial_eta Q}, we have
    \begin{equation*}
        (M(\epsilon),i\partial_\eta Q_{\mathcal{P}})_r = \mathcal{O}(\lambda \norm{\epsilon}_{L^2}).
    \end{equation*}
    Since, from \eqref{decay of R(pqr)}, we have $\partial_\eta Q_{\mathcal{P}}(y) \sim \langle y \rangle^{-2}$ as $|y| \rightarrow \infty$. Hence, Lemma \eqref{interaction error order} yields
    \begin{equation*}
        |(i\mathcal{E}, i\partial_\eta Q_{\mathcal{P}})_r| \lesssim (\lambda^{\omega}+\alpha^*)\lambda^{\frac{5}{2}+\omega}.
    \end{equation*}
    Finally we estimate $(\overrightarrow{\text{Mod}}\cdot \overrightarrow{V}(\epsilon),i\partial_\eta Q_{\mathcal{P}})_r$. From \eqref{orders of modulation parameters} and the evenness and oddness property of $R_{p,q,r}$ in Proposition \ref{Singular profile}, we have
    \begin{align*}
        (-\partial_b Q_{\mathcal{P}},i\partial_\eta Q_{\mathcal{P}})_r &= -(iR_{1,0,0} - 2bR_{2,0,0},i\{R_{0,0,1}+ibR_{1,0,1}\})_r + \mathcal{O}(\lambda)\\
        &= - (R_{1,0,0},R_{0,0,1})_r + \mathcal{O}(\lambda), \\
        (-\partial_\nu Q_{\mathcal{P}}, i\partial_\eta Q_{\mathcal{P}})_r &= -(iR_{0,1,0}-bR_{1,1,0},i\{R_{0,0,1} + ibR_{1,0,1}\})_r + \mathcal{O}(\lambda)\\
        &= \mathcal{O}(\lambda),\\
        (\Lambda Q_{\mathcal{P}} + \Lambda \epsilon,i\partial_\eta Q_{\mathcal{P}})_r &= (\Lambda Q + ib\Lambda R_{1,0,0},i\{R_{0,0,1}+ibR_{1,0,1}\})_r + \mathcal{O}(\lambda + \norm{\epsilon}_{L^2})\\
        &= \mathcal{O}(b),\\
        (\nabla Q_{\mathcal{P}}+\nabla \epsilon, i\partial_\eta Q_{\mathcal{P}})_r &= (\nabla Q + ib\nabla R_{1,0,0},i\{R_{0,0,1}+ibR_{1,0,1}\})_r + \mathcal{O}(\lambda + \norm{\epsilon}_{L^2})\\
        &= \mathcal{O}(\lambda),\\
        (-i(Q_{\mathcal{P}}+\epsilon),i\partial_\eta Q_{\mathcal{P}})_r &= -(Q+ibR_{1,0,0},R_{0,0,1}+ibR_{1,0,1})_r + \mathcal{O}(\lambda + \norm{\epsilon}_{L^2}) \\
        &= -(Q,R_{0,0,1})_r + \mathcal{O}(\lambda) = (R_{1,0,0},i^{-1}L_Q[iR_{1,0,0}])_r + \mathcal{O}(\lambda).
    \end{align*}
    Therefore, from \eqref{modulation equation for b} and \eqref{modulation equation for lambda}, we obtain
    \begin{equation}\label{modulation equation for tilde gamma}
        \begin{aligned}
            \left|\tilde{\gamma}_s\right| &\lesssim (\text{(LHS) of } \eqref{modulation equation for b}) + \lambda^{1/2}\left|\frac{\lambda_s}{\lambda} + b\right| + \lambda\norm{\epsilon}_{L^2}+\lambda|\overrightarrow{\text{Mod}}(t)| +(\lambda^{\omega}+\alpha^*)\lambda^{\frac{5}{2}+\omega}\\
            & \lesssim |(\epsilon,Q_{\mathcal{P}})_r| + \lambda|\overrightarrow{\text{Mod}}(t)| +  (\lambda^{\omega}+\alpha^*)\lambda^{\frac{5}{2}+\omega}.
        \end{aligned}
    \end{equation}

 \noindent \textbf{Step 4. } Law for $\nu$: From the orthogonal condition \eqref{orthogonality for nabla Q}, we have
    \begin{align*}
        \frac{d}{ds}(\epsilon,i\nabla Q_{\mathcal{P}})_r = (\partial_s \epsilon,i\nabla Q_{\mathcal{P}})_r + (\epsilon,\partial_si\nabla Q_{\mathcal{P}})_r = 0.
    \end{align*}
    From the construction of $Q_{\mathcal{P}}$ in \eqref{modified profile of Q}, we have
    \begin{equation*}
        |(\epsilon,\partial_s i\nabla Q_{\mathcal{P}})_r| \lesssim \norm{\epsilon}_{L^2}(|\overrightarrow{\text{Mod}}(t)| + \lambda).
    \end{equation*}
    On the other hand, from \eqref{equation for epsilon}, we have
    \begin{align*}
        (\partial_s \epsilon, i\nabla Q_{\mathcal{P}})_r &= (M(\epsilon),i\nabla Q_{\mathcal{P}})_r + (\overrightarrow{\text{Mod}}\cdot \overrightarrow{V}(\epsilon),i\nabla Q_{\mathcal{P}})_r + (i\Psi, i\nabla Q_{\mathcal{P}})_r. 
    \end{align*}
    From \eqref{nabla Q}, we have
    \begin{equation*}
        (M(\epsilon),i\nabla Q_{\mathcal{P}})_r = \mathcal{O}(\lambda \norm{\epsilon}_{L^2}).
    \end{equation*}
    Since, from \eqref{decay of R(pqr)}, we have $\nabla Q_{\mathcal{P}}(y) \sim \langle y \rangle^{-2}$ as $|y| \rightarrow \infty$. Hence, Lemma \eqref{interaction error order} yields
    \begin{equation*}
        |(i\mathcal{E}, i\nabla Q_{\mathcal{P}})_r| \lesssim (\lambda^{\omega}+\alpha^*)\lambda^{\frac{5}{2}+\omega}.
    \end{equation*}
    Finally we estimate $(\overrightarrow{\text{Mod}}\cdot \overrightarrow{V}(\epsilon),i\nabla Q_{\mathcal{P}})_r$. From \eqref{orders of modulation parameters} and evenness and oddness property of $R_{p,q,r}$ in Proposition \ref{Singular profile} and previous computation, we have
    \begin{align*}
        (-\partial_b Q_{\mathcal{P}},i\nabla Q_{\mathcal{P}})_r 
        &= \mathcal{O}(\lambda), \\
        (-\partial_\nu Q_{\mathcal{P}}, i\nabla Q_{\mathcal{P}})_r &= -(iR_{0,1,0}-bR_{1,1,0},i\{\nabla Q + ib\nabla R_{1,0,0}\})_r + \mathcal{O}(\lambda)\\
        &= (R_{0,1,0},i^{-1}L_Q[iR_{0,1,0}])_r + \mathcal{O}(\lambda),\\
        (\Lambda Q_{\mathcal{P}} + \Lambda \epsilon,i\nabla Q_{\mathcal{P}})_r &= \mathcal{O}(\lambda + \norm{\epsilon}_{L^2}),\\
        (\nabla Q_{\mathcal{P}}+\nabla \epsilon, i\nabla Q_{\mathcal{P}})_r &= \mathcal{O}(\lambda + \norm{\epsilon}_{L^2}),\\
        (-i(Q_{\mathcal{P}}+\epsilon),i\nabla Q_{\mathcal{P}})_r &= \mathcal{O}(\lambda + \norm{\epsilon}_{L^2}).
    \end{align*}
    Therefore, we obtain
    \begin{equation}\label{modulation equation for v}
        \begin{aligned}
            |\nu_s + b\nu| \lesssim \lambda|\overrightarrow{\text{Mod}}(t)| + \lambda\norm{\epsilon}_{L^2} + (\lambda^{\omega}+\alpha^*)\lambda^{\frac{5}{2}+\omega}.
        \end{aligned}
    \end{equation}

   \noindent \textbf{Step 5. } Law for $\nut{x}$: From the orthogonal condition \eqref{orthogonality for partial_v Q}, we have
    \begin{align*}
        \frac{d}{ds}(\epsilon,i\partial_\nu Q_{\mathcal{P}})_r = (\partial_s \epsilon,i\partial_\nu Q_{\mathcal{P}})_r + (\epsilon,\partial_si\partial_\nu Q_{\mathcal{P}})_r = 0.
    \end{align*}
    From the construction of $Q_{\mathcal{P}}$ in \eqref{modified profile of Q}, we have
    \begin{equation*}
        |(\epsilon,\partial_s i\partial_\nu Q_{\mathcal{P}})_r| \lesssim \norm{\epsilon}_{L^2}(|\overrightarrow{\text{Mod}}(t)| + \lambda).
    \end{equation*}
    On the other hand, from \eqref{equation for epsilon}, we have
    \begin{align*}
        (\partial_s \epsilon, i\partial_\nu Q_{\mathcal{P}})_r &= (M(\epsilon),i\partial_\nu Q_{\mathcal{P}})_r + (\overrightarrow{\text{Mod}}\cdot \overrightarrow{V}(\epsilon),i\partial_\nu Q_{\mathcal{P}})_r + (i\Psi, i\partial_\nu Q_{\mathcal{P}})_r. 
    \end{align*}
    From \eqref{partial_v Q}, we have
    \begin{equation*}
        (M(\epsilon),i\partial_\nu Q_{\mathcal{P}})_r = \mathcal{O}(\lambda \norm{\epsilon}_{L^2}).
    \end{equation*}
    Since, from \eqref{decay of R(pqr)}, we have $\partial_\nu Q_{\mathcal{P}}(y) \sim \langle y \rangle^{-2}$ as $|y| \rightarrow \infty$. Hence, Lemma \eqref{interaction error order} yields
    \begin{equation*}
        |(i\mathcal{E}, i\partial_\nu Q_{\mathcal{P}})_r| \lesssim (\lambda^{\omega}+\alpha^*)\lambda^{\frac{5}{2}+\omega}.
    \end{equation*}
    Finally we estimate $(\overrightarrow{\text{Mod}}\cdot \overrightarrow{V}(\epsilon),i\partial_\nu Q_{\mathcal{P}})_r$. From \eqref{orders of modulation parameters} and the evenness and oddness property of $R_{p,q,r}$ in Proposition \ref{Singular profile} and the previous computation, we have
    \begin{align*}
        (-\partial_b Q_{\mathcal{P}},i\partial_\nu Q_{\mathcal{P}})_r 
        &= \mathcal{O}(b), \\
        (-\partial_\nu Q_{\mathcal{P}}, i\partial_\nu Q_{\mathcal{P}})_r &= 0\\
        (\Lambda Q_{\mathcal{P}} + \Lambda \epsilon,i\partial_\nu Q_{\mathcal{P}})_r &= \mathcal{O}(\lambda + \norm{\epsilon}_{L^2}),\\
        (\nabla Q_{\mathcal{P}}+\nabla \epsilon, i\partial_\nu Q_{\mathcal{P}})_r &= (R_{0,1,0},i^{-1}L_Q[iR_{0,1,0}])_r + \mathcal{O}(\lambda + \norm{\epsilon}_{L^2}),\\
        (-i(Q_{\mathcal{P}}+\epsilon),i\nabla Q_{\mathcal{P}})_r &= \mathcal{O}(\lambda + \norm{\epsilon}_{L^2}).
    \end{align*}
    Therefore, from \eqref{modulation equation for b}, we obtain
    \begin{equation}\label{modulation equation for x}
        \begin{aligned}
            \left|\frac{\nut{x}_s}{\lambda} -\nu -c_2\nu^2\right| &\lesssim \lambda^{\frac{1}{2}}(\text{(LHS) of } \eqref{modulation equation for b}) + \lambda|\overrightarrow{\text{Mod}}(t)| + \lambda \norm{\epsilon}_{L^2} + (\lambda^{\omega}+\alpha^*)\lambda^{\frac{5}{2}+\omega}\\
            & \lesssim \lambda^{\frac{1}{2}}|(\epsilon,Q_{\mathcal{P}})_r| + \lambda|\overrightarrow{\text{Mod}}(t)| +(\lambda^{\omega}+\alpha^*)\lambda^{\frac{5}{2}+\omega}.
        \end{aligned}
    \end{equation}
    
    From \eqref{modulation equation for b}, \eqref{modulation equation for lambda}, \eqref{modulation equation for tilde gamma}, \eqref{modulation equation for v}, and \eqref{modulation equation for x}, we have
    \begin{align}\label{Mod priori estimate}
        |\overrightarrow{\text{Mod}}(t)| \lesssim |(\epsilon, Q_{\mathcal{P}})_r| + (\lambda^{\omega}+\alpha^*)\lambda^{\frac{5}{2}+\omega}.
    \end{align}

    Hence, we investigate the smallness of $(\epsilon, Q_{\mathcal{P}})_r$. We differentiate this inner product with respect to $s$:
    \begin{equation*}
        \frac{d}{ds}(\epsilon, Q_{\mathcal{P}})_r = (\partial_s \epsilon, Q_{\mathcal{P}})_r + (\epsilon, \partial_s Q_{\mathcal{P}})_r.
    \end{equation*}
    Since, from \eqref{modified profile of Q}, we have
    \begin{equation*}
        |(\epsilon, \partial_s Q_{\mathcal{P}})_r| \lesssim \norm{\epsilon}_{L^2}(\lambda + |\overrightarrow{\text{Mod}}(t)|).
    \end{equation*}
    From \eqref{equation for epsilon}, we obtain
    \begin{equation*}
        (\partial_s\epsilon, Q_{\mathcal{P}})_r = (M(\epsilon),Q_{\mathcal{P}})_r + (\overrightarrow{\text{Mod}}(t) \cdot \overrightarrow{V}(\epsilon),Q_{\mathcal{P}})_r + (i\mathcal{E}, Q_{\mathcal{P}})_r.
    \end{equation*}
    We investigate $(M(\epsilon),Q_{\mathcal{P}})_r$. From \eqref{diff for mod eq}, we have
    \begin{align*}
        &(i^{-1}L_{Q_{\mathcal{P}}}[\epsilon] + (\nu+c_2b^2\nu)\nabla \epsilon - b\Lambda \epsilon, Q_{\mathcal{P}})_r\\
        &= (L_Q[\epsilon] -2ibQR_{1,0,0}\overline{\epsilon},i\{Q+ibR_{1,0,0}\})_r - b(\Lambda \epsilon, Q_{\mathcal{P}})_r + \mathcal{O}(\lambda \norm{\epsilon}_{L^2})\\
        &= (L_Q[\epsilon],iQ)_r - b\{(L_Q[\epsilon],R_{1,0,0})_r +2(iQR_{1,0,0}\overline{\epsilon},iQ)_r +(\Lambda \epsilon, Q_{\mathcal{P}})_r\} + \mathcal{O}(\lambda \norm{\epsilon}_{L^2}).
    \end{align*}
    Since $iQ$ is a kernel of the linearized operator $L_Q$ and since the identity $i^{-1}L_Q[iR_{1,0,0}] = \Lambda Q$ yields
    \begin{align*}
        &(L_Q[\epsilon],R_{1,0,0})_r +2(iQR_{1,0,0}\overline{\epsilon},iQ)_r - (\epsilon,\Lambda Q_{\mathcal{P}})_r \\
        &= (\epsilon, i^{-1}L_Q[iR_{1,0,0}])_r - (\epsilon, \Lambda Q_{\mathcal{P}})_r \\
        &= \mathcal{O}(\lambda \norm{\epsilon}_{L^2}).
    \end{align*}
    Thus, we conclude
    \begin{equation*}
        (M(\epsilon),Q_{\mathcal{P}})_r = \mathcal{O}(\lambda \norm{\epsilon}_{L^2}).
    \end{equation*}
    We now estimate $(\overrightarrow{\text{Mod}}(t) \cdot \overrightarrow{V}(\epsilon),Q_{\mathcal{P}})_r$. From \eqref{orders of modulation parameters} and the evenness and oddness property of $R_{p,q,r}$ in Proposition \ref{Singular profile}, we have
    \begin{align*}
        (\partial_b Q_{\mathcal{P}}, Q_{\mathcal{P}})_r &= (iR_{1,0,0}-2bR_{2,0,0},Q+ibR_{1,0,0})_r + \mathcal{O}(\lambda)\\
        &= b(\norm{R_{1,0,0}}_{L^2}^2 - 2(Q,R_{2,0,0})_r) + \mathcal{O}(\lambda) =\mathcal{O}(\lambda), \\
        (\partial_\nu Q_{\mathcal{P}},Q_{\mathcal{P}})_r &= (iR_{0,1,0}-bR_{1,1,0},Q+ibR_{1,0,0})_r + \mathcal{O}(\lambda) = \mathcal{O}(\lambda),\\
        (\Lambda Q_{\mathcal{P}} + \Lambda \epsilon, Q_{\mathcal{P}})_r &= \mathcal{O}(\norm{\epsilon}_{L^2}),\\
        (\nabla Q_{\mathcal{P}} + \nabla \epsilon, Q_{\mathcal{P}})_r &= \mathcal{O}(\norm{\epsilon}_{L^2}),\\
        (-i(Q_{\mathcal{P}}+\epsilon),Q_{\mathcal{P}})_r &=\mathcal{O}(\norm{\epsilon}_{L^2}).
    \end{align*}
    Since, $Q_{\mathcal{P}}(y) \sim \langle y \rangle^{-2}$ as $|y| \rightarrow \infty$, Lemma \ref{interaction error order} yields
    \begin{equation*}
        |(i\mathcal{E},Q_{\mathcal{P}})_r| \lesssim (\lambda^{\omega}+\alpha^*)\lambda^{\frac{5}{2}+\omega}.
    \end{equation*}
    Therefore, from \eqref{Mod priori estimate} we obtain 
    \begin{equation}
        \begin{aligned}
            \left|\frac{d}{ds}(\epsilon, Q_{\mathcal{P}})_r\right| &\lesssim \lambda|\overrightarrow{\text{Mod}}(t)| + \lambda\norm{\epsilon}_{L^2} + (\lambda^{\omega}+\alpha^*)\lambda^{\frac{5}{2}+\omega} \\
            & \lesssim \lambda|(\epsilon, Q_{\mathcal{P}})_r| +(\lambda^{\omega}+\alpha^*)\lambda^{\frac{5}{2}+\omega}.
        \end{aligned}
    \end{equation}
    By integrating from $t=s(t)$ to $\infty$ with respect to $s$, we obtain
    \begin{equation}\label{inner product of ep and Q}
        |(\epsilon,Q_{\mathcal{P}})_r| \lesssim (\lambda^{\omega} + \alpha^*)\lambda^{2+\omega}.
    \end{equation}
    Thus, \eqref{Mod priori estimate} yields \eqref{Modulation equation}. 
    Furthermore, from \eqref{modulation equation for lambda}, \eqref{modulation equation for v}, and \eqref{modulation equation for x} we obtain improved bounds
    \eqref{improved bounds}.
\end{proof}

\begin{remark}
In the construction of minimal mass blow-up solutions to \eqref{half-wave} in \cite{KLR2013ARMAhalfwave}, the smallness condition arises from the threshold condition. More precisely, $|(\epsilon, Q_{\mathcal{P}})_r|=\mathcal{O}(\lambda^2)$. However, a trivial bound without such a threshold condition $|(\epsilon, Q_{\mathcal{P}})_r|= \|\epsilon\|_{L^2} = \mathcal{O}(\lambda^{\frac{3}{2}+2\omega})=\mathcal{O}(\lambda^{\frac{7}{4}}) $ is not sufficient to close the bootstrapping. Here, we exploit the structural identity
\[
    \|R_{1,0,0}\|_{L^2}^2 = 2(Q, R_{2,0,0})_r
\]
from \eqref{relation of R(200) and R(100)}. The profiles $R_{1,0,0}$ and $R_{2,0,0}$ constructed in Proposition~\ref{Singular profile} correspond to the second and third terms in the expansion of $e^{i b \frac{|y|^2}{4}} Q_{\text{NLS}}$ from \eqref{NLS}, namely $i b \frac{|y|^2}{4} Q_{\text{NLS}}$ and $(i b)^2 \frac{|y|^4}{16} Q_{\text{NLS}}$, which arise from the Taylor expansion of the pseudo-conformal phase. This identity leads to a cancellation in the computation of $(\partial_b Q_{\mathcal{P}}, Q_{\mathcal{P}})_r$, resulting in the improved bound
\[
    (\partial_b Q_{\mathcal{P}}, Q_{\mathcal{P}})_r = \mathcal{O}(\lambda).
\]
As a consequence, we obtain an additional smallness in $(\epsilon, Q_{\mathcal{P}})_r = \mathcal{O}(\lambda^{2+\omega})$ as in \eqref{inner product of ep and Q}.
\end{remark}

As a consequence of the Proposition~\ref{Modulation estimates}, we show that $b(t)$ is nonnegative.

\begin{lemma}
    Under the bootstrap assumption \eqref{bootstrap bounds}, possibly after reducing $t_1$, we have, for any $\eta \in (0,\eta^*)$,
    \begin{equation}\label{sign of b-parameter}
        b(t) \ge 0, \quad \text{for all } t \in \bigl(T_{\mathrm{dec}}^{(\eta)},0\bigr] 
        \cap [t_1,0].
    \end{equation}
\end{lemma}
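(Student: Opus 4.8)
The plan is to convert the modulation estimate \eqref{Modulation equation} into a one-sided differential inequality for $b$ in the physical time variable, and then run a comparison argument, treating a neighborhood of the blow-up time separately from the region bounded away from it.

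First I would read off the fourth component of $\overrightarrow{\mathrm{Mod}}(t)$ from \eqref{definition of modulation eq and modulation vector}: together with \eqref{Modulation equation} and $\tfrac{ds}{dt}=\tfrac1\lambda$ this gives
\begin{equation*}
    b_t \;=\; \frac{1}{\lambda}\Bigl(-\bigl(\tfrac12+c_3\eta\bigr)b^2 \;-\;\eta\;-\;c_1 b^4 \;-\;c_4\nu^2\Bigr)\;+\;\mathcal{O}\bigl((\lambda^\omega+\alpha^*)\lambda^{1+\omega}\bigr).
\end{equation*}
Since $c_4<0$ by Proposition~\ref{Singular profile}, we have $-c_4\nu^2=|c_4|\nu^2\ge 0$; moreover $\tfrac12+c_3\eta>0$ for $\eta\le\eta^*$ small, and by \eqref{orders of modulation parameters} one has $\nu\sim\lambda$ and $b^2+\eta\lesssim\lambda$. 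Hence every term on the right other than $-\eta/\lambda$ is either non-positive or of size $\mathcal{O}(\lambda)$ in absolute value, and after shrinking $t_1$ so that $\lambda$ is uniformly small on $[t_1,0]$, there is a universal constant $C_2>0$ with $b_t\le \lambda^{-1}(-\eta+C_2\lambda^2)$ on $(T_{\mathrm{dec}}^{(\eta)},0]\cap[t_1,0]$.

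Next I would split the interval at the scale $\lambda^2\sim\eta$. Using the asymptotics \eqref{asymtotic for parameters for t} and the bootstrap bound \eqref{bootstrap bounds} one has $\lambda(t)\le C'(t^2+\eta)$ for a universal $C'$. On the near-blow-up set $A:=\{t\in[t_1,0]:C'(t^2+\eta)\le\sqrt{\eta/(2C_2)}\}$ — an interval with right endpoint $0$, contained in $\{|t|\le t_A\}$ with $t_A\sim\eta^{1/4}$ — we get $C_2\lambda^2\le\eta/2$, hence $b_t\le -\eta/(2\lambda)<0$; since $b(0)=0$ by the choice of initial data in Lemma~\ref{approximated dynamics}, integrating backward gives $b(t)=-\int_t^0 b_{t'}\,dt'>0$ on $A\setminus\{0\}$. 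On the complement $B=[t_1,0]\setminus A$ one has $t^2\gtrsim\eta^{1/2}\gg\eta$, so $\lambda\lesssim t^2$; then Lemma~\ref{approximated dynamics} gives $b_\eta(t)\ge\tfrac1{4C_0^2}|t|$ with $|t|\gtrsim\lambda^{1/2}$, while \eqref{bootstrap bounds} gives $|b-b_\eta|\le\lambda^{3/2+\omega}$. After one further reduction of $t_1$ ensuring $\lambda^{3/2+\omega}\le\tfrac1{8C_0^2}|t|$ on $B$, this forces $b\ge b_\eta-|b-b_\eta|\ge\tfrac1{8C_0^2}|t|>0$ on $B$. Since $A\cup B=[t_1,0]$ and $b(0)=0$, we conclude $b\ge0$ on $(T_{\mathrm{dec}}^{(\eta)},0]\cap[t_1,0]$.

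The only genuinely delicate point is the set $A$: there both $b$ and $b_\eta$ degenerate to $0$, so the perturbative inequality $|b-b_\eta|\le\lambda^{3/2+\omega}$ carries no sign information, and one must instead exploit the differential inequality — the key being that the strictly negative term $-\eta/\lambda$ dominates the positive correction $|c_4|\nu^2/\lambda=\mathcal{O}(\lambda)$ together with the modulation error, precisely because $c_4<0$. Arranging that $A$ and $B$ overlap, so that together they cover $[t_1,0]$ uniformly in $\eta\in(0,\eta^*)$, is elementary bookkeeping in powers of $\eta$; $t_1$ (and, if necessary, $\eta^*$) are reduced finitely many times, all choices independent of $\eta$.
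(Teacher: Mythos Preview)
Your argument is correct, but it takes a somewhat longer route than the paper's. The paper observes that the modulation estimate plus $\tfrac{ds}{dt}=\tfrac1\lambda$ gives
\[
    b_t \;=\; -\,\frac{b^2+2\eta}{2\lambda}\;+\;\mathcal{O}(\lambda),
\]
and then, rather than splitting into regions, simply replaces $(b,\lambda)$ by $(b_\eta,\lambda_\eta)$ via the bootstrap bound and invokes the almost-conserved quantity from Lemma~\ref{approximated dynamics}: $(b_\eta^2+2\eta)/\lambda_\eta \approx 1/C_0^2$. This makes $b_t \approx -\tfrac{1}{2C_0^2}$, a strictly negative constant uniformly on the whole interval, so $b(t)\ge 0$ follows at once from $b(0)=0$. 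Your splitting into the near-blow-up region $A$ (where you exploit $-\eta/\lambda$) and the far region $B$ (where you use $|b-b_\eta|\le\lambda^{3/2+\omega}$) is a perfectly sound alternative that avoids appealing to the almost-conservation law, at the cost of some extra bookkeeping. The paper's approach is shorter because the conserved structure already packages the competition between $b^2$, $\eta$, and $\lambda$ into a single non-degenerate quantity.
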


\begin{proof}
    From modulation estimate \eqref{Modulation equation} and \eqref{rough bound for lambda} we have
    \begin{equation*}
        \left|b_s + \frac{1}{2}b^2+\eta + c_1b^4 + c_3b^2\eta + c_4\nu^2 \right| \lesssim \lambda^{2+\omega}.
    \end{equation*}
    In addition, from \eqref{orders of modulation parameters}, \eqref{bootstrap bounds}, asymptotics for \eqref{asymtotic for parameters for t} and relation $\frac{ds}{dt} =\frac{1}{\lambda}$ we obtain
    \begin{align*}
        b_t = -\frac{b^2+2\eta}{2\lambda} + \mathcal{O}(\lambda^2) &= -\frac{b^2_{\eta}+2\eta}{\lambda_{\eta}} + \mathcal{O}\left(\lambda^2 + \left|\frac{b^2+2\eta}{2\lambda} - \frac{b^2_{\eta}+2\eta}{\lambda_{\eta}}\right|\right)\\
        & = -\left(\frac{1}{2C_0^2} + o_{\eta \to 0+}(1)\right) + \mathcal{O}(\lambda^{1+\omega}).
    \end{align*}
    Hence, we conclude that $b(t) \geq 0$, for $t \in \bigl(T_{\mathrm{dec}}^{(\eta)},0\bigr] \cap [t_1,0]$. In particular, $b(t) \sim -t$. 
\end{proof}

We now turn to the expansion of conserved quantities in terms of the modulation parameters.

\begin{proposition}\label{energy and momentum expansion for singular profile}
    We have the following expansion of Energy and Momentum:
    \begin{equation}\label{energy expansion}
        \begin{aligned}
            E(Q_{\mathcal{P}}+z^{\flat} + \epsilon) &= E(Q_{\mathcal{P}})+E(z^{\flat})+\mathcal{O}(\lambda^{\frac{5}{4}})\\
            &= \frac{1}{2}(i^{-1}L_Q[iR_{1,0,0}],R_{1,0,0})_r\cdot \left(b^2+2\eta\right) + \lambda E(z_f^*) + \mathcal{O}(\lambda^{\frac{5}{4}}),
        \end{aligned}
    \end{equation}
    and
    \begin{equation}\label{momentum expansion}
        \begin{aligned}
        P(Q_{\mathcal{P}} + z^{\flat}+\epsilon) &= P(u_0)+P(z^{\flat})+\mathcal{O}(\lambda^{\frac{5}{4}})\\
        &= 2(i^{-1}L_Q[iR_{0,1,0}],R_{0,1,0})_r\cdot\nu + \lambda P(z_f^*) + \mathcal{O}(\lambda^{\frac{5}{4}}).
        \end{aligned}
    \end{equation}
\end{proposition}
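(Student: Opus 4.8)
The strategy is to compute $E$ and $P$ of the decomposed solution $u^\flat_\eta = Q_{\mathcal{P}} + z^\flat + \epsilon$ by exploiting the (approximate) orthogonality of the three constituents: the singular profile $Q_{\mathcal{P}}$ which is $\mathcal{O}(1)$-sized in $L^2$ and spatially localized on scale $\langle y\rangle^{-2}$, the renormalized radiation $z^\flat$ which is small and degenerate near the origin, and the perturbation $\epsilon$ which is controlled by the bootstrap bounds \eqref{bootstrap bounds}. First I would expand $E(Q_{\mathcal{P}}+z^\flat+\epsilon)$ by the Taylor/trilinear structure of the energy functional: $E(a+b+c) = E(a) + E(b) + E(c) + (\text{cross terms})$, where the cross terms are the bilinear form $\tfrac12(D^{1/2}\cdot,D^{1/2}\cdot)$ applied pairwise minus the quartic cross interactions from $\tfrac14\int|u|^4$. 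The $\epsilon$-cross terms are handled by the modulation estimates: the quadratic-in-$\epsilon$ part of the energy around $Q_{\mathcal{P}}$ is essentially $(L_{Q_{\mathcal{P}}}[\epsilon],\epsilon)_r$ up to the $\dot H^{1/2}$-normalization, and the linear-in-$\epsilon$ cross term reduces, after using the $Q$-equation and the orthogonality conditions \eqref{orthogonality for Lambda Q}--\eqref{orthogonality for partial_v Q} together with the degeneracy estimate \eqref{unstable direction degeneracy}, to something of size $\mathcal{O}(\lambda^{5/4})$ once renormalized back by the factor $\lambda^{-1}$ coming from the $\dot H^{1/2}$-scaling (recall $E(f^\sharp) = \lambda^{-1} E_{(s,y)}$-type scaling, so the target bound $\mathcal{O}(\lambda^{5/4})$ in $(t,x)$ corresponds to $\mathcal{O}(\lambda^{9/4})$ in renormalized variables — the bootstrap $\|\epsilon\|_{H^{1/2}}\lesssim\lambda^{3/2+2\omega}$ with $\omega=1/8$ gives $\|\epsilon\|_{H^{1/2}}^2\lesssim \lambda^{3+4\omega}=\lambda^{7/2}$, comfortably enough).

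The $z^\flat$-cross terms are the genuinely new ingredient and are controlled by Lemma~\ref{interaction error order}: the bilinear $\dot H^{1/2}$ pairing $(D^{1/2}Q_{\mathcal{P}}, D^{1/2}z^\flat)_r$ and the quartic interactions $\int \mathrm{IN}$-type terms are all dominated by the localized decay estimates \eqref{interaction degeneracy}, \eqref{interaction degeneracy 2}, giving $\mathcal{O}(\alpha^*\lambda^{5/4})$ after accounting for the scaling. The $Q_{\mathcal{P}}$-$\epsilon$ and $z^\flat$-$\epsilon$ mixed quartic terms are even smaller. This produces the first line of both \eqref{energy expansion} and \eqref{momentum expansion}, namely $E(u^\flat_\eta) = E(Q_{\mathcal{P}}) + E(z^\flat) + \mathcal{O}(\lambda^{5/4})$ and similarly for $P$; here one also uses $E(z^\flat) = \lambda E(z)$ and $E(z)=E(z_f^*)$ by conservation, and likewise $P(z^\flat)=\lambda P(z) = \lambda P(z_f^*)$.

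For the second line, I would expand $E(Q_{\mathcal{P}})$ and $P(Q_{\mathcal{P}})$ directly in the modulation parameters $(b,\nu,\eta)$ using the profile expansion $Q_{\mathcal{P}} = Q + ibR_{1,0,0} + i\nu R_{0,1,0} + \eta R_{0,0,1} + (\text{higher order})$. Since $E(Q)=0$ and $Q$ solves \eqref{Q equation}, the linear-in-$(b,\nu,\eta)$ terms in $E(Q_{\mathcal{P}})$ vanish by the Pohozaev/variational identities (the first variation of $E$ at $Q$ in the directions $iR_{p,q,r}$ pairs against $DQ+Q-Q^3=0$, modulo the kernel structure). The leading quadratic contribution comes from the $b$- and $\eta$-directions: writing out the $\dot H^{1/2}$-quadratic form and the quartic term at second order, and using the kernel relations $L_Q[iR_{1,0,0}]=i\Lambda Q$ together with $L_Q[R_{0,0,1}]=R_{1,0,0}$, the coefficient of $b^2$ and of $2\eta$ both collapse to $\tfrac12(i^{-1}L_Q[iR_{1,0,0}],R_{1,0,0})_r$ — this is exactly the structural reason the combination $b^2+2\eta$ appears, mirroring the conserved quantity $\ell$ in \eqref{eq:formal conserv}. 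For the momentum, $P(Q)=0$ and $P$ is quadratic, so $P(Q_{\mathcal{P}})$ picks up its leading term at order $\nu$ from the cross pairing of $\nabla Q$ (or $iR_{0,1,0}$) structure, giving $2(i^{-1}L_Q[iR_{0,1,0}],R_{0,1,0})_r\cdot\nu$; the $b$- and $\eta$-directions do not contribute to $P$ at this order because $R_{1,0,0}$ and $R_{0,0,1}$ are even while momentum pairs against the gradient.

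\textbf{Main obstacle.} I expect the delicate point to be verifying that the $b^2$ and $2\eta$ contributions to $E(Q_{\mathcal{P}})$ really carry the \emph{same} coefficient $\tfrac12(i^{-1}L_Q[iR_{1,0,0}],R_{1,0,0})_r$; this requires carefully collecting all second-order terms (the pure $b^2$ term from $(ibR_{1,0,0})$ squared in the quadratic form, the $b^2$ term from the $R_{2,0,0}$ correction paired linearly against $Q$, the $\eta$ term from $R_{0,0,1}$ paired linearly, and the $\eta^2$ from $R_{0,0,1}$ squared and $R_{0,0,2}$ linearly) and using the solvability identities from Proposition~\ref{Singular profile} — especially \eqref{relation of R(200) and R(100)}, $\|R_{1,0,0}\|_{L^2}^2 = 2(Q,R_{2,0,0})_r$, and the analogous relation $L_Q[R_{0,0,1}]=R_{1,0,0}$ — to see the cancellations and reassembly into a single coefficient. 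The bookkeeping is parallel to, but somewhat more involved than, the energy expansion in \cite[Section~4]{KLR2013ARMAhalfwave} because of the extra $\eta$-direction; the guiding principle throughout is that the Pohozaev identity for $Q$ kills all the spurious linear terms and the kernel relations recombine the quadratic ones into $(i^{-1}L_Q[iR_{1,0,0}],R_{1,0,0})_r(b^2+2\eta)$.
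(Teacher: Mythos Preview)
Your proposal is correct and, for the cross terms and for $P(Q_{\mathcal{P}})$, essentially parallel to the paper. One small overcomplication: for the linear-in-$\epsilon$ cross terms you invoke the orthogonality conditions and \eqref{unstable direction degeneracy}, but the paper simply uses Cauchy--Schwarz: $|(D(Q_{\mathcal{P}}+z^\flat),\epsilon)_r|\lesssim \|Q_{\mathcal{P}}+z^\flat\|_{H^{1/2}}\|\epsilon\|_{H^{1/2}}\lesssim \lambda^{3/2+2\omega}=\lambda^{7/4}$, which already clears $\mathcal{O}(\lambda^{5/4})$.

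The genuine difference is in the computation of $E(Q_{\mathcal{P}})$. You propose a direct Taylor expansion of the energy in $(b,\nu,\eta)$ and correctly identify the ``main obstacle'' as showing that the $b^2$ and $2\eta$ contributions carry the same coefficient $\tfrac12(i^{-1}L_Q[iR_{1,0,0}],R_{1,0,0})_r$ --- this requires separately tracking the $R_{2,0,0}$-linear term via \eqref{relation of R(200) and R(100)} and the $R_{0,0,1}$-linear term via $L_Q[R_{0,0,1}]=R_{1,0,0}$. The paper sidesteps this entirely: it invokes the Pohozaev identity $E(Q_{\mathcal{P}}) = (DQ_{\mathcal{P}}+Q_{\mathcal{P}}-|Q_{\mathcal{P}}|^2Q_{\mathcal{P}},\,\Lambda Q_{\mathcal{P}})_r$, then observes from \eqref{modified profile eror} that $DQ_{\mathcal{P}}+Q_{\mathcal{P}}-|Q_{\mathcal{P}}|^2Q_{\mathcal{P}}$ equals $-i\Psi_{\mathcal{P}}$ plus $i$ times the modulation vector fields with their explicit coefficients. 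Pairing against $\Lambda Q_{\mathcal{P}}$ and reading off the already-computed inner products \eqref{innerproduct with Lambda Q}, only $(-i\partial_b Q_{\mathcal{P}}, i\Lambda Q_{\mathcal{P}})_r = (i^{-1}L_Q[iR_{1,0,0}],R_{1,0,0})_r + \mathcal{O}(\lambda)$ survives, and its coefficient in \eqref{modified profile eror} is precisely $\tfrac12 b^2 + \eta + \mathcal{O}(\lambda^2)$ --- so the combination $b^2+2\eta$ appears automatically, with no separate verification needed. Your route works, but the paper's Pohozaev trick recycles the modulation-estimate computations and dissolves what you flagged as the delicate point.
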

\begin{proof}
    \textbf{Step 1.} Energy computation: we compute
    \begin{align*}
        E(Q_{\mathcal{P}}+z^{\flat} + \epsilon) &=
        \frac{1}{2}\int D(Q_{\mathcal{P}} + z^{\flat} + \epsilon)\cdot (\overline{Q_{\mathcal{P}} + z^{\flat} + \epsilon}) dy - \frac{1}{4}\int |Q_{\mathcal{P}} + z^{\flat} + \epsilon|^4 dy \\
        &= E(Q_\mathcal{P}) + E(z^\flat) + E(\epsilon) + \mathcal{R}_{E},
    \end{align*}
    where $\mathcal{R}_{E}$ is an remainder term which containing interaction term of $Q_{\mathcal{P}},\ z^{\flat},\text{ and } \epsilon$ which is defined by
    \begin{align*}
        \mathcal{R}_E &\coloneqq \int D(Q_{\mathcal{P}} + z^{\flat})\cdot \overline{\epsilon} + D\epsilon \cdot (\overline{Q_{\mathcal{P}} + z^\flat}) dy \\
        & \quad + \int DQ_{\mathcal{P}}\cdot \overline{z^{\flat}} + Dz^{\flat}\cdot \overline{Q_{\mathcal{P}}} dy\\
        & \quad - \frac{1}{4}\int \{|Q_{\mathcal{P}} + z^{\flat} + \epsilon|^4 - |Q_{\mathcal{P}}|^4 - |z^{\flat}|^4 - |\epsilon|^4 \} dy.
    \end{align*}
    From bootstrap assumption on $\epsilon$, \eqref{bootstrap bounds}, we have
    \begin{equation}\label{Energy computation 1}
        \left|\int D(Q_{\mathcal{P}} + z^{\flat})\cdot \overline{\epsilon} + D\epsilon \cdot (\overline{Q_{\mathcal{P}} + z^\flat}) dy \right| \lesssim 
        \norm{\epsilon}_{H^{1/2}}\norm{Q_{\mathcal{P}}+z^{\flat}}_{H^{1/2}} \lesssim \lambda^{3/2+2\omega}.
    \end{equation}
    Also, from \eqref{interaction degeneracy 2}, and $Q_{\mathcal{P}} \sim \langle y \rangle^{-2}$ from \eqref{decay of R(pqr)}, we have
    \begin{equation}\label{Energy computation 2}
        \left|\int DQ_{\mathcal{P}}\cdot \overline{z^{\flat}} + Dz^{\flat}\cdot \overline{Q_{\mathcal{P}}} dy\right| \lesssim \left|\int \langle y \rangle^{-2}z^{\flat}\right| \lesssim \alpha^*\lambda^{\frac{5}{4}}.
    \end{equation}
    Finally, the remainder term $\tilde{\mathcal{R}}_{E}$ after subtracting \eqref{Energy computation 1} and \eqref{Energy computation 2} from $\mathcal{R}_E$ is schematically equals to
    \begin{align*}
        \tilde{\mathcal{R}}_E \sim &\int Q_{\mathcal{P}}^3\cdot (\epsilon+z^\flat) + Q_{\mathcal{P}} \cdot (\epsilon^2 + \epsilon \cdot z^{\flat} + (z^{\flat})^2) \\ 
        & \quad + Q_{\mathcal{P}}^2\cdot(\epsilon^3 + \epsilon^2\cdot z^\flat + \epsilon\cdot (z^\flat)^2 + (z^{\flat})^3) \\
        & \quad + (z^\flat)^3\cdot \epsilon + (z^\flat)^2\cdot \epsilon^2 + z^\flat\cdot \epsilon^3 dy.
    \end{align*}
    Using H\"older inequality and Sobolev embedding, we have
    \begin{equation}
        |\tilde{\mathcal{R}}_E| \lesssim \norm{\epsilon}_{H^{1/2}} + \norm{z^{\flat}Q_{\mathcal{P}}}_{L^{\infty}} \lesssim \alpha^* \lambda^{2+\frac{3}{8}}.
    \end{equation}
    On the other hand, since $|E(\epsilon)| \lesssim \norm{\epsilon}_{H^{1/2}}^2 \lesssim \lambda^{3+4\omega}$ and from the energy conservation of the flow $z(t)$ constructed in Proposition~\ref{construction of the approximate radiation} and scaling property, we have $E(z^\flat) = \lambda E(z^*_f)$. Hence, it remains to compute $E(Q_{\mathcal{P}})$. From Pohozaev identity and \eqref{modified profile eror}, we have
    \begin{align*}
        E(Q_{\mathcal{P}}) &= (DQ_{\mathcal{P}}+Q_{\mathcal{P}}-|Q_{\mathcal{P}}|^2Q_{\mathcal{P}},\Lambda Q_{\mathcal{P}})_r \\
        &= b(i\Lambda Q_{\mathcal{P}},\Lambda Q_{\mathcal{P}})_r - (\nu + c_2\nu b^2)\cdot (i\nabla Q_{\mathcal{P}},\Lambda Q_{\mathcal{P}})_r \\
        & \quad -\left(\frac{1}{2}b^2 + \eta c_1b^4 + c_3b^2\eta + c_4\nu^2\right)\cdot(i\partial_b Q_{\mathcal{P}},\Lambda Q_{\mathcal{P}})_r \\
        & \quad - b\nu(i\partial_\nu Q_{\mathcal{P}},\Lambda Q_{\mathcal{P}})_r \\
        &= \frac{1}{2}(i^{-1}L_Q[iR_{1,0,0}],R_{1,0,0})_r \cdot (b^2+2\eta) + \mathcal{O}(\lambda^2).
    \end{align*}
    In the last equality, we use the estimate \eqref{innerproduct with Lambda Q}. This proves the energy expansion \eqref{energy expansion}.\\

    \textbf{Step 2.} Momentum computation: We have
    \begin{align*}
        P(Q_{\mathcal{P}}+z^\flat + \epsilon) &= \int -i\nabla(Q_{\mathcal{P}}+z^\flat + \epsilon)\cdot(\overline{Q_{\mathcal{P}}+z^\flat + \epsilon}) \\
        &= P(Q_{\mathcal{P}}) + P(z^\flat) + P(\epsilon) + \mathcal{O}\left(\int |\nabla Q_{\mathcal{P}}\cdot z^\flat| + |\nabla \{Q_{\mathcal{P}} + z^\flat\} \cdot \epsilon|\right).
    \end{align*}
    From the momentum conservation of the flow $z(t)$ and scaling property, we have $P(z^\flat) = \lambda P(z_f^*)$ and from \eqref{bootstrap bounds}, we have $|P(\epsilon)| \leq \norm{\epsilon}_{H^{1/2]}}^2 \leq K\lambda^{3+4\omega}$.
    In addition, since $|\nabla Q_{\mathcal{P}}(y)| \lesssim \langle y \rangle^{-2}$ for all $y \in \bbR$ from \eqref{decay of R(pqr)}, \eqref{interaction degeneracy 2}, and bootstrap bound for $\epsilon$ in \eqref{bootstrap bounds}, we have
    \begin{align*}
        \int |\nabla Q_{\mathcal{P}}\cdot z^\flat| + |\nabla \{Q_{\mathcal{P}} + z^\flat\} \cdot \epsilon| &\lesssim \left|\int \langle y \rangle^{-2} \cdot z^\flat dy\right| + \norm{\epsilon}_{H^{1/2}}\\
        &\lesssim \alpha^*\lambda^{\frac{5}{4}} + \lambda^{3/2+2\omega}.
    \end{align*}
    Hence, it remains to calculate $P(Q_{\mathcal{P}})$. Indeed, from the expansion of $Q_{\mathcal{P}}$ in Proposition~\ref{Singular profile}, even-odd or real-imaginary product, we have 
    \begin{align*}
        P(Q_{\mathcal{P}}) &= \int -i\nabla(Q+ibR_{1,0,0}+i\nu R_{0,1,0} + \eta R_{0,0,1})\\
        & \quad \times (\overline{Q+ibR_{1,0,0}+i\nu R_{0,1,0} + \eta R_{0,0,1}}) + \mathcal{O}(\lambda^2)\\
        & = 2(i^{-1}L_Q[iR_{0,1,0}],R_{0,1,0})_r \cdot \nu + \mathcal{O}(\lambda^2).
    \end{align*}
    This proves the expansion \eqref{momentum expansion}.
\end{proof}

\subsection{Energy bounds}\label{closing the bootstrap arguments}
In this subsection, we finish proving Lemma~\ref{Main bootstrap}. We control the $\norm{\epsilon}_{H^{1/2}}^2$, and obtain ad stronger bound than the bootstrap hypothesis on $\norm{\epsilon}_{H^{1/2}}$ in \eqref{bootstrap bounds}. To proceed, we use a quadratic linearized energy with respect to $\epsilon$ with a virial correction, denoted by $J_A$ and defined in \eqref{energy functional J_A}. To control the $\norm{\epsilon}_{H^{1/2}}$ for time $[t_1,0]$, we will verify that the functional $J$ is coercive and $\partial_s J_A$ is almost positive, that is, 
\begin{align*}
    \begin{cases}
        J \sim \norm{\epsilon}_{H^{1/2}}^2, &\\
        \partial_s J \gtrsim -\alpha^*\lambda^{1/2}\norm{\epsilon}_{H^{1/2}}^2.
    \end{cases}
\end{align*}
Then from a bootstrap hypothesis, $\norm{\epsilon}_{H^{1/2}} \leq K\lambda^{3/2+2\omega}$ and using the fundamental theorem of calculus, we have
\begin{equation*}
    \norm{\epsilon(t)}_{H^{1/2}}^2 \lesssim J(t) \leq C\alpha^*\int_{t}^0 \lambda^{-1/2}\norm{\epsilon}_{H^{1/2}} \ dt' \leq C\alpha^*\lambda^{3/2+2\omega}.
\end{equation*}
We set $\epsilon(0) = 0$ in $H^{1/2+}$, and hence $J(0)=0$ from the construction of $\epsilon$ and $|t|\lambda^{-1/2} \lesssim 1$ from \eqref{orders of modulation parameters}. By possibly reducing $\alpha^*>0$ so that $C\alpha^* < \frac{1}{2}$, we can close the bootstrapping for $\norm{\epsilon}_{H^{1/2}}$. 

Considering the coercivity of the bilinear form $(L_Q[\epsilon],\epsilon)_r$ in Lemma~\ref{Coercivity 2}, the natural candidate for $J$ is a sum of quadratic terms of linearized energy on $W^{\flat}=Q_\mathcal{P} +z^{\flat}$ with respect to $\epsilon$ and $\norm{\epsilon}_{L^2}^2$:
\begin{align*}
    J(u^{\flat})(s) &= E(W^{\flat} + \epsilon) - E(W^{\flat}) - (\nabla E)(W^{\flat})\cdot \epsilon + \frac{1}{2}\norm{\epsilon}_{L^2}^2,\\
    &= \frac{1}{2}(L_{W^{\flat}}[\epsilon],\epsilon)_r + h.o.t.
\end{align*}
Using $ds/dt = 1/\lambda$, we have
$J(u)(t) = \lambda^{-1}\cdot J(u^{\flat})(s)$. Also, from the evolution of $\epsilon^{\sharp}$ in \eqref{equation for epsilon sharp}, we have
\begin{equation}\label{energy bound: flow of epsilon}
    i\partial_s \epsilon - L_{W^{\flat}}[\epsilon] + ib\Lambda \epsilon \approx 0.
\end{equation}
Note that $\nu \sim \lambda$ and $|b| \lesssim \lambda^{1/2}$, we correct $J$ by adding a localized virial correction to control the non-perturbative term $ib\Lambda\epsilon$ in the evolution of $\epsilon$. To introduce the localized virial term, we define an even cutoff function $\phi:\mathbb{R} \rightarrow \mathbb{R}$ so that $\phi''(y)\geq 0$ and 
\begin{equation}
        \phi '(y) \coloneqq \begin{cases}
        y \quad \text{for} \quad 0 \leq y\leq 1,&\\
        3-e^{-y} \quad \text{for} \quad y\geq 2.
    \end{cases}
\end{equation}
Also, we denote
\begin{equation*}
    F(u)= \frac{1}{4}|u|^4, \quad f(u) = u|u|^2, \quad \text{and} \quad F'(u)\cdot h = \text{Re}(f(u)\cdot \overline{h}).
\end{equation*}
Note that $F'(u)\cdot h$ is a linear part of $F(u+h)-F(u)$ with respect to $u$ and $F'=f$. 
Finally, we define the energy functional $\mathcal{J}_{A}(u)$ with respect to $t$-variable:
\begin{equation}\label{energy functional J_A}
    \begin{aligned}
        \mathcal{J}_A(u) \coloneqq& \frac{1}{2}\int |D^{\frac{1}{2}}\epsilon^{\sharp}|^2 + \frac{1}{2\lambda}\int |\epsilon^{\sharp}|^2 - \int \left[F(W+\epsilon^{\sharp})-F(W)-F'(W)\cdot \epsilon^{\sharp}\right]\\
        & +\frac{b}{2}\text{Im}\left(\int A\phi'\left(\frac{x-\nut{x}}{A\lambda}\right)\nabla \epsilon^{\sharp}\overline{\epsilon^{\sharp}}\right),
    \end{aligned}
\end{equation}
for positive constant $A>0$, which is chosen later(see the proof of Lemma~\ref{coercivity of C(u) and P(u)}).
We now recall the \textit{a priori} bound and equation for $W$ and $\epsilon$:
From \eqref{equation for W}, we have
\begin{equation}\label{energy estimate : epsilon flow}
    i\partial_tW - DW + |W|^2W = \frac{1}{\lambda}\Psi^{\sharp},
\end{equation}
and from \eqref{a priori bound for W}, we have
\begin{equation*}
    \norm{W}_{L^2} \lesssim 1,\quad \norm{W}_{\dot{H}^{\frac{1}{2}}}\lesssim \lambda^{-\frac{1}{2}},\quad \norm{W}_{\dot{H}^1} \lesssim \lambda^{-1}.
\end{equation*}
for $W = Q_{\mathcal{P}}^{\sharp} + z$. Also, from the bootstrap assumption on $\epsilon$ \eqref{bootstrap bounds}, we have
\begin{equation}
    \frac{1}{\lambda}\norm{\epsilon}_{H^{1/2}}^2 = \norm{\epsilon^{\sharp}}_{L^2}^2 + \frac{1}{\lambda}\norm{\epsilon^{\sharp}}_{\dot{H}^{1/2}}^2 \leq \lambda^{2+4\omega},\quad  \norm{\epsilon^{\sharp}}_{\dot{H}^{1/2+\delta}} \leq \lambda^{1-2\delta}.
\end{equation}

Due to the nonlocality of $D$, the control of the localized virial term is not trivial. We follow the techniques used in \cite[Section 6]{KLR2013ARMAhalfwave}. For readers' convenience, we write some preliminary facts and lemmas used in \cite[Section 6]{KLR2013ARMAhalfwave}, for closing the bootstrap argument. From the fractional calculus for $x>0$, $0<\beta<1$, we have
\begin{equation*}
    x^\beta = \frac{\sin(\pi\beta)}{\pi} \int_0^{\infty} \sigma^{\beta-1} \frac{x}{x+\sigma} \, d\sigma.
\end{equation*}
Using this formula and the spectral theorem applied to the self-adjoint operator $p^2$, we obtain the commutator formula,
\begin{equation*}
    [|p|^\alpha, B] = \frac{\sin(\pi\alpha/2)}{\pi} \int_0^{+\infty} \sigma^{\alpha/2} \frac{1}{p^2 + \sigma} [p^2, B] \frac{1}{p^2 + \sigma} \, d\sigma, \quad \text{for } 0<\alpha<2
\end{equation*}
for $0<\alpha<2$, and self-adjoint operator $B$ whose domain is $\mathcal{S}(\mathbb{R})$. In particular, for any smooth function $\tilde{\phi}$, we have
\begin{equation}\label{commutator D-1}
    \left[ |p|, \nabla \widetilde{\phi} \cdot p + p \cdot \nabla \widetilde{\phi} \right] = \frac{1}{\pi} \int_0^{\infty} \sqrt{\sigma} \frac{1}{p^2 + \sigma} [p^2, \nabla \widetilde{\phi} \cdot p + p \cdot \nabla \widetilde{\phi}] \frac{1}{p^2 + \sigma} \, d\sigma.
\end{equation}
Then, from the known formula so that
\begin{equation}\label{commutator D-2}
    [p^2, \nabla \widetilde{\phi} \cdot p + p \cdot \nabla \widetilde{\phi}] = -4i\, p \cdot (\Delta \widetilde{\phi}) p + i\, \Delta^2 \widetilde{\phi}.
\end{equation}
Now, we introduce the auxiliary function $g_{\epsilon^{\sharp},s}$ so that 
\begin{equation*}
    g_{\epsilon^{\sharp},\sigma}(t,x) \coloneqq \sqrt{\frac{2}{\pi}} \frac{1}{-\Delta + \sigma} \widetilde{\epsilon^{\sharp}}(t,x),
\end{equation*}
for each $s > 0$. In other words, $g_{\epsilon^{\sharp},s}$ is a solution to the elliptic equation 
\begin{equation}\label{commutator D-3}
    -\Delta g_{\epsilon^{\sharp},\sigma} + \sigma g_{\epsilon^{\sharp},\sigma} = \sqrt{\frac{2}{\pi}} \epsilon^{\sharp}.
\end{equation}
Note that the integral kernel for the resolvent $(-\Delta +\sigma)^{-1}$ in $d=1$ dimension is explicitly given by $\frac{1}{2\sqrt{s}}e^{-\sqrt{\sigma}|x-\tilde{x}|}$. Thus, we have a convolution formula
\begin{equation}\label{commutator D_4}
    g_{\epsilon^{\sharp},\sigma}(t,x) = \frac{1}{\sqrt{2\pi \sigma}} \int_{\mathbb{R}} e^{-\sqrt{\sigma} |x-\tilde{x}|} \epsilon^{\sharp}(t,\tilde{x}) \, d\tilde{x}.
\end{equation}
Now, we define $\nabla \tilde{\phi}(t,x) = bA\phi'(\frac{x-\nut{x}}{A\lambda})$. Then, since $(-\Delta + \sigma)^{-1}$ is self-adjoint, from Fubini's theorem, we obtain
\begin{equation}\label{fubini theroem}
\begin{aligned}
    -&\frac{1}{4} \Re\left( \int \epsilon^{\sharp} \left[ -i|p|, \nabla \widetilde{\phi} \cdot p + p \cdot \nabla \widetilde{\phi} \right] \epsilon^{\sharp} \right)
    \\
    =& \frac{b}{2\lambda} \int_0^{+\infty} \sqrt{\sigma} \int_{\mathbb{R}} \Delta\phi\left( \frac{x-\nut{x}}{A\lambda} \right) |\nabla g_{\epsilon^{\sharp},\sigma}|^2 \, dx\, d\sigma \\
    &- \frac{1}{8} \frac{b}{A^2\lambda^3} \int_0^{+\infty} \sqrt{\sigma} \int_{\mathbb{R}} \Delta^2\phi\left( \frac{x-\nut{x}}{A\lambda} \right) |g_{\epsilon^{\sharp},\sigma}|^2 \, dx\, d\sigma,
\end{aligned}
\end{equation}
for $p = -i\nabla$. Now, we are going to introduce a lemma which is for the estimate of the localized virial correction, which we shall use later.
\begin{lemma}(\cite[Appendix F]{KLR2013ARMAhalfwave})\label{estimate for localized virial correction}
    Let $\phi : \mathbb{R} \rightarrow \mathbb{R}$ be a smooth function such that
    $\Delta \phi$ and $\nabla \phi$ are in $L^{\infty}$. Also, let $v \in H^{1/2}$ Then, we have
    \begin{equation*}
        \left|\int_{\mathbb{R}} \overline{v}(x) \nabla \phi(x) \cdot \nabla v(x) \ dx\right| \lesssim \norm{\nabla \phi}_{L^{\infty}}\norm{v}_{\dot{H}^{1/2}}^2 + \norm{\Delta \phi}_{L^{\infty}}\norm{v}_{L^2}^2.
    \end{equation*}
\end{lemma}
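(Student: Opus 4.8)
The plan is to symmetrize the bilinear form and reduce it to a single fractional product estimate, which is then handled by a Calderón-type commutator bound. It suffices to prove the inequality for $v\in\mathcal S(\R)$ and pass to the limit by density (both sides are continuous in the $H^{1/2}$ topology in the arguments below). Write $\psi\coloneqq\nabla\phi$, a scalar smooth function on $\R$ with $\psi,\nabla\psi\in L^\infty$, so the quantity to bound is $B(v)\coloneqq\int_\R\ol v\,\psi\,\nabla v\,dx$. The starting point is that on $\R$ one has $\nabla v=D\mathcal H v$, where $\mathcal H$ is the Fourier multiplier with symbol $i\,\sgn(\xi)$; in particular $\norm{\mathcal H v}_{L^2}=\norm{v}_{L^2}$ and $\norm{D^{s}\mathcal H v}_{L^2}=\norm{D^{s}v}_{L^2}$. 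Using that $D^{1/2}$ is symmetric for the (bilinear) integration pairing, together with $\psi$ real so that $\psi\ol v=\ol{\psi v}$ and $\norm{D^{1/2}\ol w}_{L^2}=\norm{D^{1/2}w}_{L^2}$, we get
\[
    B(v)=\int_\R(\psi\ol v)\,(D\mathcal H v)\,dx=\int_\R D^{1/2}(\psi\ol v)\cdot D^{1/2}(\mathcal H v)\,dx,
\]
hence, by Cauchy--Schwarz,
\[
    |B(v)|\le\norm{D^{1/2}(\psi v)}_{L^2}\,\norm{v}_{\dot H^{1/2}}.
\]

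It then remains to prove the product estimate $\norm{D^{1/2}(\psi v)}_{L^2}\lesssim\norm{\psi}_{L^\infty}\norm{v}_{\dot H^{1/2}}+\norm{\psi}_{L^\infty}^{1/2}\norm{\nabla\psi}_{L^\infty}^{1/2}\norm{v}_{L^2}$; inserting this into the last display and using $ab\le\tfrac12(a^2+b^2)$ yields exactly $|B(v)|\lesssim\norm{\nabla\phi}_{L^\infty}\norm{v}_{\dot H^{1/2}}^2+\norm{\Delta\phi}_{L^\infty}\norm{v}_{L^2}^2$. To prove the product estimate I would split $D^{1/2}(\psi v)=\psi\,D^{1/2}v+[D^{1/2},\psi]v$; the first term is $\le\norm{\psi}_{L^\infty}\norm{v}_{\dot H^{1/2}}$, and the commutator is where the real work is. Its convolution kernel: since the inverse Fourier transform of the symbol $|\xi|^{1/2}$ on $\R$ equals $c_0|x|^{-3/2}$ away from the origin, $[D^{1/2},\psi]$ is the integral operator with kernel $K(x,y)=c_0\,|x-y|^{-3/2}\,(\psi(y)-\psi(x))$, and the difference $\psi(y)-\psi(x)$ cancels the non-integrable singularity at $x=y$. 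Estimating $|\psi(y)-\psi(x)|\le\min\{2\norm{\psi}_{L^\infty},\,\norm{\nabla\psi}_{L^\infty}|x-y|\}$ gives $|K(x,y)|\lesssim\min\{\norm{\psi}_{L^\infty}|x-y|^{-3/2},\,\norm{\nabla\psi}_{L^\infty}|x-y|^{-1/2}\}$, so splitting the integral at $|x-y|\sim\norm{\psi}_{L^\infty}/\norm{\nabla\psi}_{L^\infty}$ yields $\sup_x\int_\R|K(x,y)|\,dy\lesssim\norm{\psi}_{L^\infty}^{1/2}\norm{\nabla\psi}_{L^\infty}^{1/2}$; by the symmetry of $|K|$ the same holds for $\sup_y\int_\R|K(x,y)|\,dx$, and Schur's test gives $\norm{[D^{1/2},\psi]}_{L^2\to L^2}\lesssim\norm{\psi}_{L^\infty}^{1/2}\norm{\nabla\psi}_{L^\infty}^{1/2}$, hence $\norm{[D^{1/2},\psi]v}_{L^2}\lesssim\norm{\psi}_{L^\infty}^{1/2}\norm{\nabla\psi}_{L^\infty}^{1/2}\norm{v}_{L^2}$, as needed. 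The main obstacle is exactly this commutator/Schur step: one must exploit that $D^{1/2}$ has a non-local but explicit homogeneous kernel and that the commutator structure trades one of the two missing half-derivatives for a gain from $\nabla\psi$.

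As a cross-check and to isolate the genuinely $1/2$-order piece, I would also record the elementary real-part identity $2\,\Re B(v)=\int_\R\nabla\phi\cdot\nabla(|v|^2)\,dx=-\int_\R(\Delta\phi)\,|v|^2\,dx$, which already gives $|\Re B(v)|\le\tfrac12\norm{\Delta\phi}_{L^\infty}\norm{v}_{L^2}^2$ with no use of the $\dot H^{1/2}$ norm; combined with the bound on $|B(v)|$ above this is consistent and shows the $\dot H^{1/2}$ norm is only needed for the current (imaginary) part. Alternatively, the commutator step can be run without the explicit kernel, through the resolvent identity $|\xi|^{1/2}=c\int_0^\infty\sigma^{-3/4}\tfrac{|\xi|^2}{|\xi|^2+\sigma}\,d\sigma$ already recorded among the fractional-calculus formulas above: commuting $\psi$ past $(-\Delta+\sigma)^{-1}$ produces $(-\Delta+\sigma)^{-1}[-\Delta,\psi](-\Delta+\sigma)^{-1}$ with $[-\Delta,\psi]=-2\nabla\psi\cdot\nabla-\Delta\psi$, and using $\norm{\nabla(-\Delta+\sigma)^{-1}}_{L^2\to L^2}\lesssim\sigma^{-1/2}$, $\norm{(-\Delta+\sigma)^{-1}}_{L^2\to L^2}\lesssim\sigma^{-1}$ the $\sigma$-integral converges to the same bound; this is essentially the route of \cite[Appendix F]{KLR2013ARMAhalfwave}, and either version completes the proof.
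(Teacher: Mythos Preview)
Your proof is correct. The paper does not actually prove this lemma itself; it defers to \cite[Appendix~F]{KLR2013ARMAhalfwave} and summarizes that argument as ``using the Fourier transform and a standard interpolation theorem.'' Your route is different and more direct: you exploit the one-dimensional identity $\nabla=D\mathcal H$ to write $B(v)=\int D^{1/2}(\psi\bar v)\cdot D^{1/2}(\mathcal Hv)$, apply Cauchy--Schwarz, and then control the Calder\'on-type commutator $[D^{1/2},\psi]$ by an explicit Schur-test bound on its kernel $c_0(\psi(y)-\psi(x))|x-y|^{-3/2}$. This is entirely self-contained and avoids interpolation; the only point that deserves one extra sentence is that the kernel of $D^{1/2}$ itself is only a finite-part distribution, but the difference $\psi(y)-\psi(x)$ in the commutator makes $K(x,y)$ genuinely locally integrable, so the Schur argument is legitimate. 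Your closing remark that the resolvent-identity alternative ``is essentially the route of \cite[Appendix~F]{KLR2013ARMAhalfwave}'' may not be quite accurate --- the present paper describes that appendix as Fourier/interpolation based, while the resolvent formula is used in this paper for a different purpose (the virial commutator computation leading to \eqref{fubini theroem}) --- but this does not affect the validity of your argument.
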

The proof of the Lemma~\ref{estimate for localized virial correction} is using the Fourier transform and a standard interpolation theorem, and can be found in \cite[Appendix F]{KLR2013ARMAhalfwave}.

Finally, we note that the estimate for the nonlinearity from $|u|^2u$, we need to control $L^{\infty}$-norm. We handle this quantity by higher order norm bootstrap, from Lemma~\ref{log loss of the L-infty estiamte}.

We start by differentiating the functional $\mathcal{J}_A$ with respect to $t$.

\begin{lemma}(Differentiation of energy functional $\mathcal{J}_A$)\label{lemma : differentiation of Energy functional}
    Let $\mathcal{J}_A$ be a function defined in \eqref{energy functional J_A}, and $u = W + \epsilon^{\sharp}$ be a decomposition of the solution to \eqref{half-wave} under the weak bootstrap bounds \eqref{bootstrap bounds}. Then, for $t \in (T_{\text{dec}}^{(\eta)},0]\cap[t_1,0]$, we have
     \begin{equation*}
        \frac{d\mathcal{J}_A}{dt} = \mathcal{C}(u) + \mathcal{P}(u) + \mathcal{O}\left(\frac{1}{\lambda}\norm{\Psi^{\sharp}}_{L^2}^2 + \frac{1}{\lambda}\norm{\epsilon}_{H^{1/2}}^2 + \log^{\frac{1}{2}}(2+\norm{\epsilon^{\sharp}}_{H^{1/2}}^{-1})\norm{\epsilon^{\sharp}}_{H^{1/2}}^2 \right),
    \end{equation*}
    where $\mathcal{C}(u)$ and $\mathcal{P}(u)$ are
    \begin{equation}\label{definition of C(u)}
        \begin{aligned}
            \mathcal{C}(u) \coloneqq &-\frac{1}{\lambda}\textnormal{Im}\left\{\int W^2\overline{\epsilon^{\sharp}}^2\right\} - \textnormal{Re}\left\{\int \partial_tW(\overline{2|\epsilon^{\sharp}|^2W+(\epsilon^{\sharp})^2\overline{W}})\right\}\\
            &+\frac{b}{2\lambda}\int \frac{|\epsilon^{\sharp}|^2}{\lambda} + \frac{b}{2\lambda}\int_0^{\infty} \sqrt{\sigma}\int_{\mathbb{R}} \phi''\left(\frac{x-\nut{x}}{A\lambda}\right)|\nabla g_{\epsilon^{\sharp},\sigma}|^2 \,dxd\sigma\\
            & -\frac{b}{8A^2\lambda^3}\int_0^{\infty}\sqrt{\sigma}\int_{\mathbb{R}}\phi''''\left(\frac{x-\nut{x}}{A\lambda}\right)|g_{\epsilon^{\sharp},\sigma}|^2 \,dxd\sigma \\
            & + b\textnormal{Re}\left\{\int A\phi'\left(\frac{x-\nut{x}}{A\lambda}\right)(2|\epsilon^{\sharp}|^2W+(\epsilon^{\sharp})^2\overline{W})\cdot \overline{\nabla W}\right\},
        \end{aligned}
    \end{equation}
    and
    \begin{equation}\label{definition of P(u)}
        \begin{aligned}
            \mathcal{P}(u) \coloneqq \frac{1}{\lambda}\textnormal{Im}&\left\{\int \left[ -D\Psi^{\sharp} -\frac{\Psi^{\sharp}}{\lambda} - (2|W|^2\overline{\Psi^{\sharp}}-W^2\Psi^{\sharp})  \right.\right.\\
            & \left.\left. +ibA\phi'\left(\frac{x-\nut{x}}{A\lambda}\right)\cdot \nabla\Psi^{\sharp} + i\frac{b}{2\lambda}\phi''\left(\frac{x-\nut{x}}{A\lambda}\right)\Psi^{\sharp} \right]\overline{\epsilon^{\sharp}}\right\},
        \end{aligned}
    \end{equation}
    where $g_{\epsilon^{\sharp},\sigma}$ is an auxiliary function which depends on $\epsilon^{\sharp}$ and $\sigma>0$ so that
    \begin{equation*}
        g_{\epsilon^{\sharp},\sigma} = \sqrt{\frac{2}{\pi}}\frac{1}{-\Delta+\sigma}\epsilon^{\sharp},
    \end{equation*}
\end{lemma}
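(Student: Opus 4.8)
The plan is to differentiate $\mathcal{J}_A(u)$ term by term, using the flow equation \eqref{equation for epsilon sharp} for $\epsilon^{\sharp}$ and the profile equation \eqref{energy estimate : epsilon flow} for $W$, and to carefully track which contributions are manifestly quadratic-positive or absorbable into $\mathcal{C}(u)$, which are linear-in-$\Psi^{\sharp}$ contributions forming $\mathcal{P}(u)$, and which are genuine error terms of the claimed size. First I would rewrite the flow of $\epsilon^{\sharp}$ in the convenient form $i\partial_t\epsilon^{\sharp} = D\epsilon^{\sharp} - (2|W|^2\epsilon^{\sharp} + W^2\overline{\epsilon^{\sharp}}) - (\text{higher order in }\epsilon^{\sharp}) - \tfrac1\lambda\Psi^{\sharp}$, extracting from $|u_\eta|^2u_\eta - |W|^2W$ the linear part $2|W|^2\epsilon^{\sharp} + W^2\overline{\epsilon^{\sharp}}$ and the remainder $R^{\sharp}(\epsilon^{\sharp}) \sim 2W|\epsilon^{\sharp}|^2 + \overline{W}(\epsilon^{\sharp})^2 + \epsilon^{\sharp}|\epsilon^{\sharp}|^2$. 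Then I would compute $\frac{d}{dt}$ of each of the four pieces of $\mathcal{J}_A$ separately.

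For the kinetic piece $\tfrac12\int|D^{1/2}\epsilon^{\sharp}|^2 = \tfrac12\int \overline{\epsilon^{\sharp}}\,D\epsilon^{\sharp}$, differentiating and using self-adjointness of $D$ gives $\Re\int \overline{D\epsilon^{\sharp}}\,\partial_t\epsilon^{\sharp} = \Im\int \overline{D\epsilon^{\sharp}}\,(D\epsilon^{\sharp} - \cdots)$, and the $D\epsilon^{\sharp}$ self-interaction vanishes. For the $L^2$ piece $\tfrac{1}{2\lambda}\int|\epsilon^{\sharp}|^2$, I would separate the $\partial_t(1/\lambda)$ term, which produces the contribution $\tfrac{b}{2\lambda}\int\tfrac{|\epsilon^{\sharp}|^2}{\lambda}$ up to a modulation error controlled by \eqref{improved bounds}, and the $\Re\int\overline{\epsilon^{\sharp}}\partial_t\epsilon^{\sharp}$ term. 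The nonlinear potential piece is the delicate algebraic computation: using $F'=f$ and the Taylor structure, $\frac{d}{dt}\int[F(W+\epsilon^{\sharp}) - F(W) - F'(W)\cdot\epsilon^{\sharp}]$ splits into a $\partial_tW$-term (giving the $\partial_tW$ interactions in $\mathcal{C}(u)$ plus terms that cancel against the $L^2$- and kinetic-piece potential contributions using the $W$-equation) and an $\epsilon^{\sharp}_t$-term that combines with the potential terms from the other two pieces. The key cancellations are of Hamiltonian type: the linear-in-$\epsilon^{\sharp}$ part of the flow, when tested against $\nabla\mathcal{J}_A$'s quadratic part, produces only the commutator $\Im\int \overline{\epsilon^{\sharp}}(iL_{W^{\flat}}^{\sharp})[\text{virial correction}]$, i.e. the virial terms; the crucial identity is that $\Im\int W^2\overline{\epsilon^{\sharp}}^2$ (which is the obstruction to the linearized energy being conserved) survives and is placed in $\mathcal{C}(u)$. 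The virial piece $\tfrac{b}{2}\Im\int A\phi'(\cdots)\nabla\epsilon^{\sharp}\overline{\epsilon^{\sharp}}$, upon differentiation, produces: (i) $\partial_t b$ and $\partial_t(A\lambda)$ factors, controlled by the modulation estimates and giving $\mathcal{O}(\lambda^{-1}\|\epsilon\|_{H^{1/2}}^2)$-type errors; (ii) the main term $-\tfrac14\Re\int\epsilon^{\sharp}[-i|p|,\nabla\tilde\phi\cdot p + p\cdot\nabla\tilde\phi]\epsilon^{\sharp}$ from the $D\epsilon^{\sharp}$ part of $\partial_t\epsilon^{\sharp}$, which by the commutator formulas \eqref{commutator D-1}, \eqref{commutator D-2} and \eqref{fubini theroem} equals exactly the $\phi''$ and $\phi''''$ terms involving $g_{\epsilon^{\sharp},\sigma}$ in $\mathcal{C}(u)$; and (iii) the nonlinear interaction $b\Re\int A\phi'(\cdots)(2|\epsilon^{\sharp}|^2W + (\epsilon^{\sharp})^2\overline{W})\overline{\nabla W}$ after integrating by parts against $\nabla W$.

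The $\Psi^{\sharp}$-dependent terms are collected as follows: every place where $\partial_t\epsilon^{\sharp}$ contributes a factor $-\tfrac1\lambda\Psi^{\sharp}$ and is paired against a quadratic-in-$\epsilon^{\sharp}$ weight (the $D\epsilon^{\sharp}$, $\tfrac1\lambda\epsilon^{\sharp}$, the linear nonlinear term $2|W|^2\epsilon^{\sharp}-W^2\overline{\epsilon^{\sharp}}$, and the two virial weights $ibA\phi'\nabla$ and $i\tfrac{b}{2\lambda}\phi''$) yields precisely the five terms inside $\mathcal{P}(u)$, modulo a manifestly $\mathcal{O}(\lambda^{-1}\|\Psi^{\sharp}\|_{L^2}^2)$ remainder coming from the $\Psi^{\sharp}$-$\Psi^{\sharp}$ pairing when $W$-equation substitution reintroduces $\Psi^{\sharp}$. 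The higher-order-in-$\epsilon^{\sharp}$ terms $R^{\sharp}(\epsilon^{\sharp})$ contribute to $\mathcal{C}(u)$ the $\partial_tW$-interactions and the cubic/quartic terms that I would bound using Hölder and the refined Sobolev embedding Lemma~\ref{log loss of the L-infty estiamte} to produce the $\log^{1/2}(2+\|\epsilon^{\sharp}\|_{H^{1/2}}^{-1})\|\epsilon^{\sharp}\|_{H^{1/2}}^2$ error — this is where the $L^\infty$ control of $\epsilon^{\sharp}$ must be traded for a logarithmic loss against the higher-regularity bootstrap norm. I expect the main obstacle to be the bookkeeping in the nonlinear potential term: organizing $\frac{d}{dt}\int[F(W+\epsilon^{\sharp})-F(W)-F'(W)\cdot\epsilon^{\sharp}]$ so that the $\Re\int f'(W+\epsilon^{\sharp})\cdot\partial_t\epsilon^{\sharp}$-type expressions combine correctly with the $\int D\epsilon^{\sharp}\cdot\overline{\partial_t\epsilon^{\sharp}}$ and $\tfrac1\lambda\int\epsilon^{\sharp}\overline{\partial_t\epsilon^{\sharp}}$ contributions to leave only the purely imaginary obstruction $-\tfrac1\lambda\Im\int W^2\overline{\epsilon^{\sharp}}^2$ plus the controlled remainders, using that $D + \tfrac1\lambda - (2|W|^2 + W^2\bar\cdot)$ is (up to the $\lambda$-rescaling) the linearized operator $L_{W^{\flat}}^{\sharp}$ whose quadratic form is real-symmetric so the linear flow is energy-conserving.
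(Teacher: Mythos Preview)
Your proposal is correct and follows essentially the same approach as the paper's proof: differentiate the energy and virial parts of $\mathcal{J}_A$ separately, use the flow equation for $\epsilon^{\sharp}$ and the modulation estimate for $\lambda_t+b$, extract the $\Im\int W^2\overline{\epsilon^{\sharp}}^2$ obstruction and the $\partial_tW$-quadratic terms into $\mathcal{C}(u)$, apply the commutator formula \eqref{fubini theroem} to produce the $g_{\epsilon^{\sharp},\sigma}$ contributions, collect the $\Psi^{\sharp}$-linear terms into $\mathcal{P}(u)$, and control the cubic-in-$\epsilon^{\sharp}$ virial remainder via Lemma~\ref{log loss of the L-infty estiamte}. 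The paper organizes the computation into exactly these two steps (energy part, then virial part) with the same cancellations and the same sources for each error term.
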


\begin{proof}
     We calculate the differentiation of functional $\mathcal{J}_A$ by dividing the energy part and the localized virial part.
     
    \textbf{Step 1.} We differentiate the energy part of the functional $\mathcal{J}_A$.
    \begin{align*}
        \frac{d}{dt}&\left\{\frac{1}{2}\int |D^{\frac{1}{2}}\epsilon^{\sharp}|^2 + \frac{1}{2\lambda}\int |\epsilon^{\sharp}|^2 - \int \left[F(W+\epsilon^{\sharp})-F(W)-F'(W)\cdot \epsilon^{\sharp}\right]\right\}\\
        =& \text{Re}\left\{\int \partial_t \epsilon^{\sharp}\left(\overline{D\epsilon^{\sharp}+\frac{\epsilon^{\sharp}}{\lambda}-(f(W+\epsilon^{\sharp})-f(W))}\right)\right\} - \frac{\lambda_t}{2\lambda^2}\int |\epsilon^{\sharp}|^2\\
        & -\text{Re}\left\{\int \partial_tW\left(\overline{f(u)-f(W)-f'(W)\cdot \epsilon^{\sharp}}\right)\right\}\\
        =& - \frac{1}{\lambda}\textnormal{Im}\left\{\int W^2\overline{\epsilon^{\sharp}}^2\right\} -\text{Re}\left\{\int \partial_tW\left(\overline{\overline{W}(\epsilon^{\sharp})^2+2W|\epsilon^{\sharp}|^2}\right)\right\} \\
        &-\text{Im}\left\{\int \frac{\Psi^{\sharp}}{\lambda}\left(\overline{D\epsilon^{\sharp}+\frac{\epsilon^{\sharp}}{\lambda}-(2|W|^2\overline{\epsilon^{\sharp}}+\overline{W}^2\epsilon^{\sharp})}\right)\right\}  \\
        & -\frac{\lambda_t}{2\lambda^2}\int |\epsilon^{\sharp}|^2 + \text{Im}\left\{\int \left(\frac{\Psi^{\sharp}}{\lambda}+\frac{\epsilon^{\sharp}}{\lambda}\right)\left(\overline{\overline{W}(\epsilon^{\sharp})^2 + 2W|\epsilon^{\sharp}|^2+ \epsilon^{\sharp}|\epsilon^{\sharp}|^2}\right)\right\}\\
        & - \text{Re}\left\{\int \partial_tW(\overline{\epsilon^{\sharp}|\epsilon^{\sharp}|^2})\right\}
    \end{align*}
    In the last equality, we use the equation for $\epsilon^{\sharp}$ \eqref{equation for epsilon sharp}. Also, we denote $f'(W)\cdot \epsilon^{\sharp}$ by the linear term of $f(W+\epsilon^{\sharp})-f(W)$ with respect to the $\epsilon^{\sharp}$ which is written by
    \begin{equation}\label{linear term of f(u)-f(W) with respect to epsilon}
        f'(W)\cdot\epsilon^{\sharp} \coloneqq 2|W|^2\epsilon^{\sharp} + 2W^2\overline{\epsilon^{\sharp}}.
    \end{equation}
    We now estimate 
    From the modulation equation \eqref{Modulation equation}, we have
    \begin{equation}\label{estimate for lambda_t term}
        -\frac{\lambda_t}{2\lambda^2}\int |\epsilon^{\sharp}|^2 = 
        \frac{b}{2\lambda^2} \int |\epsilon^{\sharp}|^2 + \frac{1}{2\lambda^2}(\lambda_t + b)\norm{\epsilon^{\sharp}}_{L^2}^2 = \frac{b}{2\lambda^2} \int |\epsilon^{\sharp}|^2 + \mathcal{O}(\frac{\norm{\epsilon}_{H^{1/2}}^2}{\lambda}).
    \end{equation}
    Next, from \eqref{a priori bound for W}, \eqref{bootstrap bounds}, and the Gagliardo-Nirenberg inequality, we estimate
    \begin{equation}\label{estimates for epsilon^2 term}
        \begin{aligned}
            &\left|\text{Im}\left\{\int \left(\frac{\Psi^{\sharp}}{\lambda}+\frac{\epsilon^{\sharp}}{\lambda}\right)\left(\overline{\overline{W}(\epsilon^{\sharp})^2 + 2W|\epsilon^{\sharp}|^2+ \epsilon^{\sharp}|\epsilon^{\sharp}|^2}\right)\right\}\right|\\
            & \lesssim \frac{1}{\lambda} \left(\norm{\Psi^{\sharp}}_{L^2}+\norm{\epsilon^{\sharp}}_{L^2}\right)\norm{\epsilon^{\sharp}}_{L^6}^2(\norm{W}_{L^6}+\norm{\epsilon^{\sharp}}_{L^6})\\
            & \lesssim
            \frac{1}{\lambda}\left(\norm{\Psi^{\sharp}}_{L^2}+\norm{\epsilon^{\sharp}}_{L^2}\right) \norm{\epsilon^{\sharp}}_{\dot{H}^{1/2}}^{4/3}\norm{\epsilon^{\sharp}}_{L^2}^{2/3}\norm{W}_{\dot{H}^{1/2}}^{2/3}\norm{W}_{L^2}^{1/3}\\
            & \lesssim
            \frac{\norm{\Psi^{\sharp}}_{L^2}^2}{\lambda} + \frac{\norm{\epsilon}_{H^{1/2}}^2}{\lambda}.
        \end{aligned}
    \end{equation}
    We now estimate the cubic term of $\epsilon^{\sharp}$ hitting the 
    $\partial_tW$. Since $W$ satisfies \eqref{equation for W}, and from \eqref{a priori bound for W}, we have
    \begin{equation}\label{estiamte for cubic term of epsilon}
        \begin{aligned}
            \left|\int \partial_tW(\overline{\epsilon^{\sharp}|\epsilon^{\sharp}|^2})\right| & \lesssim \norm{W}_{\dot{H}^{3/4}}\norm{|\epsilon^{\sharp}|^2\epsilon^{\sharp}}_{\dot{H}^{1/4}} + \norm{W}_{L^6}^3\norm{\epsilon^{\sharp}}_{L^6}^3 + \frac{1}{\lambda}\norm{\Psi^{\sharp}}_{L^2}\norm{\epsilon^{\sharp}}_{L^6}^3\\
            &\lesssim \frac{1}{\lambda^{3/4}}\norm{\epsilon^{\sharp}}_{L^2}^{\frac{1}{2}}\norm{\epsilon^{\sharp}}_{\dot{H}^{\frac{1}{2}}}^{\frac{5}{2}}+\frac{1}{\lambda}\norm{\epsilon^{\sharp}}_{L^2}\norm{\epsilon^{\sharp}}_{\dot{H}^{1/2}}^2 + \frac{1}{\lambda}\norm{\Psi^{\sharp}}_{L^2}\norm{\epsilon^{\sharp}}_{\dot{H}^{1/2}}^2\norm{\epsilon^{\sharp}}_{L^2}\\
            &\lesssim \frac{\norm{\Psi^{\sharp}}_{L^2}^2}{\lambda} + \frac{\norm{\epsilon}_{H^{1/2}}^2}{\lambda}.
        \end{aligned}
    \end{equation}
    Thus, from \eqref{estimate for lambda_t term}, \eqref{estimates for epsilon^2 term}, \eqref{estiamte for cubic term of epsilon} and integration by parts, we yield
    \begin{equation*}
        \begin{aligned}
            \frac{d}{dt}&\left\{\frac{1}{2}\int |D^{\frac{1}{2}}\epsilon^{\sharp}|^2 + \frac{1}{2\lambda}\int |\epsilon^{\sharp}|^2 - \int \left[F(W+\epsilon^{\sharp})-F(W)-F'(W)\cdot \epsilon^{\sharp}\right]\right\}\\
            =& -\frac{1}{\lambda}\text{Im}\left\{\int W^2\overline{\epsilon^{\sharp}}^2\right\} - \text{Re}\left\{\int \partial_tW\left(\overline{2|\epsilon^{\sharp}|^2W+(\epsilon^{\sharp})^2\overline{W}}\right)\right\} + \frac{b}{2\lambda}\int \frac{|\epsilon^{\sharp}|^2}{\lambda} \\
            &+ \frac{1}{\lambda}\text{Im}\left\{\int \left[-D\Psi^{\sharp}-\frac{\Psi^{\sharp}}{\lambda}-(2|W|^2\overline{\Psi^{\sharp}}-W^2\Psi^{\sharp})\right]\overline{\epsilon^{\sharp }}\right\} \\
            &+ \mathcal{O}\left(\frac{\norm{\Psi^{\sharp}}_{L^2}^2}{\lambda} + \frac{\norm{\epsilon}_{H^{1/2}}^2}{\lambda}\right).
        \end{aligned}
    \end{equation*}
    \textbf{Step 2.} We now calculate the differentiation of the localized virial part.
    
    For convenience of the notation, we let $\tilde{\phi}(t,x)$ be a function so that 
    \begin{equation*}
        \nabla \tilde{\phi}(t,x)\coloneqq bA \phi'\left(\frac{x-\nut{x}}{A\lambda}\right).
    \end{equation*}
    Then, we obtain
    \begin{equation}\label{Virial differentiation}
        \begin{aligned}
            \frac{1}{2}\frac{d}{dt}&\left(\text{Im}\left\{\int \nabla \tilde{\phi}\cdot \nabla \epsilon^{\sharp}\overline{\epsilon^{\sharp}}\right\}\right)\\
            &=\frac{1}{2}\text{Im}\left\{\int(\partial_t\nabla\tilde{\phi}) \cdot \nabla \epsilon^{\sharp}\overline{\epsilon^{\sharp}}\right\} + \frac{1}{2}\text{Im}\left\{\int \nabla \tilde{\phi}\cdot\left((\nabla \partial_t \epsilon^{\sharp})\overline{\epsilon^{\sharp}}+\nabla\epsilon^{\sharp}\overline{\partial_t\epsilon^{\sharp}}\right)\right\}.
        \end{aligned}
    \end{equation}
    Since we have
    \begin{equation}\label{estimate of time derivative of localized virial}
        \begin{aligned}
            &|\partial_t\nabla \tilde{\phi}| \lesssim |b_t|+\left|\frac{b\nut{x}_t}{\lambda}\right|+\left|b\frac{\lambda_t}{\lambda}\right|\lesssim 1,\\
            & |\nabla (\partial_t \nabla \tilde{\phi})| \lesssim \lambda^{-1}.
        \end{aligned}
    \end{equation}
    Thus, from \eqref{estimate of time derivative of localized virial} and using Lemma~\ref{estimate for localized virial correction} we have
    \begin{equation}\label{estimate of first term virial}
        \left|\int \partial_t(\nabla \tilde{\phi})\cdot \nabla \epsilon^{\sharp}\overline{\epsilon^{\sharp}}\right| \lesssim \norm{\epsilon^{\sharp}}_{\dot{H}^{1/2}}^2+\frac{1}{\lambda}\norm{\epsilon^{\sharp}}_{L^2}^2.
    \end{equation}
    We now estimate the second term of \eqref{Virial differentiation}.
    Integration by parts and \eqref{equation for epsilon sharp} yield
    \begin{align}
        \frac{1}{2}\text{Im}&\left\{\int \nabla \tilde{\phi}\cdot\left((\nabla \partial_t \epsilon^{\sharp})\overline{\epsilon^{\sharp}}+\nabla\epsilon^{\sharp}\overline{\partial_t\epsilon^{\sharp}}\right)\right\} \nonumber \\
        =& -\frac{1}{2}\text{Re}\left\{\int \nabla \tilde{\phi} \cdot (\nabla(D\epsilon^{\sharp})\overline{\epsilon^{\sharp}}+\epsilon^{\sharp}\nabla\overline{D\epsilon^{\sharp}})+\nabla(\nabla \tilde{\phi})\cdot \epsilon^{\sharp}\overline{D\epsilon^{\sharp}}\right\} \label{first term of differentiation virial part - (2)}\\
        & -\frac{1}{2}\text{Re}\left\{\int \nabla\tilde{\phi}\cdot \left(f(W+\epsilon^{\sharp})-f(W)+\frac{\Psi^{\sharp}}{\lambda}\right) \overline{\nabla\epsilon^{\sharp}}\right\}
        \label{second term of differentiation virial part - (2)}\\
        & -\frac{1}{2}\text{Re}\left\{\int \Delta \tilde{\phi} \cdot \left(f(W+\epsilon^{\sharp})-f(W)+\frac{\Psi^{\sharp}}{\lambda}\right)\overline{\epsilon^{\sharp}}\right\} \label{third term of differentiation virial part - (2)}.
    \end{align}
    We divide the term $\eqref{second term of differentiation virial part - (2)} + \eqref{third term of differentiation virial part - (2)}$ by the integration of the quadratic term and the cubic term with respect to $\epsilon^{\sharp}$ and the term containing  $\Psi^{\sharp}$. We express $f(W+\epsilon^{\sharp})-f(W)$ by
    \begin{equation}
        \begin{aligned}
            f(W+\epsilon^{\sharp})-f(W) = f'(W)\cdot \epsilon^{\sharp} + \mathcal{N}(\epsilon^{\sharp}),
        \end{aligned}
    \end{equation}
    where $f'(W)\cdot \epsilon^{\sharp}$ defined in \eqref{linear term of f(u)-f(W) with respect to epsilon} and by definition $\mathcal{N}(\epsilon^{\sharp})$ is computed by
    \begin{align*}
        \mathcal{N}(\epsilon^{\sharp}) = 2|\epsilon^{\sharp}|^2W + (\epsilon^{\sharp})^2\overline{W} + |\epsilon^{\sharp}|^2\epsilon^{\sharp}. 
    \end{align*}
    Hence, we obtain
    \begin{align}
        &\eqref{second term of differentiation virial part - (2)} + \eqref{third term of differentiation virial part - (2)} \nonumber \\
        &=-b\text{Re}\left\{\int A \phi'\left(\frac{x-\nut{x}}{A\lambda}\right)(f'(W)\cdot\epsilon^{\sharp})\overline{\nabla\epsilon^{\sharp}}\right\}
        -\frac{b}{2\lambda}\text{Re}\left\{\int \phi '' \left(\frac{x-\nut{x}}{A\lambda}\right)(f'(W)\cdot\epsilon^{\sharp})\overline{\epsilon^{\sharp}} \right\} \label{linear part of the last two term of viral diff}\\
        &=  b \Re\left\{ \int A \phi'\left(\frac{x-\nut{x}}{A \lambda}\right)\mathcal{N}(\epsilon^{\sharp}) \cdot \overline{\nabla \epsilon^{\sharp}}\right\} + \frac{1}{2} \frac{b}{\lambda} \Re\left\{\int \phi''\left(\frac{x-\nut{x}}{A \lambda}\right)\mathcal{N}(\epsilon^{\sharp})\epsilon^{\sharp}\right\} \label{nonlinear part of the last two term of virial diff}\\
        &= -b \Re\left\{\int \frac{\Psi^{\sharp}}{\lambda} A \phi'\left(\frac{x-\nut{x}}{A \lambda}\right) \cdot \overline{\nabla \epsilon^{\sharp}}\right\}-\frac{1}{2} \frac{b}{\lambda} \Re\left\{\int \frac{\Psi^{\sharp}}{\lambda} \phi''\left(\frac{x-\nut{x}}{A \lambda}\right) \overline{\epsilon^{\sharp}}\right\} \label{psi part of the last two term of virial diff}
    \end{align}
    
    \textbf{1.} We first estimate \eqref{first term of differentiation virial part - (2)}: We use the commutator formula. From direct calculation, we obtain
    \begin{equation*}
        \begin{aligned}
            \eqref{first term of differentiation virial part - (2)} = - \frac{1}{4}\text{Re}\left\{\int \overline{\epsilon^{\sharp}}\left[-iD,\nabla \tilde{\phi}\cdot (-i\nabla) - i\nabla\cdot(\nabla\tilde{\phi})\right]\epsilon^{\sharp}\right\}.
        \end{aligned}
    \end{equation*}
    Then, from \eqref{commutator D-1}--\eqref{fubini theroem} we obtain
    \begin{equation*}
        \begin{aligned}
            \eqref{first term of differentiation virial part - (2)} &= \frac{b}{2\lambda} \int_0^{+\infty} \sqrt{\sigma} \int_{\mathbb{R}} \Delta\phi\left( \frac{x-\nut{x}}{A\lambda} \right) |\nabla g_{\epsilon^{\sharp},\sigma}|^2 \, dx\, d\sigma \\
            &- \frac{1}{8} \frac{b}{A^2\lambda^3} \int_0^{+\infty} \sqrt{\sigma} \int_{\mathbb{R}} \Delta^2\phi\left( \frac{x-\nut{x}}{A\lambda} \right) |g_{\epsilon^{\sharp},\sigma}|^2 \, dx\, d\sigma
        \end{aligned}
    \end{equation*}
    
    \textbf{2.}
    We now estimate \eqref{linear part of the last two term of viral diff}--\eqref{psi part of the last two term of virial diff}: 
    Integration by parts yields 
    \begin{equation*}
        \eqref{linear part of the last two term of viral diff} = b \Re\left\{\int A  \phi '\left(\frac{x-\nut{x}}{A \lambda}\right)\left(2|\epsilon^{\sharp}|^2 W+(\epsilon^{\sharp})^2 \bar{W}\right) \cdot \overline{\nabla W}\right\}.
    \end{equation*}
    In addition, from the fractional Leibniz rule with the bootstrap assumption on $\epsilon$ in \eqref{bootstrap bounds} we obtain
    \begin{equation*}
        \begin{aligned}
            |\eqref{nonlinear part of the last two term of virial diff}| &\lesssim \norm{\nabla\tilde{\phi}\cdot \mathcal{N}(\epsilon)}_{\dot{H}^{1/2}}\norm{\epsilon^{\sharp}}_{\dot{H}^{1/2}} + \frac{b}{\lambda}\norm{\mathcal{N}(\epsilon^{\sharp})}_{L^{4/3}}\norm{\epsilon^{\sharp}}_{L^4}
            \\
            & \lesssim\|\epsilon^{\sharp}\|_{\dot{H}^{\frac{1}{2}}}^2\|\nabla \tilde{\phi}\|_{L^{\infty}}\|\epsilon^{\sharp}\|_{L^{\infty}}\left(\|\epsilon^{\sharp}\|_{L^{\infty}}+\|W\|_{L^{\infty}}\right)+\|\epsilon^{\sharp}\|_{\dot{H}^{\frac{1}{2}}}\|\nabla \tilde{\phi}\|_{L^{\infty}}\|\epsilon^{\sharp}\|_{L^{\infty}}^2\|W\|_{\dot{H}^{\frac{1}{2}}} \\
            &+\|\epsilon^{\sharp}\|_{\dot{H}^{\frac{1}{2}}}\|\nabla \tilde{\phi}\|_{\dot{H}^{\frac{1}{2}}}\|\epsilon^{\sharp}\|_{L^{\infty}}^2\left(\|\epsilon^{\sharp}\|_{L^{\infty}}+\|W\|_{L^{\infty}}\right)+\lambda^{-\frac{1}{2}}\left(\|\epsilon^{\sharp}\|_{L^4}^3\|W\|_{L^4}+\|\epsilon^{\sharp}\|_{L^4}^4\right) \\
            & \lesssim \norm{\epsilon^{\sharp}}_{\dot{H}^{1/2}}\norm{\epsilon^{\sharp}}_{L^{\infty}} +\lambda^{-\frac{1}{2}}\left(\lambda^{-\frac{1}{4}}\|\epsilon^{\sharp}\|_{\dot{H}^{1 / 2}}^{\frac{3}{2}}\|\epsilon^{\sharp}\|_{L^2}^{\frac{3}{2}}+\|\epsilon^{\sharp}\|_{\dot{H}^{1 / 2}}^2\|\epsilon^{\sharp}\|_{L^2}^2\right) \\
            & \lesssim \log ^{\frac{1}{2}}\left(2+\|\epsilon^{\sharp}\|_{H^{1 / 2}}^{-1}\right)\|\epsilon^{\sharp}\|_{H^{1 / 2}}^2 + \frac{1}{\lambda}\norm{\epsilon}_{H^{1/2}}^2 \\
        \end{aligned}
    \end{equation*}      
    Here, we use the bootstrap bound that $\norm{\epsilon^{\sharp}}_{H^{1/2+}}$ is bounded to control the $L^{\infty}$-norm of $\epsilon$ from the Lemma~\ref{log loss of the L-infty estiamte}. Hence, we obtain $log$-loss of error estimate, which is not harmful.

    Finally, direct computation employing integration by parts gives
    \begin{equation*}
        \begin{aligned}
            \eqref{psi part of the last two term of virial diff} =\frac{1}{\lambda}\Im\left\{\int\left[i b A  \phi'\left(\frac{x-\nut{x}}{A \lambda}\right) \cdot \nabla \Psi^{\sharp}+i \frac{b}{2 \lambda} \phi '' \left(\frac{x-\nut{x}}{A \lambda}\right) \Psi^{\sharp}\right] \overline{\epsilon^{\sharp}}\right\}.
        \end{aligned}
    \end{equation*}
    This proves our lemma.
\end{proof}
We now prove the coercivity of $\frac{d\mathcal{J}_A}{dt}$.
To get the almost positivity for $\mathcal{C}(u)$, we are going to introduce two lemmas. One for coercivity $L_A(\epsilon)$ which appears after renormalizing after \eqref{eq for C(u) to get coercivity} and another one for the estimate of the term containing $\phi''''$. The following lemmas and proofs of these can be found in \cite[Appendix B]{KLR2013ARMAhalfwave}.
\begin{lemma}[\cite{KLR2013ARMAhalfwave}, Appendix B]\label{coercivity for C(u)}
    Let $A>0$, and $L_A[\epsilon]$ be a functional of $\epsilon \in H^{1/2}$ so that
    \begin{equation*}
        L_A[\epsilon] = \int_{\sigma=0}^{\infty} \sqrt{\sigma}\int_{\mathbb{R}} \phi''\left(\frac{y}{A}\right)|\nabla g_{\epsilon,\sigma}|^2 dyds + \int |\epsilon|^2 dy - \Re\left\{\int (2|\epsilon|^2Q+\epsilon^2Q)Q dy\right\},
    \end{equation*}
    where $g_{\epsilon,\sigma} = \sqrt{\frac{2}{\pi}}\frac{1}{-\Delta + \sigma}\epsilon$ for $\sigma>0$. Then there is a universal constant $A_0>0$ and $\kappa_2>0$ so that for $A\geq A_0$ and $\epsilon \in H^{1/2}$, we have
    \begin{equation*}
         L_A[\epsilon] \geq \kappa_2 \int_{\mathbb{R}}|\epsilon|^2 \, dy - \frac{1}{\kappa_2}\left\{ (\epsilon, Q)_r^2 + (\epsilon, R_{1,0,0})_r^2+(\epsilon, R_{0,1,0})_r^2+(\epsilon, iR_{0,0,1})_r^2\right\},
    \end{equation*}
    where $R_{1,0,0}$, $R_{0,1,0}$, and $R_{0,0,1}$ are defined in Proposition~\ref{Singular profile}.
\end{lemma}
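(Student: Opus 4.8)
The plan is to reduce this to the coercivity of $L_Q$ from Lemma~\ref{Coercivity 2} and then to remove the truncation $\phi''(\,\cdot\,/A)$ by a compactness argument as $A\to\infty$, following the strategy used for linearized energies in \cite{KLR2013ARMAhalfwave}. First I would identify the ``untruncated'' functional obtained by replacing $\phi''$ by its value $\phi''(0)=1$. Writing $\widehat{g_{\epsilon,\sigma}}(\xi)=\sqrt{2/\pi}\,(\xi^2+\sigma)^{-1}\widehat{\epsilon}(\xi)$, Plancherel together with the elementary identity $\int_0^\infty\sqrt\sigma\,(\xi^2+\sigma)^{-2}\,d\sigma=\tfrac{\pi}{2}|\xi|^{-1}$ (a Beta integral) gives $\int_0^\infty\sqrt\sigma\int_{\R}|\nabla g_{\epsilon,\sigma}|^2\,dy\,d\sigma=\norm{D^{1/2}\epsilon}_{L^2}^2$; combined with $\Re\{(2|\epsilon|^2Q+\epsilon^2Q)Q\}=Q^2\bigl(3(\Re\epsilon)^2+(\Im\epsilon)^2\bigr)$ this shows that the $A=\infty$ functional is exactly $(L_Q[\epsilon],\epsilon)_r$. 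Lemma~\ref{Coercivity 2} (with $S_1=R_{1,0,0}$, $G_1=R_{0,1,0}$, $\rho_1=R_{0,0,1}$) then gives the claimed bound — indeed with $\norm{\epsilon}_{H^{1/2}}^2$ in place of $\norm{\epsilon}_{L^2}^2$ — when $A=+\infty$.

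For finite $A$ I would argue by contradiction. If the statement failed, taking $\kappa_2=A_0=n$ would produce $A_n\to\infty$ and $\epsilon_n\in H^{1/2}$ with $\norm{\epsilon_n}_{L^2}=1$, $L_{A_n}[\epsilon_n]\le 1/n$, and (since $L_{A_n}$ is bounded below on the unit $L^2$-sphere) the four inner products $(\epsilon_n,Q)_r$, $(\epsilon_n,R_{1,0,0})_r$, $(\epsilon_n,R_{0,1,0})_r$, $(\epsilon_n,iR_{0,0,1})_r$ tending to $0$. Since $\phi''\ge0$, the nonlocal term $\mathcal N_n:=\int_0^\infty\sqrt\sigma\int\phi''(y/A_n)|\nabla g_{\epsilon_n,\sigma}|^2$ is nonnegative, and the cubic term is $\lesssim\norm{Q}_{L^\infty}^2$, so $\mathcal N_n$ is bounded. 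Pass to a subsequence $\epsilon_n\rightharpoonup\epsilon_*$ in $L^2$; on any fixed ball $B_R$ we have $\phi''(y/A_n)\equiv1$ once $A_n\ge 2R$, whence a uniform bound on $\int_0^\infty\sqrt\sigma\int_{B_R}|\nabla g_{\epsilon_n,\sigma}|^2$, and a Rellich-type argument (commuting cutoffs with the resolvents $(-\Delta+\sigma)^{-1}$, using their explicit one–dimensional kernel $\tfrac{1}{2\sqrt\sigma}e^{-\sqrt\sigma|y-\tilde y|}$ to keep the $\sigma$-integral under control) upgrades this to strong convergence $\epsilon_n\to\epsilon_*$ in $L^2_{\mathrm{loc}}$.

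To close: strong $L^2_{\mathrm{loc}}$ convergence and the decay of $Q$ give $\Re\int(2|\epsilon_n|^2Q+\epsilon_n^2Q)Q\to\Re\int(2|\epsilon_*|^2Q+\epsilon_*^2Q)Q$, while the orthogonality conditions pass to the limit, so $\epsilon_*$ is orthogonal to $Q$, $R_{1,0,0}$, $R_{0,1,0}$, $iR_{0,0,1}$; by Fatou in $\sigma$ and weak lower semicontinuity, $\liminf_n\mathcal N_n\ge\norm{D^{1/2}\epsilon_*}_{L^2}^2$ and $\norm{\epsilon_*}_{L^2}^2\le1$. If $\epsilon_*\neq0$ then
\begin{equation*}
0\ \ge\ \liminf_n L_{A_n}[\epsilon_n]\ \ge\ \norm{D^{1/2}\epsilon_*}_{L^2}^2+\norm{\epsilon_*}_{L^2}^2-\Re\!\int(2|\epsilon_*|^2Q+\epsilon_*^2Q)Q\ =\ (L_Q[\epsilon_*],\epsilon_*)_r\ \ge\ \kappa_1\norm{\epsilon_*}_{H^{1/2}}^2\ >\ 0,
\end{equation*}
a contradiction; if $\epsilon_*=0$, the cubic term vanishes in the limit and $\mathcal N_n\ge0$, so $\liminf_n L_{A_n}[\epsilon_n]\ge\liminf_n\norm{\epsilon_n}_{L^2}^2=1$, again contradicting $L_{A_n}[\epsilon_n]\le1/n$. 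The hard part is the local $L^2$-compactness invoked in the middle step: because $g\mapsto(-\Delta+\sigma)^{-1}g$ is nonlocal, deducing local regularity of $\epsilon_n$ from the bound on $\int_0^\infty\sqrt\sigma\int_{B_R}|\nabla g_{\epsilon_n,\sigma}|^2$ requires commutator estimates that are uniform in $\sigma$ and integrable against $\sqrt\sigma\,d\sigma$; this is precisely the computation carried out in Appendix~B of \cite{KLR2013ARMAhalfwave}, to which we refer for the details.
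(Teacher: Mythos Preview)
The paper does not give its own proof of this lemma; it simply records the statement and refers to \cite[Appendix~B]{KLR2013ARMAhalfwave} for the argument. Your sketch is essentially a reconstruction of that argument: identifying the untruncated functional with $(L_Q[\epsilon],\epsilon)_r$ via the Beta-integral computation, invoking Lemma~\ref{Coercivity 2}, and then removing the truncation by a compactness/contradiction argument as $A\to\infty$. This is the standard route, and you correctly isolate the one genuinely nontrivial step --- upgrading the localized bound $\int_0^\infty\sqrt\sigma\int_{B_R}|\nabla g_{\epsilon_n,\sigma}|^2\lesssim1$ to strong $L^2_{\mathrm{loc}}$ compactness of $\epsilon_n$ through commutator estimates uniform in $\sigma$ --- and defer it to the same reference. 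Two minor points: in your contradiction setup you should take $\kappa_2=1/n$ (not $\kappa_2=n$) to obtain $L_{A_n}[\epsilon_n]\le1/n$ after normalization; and in the chain for $\epsilon_*\neq0$, the term $\norm{\epsilon_*}_{L^2}^2$ enters via $\norm{\epsilon_n}_{L^2}^2=1\ge\norm{\epsilon_*}_{L^2}^2$, which you use but do not make explicit. Neither affects the validity of the argument.
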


\begin{lemma}[\cite{KLR2013ARMAhalfwave},Appendix B]\label{smallness of phi''''}
    For $A>0$ and $\epsilon \in H^{1/2}$ we have
    \begin{equation}\label{estimate 1}
        \left|\frac{1}{A^2}\int_0^{\infty}\sqrt{\sigma}\int_{\mathbb{R}}\Delta^2 \phi\left(\frac{y}{A}\right)|g_{\epsilon,\sigma}|^2\,dyd\sigma\right| \lesssim \frac{1}{A}\norm{\epsilon}^2_{L^2}.
    \end{equation}
\end{lemma}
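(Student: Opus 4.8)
The plan is to adapt the argument of \cite[Appendix~B]{KLR2013ARMAhalfwave}, of which the present statement is the $\lambda$-free model case; I sketch it here for completeness. The only inputs are the explicit resolvent representations
\[
    g_{\epsilon,\sigma} = h_\sigma * \epsilon,\qquad h_\sigma(y) = \frac{1}{\sqrt{2\pi\sigma}}\,e^{-\sqrt\sigma|y|},\qquad
    \nabla g_{\epsilon,\sigma} = (\nabla h_\sigma)*\epsilon,\qquad \nabla h_\sigma(y) = -\frac{1}{\sqrt{2\pi}}\,\mathrm{sgn}(y)\,e^{-\sqrt\sigma|y|},
\]
equivalently $\widehat{g_{\epsilon,\sigma}}(\xi) = \sqrt{2/\pi}\,\widehat\epsilon(\xi)(|\xi|^2+\sigma)^{-1}$, together with the structure of the cutoff: $\phi'''$ and $\phi''''$ are bounded, supported in $\{|y|\ge 1\}$, and decay like $e^{-|y|}$, so in particular $\phi''',\phi''''\in L^p(\mathbb{R})$ for all $p\in[1,\infty]$. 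The first step is to split the $\sigma$-integral at the threshold $\sigma = A^{-2}$ and treat the two ranges by different means.

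On the high range $\sigma\ge A^{-2}$, the Plancherel bound $\|g_{\epsilon,\sigma}\|_{L^2}\le\sqrt{2/\pi}\,\sigma^{-1}\|\epsilon\|_{L^2}$ and $\phi''''\in L^\infty$ give $\int_{\mathbb R}|\phi''''(y/A)|\,|g_{\epsilon,\sigma}(y)|^2\,dy \le \|\phi''''\|_{L^\infty}\,\|g_{\epsilon,\sigma}\|_{L^2}^2 \lesssim \sigma^{-2}\|\epsilon\|_{L^2}^2$; since $\int_{A^{-2}}^{\infty}\sigma^{1/2}\sigma^{-2}\,d\sigma\sim A$, multiplication by the prefactor $A^{-2}$ yields a contribution $\lesssim A^{-1}\|\epsilon\|_{L^2}^2$.

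The delicate part is the low range $\sigma\le A^{-2}$, where $(-\Delta+\sigma)^{-1}$ degenerates and the naive bound obtained from $\|g_{\epsilon,\sigma}\|_{L^\infty}\lesssim\sigma^{-3/4}\|\epsilon\|_{L^2}$ produces a (logarithmically) divergent $\sigma$-integral at $\sigma=0$. To gain, the key is to integrate by parts once, using $\phi''''(y/A) = A\,\partial_y[\phi'''(y/A)]$:
\[
    \int_{\mathbb R}\phi''''(y/A)\,|g_{\epsilon,\sigma}|^2\,dy = -2A\,\mathrm{Re}\int_{\mathbb R}\phi'''(y/A)\,\overline{g_{\epsilon,\sigma}}\,\nabla g_{\epsilon,\sigma}\,dy.
\]
Then, using $\phi'''\in L^1$ (so $\|\phi'''(\cdot/A)\|_{L^1} = A\|\phi'''\|_{L^1}$) together with the resolvent bounds $\|g_{\epsilon,\sigma}\|_{L^\infty}\le\|h_\sigma\|_{L^2}\|\epsilon\|_{L^2}\lesssim\sigma^{-3/4}\|\epsilon\|_{L^2}$ and $\|\nabla g_{\epsilon,\sigma}\|_{L^\infty}\le\|\nabla h_\sigma\|_{L^2}\|\epsilon\|_{L^2}\lesssim\sigma^{-1/4}\|\epsilon\|_{L^2}$ (the improvement coming from the fact that $\nabla h_\sigma$ has height $O(1)$ rather than $O(\sigma^{-1/2})$), one obtains
\[
    \int_{\mathbb R}\phi''''(y/A)\,|g_{\epsilon,\sigma}|^2\,dy \lesssim A\,\|\phi'''(\cdot/A)\|_{L^1}\,\|g_{\epsilon,\sigma}\|_{L^\infty}\|\nabla g_{\epsilon,\sigma}\|_{L^\infty}\lesssim A^2\,\sigma^{-1}\|\epsilon\|_{L^2}^2.
\]
Since now $\int_0^{A^{-2}}\sigma^{1/2}\sigma^{-1}\,d\sigma = 2A^{-1}$ is convergent, the prefactor $A^{-2}$ combined with the factor $A^2$ above gives a low-range contribution $\lesssim A^{-1}\|\epsilon\|_{L^2}^2$. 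Adding the two ranges proves the lemma. The main obstacle is precisely the nonlocal degeneracy of $(-\Delta+\sigma)^{-1}$ as $\sigma\to0^+$; the integration by parts, which converts the singular resolvent kernel into the milder kernel of $\nabla(-\Delta+\sigma)^{-1}$ at the harmless cost of one power of $A$, is what makes the $\sigma$-integral converge.
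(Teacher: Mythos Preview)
Your proof is correct and follows exactly the approach of \cite[Appendix~B]{KLR2013ARMAhalfwave}, which is precisely what the paper cites for this lemma (the paper does not reproduce a proof). The splitting at $\sigma=A^{-2}$, the Plancherel bound for large $\sigma$, and the integration by parts trading $\phi''''$ for $A\phi'''$ to exploit the milder kernel of $\nabla(-\Delta+\sigma)^{-1}$ at small $\sigma$ are all the standard ingredients, and your computations check out.
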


\begin{lemma}(Almost positivity of $\mathcal{C}(u)$ and smallness of $\mathcal{P}(u)$)\label{coercivity of C(u) and P(u)}
    Let $\mathcal{C}(u)$ and $\mathcal{P}(u)$ be a functional defined in Lemma \ref{lemma : differentiation of Energy functional}. Also, let $A>0$ be a sufficiently large constant so that $A \gtrsim \max\{A_0,\frac{4}{\kappa_2}\}$. From the weak bootstrap assumption \eqref{bootstrap bounds}, we have
    \begin{equation}
        \mathcal{C}(u) \geq \frac{\kappa_2b}{4\lambda^2}\norm{\epsilon}_{L^2}^2 + \mathcal{O}\left(\lambda^{5/2+2\omega} + \frac{1}{\lambda}\norm{\epsilon}_{H^{1/2}}^2\right),
    \end{equation}
    and
    \begin{equation}
        \mathcal{P}(u) = \mathcal{O}\left(\alpha^*\lambda^{1/2-2/n}\norm{\epsilon}_{L^2}+\lambda^{5/2+3\omega}\right).
    \end{equation}
    for some small constant $\kappa_2>0$ which does not depend on $u_\eta$ and $\eta$.
    Also, $A_0$ is a constant which is introduced in the previous Lemma~\ref{coercivity for C(u)}.
\end{lemma}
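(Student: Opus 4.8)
The plan is to follow the renormalization‑and‑coercivity scheme of \cite[Section~6, Appendix~B]{KLR2013ARMAhalfwave}, treating $\mathcal{C}(u)$ and $\mathcal{P}(u)$ separately. Throughout I would work in the $(s,y)$‑variables, using $\|g^{\sharp}\|_{L^{2}}=\|g\|_{L^{2}}$ and $\|g^{\sharp}\|_{\dot H^{1/2}}=\lambda^{-1/2}\|g\|_{\dot H^{1/2}}$, the identities $W=[Q_{\mathcal{P}}+z^{\flat}]^{\sharp}$ and $\epsilon^{\sharp}=[\epsilon]^{\sharp}$, the a priori bounds \eqref{a priori bound for W}, the weak bootstrap assumptions \eqref{bootstrap bounds}, and the parameter orders \eqref{orders of modulation parameters} (so $b^{2}+\eta\lesssim\lambda$, $\nu\sim\lambda$, $b\sim|t|\lesssim\lambda^{1/2}$, and $b\ge0$ by \eqref{sign of b-parameter}).

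For $\mathcal{C}(u)$, I would first renormalize every integral in \eqref{definition of C(u)}; the phases $e^{i\gamma}$ carried by $W$ and $\epsilon^{\sharp}$ cancel in the bilinear expressions $W^{2}\overline{\epsilon^{\sharp}}{}^{2}$ and $2|\epsilon^{\sharp}|^{2}W+(\epsilon^{\sharp})^{2}\overline{W}$. Using the profile equation of Proposition~\ref{Singular profile} (together with \eqref{equation for W} and the modulation estimates) to replace $\partial_{t}W$ by its leading term, the first two terms of \eqref{definition of C(u)} cancel to leading order, while the $\phi''$‑weighted nonlocal term, the $\tfrac{b}{2\lambda^{2}}\int|\epsilon^{\sharp}|^{2}$ term, and the quadratic part of the nonlinear energy (extracted via $\tfrac{d}{dt}(\lambda^{-1})$) reassemble—after invoking the commutator identities \eqref{commutator D-1}--\eqref{fubini theroem}—into $\tfrac{b}{2\lambda^{2}}L_{A}(\epsilon)$, with $L_{A}$ as in Lemma~\ref{coercivity for C(u)}. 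The leftovers are: (i) the $\phi''''$‑term, which is $\mathcal{O}(\tfrac{b}{\lambda^{2}}A^{-1}\|\epsilon\|_{L^{2}}^{2})$ by Lemma~\ref{smallness of phi''''} and hence absorbed once $A\gg\kappa_{2}^{-1}$; (ii) the difference $W-Q_{\mathcal{P}}^{\sharp}=z$ and the $z^{\flat}$‑interactions, controlled by the localized bound $\|z^{\flat}\langle\cdot\rangle^{-2\beta}\|_{L^{\infty}}\lesssim\alpha^{*}\lambda^{1/2+(2-2/n)\beta}$ of Lemma~\ref{interaction error order}; and (iii) the cubic‑in‑$\epsilon$ remainders and the $\phi'$‑weighted $\overline{\nabla W}$ term, estimated via \eqref{a priori bound for W}, the Gagliardo--Nirenberg inequality, and the refined Sobolev bound of Lemma~\ref{log loss of the L-infty estiamte} applied with the bootstrap control of $\|\epsilon^{\sharp}\|_{H^{1/2+\delta}}$—all of size $\mathcal{O}(\lambda^{5/2+2\omega}+\lambda^{-1}\|\epsilon\|_{H^{1/2}}^{2})$.

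Lemma~\ref{coercivity for C(u)} then yields $L_{A}(\epsilon)\ge\kappa_{2}\|\epsilon\|_{L^{2}}^{2}-\kappa_{2}^{-1}\{(\epsilon,Q)_{r}^{2}+(\epsilon,R_{1,0,0})_{r}^{2}+(\epsilon,R_{0,1,0})_{r}^{2}+(\epsilon,iR_{0,0,1})_{r}^{2}\}$. To dispose of the defect I would use that $\partial_{b}Q_{\mathcal{P}}=iR_{1,0,0}+\mathcal{O}(b)$, $\partial_{\nu}Q_{\mathcal{P}}=iR_{0,1,0}+\mathcal{O}(b)$, $\partial_{\eta}Q_{\mathcal{P}}=R_{0,0,1}+\mathcal{O}(b)$, so the orthogonality conditions \eqref{orthogonality for partial_b Q}, \eqref{orthogonality for partial_v Q}, \eqref{orthogonality for partial_eta Q} force $|(\epsilon,R_{1,0,0})_{r}|+|(\epsilon,R_{0,1,0})_{r}|+|(\epsilon,iR_{0,0,1})_{r}|\lesssim b\|\epsilon\|_{L^{2}}$, while $(\epsilon,Q)_{r}=(\epsilon,Q_{\mathcal{P}})_{r}+\mathcal{O}(b\|\epsilon\|_{L^{2}})$ with $|(\epsilon,Q_{\mathcal{P}})_{r}|\lesssim(\lambda^{\omega}+\alpha^{*})\lambda^{2+\omega}$ by Proposition~\ref{Modulation estimates}; hence the defect contributes $\tfrac{1}{\kappa_{2}}\tfrac{b}{2\lambda^{2}}\{\cdots\}\lesssim\tfrac{b}{\lambda}\|\epsilon\|_{L^{2}}^{2}+\lambda^{5/2+2\omega}$, which is swallowed by the error, and combined with $b\ge0$ this gives the claimed lower bound for $\mathcal{C}(u)$. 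For $\mathcal{P}(u)$, which is linear in $\epsilon^{\sharp}$ with coefficient built from $\Psi^{\sharp}$, I would split $\Psi=\mathrm{Mod}-\Psi_{\mathcal{P}}+\mathrm{IN}$ as in \eqref{Q_P+z profile error} and pair each piece against $\overline{\epsilon^{\sharp}}$ by Cauchy--Schwarz, using $\dot H^{1/2}$‑duality for the $D\Psi^{\sharp}$ term and $\|A\phi'(\tfrac{\cdot}{A\lambda})\|_{L^{\infty}}+\|\phi''\|_{L^{\infty}}\lesssim_{A}1$ for the virial terms. The $\mathrm{Mod}$‑part is $\mathcal{O}(\lambda^{5/2+3\omega})$ by \eqref{Modulation equation} and \eqref{decay of R(pqr)}; the $\Psi_{\mathcal{P}}$‑part is $\mathcal{O}(\lambda^{5/2+3\omega})$ via $\|\Psi_{\mathcal{P}}\|_{H^{1}}\lesssim\lambda^{3}$ from Proposition~\ref{Singular profile} and the bootstrap bound on $\|\epsilon\|_{H^{1/2}}$ (reducing $|t_{1}|$ if needed so that $\lambda^{1/2+2/n}<\alpha^{*}$ on $[t_{1},0]$); and the $\mathrm{IN}$‑part, carrying the unavoidable $\tfrac{2}{n}$‑loss of Lemma~\ref{interaction error order} ($\|\mathrm{IN}\|_{L^{2}}\lesssim\alpha^{*}\lambda^{1/2+(2-2/n)}$, with the analogous $\dot H^{1/2}$‑bound), is $\mathcal{O}(\lambda^{-2}\|\mathrm{IN}\|_{L^{2}}\|\epsilon\|_{L^{2}})=\mathcal{O}(\alpha^{*}\lambda^{1/2-2/n}\|\epsilon\|_{L^{2}})$.

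The step I expect to be the main obstacle is the reassembly in the treatment of $\mathcal{C}(u)$: verifying that the apparently heterogeneous quadratic terms of \eqref{definition of C(u)}—the imaginary pairing $\Im\int W^{2}\overline{\epsilon^{\sharp}}{}^{2}$, the $\partial_{t}W$‑nonlinearity, the nonlocal virial term weighted by $\phi''$, and the $\phi'$‑weighted term involving $\overline{\nabla W}$—genuinely recombine, after renormalization and the commutator algebra \eqref{commutator D-1}--\eqref{fubini theroem}, into the single coercive functional $L_{A}$ of Lemma~\ref{coercivity for C(u)}, and that every leftover is genuinely of the claimed size $\lambda^{5/2+2\omega}+\lambda^{-1}\|\epsilon\|_{H^{1/2}}^{2}$. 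Here the sharp modulation estimate $|(\epsilon,Q_{\mathcal{P}})_{r}|\lesssim(\lambda^{\omega}+\alpha^{*})\lambda^{2+\omega}$ of Proposition~\ref{Modulation estimates}, the sign $b\ge0$ of \eqref{sign of b-parameter}, and the choice $A\gg1$ used to kill the $\phi''''$‑term via Lemma~\ref{smallness of phi''''} are all indispensable.
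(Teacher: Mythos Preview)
Your treatment of $\mathcal{C}(u)$ is correct and matches the paper: the cancellation between $-\tfrac{1}{\lambda}\Im\int W^{2}\overline{\epsilon^{\sharp}}^{2}$ and the $\tfrac{i}{\lambda}Q_{\mathcal{P}}^{\sharp}$ piece of $\partial_{t}Q_{\mathcal{P}}^{\sharp}$, the reassembly of the remaining $\tfrac{b}{2\lambda}Q_{\mathcal{P}}^{\sharp}$ and $b[y\nabla Q_{\mathcal{P}}]^{\sharp}$ contributions together with the $\phi''$-nonlocal term and $\tfrac{b}{2\lambda^{2}}\int|\epsilon|^{2}$ into $\tfrac{b}{2\lambda^{2}}L_{A}[\epsilon]$, and the absorption of the $\phi''''$-term and the $(A\phi'(y/A)-y)$ mismatch via Lemmas~\ref{smallness of phi''''} and the $\langle y\rangle^{-2}$ decay of $Q_{\mathcal{P}}$, all go through exactly as in the paper.

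There is, however, a genuine gap in your treatment of $\mathcal{P}(u)$, specifically for the $\mathrm{Mod}$-contribution $\Psi_{1}$. Direct Cauchy--Schwarz on the dominant piece $\tfrac{1}{\lambda}\Im\int\tfrac{\Psi_{1}^{\sharp}}{\lambda}\overline{\epsilon^{\sharp}}$ gives only
\[
\frac{1}{\lambda^{2}}\,|\overrightarrow{\mathrm{Mod}}(t)|\,\|\epsilon\|_{L^{2}}
\;\lesssim\;(\alpha^{*}+\lambda^{\omega})\,\lambda^{\omega}\,\|\epsilon\|_{L^{2}},
\]
which with $\omega=\tfrac{1}{8}$ is $\alpha^{*}\lambda^{1/8}\|\epsilon\|_{L^{2}}$, strictly larger than both allowed error terms $\alpha^{*}\lambda^{1/2-2/n}\|\epsilon\|_{L^{2}}=\alpha^{*}\lambda^{1/4}\|\epsilon\|_{L^{2}}$ and $\lambda^{5/2+3\omega}$; the citation of \eqref{Modulation equation} and \eqref{decay of R(pqr)} alone cannot produce $\mathcal{O}(\lambda^{5/2+3\omega})$. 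The paper's remedy is structural: in renormalized variables the combination $-D\Psi_{1}^{\sharp}-\tfrac{1}{\lambda}\Psi_{1}^{\sharp}-(2|Q_{\mathcal{P}}^{\sharp}|^{2}\overline{\Psi_{1}^{\sharp}}-(Q_{\mathcal{P}}^{\sharp})^{2}\Psi_{1}^{\sharp})$ is, up to $\mathcal{O}(\lambda^{1/2})$ corrections, $-\tfrac{1}{\lambda^{2}}[L_{Q}(\text{Mod-directions})]^{\sharp}$. One then uses the generalized kernel relations $L_{Q}[iQ]=L_{Q}[\nabla Q]=0$, $i^{-1}L_{Q}[iR_{1,0,0}]=\Lambda Q$, $i^{-1}L_{Q}[iR_{0,1,0}]=-\nabla Q$ together with the orthogonality \eqref{orthogonality for Lambda Q}, \eqref{orthogonality for nabla Q} to obtain $|(i\partial_{b}Q_{\mathcal{P}},i^{-1}L_{Q}[\epsilon])_{r}|+|(i\partial_{\nu}Q_{\mathcal{P}},i^{-1}L_{Q}[\epsilon])_{r}|+|(Q_{\mathcal{P}},i^{-1}L_{Q}[\epsilon])_{r}|=\mathcal{O}(\lambda^{1/2}\|\epsilon\|_{L^{2}})$, which is the missing factor of $\lambda^{1/2}$. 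For the $\Lambda Q_{\mathcal{P}}$ and $\nabla Q_{\mathcal{P}}$ directions one must instead invoke the \emph{improved} bounds \eqref{improved bounds} on $|\tfrac{\lambda_{s}}{\lambda}+b|$ and $|\tfrac{\nut{x}_{s}}{\lambda}-\nu-c_{2}b^{2}\nu|$ (which carry the extra $\lambda^{1/2}$) combined with $|(i^{-1}L_{Q}[\epsilon],i\Lambda Q_{\mathcal{P}})_{r}|\lesssim|(\epsilon,Q_{\mathcal{P}})_{r}|+\lambda^{1/2}\|\epsilon\|_{L^{2}}$ and \eqref{unstable direction degeneracy}. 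This algebraic cancellation---not a sharper norm bound---is the crux of the $\mathcal{P}(u)$ estimate and is absent from your outline.
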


\begin{proof}
    \textbf{Step 1.} Estimate for $\mathcal{C}(u)$ \\
    We proceed with a proof by decomposing $W$ into $Q_{\mathcal{P}}^{\sharp}+z$, and we use the coercivity property for the perturbed linearized bilinear operator $L_A[\epsilon]$, Lemma~\ref{coercivity for C(u)}. 
    First, we estimate the first line in (RHS) of \eqref{definition of C(u)}:
    \begin{equation}\label{first line of C(u)}
        -\frac{1}{\lambda}\text{Im}\left\{\int W^2\overline{\epsilon^{\sharp}}^2\right\} - \text{Re}\left\{\int \partial_tW(\overline{2|\epsilon^{\sharp}|^2W+(\epsilon^{\sharp})^2\overline{W}})\right\}
    \end{equation}
    We consider the first term of \eqref{first line of C(u)}. From Lemma \ref{interaction error order}, we have $\norm{Q_{\mathcal{P}}z^{\flat}}_{L^{\infty}} \leq \alpha^*\lambda^{2+\frac{1}{2}-\frac{2}{n}}$. Since we choose $n=8$, we obtain $\norm{Q_{\mathcal{P}}^{\sharp}z}_{L^{\infty}}\lesssim \lambda$. Hence, we have
    \begin{equation*}
        \begin{aligned}
            \frac{1}{\lambda}\Im\left\{\int W^2(\overline{\epsilon^{\sharp})^2}\right\} &=\frac{1}{\lambda}\Im\left\{\int  (Q_{\mathcal{P}}^{\sharp})^2\overline{(\epsilon^{\sharp})^2}\right\} + \mathcal{O}\left(\frac{1}{\lambda}\{\norm{Q_{\mathcal{P}}^{\sharp}z}_{L^{\infty}}+\norm{z}_{L^{\infty}}^2\}\norm{\epsilon^{\sharp}}_{L^2}^2\right) \\
            &=\frac{1}{\lambda}\Im\left\{\int  (Q_{\mathcal{P}}^{\sharp})^2\overline{(\epsilon^{\sharp})^2}\right\} + \mathcal{O}\left(\frac{1}{\lambda}\norm{\epsilon}_{H^{1/2}}^2\right).
        \end{aligned}
    \end{equation*}
    We now consider the second term of \eqref{first line of C(u)}. Note that 
    \begin{equation}\label{Eq : 2}
        \begin{aligned}
            &\Re\left\{\int \partial_tW(\overline{2|\epsilon^{\sharp}|^2W+(\epsilon^{\sharp})^2\overline{W}})\right\} 
            = \Re\left\{\int \partial_tQ_{\mathcal{P}}^{\sharp}(\overline{2|\epsilon^{\sharp}|^2Q_{\mathcal{P}}^{\sharp}+(\epsilon^{\sharp})^2\overline{Q_{\mathcal{P}}^{\sharp}}})\right\} \\
            & + \Re\left\{\int \partial_tQ_{\mathcal{P}}^{\sharp}(\overline{2|\epsilon^{\sharp}|^2z+(\epsilon^{\sharp})^2\overline{z}}) + \int \partial_tz (\overline{2|\epsilon^{\sharp}|^2W+(\epsilon^{\sharp})^2\overline{W}})\right\}.
        \end{aligned}
    \end{equation}
    We first consider the interaction term of $Q_{\mathcal{P}}^{\sharp}$ and $z$, which is the second line in (RHS) of \eqref{Eq : 2}. Differentiation of $Q_{\mathcal{P}}^{\sharp}$ with respect to $t$ is
    \begin{equation}\label{diff of Q_P of t}
        \begin{aligned}
            \partial_tQ_{\mathcal{P}}^{\sharp} &= e^{i\gamma}\frac{1}{\lambda^{1/2}}\left[-\frac{\lambda_t}{\lambda}\Lambda Q_{\mathcal{P}} + i\gamma_tQ_{\mathcal{P}}+b_t\frac{\partial Q_{\mathcal{P}}}{\partial b}+\nu_t\frac{\partial Q_{\mathcal{P}}}{\partial_\nu}-\frac{\nut{x}_t}{\lambda}\nabla Q_{\mathcal{P}}\right]\left(\frac{x-\nut{x}}{\lambda}\right) \\
            & = \left(\frac{i}{\lambda}+\frac{b}{2\lambda}\right)Q_{\mathcal{P}}^{\sharp} + b\left[y\nabla Q_{\mathcal{P}}\right]^{\sharp} + \mathcal{O}\left(\left[\frac{|\text{Mod}(t)|+\lambda}{\lambda^{3/2}}\right]\langle y \rangle^{-2}\right),
        \end{aligned}
    \end{equation}
    where we denote $y = \frac{x-\nut{x}(t)}{\lambda(t)}$. 
    Here, we use $\nu \sim \lambda$ and $b^2+\eta \lesssim \lambda$ in \eqref{orders of modulation parameters}.  Using the modulation estimates \eqref{Modulation equation}, we obtain
    \begin{equation}
        \begin{aligned}
            &\left|\int \partial_tQ_{\mathcal{P}}^{\sharp}(\overline{2|\epsilon^{\sharp}|^2z+(\epsilon^{\sharp})^2\overline{z}})\right| + \left|\int \partial_tz (\overline{2|\epsilon^{\sharp}|^2W+(\epsilon^{\sharp})^2\overline{W}})\right| \\  
            &\lesssim
            \frac{1}{\lambda}\left(\norm{Q_{\mathcal{P}}^{\sharp}z}_{L^{\infty}}+\norm{[y\nabla Q_{\mathcal{P}}]^{\sharp}z}_{L^{\infty}}\right)\norm{\epsilon^{\sharp}}_{L^2}^2 + \norm{\partial_tz}_{L^4}\norm{W}_{L^4}\norm{\epsilon^{\sharp}}_{L^4}^2\\
            &\lesssim \frac{1}{\lambda}\norm{\epsilon^{\sharp}}_{L^2}^2 + \frac{1}{\lambda^{1/4}}\norm{\epsilon^{\sharp}}_{L^2}\norm{\epsilon^{\sharp}}_{H^{1/2}}\\
            &\lesssim \frac{1}{\lambda}\norm{\epsilon}_{H^{1/2}}^2.
        \end{aligned}
    \end{equation}
    The last inequality follows from the fact that $\norm{W}_{L^4}\lesssim \norm{W}_{H^{1/2}}^{1/2}\norm{W}_{L^2}^{1/2}\lesssim \lambda^{-1/4}$. We now investigate the first line of (RHS) of \eqref{Eq : 2}. From \eqref{diff of Q_P of t}, we obtain
    \begin{equation}\label{eq:4}
        \begin{aligned}
            -&\Re\left\{\int \partial_tQ_{\mathcal{P}}^{\sharp}(\overline{2|\epsilon^{\sharp}|^2Q_{\mathcal{P}}^{\sharp}+(\epsilon^{\sharp})^2\overline{Q_{\mathcal{P}}^{\sharp}}})\right\} \\
            =& \frac{1}{\lambda}\Im\left\{\int Q_{\mathcal{P}}^{\sharp}(\overline{2|\epsilon^{\sharp}|^2Q_{\mathcal{P}}^{\sharp}+(\epsilon^{\sharp})^2\overline{Q_{\mathcal{P}}^{\sharp}}})\right\}-\frac{b}{2\lambda}\Re\left\{\int (2|\epsilon^{\sharp}|^2Q_{\mathcal{P}}^{\sharp}+(\epsilon^{\sharp})^2\overline{Q_{\mathcal{P}}^{\sharp}})\overline{Q_{\mathcal{P}}^{\sharp}}\right\}\\
            &-b\Re\left\{\int \left(\frac{x-\nut{x}}{\lambda}\right)(2|\epsilon^{\sharp}|^2Q_{\mathcal{P}}^{\sharp}+(\epsilon^{\sharp})^2\overline{Q_{\mathcal{P}}^{\sharp}})\cdot \nabla Q_{\mathcal{P}}^{\sharp}\right\} + \mathcal{O}\left(\frac{1}{\lambda}\norm{\epsilon}_{H^{1/2}}^2\right).
        \end{aligned}
    \end{equation}
    Second, we estimate the last line of (RHS) in \eqref{definition of C(u)}:
    \begin{equation}\label{last line of C(u)}
        b\text{Re}\left\{\int A\phi'\left(\frac{x-\nut{x}}{A\lambda}\right)(2|\epsilon^{\sharp}|^2W+(\epsilon^{\sharp})^2\overline{W})\cdot \overline{\nabla W}\right\}
    \end{equation}
    Since $W = Q_{\mathcal{P}}^{\sharp}+z$, we have
    \begin{align*}
        \eqref{last line of C(u)} &= b\Re\left\{\int A\phi'\left(\frac{x-\nut{x}}{A\lambda}\right)(2|\epsilon^{\sharp}|^2Q_{\mathcal{P}}^{\sharp}+(\epsilon^{\sharp})^2\overline{Q_{\mathcal{P}}^{\sharp}})\cdot\overline{\nabla Q_{\mathcal{P}}^{\sharp}}\right\} \\
        & \quad + \calR,
    \end{align*}
    where $\mathcal{R}$ denotes the part consisting of interaction terms 
    between $z$ and $Q_{\mathcal{P}}$, as well as terms involving only $z$.
    \begin{align*}
         \calR= b\Re\left\{\int A\phi'\left(\frac{x-\nut{x}}{\lambda}\right)\left\{(2|\epsilon^{\sharp}|^2W+(\epsilon^{\sharp})^2\overline{W})\cdot\overline{\nabla z}+(2|\epsilon^{\sharp}|^2z+(\epsilon^{\sharp})^2\overline{z})\cdot\overline{\nabla Q_{\mathcal{P}}^{\sharp}}\right\}\right\}
    \end{align*}
    Since the function $\phi'$ is a bounded function on $\mathbb{R}$, we obtain the $\calR$-estimate:
    \begin{equation}\label{eq:2}
        \begin{aligned}
            |\calR| \lesssim bA\norm{W}_{L^{\infty}}\norm{\nabla z}_{L^{\infty}}\norm{\epsilon^{\sharp}}_{L^2}^2 + bA \norm{z\nabla Q_{\mathcal{P}}^{\sharp}}_{L^{\infty}}\norm{\epsilon^{\sharp}}_{L^2}^2 \lesssim \frac{1}{\lambda}\norm{\epsilon}_{H^{1/2}}^2.
        \end{aligned}
    \end{equation}
    Here, we use $b^2+\eta \lesssim \lambda$ from \eqref{orders of modulation parameters} 
    and the construction of the asymptotic profile $z$ in Proposition~\ref{construction of asymptotic profile}. 
    In particular, we rely on the 
    uniform-in-time bound established there (see \eqref{control of norm z(t) for geq 1/2}):
    \[
        \|\nabla z(t)\|_{L^{\infty}} \lesssim \alpha^*, \quad 
        t \in (T_{\mathrm{dec}}^{(\eta)},0] \cap [t_1,0].
    \]
    Thus, gathering the estimate of \eqref{first line of C(u)} and \eqref{last line of C(u)}, we obtain
    \begin{equation}\label{eq for C(u) to get coercivity}
        \begin{aligned}
            \mathcal{C}(u) =& -\frac{b}{2\lambda}\Re\left\{\int (2|\epsilon^{\sharp}|^2Q_{\mathcal{P}}^{\sharp}+(\epsilon^{\sharp})^2\overline{Q_{\mathcal{P}}^{\sharp}})\overline{Q_{\mathcal{P}}^{\sharp}}\right\} + \frac{b}{2\lambda^2}\int |\epsilon^{\sharp}|^2\\
            &+ b\Re\left\{\int \left(A \phi'\left(\frac{x-\nut{x}}{A\lambda}\right)-\left(\frac{x-\nut{x}}{\lambda}\right)\right)(2|\epsilon^{\sharp}|^2Q_{\mathcal{P}}^{\sharp}+(\epsilon^{\sharp})^2\overline{Q_{\mathcal{P}}^{\sharp}})\cdot \overline{\nabla Q_{\mathcal{P}}^{\sharp}}\right\}\\
            & + \frac{b}{2\lambda} \int_0^{\infty} \sqrt{\sigma} \int_{\mathbb{R}} \phi''\left(\frac{x-\nut{x}}{A\lambda}\right) |\nabla g_{\epsilon^{\sharp},\sigma}|^2 \,dx d\sigma 
            \\
            &-\frac{b}{8A^2\lambda^3}\int_0^{\infty} \int_{\mathbb{R}} \phi''''\left(\frac{x-\nut{x}}{A\lambda}\right)|g_{\epsilon^{\sharp},\sigma}|^2 \,dxd\sigma\\
            &+ \mathcal{O}\left(\frac{1}{\lambda}\norm{\epsilon}_{H^{1/2}}^2\right).
        \end{aligned}
    \end{equation}
    Now by changing of variable $y=\frac{x-\nut{x}}{\lambda}$ and applying the identity $|Q_{\mathcal{P}}(y) - Q(y)| \lesssim \lambda^{1/2}\langle y \rangle^{-2}$, which is from Proposition~\ref{Singular profile} and $b^2+\eta + \nu \lesssim \lambda^{1/2}$ from \eqref{orders of modulation parameters}, we obtain
    \begin{align*}
        \mathcal{C}(u) &= \frac{b}{2\lambda^2}\left\{ L_A[\epsilon] + \mathcal{O}(\lambda^{1/2}\norm{\epsilon}_{L^2}^2)
        -\frac{1}{4A^2}\int_0^{\infty} \sqrt{s}\int_{\mathbb{R}} \phi''''\left(\frac{y}{A}\right)|g_{\epsilon,s}|^2 dyds \right.\\
        &+\left.2\Re\left\{\int\left(A\phi'\left(\frac{y}{A}\right)-y\right)(2|\epsilon|^2Q_{\mathcal{P}}+\epsilon^2\overline{Q_{\mathcal{P}}})\cdot \overline{\nabla Q_{\mathcal{P}}}\right\}\right\}+ \mathcal{O}\left(\frac{1}{\lambda}\norm{\epsilon}_{H^{1/2}}^2\right).
    \end{align*}
    Note that $A\nabla\phi(x/A)-x =0 $ for $|x|\leq A$, $\norm{\nabla Q_{\mathcal{P}}}_{L^{\infty}} \lesssim 1$, and $Q_{\mathcal{P}}(y) \sim \langle y \rangle^{-2}$. Therefore, we have
    \begin{equation}\label{estimate 2}
        \begin{aligned}
            &\left|\int\left(A\nabla\phi\left(\frac{y}{A}\right)-y\right)(2|\epsilon|^2Q_{\mathcal{P}}+\epsilon^2\overline{Q_{\mathcal{P}}})\cdot \overline{\nabla Q_{\mathcal{P}}}\right|\\
            &\lesssim \norm{(A+|y|)Q_{\mathcal{P}}}_{L^{\infty}(x\geq A)}\norm{\nabla Q_{\mathcal{P}}}_{L^{\infty}}\norm{\epsilon}^2_{L^2} \lesssim \frac{1}{A}\norm{\epsilon}_{L^2}^2.
        \end{aligned}
    \end{equation}
    Combining the estimate \eqref{estimate 1} in Lemma~\ref{smallness of phi''''} 
    with \eqref{estimate 2}, and applying Lemma \ref{coercivity for C(u)} for 
    sufficiently large $A>0$ such that 
    $A\gtrsim \max\{A_0,\frac{4}{\kappa_2}\}$, together with the bound 
    $b \lesssim \lambda^{1/2}$, we obtain
    \begin{align*}
        \mathcal{C}(u) =&  \frac{b}{2\lambda^2}\left\{ L_A[\epsilon] + \mathcal{O}\left(\frac{1}{A}\norm{\epsilon}_{L^2}^2\right)\right\}
        + \mathcal{O}\left(\frac{1}{\lambda}\norm{\epsilon}_{H^{1/2}}^2\right)\\
        \geq& \frac{b}{2\lambda^2} \left[\frac{\kappa_2}{2}\int |\epsilon|^2 dy - \frac{1}{\kappa_2} \left\{ (\epsilon, Q)_r^2 + (\epsilon, R_{1,0,0})_r^2+(\epsilon, R_{0,1,0})_r^2+(\epsilon, iR_{0,1,0})_r^2\right\}\right] 
        \\
        &+ \mathcal{O}\left(\frac{1}{\lambda}\norm{\epsilon}_{H^{1/2}}^2\right)\\
        \geq& \frac{\kappa_2b}{4\lambda^2}\norm{\epsilon}_{L^2}^2 + \mathcal{O}\left(\lambda^{5/2+2\omega} + \frac{1}{\lambda}\norm{\epsilon}_{H^{1/2}}^2\right).
    \end{align*}
    The last inequality follows from the degeneracy of unstable direction $(\epsilon,Q_{\mathcal{P}})_r = \mathcal{O}(\lambda^{2+\omega})$ in \eqref{unstable direction degeneracy} and orthogonality conditions on $\epsilon$, \eqref{orthogonality for partial_b Q}, \eqref{orthogonality for partial_eta Q}, and \eqref{orthogonality for partial_v Q}.

    \textbf{Step 2.} We now consider the $\mathcal{P}(u)$. Recall that $\mathcal{P}(u)$ is defined by
    \begin{align*}
        \mathcal{P}(u) = \frac{1}{\lambda}\Im&\left\{\int \left[-D\Psi^{\sharp}-\frac{1}{\lambda}\Psi^{\sharp} - (2|W|^2\overline{\Psi^{\sharp}}-W^2\Psi^{\sharp}) \right.\right.\\
        &+ \left. \left. ibA \phi '\left(\frac{x-\nut{x}}{A\lambda}\right)\cdot \nabla \Psi^{\sharp} + i\frac{b}{2\lambda}\phi''\left(\frac{x-\nut{x}}{A\lambda}\right)\Psi^{\sharp}\right]\overline{\epsilon^{\sharp}} \right\}.
    \end{align*}
    First, we deal with the interaction between $Q_{\mathcal{P}}^{\sharp}$ and $z$, 
    as well as the terms involving only $z$, in
    \begin{equation}\label{profile error part involving W}
        -\frac{1}{\lambda}\Im\left\{\int (2|W|^2\overline{\Psi^{\sharp}} 
        - W^2\Psi^{\sharp})\overline{\epsilon^{\sharp}}\right\}.
    \end{equation}
    Afterwards, we handle the term involving only $Q_{\mathcal{P}}^{\sharp}$.

    From the definition of $\Psi$ in \eqref{Q_P+z profile error} with estimate \eqref{L2 norm of IN and profile error of Q_P} and modulation estimates \eqref{Modulation equation}, we have
    \begin{equation}\label{L2 norm of Psi}
       \norm{\Psi}_{L^2} \lesssim \norm{\text{Mod}}_{L^2} + \norm{\Psi_{\mathcal{P}}}_{L^2} + \norm{\text{IN}}_{L^2} \lesssim (\alpha^*+\lambda^{\omega})\lambda^{2+\omega}.
    \end{equation}
    Hence, we estimate
    \begin{align*}
        &\frac{1}{\lambda}\int (|z|^2+|Q_{\mathcal{P}}^{\sharp}z|)|\Psi^{\sharp}||\epsilon^{\sharp}|
        \lesssim
        (\alpha^{*})^3\frac{1}{\lambda}\norm{\Psi}_{L^2}\norm{\epsilon^{\sharp}}_{L^2} \lesssim (\alpha^*+\lambda^\omega)\lambda^{1+\omega}\norm{\epsilon^{\sharp}}_{L^2}.
    \end{align*}
    Now, we consider the term involving $Q_{\mathcal{P}}$ in \eqref{profile error part involving W} and the rest part of $\mathcal{P}(u)$ after subtracting \eqref{profile error estimate}. To estimate this, we decompose $\Psi^{\sharp}$ by $\Psi_1^{\sharp}$ and $\Psi_2^{\sharp}$ where they are defined by
    \begin{equation}\label{definition of phi1 and phi2}
        \begin{aligned}
            \frac{1}{\lambda}\Psi_1^{\sharp} &\coloneqq \frac{1}{\lambda^{3/2}}\{\text{Mod}(t)\}\left(\frac{x-\nut{x}}{\lambda}\right)e^{i\gamma},\\
            \frac{1}{\lambda}\Psi_2^{\sharp} &\coloneqq \frac{1}{\lambda}\Psi^{\sharp}-\frac{1}{\lambda}\Psi_1^{\sharp} = \frac{1}{\lambda^{3/2}}\{\text{IN} - \Psi_{\mathcal{P}}\}\left(\frac{x-\nut{x}}{\lambda}\right)e^{i\gamma}.
        \end{aligned}
    \end{equation}
    \textbf{1.} We first estimate the terms involving $\Psi_1^{\sharp}$: Since $b \lesssim \lambda^{1/2}$, we have
        \begin{align*}
         &\left|\frac{1}{\lambda}\Im\left\{\int \left[-D\Psi^{\sharp}_1-\frac{1}{\lambda}\Psi^{\sharp}_1 - (2|Q_{\mathcal{P}}^{\sharp}|^2\overline{\Psi_1^{\sharp}}-(Q_{\mathcal{P}}^{\sharp})^2\Psi_1^{\sharp})   \right.\right.\right.
        \\
        &\quad + ibA\phi'\left(\frac{x-\nut{x}}{A\lambda}\right)\cdot \nabla \Psi_1^{\sharp} +\left.\left.\left.i\frac{b}{2\lambda}\phi''\left(\frac{x-\nut{x}}{A\lambda}\right)\Psi_1^{\sharp}\right]\overline{\epsilon^{\sharp}} \right\}\right| \\
        \lesssim& \frac{|\overrightarrow{\text{Mod}}(t)|}{\lambda^2}\left[|(i\partial_bQ_{\mathcal{P}},i^{-1}L_Q[\epsilon])_r|+|(i\partial_\nu Q_{\mathcal{P}},i^{-1}L_Q[\epsilon])_r|+|(Q_{\mathcal{P}},i^{-1}L_Q[\epsilon])_r|+\mathcal{O}(\lambda^{1/2}\norm{\epsilon}_{L^2})\right]\\
        & + \frac{1}{\lambda^2}\left|\frac{\lambda_s}{\lambda}+b\right||(i^{-1}L_Q[\epsilon],i\Lambda Q_{\mathcal{P}})_r+\mathcal{O}(\lambda^{1/2}\norm{\epsilon}_{L^2})| \\
        & + \frac{1}{\lambda^2}\left|\frac{\nut{x}_s}{\lambda}-\nu-c_2b^2\nu\right||(i^{-1}L_Q[\epsilon],i\nabla Q_{\mathcal{P}})_r + \mathcal{O}(\lambda^{1/2}\norm{\epsilon}_{L^2})| \\
        & + \lambda^{-1/2}\norm{\nabla \Psi_1^{\sharp}}_{L^2}\norm{\epsilon}_{L^2}+\lambda^{-3/2}\norm{\Psi_1^{\sharp}}_{L^2}\norm{\epsilon}_{L^2}.
    \end{align*}
    Since $i^{-1}L_Q[iR_{1,0,0}]=\Lambda Q$, $i^{-1}L_Q[iR_{0,1,0}]=-\nabla Q$, $i^{-1}L_Q[iQ]=0$, and $|Q_{\mathcal{P}}(y) - Q(y)| \lesssim \lambda^{1/2}\langle y \rangle^{-2}$, from the Proposition \ref{Singular profile} with orthogonality conditions on $\epsilon$ from \eqref{orthogonality for Lambda Q}, \eqref{orthogonality for nabla Q}, and the kernel property of $\text{ker}(L_Q) = \{iQ,\nabla Q\}$ yield
    \begin{equation*}
        |(i\partial_bQ_{\mathcal{P}},i^{-1}L_Q[\epsilon])_r|+|(i\partial_\nu Q_{\mathcal{P}},i^{-1}L_Q[\epsilon])_r|+|(Q_{\mathcal{P}},i^{-1}L_Q[\epsilon])_r| = \mathcal{O}(\lambda^{1/2}\norm{\epsilon}_{L^2}).
    \end{equation*}
    Also, from improved bounds \eqref{improved bounds} and \eqref{unstable direction degeneracy} we have
    \begin{equation*}
         \frac{1}{\lambda^2}\left|\frac{\lambda_s}{\lambda}+b\right||(i^{-1}L_Q[\epsilon],i\Lambda Q_{\mathcal{P}})_r+\mathcal{O}(\lambda^{1/2}\norm{\epsilon}_{L^2})| \lesssim \lambda^{5/2+2\omega} + \lambda^{1+\omega}\norm{\epsilon}_{L^2}.
    \end{equation*}
    Finally noting that $i^{-1}L_Q[\nabla Q] =0$ and using a change of variable, we estimate
    \begin{align*}
        \frac{1}{\lambda^{1/2}}\norm{\nabla\Psi_1^{\sharp}}_{L^2}\norm{\epsilon}_{L^2} + \frac{1}{\lambda^{3/2}}\norm{\Psi_1^{\sharp}}_{L^2} \norm{\epsilon}_{L^2}  &\lesssim \frac{1}{\lambda^{3/2}}|\overrightarrow{\text{Mod}}(t)|\norm{\epsilon}_{L^2}\\
        &\lesssim (\alpha^*+\lambda^{\omega})\lambda^{1/2+\omega}\norm{\epsilon}_{L^2}.
    \end{align*}
    \textbf{2.} We estimate the terms involving $\Psi_2$: From the definition of \eqref{definition of phi1 and phi2}, we have
    \begin{equation}\label{L2 norm of nabla psi}
        \norm{D\Psi_2^{\sharp}}_{L^2} \sim \norm{\nabla \Psi_2^{\sharp}}_{L^2} \lesssim \frac{1}{\lambda}\left(\norm{\nabla \text{IN}}_{L^2} + \norm{\nabla \Psi_{\mathcal{P}}}_{L^2}\right).
    \end{equation}
    Note that schematically we have
    \begin{equation*}
        \nabla \text{IN} \sim \nabla (Q_{\mathcal{P}})^2z^{\flat} + (Q_{\mathcal{P}})^2\nabla z^{\flat} + \nabla Q_{\mathcal{P}}(z^{\flat})^2 + Q_{\mathcal{P}}z^{\flat}\nabla z^{\flat}
    \end{equation*}
    From \eqref{interaction definition}, since $Q_{\mathcal{P}}$ and its derivatives decay faster than $\langle y\rangle^{-2}$, we have
    \begin{equation}\label{differentiation of interaction estimate}
        \norm{\nabla \text{IN}}_{L^2} \lesssim \alpha^*\lambda^{2+\frac{1}{2}-\frac{2}{n}}, \quad \norm{\nabla \Psi_{\mathcal{P}}}_{L^2} \lesssim \lambda^3
    \end{equation}
    Therefore, using \eqref{differentiation of interaction estimate} and the fact that 
    $b \lesssim \lambda^{1/2}$, we estimate
    \begin{align*}
        &\left|\frac{1}{\lambda}\Im\left\{\int \left[-D\Psi_2^{\sharp}-\frac{1}{\lambda}\Psi_2^{\sharp} - (2|Q_{\mathcal{P}}^{\sharp}|^2\overline{\Psi_2^{\sharp}}-(Q_{\mathcal{P}}^{\sharp})^2\Psi_2^{\sharp}) + ibA\phi'\left(\frac{x-\nut{x}}{A\lambda}\right)\cdot \nabla \Psi_2^{\sharp} \right.\right.\right. \\
        &\left.\left.\left.+i\frac{b}{2\lambda}\phi''\left(\frac{x-\nut{x}}{A\lambda}\right)\Psi_2^{\sharp}\right]\overline{\epsilon^{\sharp}} \right\}\right|\\
        &\lesssim \frac{1}{\lambda}\norm{\nabla \Psi_2^{\sharp}}_{L^2}\norm{\epsilon}_{L^2} + \frac{1}{\lambda^2}\norm{\Psi_2^{\sharp}}_{L^2}\norm{\epsilon}_{L^2}+\frac{1}{\lambda^{1/2}}\norm{\nabla\Psi_2^{\sharp}}_{L^2}\norm{\epsilon}_{L^2}+\frac{1}{\lambda^{3/2}}\norm{\Psi_2^{\sharp}}_{L^2}\norm{\epsilon}_{L^2}\\
        & \lesssim \alpha^*\lambda^{1/2-2/n}\norm{\epsilon}_{L^2}.
    \end{align*}
    Henceforth, we conclude
    \begin{equation*}
        \mathcal{P}(u) = \mathcal{O}\left(\alpha^*\lambda^{1/2-2/n}\norm{\epsilon}_{L^2}+\lambda^{5/2+2\omega}\right).
    \end{equation*}
\end{proof}

In the rest of this section, we finish the proof of our main bootstrap lemma.
\begin{proof}[Finish of the proof of Lemma~\ref{Main bootstrap}] $\,$ \\
    \textbf{Step 1.} We first claim the coercivity of the functional $\mathcal{J}_A$:
    \begin{equation}\label{coercivity of J_A}
        \mathcal{J}_A(u) \geq \frac{\kappa_1}{4\lambda}\norm{\epsilon}_{H^{1/2}}^2 + \mathcal{O}\left((\alpha^*+\lambda^{\omega})\lambda^{3+2\omega} + \frac{1}{\lambda^{1/2}}\norm{\epsilon}_{H^{1/2}}^2\right).
    \end{equation}
    From the Lemma~\ref{estimate for localized virial correction}, since $\norm{\nabla\tilde{\phi}}_{L^{\infty}} = \mathcal{O}(1)$ and $\norm{\Delta \tilde{\phi}}_{L^{\infty}} = \mathcal{O}(\lambda^{-1})$, we have
    \begin{equation*}
        \left|b\Im\left\{\int A\phi'\left(\frac{x-\nut{x}}{A\lambda}\right)\cdot\nabla\epsilon^{\sharp}\overline{\epsilon^{\sharp}}\right\}\right| \lesssim b\left(\norm{D^{\frac{1}{2}}\epsilon^{\sharp}}_{L^2}^2+\frac{1}{\lambda}\norm{\epsilon^{\sharp}}_{L^2}^2\right) \lesssim \frac{1}{\lambda^{1/2}}\norm{\epsilon}_{H^{1/2}}^2.
    \end{equation*}
    By noting that $\norm{Q_{\mathcal{P}}z^{\flat}}_{L^{\infty}} = \mathcal{O}(\lambda^{2+\frac{1}{8}})$ from the Lemma~\ref{interaction error order}, we have
    \begin{equation}
        \begin{aligned}
            \mathcal{J}_A(u) &= \frac{1}{2\lambda}(L_{W^{\flat}}[\epsilon],\epsilon)_r + \mathcal{O}\left(\frac{1}{\lambda^{1/2}}\norm{\epsilon}_{H^{1/2}}^2\right)\\
            &= \frac{1}{2\lambda}\left\{(L_Q[\epsilon],\epsilon)_r + \mathcal{O}\left(\left((\alpha^*)^2+\lambda^{1/2}\right)\norm{\epsilon}_{H^{1/2}}^2\right)\right\}.
        \end{aligned}
    \end{equation}
    Now we use the coercivity of the bilinear $(L_Q[\epsilon],\epsilon)_r$ in the Lemma~\ref{Coercivity 2}. Then, from the degeneracy of unstable direction $(\epsilon,Q_{\mathcal{P}})_r = \mathcal{O}(\lambda^{2+\omega})$ in \eqref{unstable direction degeneracy} and orthogonality conditions on $\epsilon$, \eqref{orthogonality for partial_b Q}, \eqref{orthogonality for partial_eta Q}, and \eqref{orthogonality for partial_v Q}, we have
    \begin{equation}\label{smallness of degeneracy direction}
        \begin{aligned}
            (\epsilon, Q)_r^2 + (\epsilon, R_{1,0,0})_r^2&+(\epsilon, R_{0,1,0})_r^2+(\epsilon, iR_{0,0,1})_r^2 \\
            &= \mathcal{O}\left((\alpha^*+\lambda^{\omega})\lambda^{4+2\omega}+\lambda\norm{\epsilon}_{L^2}^2\right),
        \end{aligned}
    \end{equation}
    since we have $\alpha^*<1$ and $|\lambda(t)| \leq 1$. By possibly replacing $\alpha^*>0$ with a smaller one so that it satisfies $\alpha^* \lesssim \frac{\kappa_1}{2}$. Henceforth, we obtain \eqref{coercivity of J_A}.

    We now close the bootstrap for the $\norm{\epsilon}_{H^{1/2}}$.
    From the Lemma \ref{lemma : differentiation of Energy functional} and the Lemma \ref{coercivity of C(u) and P(u)}, we have
    \begin{align*}
        \frac{d}{dt}\mathcal{J}_A(u) \geq \frac{\kappa_2b}{4\lambda^2}\norm{\epsilon}_{L^2}^2 &+ \mathcal{O}\left(\alpha^*\lambda^{1/2-2/n}\norm{\epsilon}_{L^2}+ \lambda^{5/2+2\omega}  \right.\\
        &\left.+ \frac{1}{\lambda}\norm{\epsilon}_{H^{1/2}}^2+ \frac{1}{\lambda}\norm{\Psi^{\sharp}}_{L^2}^2 + \log^{\frac{1}{2}}(2+\norm{\epsilon^{\sharp}}_{H^{1/2}}^{-1})\norm{\epsilon^{\sharp}}_{H^{1/2}}^2 \right).
    \end{align*}
    Note that $b\geq 0$ from Lemma~\ref{sign of b-parameter}, and 
    $\norm{\Psi^{\sharp}}_{L^2} \lesssim \alpha^*\lambda^{2+\omega}$ from \eqref{L2 norm of Psi}. In addition, bootstrap assumption on $\epsilon$ in \eqref{bootstrap bounds} with \eqref{orders of modulation parameters} gives almost positivity of $\frac{d}{dt}\mathcal{J}_A(u)$: There is a universal constant $C_1>0$ so that
    \begin{equation}\label{almost positivity of d/dt J_A}
        \frac{d}{dt}\mathcal{J}_A(u) \geq -C_1\cdot(\alpha^*\lambda^2 + \lambda^{2+4\omega}\cdot \log^{\frac{1}{2}}(2 + \lambda^{-1-2\omega})),
    \end{equation}
    for each $\eta \in (0,\eta^*)$ and $t \in (T_{\text{dec}}^{(\eta)},0] \cap [t_1,0]$. Here we use the fact that $n=8$ and $\omega = 1/8$. By integrating from $t$ to $0$ both sides of \eqref{almost positivity of d/dt J_A} with initial conditions on $\epsilon(0)=0$ in $H^{1/2+\delta}$ and using \eqref{coercivity of J_A}, we obtain
    \begin{equation}
        \frac{\kappa_1}{4\lambda}\norm{\epsilon}_{H^{1/2}}^2 \leq C_1\lambda^{5/2}\cdot\left(\alpha^* + \lambda^{1/2}\cdot (1+\alpha^* + \log^{\frac{1}{2}}(2 + \lambda^{-1-2\omega}))\right).
    \end{equation}
    We reduce $\alpha^*>0$ if necessary so that  
    \begin{equation}\label{smallness of alpha for energy bootstrapping}
        \alpha^* < \frac{\kappa_1}{16C_1}.
    \end{equation}  
    Then we further shrink $\eta^*=\eta^*(\alpha^*)>0$ (depending on $\alpha^*$) and choose $t_1=t_1(\alpha^*)<0$ sufficiently close to $0$ such that, for every  
    \[
        \eta \in (0,\eta^*(\alpha^*)) \quad \text{and} \quad  
        t \in (T_{\mathrm{dec}}^{(\eta)},0]\cap [t_1,0],
    \]  
    we have  
    \begin{equation}\label{choice of t_1(alpha^*) and eta^*(alpha^*) - (1)}
        \lambda^{1/2}(t)\,\Bigl(1+\alpha^* + \log^{1/2}\!\bigl(2 + \lambda(t)^{-1-2\omega}\bigr)\Bigr) \le \alpha^*.
    \end{equation}  
    The existence of such $\eta^*(\alpha^*)$ and $t_1(\alpha^*)$ follows from the modulation estimate  
    \begin{equation*}
        \lambda(t) \lesssim C\,(t^2+\eta),
    \end{equation*}  
    which is a direct consequence of the bootstrap control on the modulation parameter $\lambda$ in \eqref{bootstrap bounds}. Therefore, we conclude
    \begin{equation*}
        \frac{\norm{\epsilon}_{H^{1/2}}^2(t)}{\lambda^{3+4\omega}(t)} \leq \frac{8C_1}{\kappa_1}\alpha^* < \frac{1}{2} \text{ for } t\in [t_2,0],
    \end{equation*}
    where $t_2 \in [t_1,0] \cap (T^{(\eta)}_{\text{dec}},0]$, assuming \eqref{bootstrap bounds}.
    
    \textbf{Step 2.} We close the bootstrap for modulation parameters.
    First note that $b^2+2\eta \lesssim \lambda$ and $b_{\eta}^2+2\eta \lesssim \lambda_{\eta}$ form the \eqref{asymtotic for parameters for t} and \eqref{orders of modulation parameters}. Then from modulation equation \eqref{Modulation equation} and improved bounds \eqref{improved bounds}, we have
    \begin{equation}\label{b bootstrap 1}
        \begin{aligned}
            \left\{\frac{\sqrt{b^2+2\eta}}{\lambda^{1/2}}\right\}_s &= \frac{1}{\lambda^{1/2}}\frac{b}{\sqrt{b^2+2\eta}}\left\{b_s+\frac{1}{2}b^2+\eta\right\}-\frac{\sqrt{b^2+2\eta}}{\lambda^{1/2}}\left(b+\frac{\lambda_s}{\lambda}\right) \\
            &= \frac{1}{\lambda^{1/2}}\frac{b}{\sqrt{b^2+2\eta}}\left\{b_s+\frac{1}{2}b^2+\eta\right\} + \mathcal{O}(\lambda^{5/2+\omega}).
        \end{aligned}
    \end{equation}
    Also, from the formal modulation law \eqref{formal modulation law}, we have
    \begin{equation}\label{b bootstrap 2}
        \begin{aligned}
            \left\{\frac{\sqrt{b_{\eta}^2+2\eta}}{\lambda_{\eta}^{1/2}}\right\}_s &= \frac{1}{\lambda_{\eta}^{1/2}}\frac{b_{\eta}}{\sqrt{b_\eta^2+2\eta}}\left\{(b_{\eta})_s + \frac{1}{2}b_{\eta}^2+\eta\right\}\\
            & =  \frac{1}{\lambda^{1/2}}\frac{b}{\sqrt{b^2+2\eta}}\left\{(b_{\eta})_s+\frac{1}{2}b_{\eta}^2+\eta\right\} + \mathcal{O}(\lambda^{5/2+\omega}).
        \end{aligned}
    \end{equation}
    Last equality follows from $|b_{\eta}-b|\leq \lambda^{3/2+\omega}$ and $|\lambda-\lambda_\eta|\leq \lambda^{2+\omega}$ in \eqref{bootstrap bounds}. Indeed, we have
    \begin{align*}
        &\frac{1}{\lambda_{\eta}^{1/2}}\frac{b_{\eta}}{\sqrt{b_\eta^2+2\eta}} - \frac{1}{\lambda^{1/2}}\frac{b}{\sqrt{b^2+2\eta}}\\
        & = \frac{b_{\eta}}{\sqrt{b_\eta^2+2\eta}}\left(\frac{1}{\lambda_{\eta}^{1/2}}-\frac{1}{\lambda^{1/2}}\right) + \frac{1}{\lambda^{1/2}}\left(\frac{b_{\eta}}{\sqrt{b_\eta^2+2\eta}} - \frac{b}{\sqrt{b^2+2\eta}}\right) \\
        & = \mathcal{O}(\lambda^{1/2+\omega}),
    \end{align*}
    and from \eqref{formal modulation law} and \eqref{asymtotic for parameters for t}, $(b_{\eta})_s + \frac{1}{2}b_\eta^2 + \eta = O(\lambda_\eta^2)$.
    Hence, combining \eqref{b bootstrap 1} and $\eqref{b bootstrap 2}$, and integrating with respect to the variable $s$, we obtain
    \begin{align*}
        \left|\frac{b^2+2\eta}{\lambda} - \frac{b_{\eta}^2+2\eta}{\lambda_\eta}\right| & \lesssim \int_{s(t)}^{\infty} \frac{1}{\lambda^{1/2}}\left\{(b-b_{\eta})_s + \frac{1}{2}(b^2-b_\eta^2)\right\}  + \lambda^{5/2+\omega} ds  \\
        & \lesssim (\lambda^{\omega}+\alpha^*)\lambda^{1+\omega}.
    \end{align*}
    In the last line, we use the difference of \eqref{formal modulation law} and \eqref{Modulation equation}, with bootstrap bound for modulation parameters $(b,\nu,\lambda)$ in \eqref{bootstrap bounds}. Indeed, we obtain
    \begin{align*}
        (b-b_{\eta})_s + \frac{1}{2}(b^2-b_\eta^2) &= c_1(b_\eta^4-b^4) + c_4(\nu_{\eta}^2-\nu^2) + c_3\eta(b_\eta^2-b^2) + \mathcal{O}((\alpha^*+\lambda^{\omega})\lambda^{2+\omega}) \\
        &= \mathcal{O}((\lambda^{\omega}+\alpha^*)\lambda^{2+\omega}).
    \end{align*}
    By subtracting $b_\eta$ law in \eqref{formal modulation law} from the modulation law for the parameter $b$ in \eqref{Modulation equation} gives
    \begin{align*}
        \lambda(b-b_\eta)_t + &\frac{1}{2}(b^2+2\eta - \frac{\lambda}{\lambda_\eta}(b_\eta^2 + 2\eta)) + c_3 \eta( b^2 -  \frac{\lambda}{\lambda_\eta}b_\eta^2) \\
        &+ c_1(b^4 - \frac{\lambda}{\lambda_\eta} b_{\eta}^4)
        + c_4(\nu^2-\frac{\lambda}{\lambda_\eta}\nu_\eta^2) = \mathcal{O}((\lambda^\omega+\alpha^*)\lambda^{2+\omega})
    \end{align*}
    Now, using $|\frac{\lambda}{\lambda_\eta}-1|\leq \lambda^{1+\omega}$ , $|b-b_{\eta}|\leq \lambda^{3/2+\omega}$ and $|\nu-\nu_{\eta}|\leq \lambda^{2+\omega}$, we have
    \begin{equation}
        |b-b_{\eta}| = |\int_{s(t)}^{\infty} (b-b_{\eta})_s ds| \leq \int_t^0|\lambda(b-b_{\eta})_t| dt \lesssim (\alpha^*+\lambda^{\omega}) \lambda^{3/2+\omega}.
    \end{equation}
    Then, by using $|\lambda_t + b| \lesssim (\lambda^\omega+\alpha^*)\lambda^{\frac{5}{2}+\omega}$ and $(\lambda_{\eta})_t = - b_{\eta}$, we obtain
    \begin{equation}
        |\lambda- \lambda_\eta| \leq \int_t^0 |b-b_{\eta}| dt \lesssim (\lambda^{\omega}+\alpha^*)\lambda^{2+\omega}.
    \end{equation}
    Closing the bootstrap for $\nu, \nut{x}$:
    From improved bound in \eqref{improved bounds}, the law of $\nu$ is
    \begin{equation}\label{law for v}
        |\nu_s + b\nu| \lesssim (\lambda^{\omega}+\alpha^*) \lambda^{5/2+\omega}.
    \end{equation}
    Using above \eqref{law for v} and \eqref{Modulation equation}, we have
    \begin{equation*}
        \begin{aligned}
            \left|\bigg\{\frac{\nu}{\lambda}\bigg\}_s - \bigg\{\frac{\nu_\eta}{\lambda_\eta}\bigg\}_s\right| = \left|\frac{(\nu_s+b\nu)}{\lambda}\right| \lesssim (\lambda^{\omega}+\alpha^*)\lambda^{3/2+\omega}.
        \end{aligned}
    \end{equation*}
    Integrating with respect to $s$ variable, we obtain
    \begin{equation*}
        \left|\nu- \frac{\lambda}{\lambda_\eta}\nu_\eta\right| \lesssim (\lambda^\omega+\alpha^*)\lambda^{2+\omega}.
    \end{equation*}
    Injecting $|\lambda - \lambda_\eta|\lesssim (\lambda^{\omega}+\alpha^*) \lambda^{2+\omega}$, we have the desired bound for $|\nu - \nu_\eta|$.
    From the law of $\nut{x}$ in \eqref{improved bounds}, we have
    \begin{equation*}
        \left|\frac{\nut{x}_s}{\lambda}-\nu-c_2b^2\nu\right| \lesssim (\lambda^{\omega}+\alpha^*)\lambda^{5/2+\omega}.
    \end{equation*}
    Subtracting $\nut{x}_\eta$ law in \eqref{formal modulation law}, we obtain
    \begin{equation}\label{diff of a-a_eta}
        (\nut{x}-\nut{x}_{\eta})_t -(\nu-\nu_\eta) + c_2(b^2\nu-b^2_\eta \nu_\eta) = \mathcal{O}((\lambda^{\omega}+\alpha^*)\lambda^{5/2+\omega}).
    \end{equation}
    Integration \eqref{diff of a-a_eta} with respect to $t$ and bounds for $\nu-\nu_{\eta}$ and $b-b_\eta$ yields
    \begin{equation*}
        |\nut{x}-\nut{x}_\eta| \lesssim (\lambda^{\omega}+\alpha^*)\lambda^{5/2+\omega}.
    \end{equation*}
    Finally, we estimate $\gamma - \gamma_\eta$.
    Subtracting the law of $\gamma$ and $\gamma_\eta$, we obtain
    \begin{equation*}
         \lambda|\gamma_t - (\gamma_\eta)_t| - (\gamma_\eta)_t|\lambda-\lambda_\eta| \leq |\gamma_s-(\gamma_\eta)_s| = \mathcal{O}((\lambda^{\omega}+\alpha^*)\lambda^{2+\omega}).
    \end{equation*}
    Noting that $(\gamma_\eta)_t = 1/\lambda_\eta$, and integration with respect to $t$, we conclude that
    \begin{equation*}
        |\gamma - \gamma_\eta| \lesssim (\lambda^{\omega}+\alpha^*)\lambda^{1/2+\omega}.
    \end{equation*} 
    Hence, there is a universal constant $C_2>0$ so that for each $t \in [t_2,0]$
    where $t_2 \in (T_{\text{dec}}^{(\eta)},0] \cap [t_1,0]$ we obtain
    \[
        \frac{|\lambda-\lambda_\eta|}{\lambda^{2+\omega}} + \frac{|b-b_\eta|}{\lambda^{3/2+\omega}} + \frac{|\nu-\nu_\eta|}{\lambda^{2+\omega}} + \frac{|\nut{x}-\nut{x}_\eta|}{\lambda^{5/2+\omega}} + \frac{|\gamma - \gamma_\eta|}{\lambda^{1/2+\omega}} \leq C_2(\lambda^\omega + \alpha^*).
    \]
    We set $\alpha^*>0$ smaller than the one chosen in \eqref{smallness of alpha for energy bootstrapping} to satisfy 
    \begin{equation}\label{smallness of alpha for modulation parameter bootstrapping}
        \alpha^* < \frac{1}{4C_2},
    \end{equation}
    and possibly shrinking $t_1(\alpha^*)<0$ and $\eta(\alpha^*)>0$ so that
    for $\eta \in (0,\eta^*)$ and $t \in (T_{\text{dec}}^{(\eta)},0] \cap [t_1,0]$, $\lambda(t)$ satisfies \eqref{choice of t_1(alpha^*) and eta^*(alpha^*) - (1)} and 
    \begin{equation}\label{choice of t_1(alpha^*) and eta^*(alpha^*) - (2)}
        \lambda^{\omega}(t) \leq \alpha^*
    \end{equation}
    This closes the bootstrap argument for the modulation parameters $(\lambda,b,\nu,\gamma,\nut{x})(t)$.

    \textbf{Step 3.} 
    Controlling the quantity $\|\epsilon^{\sharp}\|_{H^{1/2+\delta}}$ with $\delta = 1/16$ so that $2\delta < 1/2$ follows the approach introduced in \cite{KLR2013ARMAhalfwave}, which relies on a precise product estimate for fractional differential operators. For the reader’s convenience, we state the following standard fractional Leibniz rule.

    \begin{lemma}[Fractional Leibniz rule]\label{Leibniz and product rule for fractional calculus}
    Let $f,g \in \mathcal{S}(\mathbb{R})$ and let $\theta_1, \theta_2 \geq 0$ satisfying $\theta_1+\theta_2 \leq 1$. Then the following estimates hold:
        \begin{equation}\label{product rule for cal}
            \begin{aligned}
            \| D^{\theta_1}(fg) - (D^{\theta_1} f)g \|_{\dot{H}^{\theta_2}}  
            &\lesssim  
            \min \Big\{ \| D^{\theta_1+\theta_2} g \|_{L^2}\|\widehat{f}\|_{L^1},\;
                \|\widehat{D^{\theta_1+\theta_2} g}\|_{L^1}\| f \|_{L^2} \Big\} \\
            & \quad + \min \Big\{ \| D^{\theta_1} g \|_{L^2}\|\widehat{D^{\theta_2}f}\|_{L^1},\;
                \|\widehat{D^{\theta_1} g}\|_{L^1}\| D^{\theta_2}f \|_{L^2} \Big\},
            \end{aligned}
        \end{equation}
        and for $\theta \in [0,1]$ we have
        \begin{equation}\label{product rule for cal2}
            \| D^\theta(fg) \|_{L^2}  
            \lesssim  
            \| f \|_{L^\infty} \| D^\theta g \|_{L^2}  
            + \| g \|_{L^\infty} \| D^\theta f \|_{L^2}.
        \end{equation}
    \end{lemma}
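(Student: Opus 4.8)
The plan is to treat the two estimates separately: \eqref{product rule for cal} by a direct computation on the Fourier side, and \eqref{product rule for cal2} as the classical Kato--Ponce fractional Leibniz inequality proved via a Littlewood--Paley paraproduct decomposition.

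For \eqref{product rule for cal}, the first step is to pass to the Fourier side. Using $\widehat{fg}=\widehat f\ast\widehat g$ and $\widehat{D^{\alpha}h}(\zeta)=|\zeta|^{\alpha}\widehat h(\zeta)$, one writes
\[
  \widehat{D^{\theta_2}\,[\,D^{\theta_1}(fg)-(D^{\theta_1}f)g\,]}(\xi)
  =\int_{\mathbb R}|\xi|^{\theta_2}\big(|\xi|^{\theta_1}-|\xi-\eta|^{\theta_1}\big)\,\widehat f(\xi-\eta)\,\widehat g(\eta)\,d\eta .
\]
The second step is the pointwise multiplier bound
\[
  |\xi|^{\theta_2}\,\big||\xi|^{\theta_1}-|\xi-\eta|^{\theta_1}\big|
  \;\lesssim\;|\xi-\eta|^{\theta_2}|\eta|^{\theta_1}+|\eta|^{\theta_1+\theta_2},
\]
which follows from elementary properties of $t\mapsto t^{\alpha}$ for $\alpha\in[0,1]$ (here $\theta_1,\theta_2\le\theta_1+\theta_2\le1$): subadditivity gives $\big||\xi|^{\theta_1}-|\xi-\eta|^{\theta_1}\big|\le\big||\xi|-|\xi-\eta|\big|^{\theta_1}\le|\eta|^{\theta_1}$ by the reverse triangle inequality, while $|\xi|^{\theta_2}\le(|\xi-\eta|+|\eta|)^{\theta_2}\le|\xi-\eta|^{\theta_2}+|\eta|^{\theta_2}$. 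The third step is to take absolute values inside the integral, apply Plancherel, and estimate the two resulting convolutions by Young's inequality in its two configurations $\|u\ast v\|_{L^2}\le\min\{\|u\|_{L^1}\|v\|_{L^2},\,\|u\|_{L^2}\|v\|_{L^1}\}$: the term produced by $|\xi-\eta|^{\theta_2}|\eta|^{\theta_1}$ is $(|\cdot|^{\theta_2}|\widehat f|)\ast(|\cdot|^{\theta_1}|\widehat g|)$, which gives the second minimum in \eqref{product rule for cal}, and the term produced by $|\eta|^{\theta_1+\theta_2}$ is $|\widehat f|\ast(|\cdot|^{\theta_1+\theta_2}|\widehat g|)$, which gives the first minimum. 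One should also dispose of the degenerate cases $\theta_1=0$ (where the left side of \eqref{product rule for cal} vanishes) and $\theta_2=0$ separately.

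For \eqref{product rule for cal2} I would use the Bony decomposition $fg=\sum_j P_{\le j-3}f\,P_jg+\sum_j P_jf\,P_{\le j-3}g+\sum_{|j-k|\le2}P_jf\,P_kg$, where $P_j$ denotes a Littlewood--Paley projection to $|\xi|\sim 2^j$. For the two low--high pieces the output frequency is localized to $|\xi|\sim2^j$, so $D^{\theta}$ contributes a factor $2^{j\theta}$; almost-orthogonality together with the Littlewood--Paley square function characterization of $\dot H^{\theta}$ bounds them by $\|f\|_{L^\infty}\|D^{\theta}g\|_{L^2}$ and $\|g\|_{L^\infty}\|D^{\theta}f\|_{L^2}$ respectively. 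For the high--high piece one groups by output frequency $2^l$, uses $\|P_l(\cdots)\|_{L^2}\lesssim\|g\|_{L^\infty}\sum_{j\gtrsim l}\|P_jf\|_{L^2}$, and then sums in $\ell^2$ against the geometrically decaying kernel $2^{(l-j)\theta}$ (using $\theta>0$; the case $\theta=0$ is trivial), which again yields $\|g\|_{L^\infty}\|D^{\theta}f\|_{L^2}$. Alternatively \eqref{product rule for cal} can be recovered from \eqref{product rule for cal2} by real interpolation in the spirit of Kenig--Ponce--Vega, but in the present one-dimensional $L^2$ setting the direct Fourier computation above is cleaner.

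The routine parts are the Fourier-side identity and the Plancherel/Young bookkeeping. The one place that genuinely needs care is the high--high interaction in \eqref{product rule for cal2}: the naive Fourier--$L^1$ estimate only produces the weaker bound with $\|\widehat g\|_{L^1}$ in place of $\|g\|_{L^\infty}$, so the Littlewood--Paley summation exploiting $\theta>0$ is indispensable. That summation, together with the endpoint $\theta_1+\theta_2=1$ and the degenerate cases in the multiplier estimate, is where I expect the main effort to lie.
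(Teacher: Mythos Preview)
The paper does not actually prove this lemma: it is introduced with the phrase ``For the reader's convenience, we state the following standard fractional Leibniz rule,'' and the approach is attributed to \cite{KLR2013ARMAhalfwave} without further argument. Your proposal is correct and is essentially the standard route to these estimates---the Fourier-side computation with the subadditivity bound $|\xi|^{\theta_2}\big||\xi|^{\theta_1}-|\xi-\eta|^{\theta_1}\big|\lesssim|\xi-\eta|^{\theta_2}|\eta|^{\theta_1}+|\eta|^{\theta_1+\theta_2}$ followed by Young's inequality is exactly how \eqref{product rule for cal} is obtained in Appendix~E of \cite{KLR2013ARMAhalfwave}, and the Bony paraproduct argument for \eqref{product rule for cal2} is the classical Kato--Ponce proof.
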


    For notational convenience, we set
    \[
        \Phi \coloneqq \frac{\Psi^{\sharp}}{\lambda}.
    \]
    Recall that $W$, defined in \eqref{def of W}, satisfies the perturbed half-wave equation
    \[
        i\partial_t W - D W + |W|^2 W = \Phi.
    \]

    Unlike NLS, the half-wave equation does not enjoy small-data scattering, which forces us to work directly in the Fourier side. In particular, the control of high-regularity norms requires delicate harmonic analysis estimates based on Lemma \ref{Leibniz and product rule for fractional calculus}. We now begin the main argument to close the bootstrap for
    \[
     \|\epsilon^{\sharp}\|_{H^{1/2+\delta}}.
    \]

    Also, recall that $\epsilon^{\sharp}$ satisfies  
    \begin{equation}\label{eq for ep sharp}
        i\partial_t \epsilon^{\sharp} = D\epsilon^{\sharp} - |\epsilon^{\sharp}|^2\epsilon^{\sharp}- \mathcal{F} - \Phi, 
    \end{equation}
    where  
    \[
        \mathcal{F} = |W+\epsilon^{\sharp}|^2(W+\epsilon^{\sharp}) - |W|^2W - |\epsilon^{\sharp}|^2\epsilon^{\sharp}.
    \]
    We start by differentiating $\norm{\epsilon^{\sharp}}_{\dot{H}^{1/2+\delta}}^2$ with respect to $t$:  
    \begin{equation}\label{differentiaion of H^(1/2+)}
        \begin{aligned}
            \frac{1}{2}\frac{d}{dt}\norm{\epsilon^{\sharp}}_{\dot{H}^{1/2+\delta}}^2
            = -\Im\big(D^{1/2+\delta}(-D\epsilon^{\sharp}+|\epsilon^{\sharp}|^2\epsilon^{\sharp}+\mathcal{F}+\Phi),\,
            D^{1/2+\delta}\epsilon^{\sharp}\big).
        \end{aligned}
    \end{equation}
    The highest-order term vanishes in the imaginary part of the right-hand side of \eqref{differentiaion of H^(1/2+)}, since the operator $D$ is self-adjoint:  
    \[
        \int D^{1/2+\delta}(D\epsilon^{\sharp})\,\overline{D^{1/2+\delta}\epsilon^{\sharp}}\;dx \in \mathbb{R}.
    \]
    This is precisely why we work with the quantity $\norm{D^{1/2+\delta}\epsilon^{\sharp}}_{L^2}^2$ to close the bootstrap and control the $H^{1/2+}$-norm.

    Next, we decompose the nonlinearity as  
    \[
        \mathcal{F}+|\epsilon^{\sharp}|^2\epsilon^{\sharp} = L(\epsilon^{\sharp}) + N(\epsilon^{\sharp}),
    \]
    where $L(\epsilon^{\sharp})$ denotes the terms linear in $\epsilon^{\sharp}$ and $N(\epsilon^{\sharp})$ collects the higher-order nonlinear contributions. That is 
    \begin{align*}
        L(\epsilon^{\sharp}) &\coloneqq |W|^2\epsilon^{\sharp} + 2\Re(\overline{W}\epsilon^{\sharp})W \\
        N(\epsilon) &\coloneqq W|\epsilon^\sharp|^2 + 2\Re(\overline{W}\epsilon^{\sharp})\epsilon^{\sharp}
    \end{align*}
    Note also that $\Phi$ does not contain any $\epsilon^{\sharp}$-dependence.  
    We will therefore estimate the contributions of $D^{1/2+\delta}L(\epsilon^{\sharp})$, $D^{1/2+\delta}N(\epsilon^{\sharp})$, and $D^{1/2+\delta}\Phi$ against $D^{1/2+\delta}\epsilon^{\sharp}$ separately.

    We begin with the estimate of $L(\epsilon^{\sharp})$ paired against $D^{1/2+\delta}\epsilon^{\sharp}$:
    \begin{equation}\label{D^(1/2+)L(ep)}
        \begin{aligned}
            \Im\big(D^{1/2+\delta}L(\epsilon^{\sharp}),\,D^{1/2+\delta}\epsilon^{\sharp}\big) 
            &= \Im\big(D^{1/2+\delta}(|W|^2\epsilon^{\sharp}),\,D^{1/2+\delta}\epsilon^{\sharp}\big) \\
            &\quad + \Im\big(D^{1/2+\delta}(2\Re(\overline{W}\epsilon^{\sharp})W),\,D^{1/2+\delta}\epsilon^{\sharp}\big).
        \end{aligned}
    \end{equation}

    We first handle the first term on the right-hand side of \eqref{D^(1/2+)L(ep)}.  
    Define the commutator error term by
    \begin{equation*}
        E_1 \coloneqq 
        \Im\Big( \big(D^{1/2+\delta}(|W|^2\epsilon^{\sharp}) - D^{-\delta}(|W|^2D^{1/2+\delta}\epsilon^{\sharp})\big),\,D^{1/2+\delta}\epsilon^{\sharp}\Big).
    \end{equation*}
    Then we can rewrite
    \begin{equation*}
        \Im\big(D^{1/2+\delta}(|W|^2\epsilon^{\sharp}),\,D^{1/2+\delta}\epsilon^{\sharp}\big)
        = \Im\big(|W|^2D^{1/2+2\delta}\epsilon^{\sharp},\,D^{1/2}\epsilon^{\sharp}\big) + E_1.
    \end{equation*}

    Using the fractional Leibniz rule \eqref{product rule for cal} we bound $E_1$ as
    \begin{equation*}
        |E_1| 
        \lesssim 
        \norm{\epsilon^{\sharp}}_{L^2}
        \norm{D^{1/2}\epsilon^{\sharp}}_{L^2}
        \norm{\widehat{D^{1/2+2\delta}|W|^2}}_{L^1} 
        \lesssim 
        \lambda^{-2-2\delta}\norm{\epsilon}_{H^{1/2}}^2.
    \end{equation*}

    Next, we estimate the main term 
    \[
        \Im\big(|W|^2D^{1/2+2\delta}\epsilon^{\sharp},\,D^{1/2}\epsilon^{\sharp}\big).
    \]
    For notational convenience, we set 
    \[
        f_1 \coloneqq \widehat{D^{1/2}\epsilon^{\sharp}}.
    \]
    By Plancherel’s theorem and the smallness of $\delta$, we have the identity
    \begin{align*}
        2i\Big(\widehat{D^{1/2+2\delta}\epsilon^{\sharp} \cdot
        \overline{D^{1/2}\epsilon^{\sharp}}}\Big)(\xi) 
        &= \int_{\mathbb{R}}\big(|\xi - \zeta|^{2\delta}-|\zeta|^{2\delta}\big)\,
       \widehat{f_1}(\xi - \zeta)\,\overline{\widehat{f_1}}(\zeta)\,d\zeta \\
        &\lesssim_{\delta} |\xi|^{2\delta}\norm{f_1}_{L^2}^2.
    \end{align*}
    Therefore, we obtain the bound
    \begin{align*}
    \big|\Im\big(|W|^2D^{1/2+2\delta}\epsilon^{\sharp},\,D^{1/2}\epsilon^{\sharp}\big)\big|
        \lesssim 
        \norm{|\xi|^{2\delta}\,\widehat{|W|^2}}_{L^1_{\xi}}\,
        \norm{f_1}_{L^2}^2
        \lesssim 
        \lambda^{-2-2\delta}\,\norm{\epsilon}_{H^{1/2}}^2.    
    \end{align*}

\medskip

    We now move on to the second term on the right-hand side of \eqref{D^(1/2+)L(ep)}, namely
    \[
        \Im\big(D^{1/2+\delta}(2\Re(\overline{W}\epsilon^{\sharp})W),\,D^{1/2+\delta}\epsilon^{\sharp}\big).
    \]
    This term is more delicate due to the interaction between $\epsilon^{\sharp}$ and $W$.  
    We shall decompose it into several commutator error terms and a main term, making repeated use of the fractional Leibniz rules \eqref{product rule for cal} and \eqref{product rule for cal2}.  
    For this purpose, we introduce the following quantities:
    \begin{align*}
        &E_2 \coloneqq \Im\big(D^{1/2+\delta}\{2\Re(\epsilon^{\sharp}\overline{W})W\},\,D^{1/2+\delta}\epsilon^{\sharp}\big)
        -\Im\big(D^{1/2+\delta}\{2\Re(\epsilon^{\sharp}\overline{W})\}W,\,D^{1/2+\delta}\epsilon^{\sharp}\big),\\
        &E_3 \coloneqq \Im\big(D^{1/2+\delta}\{2\Re(\epsilon^{\sharp}\overline{W})\}W,\,D^{1/2+\delta}\epsilon^{\sharp}\big)
        -\Im\big(2\Re\big((D^{1/2+\delta}\epsilon^{\sharp})\overline{W}\big)W,\,D^{1/2+\delta}\epsilon^{\sharp}\big),\\
        &E_4 \coloneqq \Im\big(2\Re\big((D^{1/2+\delta}\epsilon^{\sharp})\overline{W}\big)W,\,D^{1/2+\delta}\epsilon^{\sharp}\big)
        -\Im\big(2D^{1/2}\big[\Re\big((D^{\delta}\epsilon^{\sharp})\overline{W}\big)\big]W,\,D^{1/2+\delta}\epsilon^{\sharp}\big),\\
        &E_5 \coloneqq \Im\big(2D^{1/2}\big[\Re\big((D^{\delta}\epsilon^{\sharp})\overline{W}\big)\big]W,\,D^{1/2+\delta}\epsilon^{\sharp}\big)
        -\Im\big(2\Re\big((D^{\delta}\epsilon^{\sharp})\overline{W}\big)W,\,D^{1+\delta}\epsilon^{\sharp}\big).
    \end{align*}
    With this notation, we can express the second term in \eqref{D^(1/2+)L(ep)} as
    \begin{equation*}
        \text{(second term of \eqref{D^(1/2+)L(ep)})} 
        = E_2 + E_3 + E_4 + E_5
        + \Im\big(2\Re((D^{\delta}\epsilon^{\sharp})\overline{W})W,\,D^{1+\delta}\epsilon^{\sharp}\big).
    \end{equation*}
    We shall estimate these terms one by one.

    Estimate for $E_2$:
    For notational convenience, we set
    \[
        f_2 \coloneqq 2\Re\bigl(\epsilon^{\sharp}\overline{W}\bigr), \quad
        g_2 \coloneqq W, \quad
        h_2 \coloneqq D^{1/2}\epsilon^{\sharp}.
    \]
    Applying \eqref{product rule for cal} and \eqref{product rule for cal2}, we obtain
    \begin{align*}
        |E_2|
        &\le \Bigl|\bigl(D^{\delta}\bigl(D^{1/2+\delta}(f_2 g_2) - (D^{1/2+\delta}f_2) g_2\bigr),\, h_2\bigr)\Bigr| \\
        &\lesssim \|h_2\|_{L^2}\Bigl(\|\widehat{D^{1/2+2\delta}g_2}\|_{L^1}\|f_2\|_{L^2}
        + \|D^{\delta}f_2\|_{L^2}\|\widehat{D^{1/2+\delta}g_2}\|_{L^1}\Bigr) \\
        &\lesssim \lambda^{-1}\|\epsilon\|_{H^{1/2}}\Bigl(\lambda^{-1-2\delta}\|\epsilon\|_{H^{1/2}}
        + \lambda^{-1/2-\delta}\bigl(\|D^{\delta}\epsilon^{\sharp}\|_{L^2}\|W\|_{L^{\infty}}
        + \|\widehat{D^{\delta}W}\|_{L^1_{\xi}}\|\epsilon\|_{L^2}\bigr)\Bigr) \\
        &\lesssim \lambda^{-2-2\delta}\|\epsilon\|_{H^{1/2}}^2.
    \end{align*}

    Estimate for $E_3$:
    For notational convenience, we set
    \[
        f_3 \coloneqq \epsilon^{\sharp}, \quad
        g_3 \coloneqq \overline{W}, \quad
        h_3 \coloneqq \overline{W}\,D^{1/2+\delta}\epsilon^{\sharp}.
    \]
    From \eqref{product rule for cal}, we have
    \begin{equation*}
        \|h_3 - D^{\delta}(\overline{W}D^{1/2}\epsilon^{\sharp})\|_{L^2}
        \lesssim \|\widehat{D^{\delta}\overline{W}}\|_{L^1}\|D^{1/2}\epsilon^{\sharp}\|_{L^2}
        \lesssim \lambda^{-1-\delta}\|\epsilon\|_{H^{1/2}}.
    \end{equation*}
    Using the triangle inequality, we deduce
    \begin{equation}\label{estimate for E3}
        \begin{aligned}
            |E_3|
            &\lesssim \Bigl|\bigl(D^{1/2+\delta}(f_3 g_3) - (D^{1/2+\delta}f_3) g_3,\,
            h_3 - D^{\delta}(\overline{W}D^{1/2}\epsilon^{\sharp})\bigr)\Bigr| \\
            &\quad + \Bigl|\bigl(D^{\delta}\bigl[D^{1/2+\delta}(f_3 g_3) - (D^{1/2+\delta}f_3) g_3\bigr],\,
            \overline{W}D^{1/2}\epsilon^{\sharp}\bigr)\Bigr|.
        \end{aligned}
    \end{equation}
    Each term on the right-hand side of \eqref{estimate for E3} is estimated as follows.  
    First, by \eqref{product rule for cal},
    \begin{align*}
        &\Bigl|\bigl(D^{1/2+\delta}(f_3 g_3) - (D^{1/2+\delta}f_3) g_3,\,
        h_3 - D^{\delta}(\overline{W}D^{1/2}\epsilon^{\sharp})\bigr)\Bigr| \\
        &\quad \lesssim \|\widehat{D^{1/2+\delta}g_3}\|_{L^1_\xi}\|f_3\|_{L^2}\,
        \lambda^{-1-\delta}\|\epsilon\|_{H^{1/2}}
        \lesssim \lambda^{-2-2\delta}\|\epsilon\|_{H^{1/2}}^2.
    \end{align*}
    Similarly,
    \begin{align*}
        &\Bigl|\bigl(D^{\delta}\bigl[D^{1/2+\delta}(f_3 g_3) - (D^{1/2+\delta}f_3) g_3\bigr],\,
        \overline{W}D^{1/2}\epsilon^{\sharp}\bigr)\Bigr| \\
        &\quad \lesssim \lambda^{-1/2}\|D^{1/2}\epsilon^{\sharp}\|_{L^2}
        \Bigl(\|\widehat{D^{1/2+\delta}g_3}\|_{L^1}\|D^{\delta}f_3\|_{L^2}
        + \|\widehat{D^{1/2+2\delta}g_3}\|_{L^1}\|f_3\|_{L^2}\Bigr) \\
        &\quad \lesssim \lambda^{-2-2\delta}\|\epsilon\|_{H^{1/2}}^2.
    \end{align*}
    Combining the above, we conclude
    \[
        |E_3| \lesssim \lambda^{-2-2\delta}\|\epsilon\|_{H^{1/2}}^2.
    \]

    Estimate for $E_4$:
    For notational convenience, set
    \[
        f_4 \coloneqq D^{\delta}\epsilon^{\sharp}, \quad
        g_4 \coloneqq \overline{W}, \quad
        h_4 \coloneqq \overline{W}\,D^{1/2}\epsilon^{\sharp}, \quad
        k_4 \coloneqq D^{\delta}h_4 - \overline{W}\,D^{1/2+\delta}\epsilon^{\sharp}.
    \]
    Since
    \[
        |E_4| \lesssim 
        \bigl|\bigl(D^{1/2}(f_4 g_4) - (D^{1/2}f_4) g_4,\,
        D^{\delta}h_4 - k_4\bigr)\bigr|,
    \]
    we estimate each term separately.  
    First,
    \begin{align*}
        \bigl|\bigl(D^{\delta}[D^{1/2}(f_4 g_4) - (D^{1/2}f_4) g_4],\, h_4\bigr)\bigr|
        &\lesssim \|h_4\|_{L^2}\left(\|D^{\delta}f_4\|_{L^2}\|\widehat{D^{1/2}g_4}\|_{L^1}
        + \|f_4\|_{L^2}\|\widehat{D^{1/2+\delta}g_4}\|_{L^1}\right)\\
        &\lesssim \lambda^{-2-2\delta}\|\epsilon\|_{H^{1/2}}^2.
    \end{align*}
    Second, by \eqref{product rule for cal},
    \begin{align*}
        \bigl|\bigl(D^{1/2}(f_4 g_4) - (D^{1/2}f_4) g_4,\, k_4\bigr)\bigr|
        &\lesssim \|f_4\|_{L^2}\|\widehat{D^{1/2}g_4}\|_{L^1}\|k_4\|_{L^2}
        \lesssim \lambda^{-2-2\delta}\|\epsilon\|_{H^{1/2}}^2,
    \end{align*}
    since
    \[
        \|k_4\|_{L^2}\lesssim \|\widehat{D^{\delta}\overline{W}}\|_{L^1}\|D^{1/2}\epsilon^{\sharp}\|_{L^2}
        \lesssim \lambda^{-1-\delta}\|\epsilon\|_{H^{1/2}}.
    \]
    Thus, we obtain
    \[
        |E_4| \lesssim \lambda^{-2-2\delta}\|\epsilon\|_{H^{1/2}}^2.
    \]
    
    Estimate for $E_5$:
    For notational convenience, set
    \[
        f_5 \coloneqq 2\Re\bigl(D^{\delta}\epsilon^{\sharp}\,\overline{W}\bigr), \quad
        g_5 \coloneqq W, \quad
        h_5 \coloneqq D^{1/2}\epsilon^{\sharp}.
    \]
    By \eqref{product rule for cal} and \eqref{product rule for cal2},
    \begin{align*}
        |E_5|
        &\lesssim \bigl|\bigl(D^{1/2}(f_5 g_5) - (D^{1/2}f_5) g_5,\, D^{\delta}h_5\bigr)\bigr|\\
        &\lesssim \left(\|\widehat{D^{1/2+\delta}g_5}\|_{L^1}\|f_5\|_{L^2}
        + \|\widehat{D^{1/2}g_5}\|_{L^1}\|D^{\delta}f_5\|_{L^2}\right)\|h_5\|_{L^2} \\
        &\lesssim \lambda^{-2-2\delta}\|\epsilon\|_{H^{1/2}}^2.
    \end{align*}
    
    Finally, we investigate $\Im([2\Re(D^{\delta}\epsilon^{\sharp}\overline{W})W],D^{1+\delta}\epsilon^{\sharp})$: Since $\epsilon^{\sharp}$ satisfies \eqref{eq for ep sharp}, we have
    \begin{align*}
        &\Im([2\Re(D^{\delta}\epsilon^{\sharp}\overline{W})W] ,D^{1+\delta}\epsilon^{\sharp}) = -\frac{d}{dt}\left(\norm{\Re(D^{\delta}\epsilon^{\sharp}\overline{W})}_{L^2}^2\right)\\&+\Re(2\Re(D^{\delta}\epsilon^{\sharp}\overline{W}),\partial_t\overline{W}D^{\delta}\epsilon^{\sharp})
        +\Im(2\Re(D^{\delta}\epsilon^{\sharp}\overline{W})W,D^{\delta}(|\epsilon^{\sharp}|^2\epsilon^{\sharp}+\mathcal{F}+\Phi)).
    \end{align*}
    From Cauchy-Schwartz inequality and \eqref{diff of Q_P of t} and from the fact that $i\partial_tz = Dz - |z|^2z$ and \eqref{control of norm z(t) for geq 1/2}, we estimate
    \begin{align*}
        |\Re(2\Re(D^{\delta}\epsilon^{\sharp}\overline{W}),\overline{\partial_tW}
        D^{\delta}\epsilon^{\sharp})| \lesssim \norm{D^{\delta}\epsilon^{\sharp}}_{L^2}^2\norm{\overline{W}}_{L^\infty}\norm{\overline{\partial_tW}}_{L^{\infty}}\lesssim \lambda^{-3/2-2\delta}\norm{\epsilon}_{H^{1/2}}^2.
    \end{align*}
    To control the last term of the above equality, we use Lemma~\ref{log loss of the L-infty estiamte}.
    Since we have schematically 
    \begin{equation*}
        |\epsilon^{\sharp}|^2\epsilon^{\sharp}+\mathcal{F} \sim W^2\epsilon^{\sharp} + W(\epsilon^{\sharp})^2,
    \end{equation*}
    combining with \eqref{product rule for cal2}, we have
    \begin{align*}
        &\norm{D^{\delta}(W^2\epsilon^{\sharp})}_{L^2} \lesssim \norm{W}_{L^{\infty}}^2\norm{D^{\delta}\epsilon^{\sharp}}_{L^2} + \norm{D^{\delta}W}_{L^2}\norm{W}_{L^{\infty}}\norm{\epsilon^{\sharp}}_{L^{\infty}}\\
        &\lesssim \lambda^{-1-\delta}\norm{\epsilon}_{H^{1/2}} + \lambda^{-1-\delta}\norm{\epsilon}_{H^{1/2}}\left[\log\left(2+\frac{\norm{\epsilon^{\sharp}}_{H^{1/2+\delta}}}{\norm{\epsilon^{\sharp}}_{H^{1/2}}}\right)\right]^{1/2},\\
        &\norm{D^{\delta}(W(\epsilon^{\sharp})^2)}_{L^2} \lesssim \norm{\epsilon^{\sharp}}_{L^{\infty}}^2\norm{D^{\delta}W}_{L^2} + \norm{D^{\delta}\epsilon^{\sharp}}_{L^2}\norm{\epsilon^{\sharp}}_{L^{\infty}}\norm{W}_{L^{\infty}} \\
        &\lesssim \lambda^{-1-\delta}\norm{\epsilon}_{H^{1/2}}^2\left\{\log\left(2+\frac{\norm{\epsilon^{\sharp}}_{H^{1/2+\delta}}}{\norm{\epsilon^{\sharp}}_{H^{1/2}}}\right)+ \left[\log\left(2+\frac{\norm{\epsilon^{\sharp}}_{H^{1/2+\delta}}}{\norm{\epsilon^{\sharp}}_{H^{1/2}}}\right)\right]^{1/2}\right\}
    \end{align*}
    Hence, noting that $\norm{\epsilon}_{H^{1/2}}^2 < \frac{1}{2}\lambda^{3+4\omega} \ll 1$, we obtain
    \begin{align*}
        \norm{D^{\delta}\left(|\epsilon^{\sharp}|^2\epsilon^{\sharp} + \mathcal{F}\right)}_{L^2} \lesssim \lambda^{-1-\delta}\norm{\epsilon}_{H^{1/2}}\log\left(2+\frac{\norm{\epsilon^{\sharp}}_{H^{1/2+\delta}}}{\norm{\epsilon^{\sharp}}_{H^{1/2}}}\right).
    \end{align*}
    Now, we estimate $\norm{D^{\delta}\Phi}_{L^2}$: We use the Gagliardo-Nirenberg inequality, and the previous estimate for $L^2$ and $H^1$ -norm of $\Phi$, we have
    \begin{equation*}
        \norm{D^{\delta}\Phi}_{L^2} \lesssim \norm{\Phi}_{L^2}^{1-\delta}\norm{\Phi}_{H^1}^{\delta} \lesssim \alpha^*\lambda^{1-\delta}.
    \end{equation*}
    This estimate follows from \eqref{L2 norm of Psi} and \eqref{L2 norm of nabla psi} which implies 
    \begin{equation}\label{estimate for Phi}
        \norm{\Phi}_{L^2} \lesssim \alpha^*\lambda^{1+\omega}, \quad \norm{\Phi}_{H^1} \lesssim \alpha^*\lambda^{\omega}
    \end{equation}
    Thus, we conclude that
    \begin{align*}
        &|\Im(2\Re(D^{\delta}\epsilon^{\sharp}\bar{W})W,D^{\delta}(|\epsilon^{\sharp}|^2\epsilon^{\sharp}+\mathcal{F}+\Phi))| \\
        & \lesssim  \norm{2\Re({D^{\delta}\epsilon^{\sharp}\overline{W}})W}_{L^2}\left(\norm{D^{\delta}(|\epsilon^{\sharp}|^2\epsilon^{\sharp}+\mathcal{F})}_{L^2}+\norm{D^{\delta}\Phi}_{L^2}\right)\\
        &\lesssim \lambda^{-2-2\delta}\norm{\epsilon}_{H^{1/2}}^2\log\left(2+\frac{\norm{\epsilon^{\sharp}}_{H^{1/2+\delta}}}{\norm{\epsilon^{\sharp}}_{H^{1/2}}}\right)
        + \alpha^*\lambda^{-2\delta}\norm{\epsilon}_{H^{1/2}}.
    \end{align*}
    Therefore, from the bootstrap assumption of $\norm{\epsilon^{\sharp}}_{\dot{H}^{1/2+\delta}} \leq \lambda^{1-2\delta} \leq 1$ and $\norm{\epsilon}_{H^{1/2}}^2 \lesssim \alpha^*\lambda^{3+4\omega}$ which is from the energy estimate, we conclude that
    \begin{equation}\label{estimate for L(ep^sharp)}
        \begin{aligned}
            \Im(D^{1/2+\delta}L(\epsilon^{\sharp}),D^{1/2+\delta}\epsilon^{\sharp}) &= -\frac{d}{dt}\left(\norm{\Re([D^{\delta}\epsilon^{\sharp}]\overline{W})}_{L^2}^2\right) + \mathcal{O}\left(\alpha^* \lambda^{3/2-2\delta}|\log(\lambda)|\right).
        \end{aligned}
    \end{equation}
    
    We continue to estimate the remainder term $\Im(D^{1/2+\delta}(|\epsilon^{\sharp}|^2\epsilon^{\sharp}+\mathcal{F}),D^{1/2+\delta}\epsilon^{\sharp})$.
    The nonlinear term $N(\epsilon^{\sharp})$ of $\mathcal{F}+|\epsilon^{\sharp}|^2\epsilon^{\sharp}$ is schematically identical to $W(\epsilon^{\sharp})^2$.  
    Thus, using $\norm{\epsilon^{\sharp}}_{\dot{H}^{1/2+\delta}} \leq \lambda^{1-2\delta}$ and \eqref{product rule for cal}, we obtain
    \begin{equation}\label{estimate for N(ep^sharp)}
        \begin{aligned}
            &|\Im(D^{1/2+\delta}(W(\epsilon^{\sharp})^2),D^{1/2+\delta}\epsilon^{\sharp})| \\
            &\lesssim \norm{D^{1/2+\delta}\epsilon^{\sharp}}_{L^2}\left(\norm{D^{1/2+\delta}W}_{L^2}\norm{\epsilon^{\sharp}}_{L^{\infty}}^2 + \norm{\epsilon^{\sharp}}_{L^{\infty}}\norm{W}_{L^{\infty}}\norm{D^{1/2+\delta}\epsilon^{\sharp}}_{L^2}\right)\\
            & \lesssim \lambda^{-3/2-\delta}\norm{D^{1/2+\delta}\epsilon^{\sharp}}_{L^2}\norm{\epsilon}_{H^{1/2}}^2\log\left(2+\frac{\norm{\epsilon^{\sharp}}_{H^{1/2+\delta}}}{\norm{\epsilon^{\sharp}}_{H^{1/2}}}\right)\\
            &\quad + \lambda^{-1}\norm{D^{1/2+\delta}\epsilon^{\sharp}}_{L^2}^2\norm{\epsilon}_{H^{1/2}}\left[\log\left(2+\frac{\norm{\epsilon^{\sharp}}_{H^{1/2+\delta}}}{\norm{\epsilon^{\sharp}}_{H^{1/2}}}\right)\right]^{1/2}\\
            & \lesssim  \alpha^*\lambda^{3/2-2\delta}|\log(\lambda)|
        \end{aligned}
    \end{equation}
    Finally, we estimate the term $\Im(D^{1/2+\delta}\Phi,D^{1/2+\delta}\epsilon^{\sharp})$:
    From self-adjointness of $D^{\delta}$ and the Cauchy-Schwartz inequality, we have
    \begin{equation}\label{estimate for Phi2}
        \begin{aligned}
            |\Im(D^{1/2+2\delta}\Phi,D^{1/2}\epsilon^{\sharp})| \lesssim \alpha^*\lambda^{-1/2}\norm{\epsilon}_{H^{1/2}}\norm{D^{1/2+2\delta}\Phi}_{L^2} \lesssim \alpha^*\lambda^{3/2-2\delta}.
        \end{aligned}
    \end{equation}
    The last inequality follows from \eqref{estimate for Phi} and
    \begin{equation*}
        \norm{D^{1/2+2\delta}\Phi}_{L^2} \lesssim \norm{\Phi}_{L^2}^{1/2-2\delta}\norm{\Phi}_{H^1}^{1/2+2\delta},
    \end{equation*}
    Hence, combining estimate \eqref{estimate for L(ep^sharp)}, \eqref{estimate for N(ep^sharp)}, and \eqref{estimate for Phi2} we conclude that
    \begin{equation}
        \frac{d}{dt}\norm{\epsilon^{\sharp}(t)}_{\dot{H}^{1/2+\delta}}^2 = -\frac{d}{dt}\left(\norm{\Re([D^{\delta}\epsilon^{\sharp}]\overline{W})}_{L^2}^2\right) + \mathcal{O}(\alpha^*\lambda^{3/2-2\delta}|\log(\lambda)|).
    \end{equation}
    Integration with respect to $t$ and using the fundamental theorem of calculus with
    initial data $\epsilon(0)=0$ in $H^{1/2+\delta}$ and bound for $\norm{\epsilon}_{H^{1/2}}\lesssim \alpha^*\lambda^{3/2+2\omega}$, there is a universal constant $C_3>0$ so that
    \begin{equation*}
        \norm{\epsilon^{\sharp}(t)}_{\dot{H}^{1/2+\delta}}^2 \leq C_3\alpha^*\lambda^{2-4\delta}, \text{ for } t \in (T_{\text{dec}}^{(\eta)},0]\cap [t_1,0].
    \end{equation*}
    By replacing $\alpha^*>0$ smaller if necessary so that 
    \[
        \alpha^* < \frac{1}{2C_3},
    \]
    and shrinking $t_1(\alpha^*)<0$ and $\eta(\alpha^*)>0$ satisfying
    \eqref{choice of t_1(alpha^*) and eta^*(alpha^*) - (1)} and \eqref{choice of t_1(alpha^*) and eta^*(alpha^*) - (2)}, we finish the proof of Lemma~\ref{Main bootstrap}.
\end{proof}

\appendix

\section{ On the Modulation equations}
\subsection{Uniqueness of Modulation Parameters}\label{Uniqueness of modulation parameters proof}
In this section, we prove a uniqueness of modulation parameters $\{\lambda , \nut{x}, \gamma, b, \nu\}$ with orthogonal conditions on $\epsilon$, \eqref{orthogonality for Lambda Q}, \eqref{orthogonality for nabla Q}, \eqref{orthogonality for partial_eta Q}, \eqref{orthogonality for partial_b Q}, and \eqref{orthogonality for partial_v Q}. We prove this by the Implicit Function Theorem.
\begin{lemma}\label{decomposition lemma}
    There is a constant $\eta^*>0$ possibly smaller than the one chosen in Lemma~\ref{approximated dynamics} such that the following property holds:
    For $\eta \in (0,\eta^*)$, let $u_{\eta} \in \mathcal{C}((T^{(\eta)}_{\text{exist}},0]:H^{1/2+\delta}(\bbR))$ be a solution to \eqref{half-wave} with initial data $u_{\eta}(0)$ defined in \eqref{initial data of u_eta}, where $T^{(\eta)}_{\text{exist}}<0$ is denoted by the maximal lifespan of $u_{\eta}(t)$ backward in time.
    Let $\mathcal{A}^{(\eta)}_{\text{dec}}$ be a subset of $( T^{(\eta)}_{\text{exist}},0]$ defined by 
    \begin{equation}\label{definition of A_{dec}}
    \begin{aligned}
        \mathcal{A}^{(\eta)}_{\text{dec}} \coloneqq \{\tilde{t}<0 \:|\:& \text{For each } t \in (\tilde{t},0], \text{there is a unique tuple of parameters} \\
        & \text{$(\lambda(t),b(t),\nu(t),\nut{x}(t),\gmm(t))$ such that the solution $u_{\eta}(t)$ admits} \\ & \text{a geometrical  decomposition as following:\ }\\
        &u_{\eta}(t,x) = \frac{1}{\lambda^{1/2}(t)}[Q_{\mathcal{P}(b(t),\nu(t),\eta)} + \epsilon]\left(\frac{x-\nut{x}(t)}{\lambda(t)}\right)e^{i\gmm(t)} + z(t,x), \\
        &\text{with orthogonal conditions \eqref{orthogonality for Lambda Q}--\eqref{orthogonality for partial_v Q}.}
        \}
    \end{aligned}
    \end{equation}
    Then, $\mathcal{A}_{\text{dec}}^{(\eta)}$ is not empty.
\end{lemma}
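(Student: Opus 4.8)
The plan is to apply the implicit function theorem at the endpoint $t=0$. Observe first that at $t=0$ the solution is, by construction \eqref{initial data of u_eta}, already in the desired form: with the parameters $(\lambda_\eta,\nut x_\eta,\gamma_\eta,b_\eta,\nu_\eta)(0)$ from Lemma~\ref{approximated dynamics} and with $\epsilon\equiv 0$, the geometric decomposition in \eqref{definition of A_{dec}} holds and the five orthogonality conditions \eqref{orthogonality for Lambda Q}--\eqref{orthogonality for partial_v Q} are trivially satisfied. The idea is then to perturb in the data. For a parameter collection $\mathcal{Q}=(\lambda,\nut x,\gamma,b,\nu)$ near $\mathcal{Q}_0:=(\lambda_\eta,\nut x_\eta,\gamma_\eta,b_\eta,\nu_\eta)(0)$ and for $v\in L^2(\mathbb{R})$ near $v_0:=u_\eta(0)-z_f^*$, define the remainder $\epsilon=\epsilon[\mathcal{Q},v]$ by
\[
    \epsilon(y):=\lambda^{1/2}e^{-i\gamma}\,v(\lambda y+\nut x)-Q_{\mathcal{P}(b,\nu,\eta)}(y),
\]
and set $G(\mathcal{Q},v)\in\mathbb{R}^5$ to be the vector of the inner products $(\epsilon,i\Lambda Q_{\mathcal{P}})_r$, $(\epsilon,i\partial_bQ_{\mathcal{P}})_r$, $(\epsilon,i\partial_\eta Q_{\mathcal{P}})_r$, $(\epsilon,i\nabla Q_{\mathcal{P}})_r$, $(\epsilon,i\partial_\nu Q_{\mathcal{P}})_r$. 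Since $v\mapsto\lambda^{1/2}e^{-i\gamma}v(\lambda\cdot+\nut x)$ is an $L^2$-isometry, $Q_{\mathcal{P}}$ depends smoothly on $(b,\nu)$, and all profiles $R_{p,q,r}$ together with $\Lambda R_{p,q,r}$ decay like $\langle y\rangle^{-2}$ by \eqref{decay of R(pqr)}, the map $G$ is $C^1$ on a neighborhood of $(\mathcal{Q}_0,v_0)$ and $G(\mathcal{Q}_0,v_0)=0$.

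The key step is to verify that the partial Jacobian $D_{\mathcal{Q}}G(\mathcal{Q}_0,v_0)$ is invertible. Since $\epsilon\equiv 0$ at the base point, only the derivatives of $\epsilon$ survive, and a direct computation gives, at $\mathcal{Q}_0$,
\[
    \partial_\lambda\epsilon=\tfrac1\lambda\Lambda Q_{\mathcal{P}},\quad
    \partial_{\nut x}\epsilon=\tfrac1\lambda\nabla Q_{\mathcal{P}},\quad
    \partial_\gamma\epsilon=-iQ_{\mathcal{P}},\quad
    \partial_b\epsilon=-\partial_bQ_{\mathcal{P}},\quad
    \partial_\nu\epsilon=-\partial_\nu Q_{\mathcal{P}}.
\]
Using $b_\eta(0)=0$, $\nut x_\eta(0)=0$, and the expansion $Q_{\mathcal{P}}=Q+i\nu R_{0,1,0}+\eta R_{0,0,1}+O(\eta^2)$, one finds that to leading order in $\eta$ the $5\times5$ matrix has only a few nonzero entries: the even/odd parity of $Q,\nabla Q,\Lambda Q$ and of the $R_{p,q,r}$ recorded in Proposition~\ref{Singular profile}, together with real--imaginary cancellation, annihilate all entries except those built from $(\Lambda Q,R_{1,0,0})_r$, $(\nabla Q,R_{0,1,0})_r$ and $(Q,R_{0,0,1})_r$. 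The resulting determinant is a nonzero universal multiple of
\[
    \frac{1}{\lambda_\eta(0)^2}\,(\Lambda Q,R_{1,0,0})_r\,(\nabla Q,R_{0,1,0})_r\,(Q,R_{0,0,1})_r ,
\]
which does not vanish: the first two factors are strictly positive by \eqref{positivity of L_Q 0}, while the third equals $-(\Lambda Q,R_{1,0,0})_r<0$ by the kernel relations \eqref{generalized kernel relations} — this is exactly the mass-deficiency expressed in \eqref{mass escape ansatz}. Shrinking $\eta^*$ if necessary absorbs the $O(\eta)$ corrections and keeps $D_{\mathcal{Q}}G$ invertible for each $\eta\in(0,\eta^*)$.

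With invertibility in hand, the implicit function theorem yields a neighborhood $\mathcal{V}$ of $v_0$ in $L^2$ and a $C^1$ map $\mathcal{V}\ni v\mapsto\mathcal{Q}(v)$ with $\mathcal{Q}(v_0)=\mathcal{Q}_0$ and $G(\mathcal{Q}(v),v)=0$, unique after possibly shrinking $\mathcal{V}$. Finally, by the continuity of the half-wave flow in $H^{1/2+\delta}$ (Lemma~\ref{Cauchy theory}) and of $t\mapsto z(t)$, the map $t\mapsto u_\eta(t)-z(t)$ is continuous into $L^2$ with value $v_0$ at $t=0$; hence there is $\tilde t\in(T^{(\eta)}_{\text{exist}},0)$ with $u_\eta(t)-z(t)\in\mathcal{V}$ for all $t\in(\tilde t,0]$. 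For such $t$ the tuple $\mathcal{Q}(u_\eta(t)-z(t))$ provides the required unique geometric decomposition, so $\tilde t\in\mathcal{A}^{(\eta)}_{\text{dec}}$ and therefore $\mathcal{A}^{(\eta)}_{\text{dec}}\neq\varnothing$. The main obstacle is the Jacobian computation: one must check that the many degeneracies forced by parity and the $i$-structure do not kill the determinant, and this is precisely where the three sign facts above — in particular the mass-deficiency $(Q,R_{0,0,1})_r\neq 0$ — enter.
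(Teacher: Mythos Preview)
Your proof is correct and follows essentially the same route as the paper: both apply the implicit function theorem at $t=0$, compute the Jacobian of the five orthogonality functionals in the five modulation parameters, and verify nondegeneracy via the kernel relations \eqref{generalized kernel relations} and the sign facts \eqref{positivity of L_Q 0} together with $(Q,R_{0,0,1})_r=-(\Lambda Q,R_{1,0,0})_r\neq 0$. The only cosmetic difference is that the paper first renormalizes to unit scale (working with $\Sigma^{(\eta)}(t,y)=\lambda_\eta^{1/2}(0)(u_\eta-z)(t,\lambda_\eta(0)y+\nut{x}_\eta(0))e^{-i\gamma_\eta(0)}$ and parameters $(\tilde\lambda,\tilde y,\tilde\gamma,b,\nu)$ near $(1,0,0,0,\nu_\eta(0))$), whereas you work directly in the physical variables; this only changes the Jacobian by harmless powers of $\lambda_\eta(0)$. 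One minor imprecision: the determinant actually carries $(\Lambda Q,R_{1,0,0})_r^2(\nabla Q,R_{0,1,0})_r^2(Q,R_{0,0,1})_r$ rather than first powers, since each of the first two inner products appears twice (once from the $\partial_\lambda\epsilon$/$\partial_{\nut x}\epsilon$ column and once from the $\partial_b\epsilon$/$\partial_\nu\epsilon$ column); this does not affect the nonvanishing conclusion.
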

\begin{proof}
    Let $\eta^*>0$ be a constant chosen in Lemma~\ref{approximated dynamics}.
    For each $\eta \in (0,\eta^*)$, define $\Sigma^{(\eta)} : (T^{(\eta)}_{\text{exist}},0] \rightarrow H^{1/2+\delta}$ so that
    \begin{equation}
        \Sigma^{(\eta)}(t,y) \coloneqq \lambda_{\eta}^{1/2}(0)\left(u_{\eta}-z\right)(t,\lambda_{\eta}(0)y+\nut{x}_{\eta}(0))e^{-i\gmm_{\eta}(0)},
    \end{equation}
    where $(\lambda_{\eta},b_{\eta},\nu_{\eta},\nut{x}_{\eta},\gmm_{\eta})(0)$ is initial data chosen in Lemma~\ref{approximated dynamics}. Note that from the choice of initial data $u_{\eta}(0)$, we have
    \[
        \Sigma^{(\eta)}(0,y) = Q_{\mathcal{P}(b_{\eta}(0),\nu_{\eta}(0),\eta)}(y)\eqqcolon Q_{\eta}.
    \]
    In addition, the Cauchy theory of \eqref{half-wave} in Lemma~\ref{Cauchy theory} for $u_{\eta}(t)$ and $z(t)$ implies that $t \mapsto \Sigma^{(\eta)}(t)$ is continuous in $H^{1/2+\delta}$, and hence for each $0<R\ll 1$, there is a constant $T^{(\eta)}_R \in (T^{(\eta)}_{\text{exist}},0)$ such that
    \begin{equation}
        \norm{\Sigma^{(\eta)}(t) - Q_{\eta}}_{H^{1/2}} < R, \text{ for all } t \in [T_{R}^{(\eta)},0].
    \end{equation}

    We now claim that, possibly after reducing $\eta^*>0$ from the one chosen in Lemma~\ref{approximated dynamics}, there exists a constant $0 < R_\eta \ll 1$ for each $\eta \in (0, \eta^*)$ such that the following holds:

    For every $\Sigma \in H^{1/2}$ satisfying $\norm{\Sigma - Q_\eta}_{H^{1/2}} < R_\eta$, there exists a unique set of parameters $(\tilde{\lambda}, \tilde{y}, \tilde{\gamma}, b, \nu) \in \mathbb{R}_+ \times \mathbb{R}^4$ such that
    \[
        |\tilde{\lambda} - 1| + |\tilde{y}| + |\tilde{\gamma}| + |b| + |\nu - \nu_\eta(0)| < R_\eta,
    \]
    and the associated error term
    \[
        \epsilon^{(\eta)}_{\tilde{\lambda},\tilde{y},\tilde{\gamma},b,\nu}(y) \coloneqq 
        e^{i\tilde{\gamma}} \tilde{\lambda}^{1/2} \Sigma(\tilde{\lambda} y - \tilde{y}) - Q_{\mathcal{P}(b,\nu,\eta)}
    \]
    satisfies the orthogonality conditions~\eqref{orthogonality for Lambda Q}--\eqref{orthogonality for partial_v Q}.

    To prove this claim, we employ the implicit function theorem.
    Let $0<R\ll 1$ be given a small constant, and let $B_{R}^{(\eta)} \coloneqq \{\Sigma \in H^{1/2} \:|\: \norm{\Sigma - Q_\eta}_{H^{1/2}} < R\}$. Now, we define a map $F^{(\eta)} : B_{R}^{(\eta)} \times \{|\tilde{\lambda} - 1| + |\tilde{y}| + |\tilde{\gamma}| + |b| + |\nu - \nu_\eta(0)| < R\} \mapsto \bbR^5$ so that
    \begin{equation*}
        F^{(\eta)}(\Sigma,\tilde{\lambda},\tilde{y},\tilde{\gmm},b,\nu) = (F_1^{(\eta)},F_2^{(\eta)},F_3^{(\eta)},F_4^{(\eta)},F_5^{(\eta)}),
    \end{equation*}
    where $(F_j^{(\eta)})_{1\leq j\leq 5}$ are defined by
    \begin{align*}
        F_1^{(\eta)}(\Sigma,\tilde{\lambda},\tilde{y},\tilde{\gmm},b,\nu) &\coloneqq (\epsilon_{\tilde{\lambda}, \tilde{y}, \tilde{\gmm}, b,\nu}^{(\eta)},i\Lambda Q_{\mathcal{P}})_r,\\
        F_2^{(\eta)}(\Sigma,\tilde{\lambda},\tilde{y},\tilde{\gmm},b,\nu) &\coloneqq (\epsilon_{\tilde{\lambda}, \tilde{y}, \tilde{\gmm}, b,\nu}^{(\eta)}, i\nabla Q_{\mathcal{P}})_r,\\
        F_3^{(\eta)}(\Sigma,\tilde{\lambda},\tilde{y},\tilde{\gmm},b,\nu) &\coloneqq (\epsilon_{\tilde{\lambda}, \tilde{y}, \tilde{\gmm}, b,\nu}^{(\eta)}, i\partial_\eta Q_{\mathcal{P}})_r,\\
        F_4^{(\eta)}(\Sigma,\tilde{\lambda},\tilde{y},\tilde{\gmm},b,\nu) &\coloneqq (\epsilon_{\tilde{\lambda}, \tilde{y}, \tilde{\gmm}, b,\nu}^{(\eta)}, i\partial_b Q_{\mathcal{P}})_r,\\
        F_5^{(\eta)}(\Sigma,\tilde{\lambda},\tilde{y},\tilde{\gmm},b,\nu) &\coloneqq (\epsilon_{\tilde{\lambda}, \tilde{y}, \tilde{\gmm}, b,\nu}^{(\eta)}, i\partial_\nu Q_{\mathcal{P}})_r.
    \end{align*}
    Note that $F^{(\eta)}(Q_{\eta},1,0,0,0,\nu_{\eta}(0))=0$. Hence, to choose desired $R_{\eta}>0$, it's suffice to show 
    \begin{equation}\label{det of F^eta}
        \text{det}\left(\frac{\partial(F_1^{(\eta)},F_2^{(\eta)}
        ,F_3^{(\eta)},F_4^{(\eta)},F_5^{(\eta)})}{\partial(\tilde{\lambda},\tilde{y},\tilde{\gamma},b,\nu)} 
        \right)\bigg|_{(Q_{\eta},1,0,0,0,\nu_{\eta}(0))} \neq 0,
    \end{equation}
    from the implicit function theorem. Note that $\nu_{\eta}(0) \sim \eta$. Then by taking partial derivatives of $\epsilon_{\tilde{\lambda}, \tilde{y}, \tilde{\gmm}, b,\nu}^{(\eta)}$ at $(\Sigma,\tilde{\lambda},\tilde{y},\tilde{\gmm}, b,\nu) = (Q_{\eta},1,0,0,0,\nu_{\eta}(0))$ yields 
    \begin{align*}
        &\frac{\partial}{\partial \tilde{\lambda}} \epsilon_{\tilde{\lambda}, \tilde{y}, \tilde{\gamma}, b,\nu}^{(\eta)} = \Lambda Q_{\eta}, \quad \frac{\partial}{\partial \tilde{y}} \epsilon_{\tilde{\lambda}, \tilde{y}, \tilde{\gamma}, b,\nu}^{(\eta)} = -\nabla Q_{\eta}, \quad 
        \frac{\partial}{\partial \tilde{\gamma}} \epsilon_{\tilde{\lambda},\tilde{y}, \gamma_1, b,\nu}^{(\eta)} = iQ_{\eta},\quad \\
        & \frac{\partial}{\partial b} \epsilon_{\tilde{\lambda}, \tilde{y}, \tilde{\gamma}, b,\nu}^{(\eta)} = -iR_{1,0,0}+ \mathcal{O}(\eta)\langle y \rangle^{-2},\quad \frac{\partial}{\partial \nu} \epsilon_{\tilde{\lambda}, \tilde{y}, \tilde{\gamma}, b,\nu}^{(\eta)} = -iR_{0,1,0}+ \mathcal{O}(\eta)\langle y \rangle^{-2}.
    \end{align*}
    In addition, we have
    \begin{align*}
        i\Lambda Q_{\mathcal{P}(0,\nu_\eta(0),\eta)} &= i\Lambda Q + \mathcal{O}(\eta)\langle y \rangle^{-2},\quad
        i\nabla Q_{\mathcal{P}(0,\nu_\eta(0),\eta)} = i\nabla Q + \mathcal{O}(\eta)\langle y \rangle^{-2},\\
        i\partial_\eta Q_{\mathcal{P}}|_{\mathcal{P}(0,\nu_\eta(0),\eta)} &= iR_{0,0,1} + \mathcal{O}(\eta)\langle y \rangle^{-2},\quad
        i\partial_b Q_{\mathcal{P}}|_{\mathcal{P}(0,\nu_\eta(0),\eta)} = -R_{1,0,0} + \mathcal{O}(\eta)\langle y \rangle^{-2},\\
        i\partial_\nu Q_{\mathcal{P}}|_{\mathcal{P}(0,\nu_{\eta}(0),\eta)} &= - R_{0,1,0} + \mathcal{O}(\eta)\langle y \rangle^{-2}.
    \end{align*}
    Therefore, the derivative of $F^{(\eta)}$ at $(Q_{\eta},1,0,0,0,\nu_{\eta}(0))$ is 
    \begin{align*}
        &\frac{\partial(F_1^{(\eta)},F_2^{(\eta)},F_3^{(\eta)},F_4^{(\eta)},F_5^{(\eta)})}{\partial(\tilde{\lambda},\tilde{y},\tilde{\gamma},b,\nu)} \bigg|_{(\tilde{\lambda},\tilde{y},\tilde{\gamma},b,\nu) = (1,0,0,0,\nu_{\eta}(0))} \\
        & = \left[
        \begin{matrix}
            0 & 0 & 0 & A_{1,4} & 0 \\
            0 & 0 & 0 & 0 &  A_{2,5} \\
            0 & 0 & A_{3,3} & A_{3,4} & 0 \\
            A_{4,1} & 0 & 0 & 0 & 0 \\
            0 & A_{5,2} & 0 & 0 & 0 \\
        \end{matrix}\right] + \mathcal{O}(\eta),
    \end{align*}
    where $A_{i,j}$ denote by
    \begin{align*}
        A_{1,4} &= -(i^{-1}L_Q[iR_{1,0,0}],R_{1,0,0})_r,\\
        A_{2,5} &=  -(i^{-1}L_Q[iR_{0,1,0}],R_{0,1,0})_r,\\
        A_{3,3} &= (Q,R_{0,0,1})_r, \quad A_{3,4} =  -(R_{1,0,0},R_{0,0,1})_r, \\
        A_{4,1} &= -(i^{-1}L_Q[iR_{1,0,0}],R_{1,0,0})_r, \\
        A_{5,2} &= -(i^{-1}L_Q[iR_{0,1,0}],R_{0,1,0})_r.
    \end{align*}
    We use $(i^{-1}L_Q[iR_{0,1,0}],R_{0,1,0})_r>0$, $(i^{-1}L_Q[iR_{1,0,0}],R_{1,0,0})_r>0$ and from the identity $L_Q[\Lambda Q] = -Q$, we have 
    \begin{align*}
        (Q,R_{0,0,1})_r = -(\Lambda Q,L_Q[R_{0,0,1}])_r = -(i^{-1}L_Q[iR_{1,0,0}],R_{1,0,0})_r \neq 0.    
    \end{align*}
    Thus, from the continuity of determinant function, there is a constant $\eta^*>0$ which is possibly smaller so that for $\eta \in (0,\eta^*)$, we obtain \eqref{det of F^eta}. Therefore, we obtain desired $R_{\eta}>0$. 

    To this end, for each $\eta \in (0,\eta^*)$ we claim that $T^{(\eta)}_{R_{\eta}} \in \mathcal{A}_{\text{dec}}^{(\eta)}$. Indeed, for each $t \in [T^{(\eta)}_{R_{\eta}},0]$, define $\epsilon(t,y)$ to be
    \[
        \epsilon(t,y) \coloneqq e^{i\tilde{\gmm}(t)}\tilde{\lambda}^{1/2}(t)\Sigma^{(\eta)}(\tilde{\lambda}(t)y - \tilde{y}(t)) - Q_{\mathcal{P}(b(t),\nu(t),\eta)},
    \]
    where $(\tilde{\lambda}(t), \tilde{y}(t), \tilde{\gmm}(t), b(t),\nu(t))$ is corresponding to $\Sigma^{(\eta)}(t)$. Then, $\epsilon$ satisfies the orthogonal conditions \eqref{orthogonality for Lambda Q}--\eqref{orthogonality for partial_v Q}. Therefore, we conclude that $\mathcal{A}_{\text{dec}}^{(\eta)}$ is not empty.
\end{proof}

\subsection{Sign of the constant $c_4$}\label{Proof of c_4<0}
\begin{lemma}
    The sign of $c_4$ defined in \eqref{value of c_4} is negative.
\end{lemma}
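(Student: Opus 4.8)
The denominator of \eqref{value of c_4}, namely $2(i^{-1}L_Q[iR_{1,0,0}],R_{1,0,0})_r$, is strictly positive by \eqref{positivity of L_Q}, so it suffices to show that the numerator
\[
\mathcal{N} \coloneqq (A_{\nu^2},\,2\Lambda Q-R_{1,0,0})_r - (i[\nabla R_{1,1,0} - \tilde{\text{NL}}_{b\nu^2}],\,iQ)_r
\]
is negative, where $A_{\nu^2}=-\nabla R_{0,1,0}-R_{0,1,0}^2Q$ is the inhomogeneous part of the right side of \eqref{order v^2} and $\tilde{\text{NL}}_{b\nu^2}=2R_{1,1,0}R_{0,1,0}Q-3R_{0,1,0}^2R_{1,0,0}$; since these functions and $Q$ are real and of definite parity, every term of $\mathcal{N}$ is a genuine real inner product. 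The plan is to reduce $\mathcal{N}$ to a negative multiple of the manifestly positive spectral quantity $(i^{-1}L_Q[iR_{0,1,0}],R_{0,1,0})_r$ from \eqref{positivity of L_Q}, which then forces $c_4<0$.

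First I would substitute the above expressions for $A_{\nu^2}$ and $\tilde{\text{NL}}_{b\nu^2}$ so as to write $\mathcal{N}$ purely in terms of $Q$, $R_{1,0,0}$, $R_{0,1,0}$, $R_{1,1,0}$. Then I would eliminate $R_{1,1,0}$ using its defining relation \eqref{order bv}, $L_Q[R_{1,1,0}]=-R_{0,1,0}+\Lambda R_{0,1,0}-\nabla R_{1,0,0}-2R_{1,0,0}R_{0,1,0}Q$: in each inner product containing $R_{1,1,0}$, move the self-adjoint operator $L_Q$ onto the partner factor whenever the latter lies in a usable subspace, using the kernel relations $L_Q[iQ]=L_Q[\nabla Q]=0$, $L_Q[\Lambda Q]=-Q$, $i^{-1}L_Q[iR_{1,0,0}]=\Lambda Q$, $i^{-1}L_Q[iR_{0,1,0}]=-\nabla Q$ from \eqref{order b}, \eqref{order v}, together with the commutator identities \eqref{comm 1}, \eqref{comm 2}, \eqref{comm 3}. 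This is the same bookkeeping used for $c_1,c_2,c_3$ in the proof of Proposition~\ref{Singular profile} (compare the groupings $I_1,I_2,I_3$ and $J_1,\dots,K_5$ there), and after it one is left with a combination of inner products built only from $Q,\nabla Q,\Lambda Q,R_{1,0,0},R_{0,1,0}$, plus a handful of trilinear terms such as $(R_{1,0,0}R_{0,1,0}^2,Q)_r$ and $(R_{0,1,0}^2Q,\Lambda Q)_r$.

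The crux is to verify that, exactly as for the other $c_j$, all the trilinear contributions cancel — this cancellation being forced by the specific choice of coefficients in \eqref{modified profile eror} (notably the $-\tfrac12 b^2$ in the coefficient of $\partial_bQ_{\mathcal P}$) together with the scalar identities $(\Lambda f,f)_r=0$, $-\tfrac{y}{2}\nabla(Q^2)+Q\Lambda Q=\tfrac12 Q^2$, and the Pohozaev-type relation \eqref{relation of R(200) and R(100)}. Once the nonlinear terms vanish, the remaining quadratic part should simplify — using $L_Q[iR_{0,1,0}]=-i\nabla Q$ once more — to $-\kappa\,(i^{-1}L_Q[iR_{0,1,0}],R_{0,1,0})_r$ for an explicit $\kappa>0$ (equivalently $\kappa(\nabla Q,R_{0,1,0})_r$, which is negative by \eqref{positivity of L_Q}), whence $\mathcal{N}<0$ and $c_4<0$. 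I expect the cancellation of the trilinear terms to be the main obstacle: it requires the correct grouping of roughly a dozen inner products and careful integration by parts, and is precisely the step that would fail had a different normalization been chosen in the profile construction.
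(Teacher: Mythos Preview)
Your approach is correct and essentially the same as the paper's: eliminate $R_{1,1,0}$ via \eqref{order bv}, use the commutator identities and the relation $i^{-1}L_Q[iR_{0,1,0}]=-\nabla Q$, and the numerator reduces to exactly $-(i^{-1}L_Q[iR_{0,1,0}],R_{0,1,0})_r$, giving
\[
c_4=-\frac{(i^{-1}L_Q[iR_{0,1,0}],R_{0,1,0})_r}{2(i^{-1}L_Q[iR_{1,0,0}],R_{1,0,0})_r}<0.
\]

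A few of your attributions are off, however. The relation \eqref{relation of R(200) and R(100)} and the $-\tfrac12 b^2$ coefficient play no role here, since $R_{2,0,0}$ never appears; the $R_{1,0,0}$-dependent trilinear terms cancel by a one-line integration by parts ($(\nabla R_{0,1,0},R_{1,0,0})_r+(R_{0,1,0},\nabla R_{1,0,0})_r=0$) together with a direct sign cancellation, not by a dozen-term bookkeeping. The genuinely nontrivial step is not the trilinear cancellation but rather the evaluation of $2(\Lambda R_{0,1,0},\,i^{-1}L_Q[iR_{0,1,0}])_r$ via the commutator \eqref{comm 3} (equivalently $[D,\Lambda]=D$) and the pointwise identity $-\tfrac{y}{2}\nabla(Q^2)+Q\Lambda Q=\tfrac12 Q^2$: this is what kills the residual $\|R_{0,1,0}\|_{L^2}^2$ and $(R_{0,1,0}^2,Q\Lambda Q)_r$ terms and leaves the clean expression above.
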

\begin{proof}
Recall that from \eqref{value of c_4} we have
\begin{equation*}
    c_4 = \frac{(A_{\nu^2},2\Lambda Q- R_{1,0,0})_r - (i[\nabla R_{1,1,0} - \tilde{\text{NL}}_{b\nu^2}],iQ)_r}{2(i^{-1}L_Q[iR_{1,0,0}],R_{1,0,0})_r},
\end{equation*}
where 
\begin{align*}
    A_{\nu^2} &= -\nabla R_{0,1,0} - R_{0,1,0}^2Q,\\
    \tilde{\text{NL}}_{b\nu^2} &= 2R_{1,1,0}R_{0,1,0}Q - 3R_{0,1,0}^2R_{1,0,0}.
\end{align*}
We first calculate the term $(i[\nabla R_{1,1,0} - \tilde{\text{NL}}_{b\nu^2}],iQ)_r$. From the definition of $R_{1,1,0}$ in \eqref{order bv} and $R_{0,1,0}$ in \eqref{order v}, we have
\begin{equation}\label{Appendix eq:1}
    \begin{aligned}
        -(i[\nabla R_{1,1,0} &- \tilde{\text{NL}}_{b\nu^2}],iQ)_r \\
        &= (-\nabla R_{1,1,0} + 2R_{1,0,0}R_{0,1,0}Q - 3R_{0,1,0}^2R_{1,0,0},Q)_r \\
        &= -(R_{1,1,0},i^{-1}L_Q[iR_{0,1,0}])_r + (R_{0,1,0},2Q^2R_{1,1,0})_r - 3(R_{0,1,0}^2R_{1,0,0},Q)_r\\
        &= -(R_{0,1,0},L_Q[R_{1,1,0}])_r - 3(R_{0,1,0}^2R_{1,0,0},Q)_r\\
        &= \norm{R_{0,1,0}}_{L^2}^2 + (R_{0,1,0},\nabla R_{1,0,0})_r - (R_{0,1,0}^2Q,R_{1,0,0})_r.
    \end{aligned}
\end{equation}
Now we calculate the term $(A_{\nu^2},\Lambda Q- R_{1,0,0})_r$. 
\begin{equation}\label{Appendix eq:2}
    \begin{aligned}
        (A_{\nu^2},&2\Lambda Q- R_{1,0,0})_r \\
        &= -2(\nabla R_{0,1,0}+R_{0,1,0}^2Q,\Lambda Q)_r + (\nabla R_{0,1,0}+R_{0,1,0}^2Q,R_{1,0,0})_r \\
    \end{aligned}
\end{equation}
Therefore, from \eqref{Appendix eq:1} and \eqref{Appendix eq:2}, using the commutator formula $[\Lambda , \nabla] = -\nabla$ the numerator in (RHS) of \eqref{value of c_4} is 
\begin{equation}
    \begin{aligned}
        (&\text{numerator in (RHS) of \eqref{value of c_4}})\\
        &= -2(\nabla R_{0,1,0},\Lambda Q)_r -2(R_{0,1,0}^2,Q\Lambda Q)_r + \norm{R_{0,1,0}}_{L^2}^2\\
        &= 2(\nabla(\Lambda R_{0,1,0}),Q)_r + 2([\Lambda ,\nabla]R_{0,1,0},Q)_r -2(R_{0,1,0}^2,Q\Lambda Q)_r + \norm{R_{0,1,0}}_{L^2}^2\\
        &= -2(i^{-1}L_Q[iR_{0,1,0}],R_{0,1,0})_r + 2(\Lambda R_{0,1,0},i^{-1}L_Q[iR_{0,1,0}])_r - 2(R_{0,1,0}^2,Q\Lambda Q)_r + \norm{R_{0,1,0}}_{L^2}^2.
    \end{aligned}
\end{equation}
Note from commutator formula $[D,\Lambda]=D$ and point-wise identity
\begin{equation*}
    -(y\cdot \nabla Q) Q + Q\Lambda Q = \frac{1}{2}Q^2,
\end{equation*}
we have
\begin{equation}
    \begin{aligned}
        2(\Lambda R_{0,1,0},&i^{-1}L_Q[iR_{0,1,0}])_r \\
        &= (R_{0,1,0},[i^{-1}L_Qi , \Lambda] R_{0,1,0})_r \\
        &= (R_{0,1,0},[D,\Lambda]R_{0,1,0})_r - (R_{0,1,0},[Q^2,\Lambda]R_{0,1,0})_r\\
        &= (DR_{0,1,0},R_{0,1,0})_r + 2((y\cdot \nabla Q) QR_{0,1,0},R_{0,1,0})_r\\
        &= (i^{-1}L_Q[iR_{0,1,0}],R_{0,1,0})_r + 2(R_{0,1,0}^2,Q\Lambda Q)_r - \norm{R_{0,1,0}}_{L^2}^2.
    \end{aligned}
\end{equation}
Thus, we the value of $c_4$ is
\begin{equation*}
    c_4 = -\frac{(i^{-1}L_Q[iR_{0,1,0}],R_{0,1,0})_r}{2(i^{-1}L_Q[iR_{1,0,0}],R_{1,0,0})_r} < 0,
\end{equation*}
by noting that $(i^{-1}L_Q[iR_{0,1,0}],R_{0,1,0})_r>0$ and $(i^{-1}L_Q[iR_{1,0,0}],R_{1,0,0})_r>0$.
\end{proof}

 \bibliographystyle{abbrv}
\bibliography{reference}

\begin{thebibliography}{10}

\bibitem{BGLV2019CMP}
J.~Bellazzini, V.~Georgiev, E.~Lenzmann, and N.~Visciglia.
\newblock On traveling solitary waves and absence of small data scattering for nonlinear half-wave equations.
\newblock {\em Comm. Math. Phys.}, 372(2):713--732, 2019.

\bibitem{BellazziniGeorgievVisciglia2018}
J.~Bellazzini, V.~Georgiev, and N.~Visciglia.
\newblock Traveling waves for the quartic focusing half‑wave equation in one space dimension.
\newblock {\em Commun. Math. Phys.}, 349(3):1171--1193, 2018.

\bibitem{BourgainWang1997}
J.~Bourgain and W.~Wang.
\newblock Construction of blowup solutions for the nonlinear {S}chr\"odinger equation with critical nonlinearity.
\newblock {\em Ann. Scuola Norm. Sup. Pisa Cl. Sci. (4)}, 25(1-2):197--215, 1997.
\newblock Dedicated to Ennio De Giorgi.

\bibitem{Physics2001PhysD}
D.~Cai, A.~J. Majda, D.~W. McLaughlin, and E.~G. Tabak.
\newblock Dispersive wave turbulence in one dimension.
\newblock {\em Phys. D}, 152/153:551--572, 2001.
\newblock Advances in nonlinear mathematics and science.

\bibitem{CaoSuZhang2022}
D.~Cao, Y.~Su, and D.~Zhang.
\newblock Construction of multi‑bubble blow‑up solutions to the $l^2$‑critical half‑wave equation.
\newblock arXiv:2207.13437, 2022.

\bibitem{ChoffrutPocovnicu2018}
A.~Choffrut and O.~Pocovnicu.
\newblock Ill‑posedness of the cubic nonlinear half‑wave equation and other fractional nls on the real line.
\newblock {\em Int. Math. Res. Not.}, 2018.

\bibitem{Dodson2012JAMS}
B.~Dodson.
\newblock Global well-posedness and scattering for the defocusing, {$L^{2}$}-critical nonlinear {S}chr\"odinger equation when {$d\geq3$}.
\newblock {\em J. Amer. Math. Soc.}, 25(2):429--463, 2012.

\bibitem{Dodson2024AnnPDEd2}
B.~Dodson.
\newblock A determination of the blowup solutions to the focusing {NLS} with mass equal to the mass of the soliton.
\newblock {\em Ann. PDE}, 9(1):Paper No. 3, 86, 2023.

\bibitem{Dodson2024AnalPDEd1}
B.~Dodson.
\newblock A determination of the blowup solutions to the focusing, quintic {NLS} with mass equal to the mass of the soliton.
\newblock {\em Anal. PDE}, 17(5):1693--1760, 2024.

\bibitem{Physics2007CPAM}
A.~Elgart and B.~Schlein.
\newblock Mean field dynamics of boson stars.
\newblock {\em Comm. Pure Appl. Math.}, 60(4):500--545, 2007.

\bibitem{FrankLenzmann2013Acta}
R.~L. Frank and E.~Lenzmann.
\newblock Uniqueness of non-linear ground states for fractional {L}aplacians in {$\mathbb{R}$}.
\newblock {\em Acta Math.}, 210(2):261--318, 2013.

\bibitem{FrohlichLenzmann2007CPAM}
J.~Fr\"ohlich and E.~Lenzmann.
\newblock Blowup for nonlinear wave equations describing boson stars.
\newblock {\em Comm. Pure Appl. Math.}, 60(11):1691--1705, 2007.

\bibitem{Georgiev3Dhalfwaveblowup}
V.~Georgiev and Y.~Li.
\newblock Blowup dynamics for mass critical half-wave equation in 3{D}.
\newblock {\em J. Funct. Anal.}, 281(7):Paper No. 109132, 34, 2021.

\bibitem{Georgiev2Dhalfwaveblowup}
V.~Georgiev and Y.~Li.
\newblock Nondispersive solutions to the mass critical half-wave equation in two dimensions.
\newblock {\em Comm. Partial Differential Equations}, 47(1):39--88, 2022.

\bibitem{GLPR2018annPDE}
P.~G\'erard, E.~Lenzmann, O.~Pocovnicu, and P.~Rapha\"el.
\newblock A two-soliton with transient turbulent regime for the cubic half-wave equation on the real line.
\newblock {\em Ann. PDE}, 4(1):Paper No. 7, 166, 2018.

\bibitem{KenigMerle2006Invent}
C.~E. Kenig and F.~Merle.
\newblock Global well-posedness, scattering and blow-up for the energy-critical, focusing, non-linear {S}chr\"odinger equation in the radial case.
\newblock {\em Invent. Math.}, 166(3):645--675, 2006.

\bibitem{KimKwon2019}
K.~Kim and S.~Kwon.
\newblock On pseudoconformal blow-up solutions to the self-dual {C}hern-{S}imons-{S}chr\"{o}dinger equation: existence, uniqueness, and instability.
\newblock {\em Mem. Amer. Math. Soc.}, 284(1409):vi+128, 2023.

\bibitem{KLS2013CMP}
K.~Kirkpatrick, E.~Lenzmann, and G.~Staffilani.
\newblock On the continuum limit for discrete {NLS} with long-range lattice interactions.
\newblock {\em Comm. Math. Phys.}, 317(3):563--591, 2013.

\bibitem{KLR2013ARMAhalfwave}
J.~Krieger, E.~Lenzmann, and P.~Rapha\"el.
\newblock Nondispersive solutions to the {$L^2$}-critical half-wave equation.
\newblock {\em Arch. Ration. Mech. Anal.}, 209(1):61--129, 2013.

\bibitem{Li2022Inhomogeneous}
Y.~Li.
\newblock Blowup dynamics for inhomogeneous mass critical half‑wave equation.
\newblock arXiv:2206.04938, 2022.

\bibitem{Li2024WeakStability}
Y.~Li.
\newblock Weak stability of the sum of solitary waves for half‑wave equation.
\newblock arXiv:2409.01919, 2024.

\bibitem{Physics1997JNS}
A.~J. Majda, D.~W. McLaughlin, and E.~G. Tabak.
\newblock A one-dimensional model for dispersive wave turbulence.
\newblock {\em J. Nonlinear Sci.}, 7(1):9--44, 1997.

\bibitem{Merle1993Duke}
F.~Merle.
\newblock Determination of blow-up solutions with minimal mass for nonlinear {S}chr\"{o}dinger equations with critical power.
\newblock {\em Duke Math. J.}, 69(2):427--454, 1993.

\bibitem{MerleRaphael2003GAFA}
F.~Merle and P.~Raphael.
\newblock Sharp upper bound on the blow-up rate for the critical nonlinear {S}chr\"odinger equation.
\newblock {\em Geom. Funct. Anal.}, 13(3):591--642, 2003.

\bibitem{MerleRaphael2004Invent}
F.~Merle and P.~Raphael.
\newblock On universality of blow-up profile for {$L^2$} critical nonlinear {S}chr\"odinger equation.
\newblock {\em Invent. Math.}, 156(3):565--672, 2004.

\bibitem{MerleRaphael2005AnnMath}
F.~Merle and P.~Raphael.
\newblock The blow-up dynamic and upper bound on the blow-up rate for critical nonlinear {S}chr\"{o}dinger equation.
\newblock {\em Ann. of Math. (2)}, 161(1):157--222, 2005.

\bibitem{MerleRaphael2005CMP}
F.~Merle and P.~Raphael.
\newblock Profiles and quantization of the blow up mass for critical nonlinear {S}chr\"odinger equation.
\newblock {\em Comm. Math. Phys.}, 253(3):675--704, 2005.

\bibitem{MerleRaphael2006JAMS}
F.~Merle and P.~Raphael.
\newblock On a sharp lower bound on the blow-up rate for the {$L^2$} critical nonlinear {S}chr\"odinger equation.
\newblock {\em J. Amer. Math. Soc.}, 19(1):37--90, 2006.

\bibitem{MerleRaphaelRodnianski2013Invention}
F.~Merle, P.~Rapha\"{e}l, and I.~Rodnianski.
\newblock Blowup dynamics for smooth data equivariant solutions to the critical {S}chr\"{o}dinger map problem.
\newblock {\em Invent. Math.}, 193(2):249--365, 2013.

\bibitem{MerleRaphaelRodnianski2015CambJMath}
F.~Merle, P.~Rapha\"{e}l, and I.~Rodnianski.
\newblock Type {II} blow up for the energy supercritical {NLS}.
\newblock {\em Camb. J. Math.}, 3(4):439--617, 2015.

\bibitem{MRS2013AJM}
F.~Merle, P.~Rapha\"el, and J.~Szeftel.
\newblock The instability of {B}ourgain-{W}ang solutions for the {$L^2$} critical {NLS}.
\newblock {\em Amer. J. Math.}, 135(4):967--1017, 2013.

\bibitem{Raphael2005MathAnnalen}
P.~Raphael.
\newblock Stability of the log-log bound for blow up solutions to the critical non linear {S}chr\"odinger equation.
\newblock {\em Math. Ann.}, 331(3):577--609, 2005.

\bibitem{RaphaelRodnianski2012}
P.~Rapha\"{e}l and I.~Rodnianski.
\newblock Stable blow up dynamics for the critical co-rotational wave maps and equivariant {Y}ang-{M}ills problems.
\newblock {\em Publ. Math. Inst. Hautes \'{E}tudes Sci.}, 115:1--122, 2012.

\bibitem{RS2011JAMSinhomo}
P.~Rapha\"el and J.~Szeftel.
\newblock Existence and uniqueness of minimal blow-up solutions to an inhomogeneous mass critical {NLS}.
\newblock {\em J. Amer. Math. Soc.}, 24(2):471--546, 2011.

\bibitem{Steinharmonic-book}
E.~M. Stein.
\newblock {\em Harmonic analysis: real-variable methods, orthogonality, and oscillatory integrals}, volume~43 of {\em Princeton Mathematical Series}.
\newblock Princeton University Press, Princeton, NJ, 1993.
\newblock With the assistance of Timothy S. Murphy, Monographs in Harmonic Analysis, III.

\end{thebibliography}

\end{document}